\documentclass[11pt]{amsart}
\usepackage{amssymb,amsmath}
\usepackage{mathrsfs,enumitem}
\usepackage[all]{xy}
\usepackage{color}

\theoremstyle{plain}
\newtheorem{thm}{Theorem}[subsection]
\newtheorem{prop}[thm]{Proposition}
\newtheorem{lem}[thm]{Lemma}
\newtheorem{cor}[thm]{Corollary}
\newtheorem{defn}[thm]{Definition}

\newtheorem*{hyp}{Rough corank condition}
\theoremstyle{definition}
\newtheorem{rem}[thm]{Remark}
\newtheorem{ex}{Example}
\newtheorem{cex}{Counterexample}
\numberwithin{equation}{subsection}

\newcommand{\eps}{\varepsilon}
\renewcommand{\phi}{\varphi}
\renewcommand{\d}{\partial}
\newcommand{\dd}{\mathrm{d}}

\newcommand{\R}{\mathbf{R}}

\newcommand{\N}{\mathbf{N}}
\newcommand{\D}{C^{\infty}_0}
\renewcommand{\S}{\mathscr{S}}
\newcommand{\supp}{\mathop{\rm supp}}
\def\q{{\frac{1}{\vert B\vert}}}
\def\2q{{\frac{2}{\|B\|}}}
\def\|{{\Vert}}
\begin{document}
\title[Regularity of Fourier Integral Operators]{Global and local regularity of Fourier integral operators on weighted and unweighted spaces}
\author{David Dos Santos Ferreira}
\address{Universit\'e Paris 13, Cnrs, umr 7539 Laga, 99 avenue Jean-Baptiste Cl\'ement, F-93430 Villetaneuse, France}
\email{ddsf@math.univ-paris13.fr}
\thanks{D.~DSF. is partially supported by ANR grant Equa-disp.}
\author{Wolfgang Staubach}
\address{Department of Mathematics and the Maxwell Institute for Mathematical
Sciences, Heriot-Watt University, Edinburgh EH14 4AS, United Kingdom}
\email{W.Staubach@hw.ac.uk}
\thanks{W.~S. is partially supported by the Engineering and Physical Sciences Research Council First Grant Scheme, reference number EP/H051368/1.}
\subjclass[2000]{35S30}

\begin{abstract}
We investigate the global continuity on $L^p$ spaces with $p\in [1,\infty]$ of Fourier integral operators with smooth and rough amplitudes and/or phase functions subject to certain non-degeneracy conditions. We initiate the investigation of the continuity of smooth and rough Fourier integral operators on weighted $L^{p}$ spaces, $L_{w}^p$ with $1< p < \infty$ and $w\in A_{p},$ (i.e. the Muckenhoput weights), and establish weighted norm inequalities for operators with rough and smooth amplitudes and phase functions satisfying a suitable rank condition. These results are then applied to prove weighted and unweighted estimates for the commutators of Fourier integral operators with functions of bounded mean oscillation BMO, then to some estimates on weighted Triebel-Lizorkin spaces, and finally to global unweighted and local weighted estimates for the solutions of the Cauchy problem for $m$-th and second order hyperbolic partial differential equations on $\mathbf{R}^n .$
\end{abstract}
\maketitle
\setcounter{tocdepth}{1}
\tableofcontents

\section*{Introduction and main results}
A Fourier integral operator is an operator that can be written locally in the form
\begin{align}\label{Intro:Fourier integral operator}
   T_{a, \varphi}u(x)= (2\pi)^{-n} \int_{\R^n} e^{i\phi(x,\xi)} a(x,\xi) \widehat{u}(\xi) \, \dd\xi,
\end{align}
where $a(x,\xi)$ is the {\it{amplitude}} and $\varphi(x,\xi)$ is the {\it{phase function}}.
Historically, a systematic study of smooth Fourier integral operators with amplitudes in $C^{\infty}(\mathbf{R}^{n} \times \mathbf{R}^n)$ with $|\partial^{\alpha}_{\xi} \partial^{\beta}_{\xi} a(x,\xi)| \leq C_{\alpha \beta} (1+|\xi|)^{m-\varrho|\alpha|+\delta|\beta|}$ (i.e. $a(x, \xi)\in S^{m}_{\varrho, \delta}$), and phase functions in $C^{\infty}(\mathbf{R}^{n} \times \mathbf{R}^n\setminus 0)$ homogenous of degree $1$ in the frequency variable $\xi$ and with non-vanishing determinant of the mixed Hessian matrix (i.e. {\it{non-degenerate phase functions}}), was initiated in the classical paper of L. H\"ormander \cite{H3}. Furthermore, I. Eskin \cite{Esk} (in the case $a(x,\xi)\in S^{0}_{1,0}$) and H\"ormander \cite{H3} (in the case $a(x,\xi) \in S^{0}_{\varrho, 1-\varrho},$ $\frac{1}{2} <\varrho \leq 1$) showed the local $L^2$ boundedness of Fourier integral operators with non-degenerate phase functions. Later on, H\"ormander's local $L^2$ result was extended by R. Beals \cite{RBE} and A. Greenleaf and G. Uhlmann \cite{GU} to the case of amplitudes in $S^{0}_{\frac{1}{2},\frac{1}{2}}.$ \\

After the pioneering investigations by M. Beals \cite{Be}, the optimal results concerning local continuity properties of smooth Fourier integral operators (with non-degenerate and homogeneous phase functions) in $L^{p}$ for $1\leq p\leq \infty$, were obtained in the seminal paper of A. Seeger, C. D. Sogge and E.M. Stein \cite{SSS}. This also paved the way for further investigations by G. Mockenhaupt, Seeger and Sogge in \cite{MSS1} and \cite{MSS2}, see also \cite{So} and \cite{So1}. In these investigations the boundedness, from $L^{p}_{\text{comp}}$ to $L^{p}_{\text{loc}}$ and from $L^{p}_{\text{comp}}$ to $L^{q}_{\text{loc}}$ of smooth Fourier integral operators with non-degenerate phase functions have been established, and furthermore it was shown that the maximal operators associated with certain Fourier integral operators (and in particular constant and variable coefficient hypersurface averages) are $L^p$ bounded.\\

In the context of H\"ormander type amplitudes and non-degenerate homogeneous phase functions which are most frequently used in applications in partial differential equations, it has been comparatively small amount of activity concerning global $L^p$ boundedness of Fourier integral operators. Among these, we would like to mention the global $L^2$ boundedness of Fourier integral operators with homogeneous phases in $C^{\infty}(\mathbf{R}^{n} \times \mathbf{R}^n \setminus 0)$ and amplitudes in the H\"ormander class $S^{0}_{0,0},$ due to D. Fujiwara \cite{Fuji}; the global $L^2$ boundedness of operators with inhomogeneous phases in $C^{\infty}(\mathbf{R}^{n} \times \mathbf{R}^n)$ and amplitudes in $S^{0}_{0,0},$ due to K. Asada and D. Fujiwara \cite{AF}; the global $L^p$ boundedness of operators with smooth amplitudes in the so called $\mathbf{SG}$ classes, due to E. Cordero, F. Nicola and L. Rodino in \cite{CNR}; and finally, S. Coriasco and M. Ruzhansky's global $L^{p}$ boundedness of Fourier integral operators \cite{CR}, with smooth amplitudes in a suitable subclass of the H\"ormander class $S^{0}_{1,0},$ where certain decay of the amplitudes in the spatial variables are assumed. We should also mention that before the appearance of the paper \cite{CR}, M. Ruzhansky and M. Sugimoto had already proved in \cite{Ruz 2} certain weighted $L^2$ boundedness (with some power weights) as well as the global unweighted $L^2$ boundedness of Fourier integrals operators with phases in $C^{\infty}(\mathbf{R}^{n} \times \mathbf{R}^{n})$ that are not necessarily homogeneous in the frequency variables, and amplitudes that are in the class $S^{0}_{0,0}.$ In all the aforementioned results, one has assumed ceratin bounds on the derivatives of the phase functions and also a stronger non-degeneracy condition than the one required in the local $L^p$ estimates. \\

In this paper, we shall take all these results as our point of departure and make a systematic study of the global $L^p$ boundedness of Fourier integral operators with amplitudes in $S^{m}_{\varrho, \delta}$ with $\varrho$ and $\delta$ in $[0,1],$ which cover all the possible ranges of $\varrho$'s and $\delta$'s. Furthermore we initiate the study of weighted norm inequalities for Fourier integral operators with weights in the $A_p$ class of Muckenhoupt and use our global unweighted $L^p$ results to prove a sharp weighted $L^p$ boundedness theorem for Fourier integral operators. The weighted results in turn will be used to establish the validity of certain vector-valued inequalities and more importantly to prove the weighted and unweighted boundedness of commutators of Fourier integral operators with functions of bounded mean oscillation BMO. Thus, all the results of this paper are connected and each chapter uses the results of the previous ones. This has been reflected in the structure of the paper and the presentation of the results.\\

Concerning the specific conditions that are put in this paper on the phase functions, it has been known at least since the appearance of the papers \cite{Fuji}, \cite{AF}, \cite{Ruz 2} and \cite{CR}, that one has to assume stronger conditions, than mere non-degeneracy, on the phase function in order to obtain global $L^{p}$ boundedness results. In fact it turns out that the assumption on the phase function, referred to in this paper as the {\it{strong non-degeneracy condition}}, which requires a nonzero lower bound on the modulus of the determinant of the mixed Hessian of the phase, is actually necessary for the validity of global regularity of Fourier integral operators, see section \ref{necessity of strong non-degeneracy}. Furthermore, we also introduce the class $\Phi^k$ of homogeneous (of degree 1) phase functions with a specific control over the derivatives of orders greater than or equal to $k,$ and assume our phases to be strongly non-degenerate and belong to $\Phi^k$ for some $k$. At first glance, these conditions might seem restrictive, but fortunately they are general enough to accommodate the phase functions arising in the study of hyperbolic partial differential equations and will still apply to the most generic phases in practical applications.\\

Concerning our choice of amplitudes, there are some features that set our investigations apart from those made previously, for example partly motivated by the investigation of C.E. Kenig and the second author of the present paper \cite{KS}, of the $L^{p}$ boundedness of the so called {\it{pseudo-pseudodifferential operators}}, we consider the global and local $L^p$ boundedness of Fourier integral operators when the amplitude $a(x,\xi)$ belongs to the class $L^{\infty}S^{m}_{\varrho}$, wherein $a(x,\xi)$ behaves in the spatial variable $x$ like an $L^\infty$ function, and in the frequency variable $\xi,$ the behaviour is that of a symbol in the H\"ormander class $S^{m}_{\varrho,0}.$\\

It is worth mentioning that the conditions defining classes $\Phi^k$, $L^{\infty}S^{m}_{\varrho}$ and the assumption of strong non-degeneracy make the global results obtained here natural extensions of the local boundedness results of Seeger, Sogge and Stein's in \cite{SSS}. Apart from the obvious local to global generalizations, this is because on one hand, our methods can handle the singularity of the phase function in the frequency variables at the origin and therefore the usual assumption that $\xi\neq 0$ in the support of the amplitude becomes obsolete. On the other hand, we do not require any regularity (and therefore no decay of derivatives) in the spatial variables of the amplitudes. Therefore, our amplitudes are close to, and in fact are spatially non-smooth versions of those in the Seeger-Sogge-Stein's paper \cite{SSS}. Indeed, in \cite{SSS} the authors although dealing with spatially smooth amplitudes, assume neither any decay in the spatial variables nor the vanishing of the amplitude in a neighbourhood of the singularity of the phase function.\\

There are several steps involved in the proof of the results of the paper and there are discussions about various conditions that we impose on the operators as well as some motivations for the study of rough operators. Moreover, giving examples and counterexamples when necessary, we have strived to give motivations for our assumptions in the statements of the theorems. Here we will not mention all the results that have been proven in this paper, instead we chose to highlight those that are the main outcomes of our investigations.\\

\noindent In Chapter 1, we set up the notations and give the definitions of the classes of amplitudes, phase functions and weights that will be used througt the paper. We also include the tools that we need in proving our global boundedness results, in this chapter. We close the chapter with a discussion about the connections between rough amplitudes and global boundedness of Fourier integral operators.\\

Chapter 2 is devoted to the investigation of the global boundedness of Fourier integral operators with smooth or rough phases, and smooth or rough amplitudes.
To achieve our global boundedness results, we split the operators in low and high frequency parts and show the boundedness of each and one of them separately. In proving the $L^p$ boundedness of the low frequency portion, see Theorem \ref{general low frequency boundedness for rough Fourier integral operator}, we utilise Lemma \ref{main low frequency estim} which yields a favourable kernel estimate for the low frequency operator, thereafter we use the phase reduction of Lemma \ref{phase reduction} to bring the operator to a canonical form, and finally we use the $L^p$ boundedness of the non-smooth substation in Corollary \ref{cor:main substitution estim} to conclude the proof. Thus, we are able to establish the global $L^p$ boundedness of the frequency-localised portion of the operator, for all $p$'s in $[1,\infty]$ simultaneously.\\

The global boundedness of the high frequency portion of the Fourier integral operator needs to be investigated in three steps. First we show the $L^1-L^1$ boundedness then we proceed to the $L^2-L^2$ boundedness and finally we prove the $L^{\infty}-L^{\infty}$ boundedness.\\

In order to show the $L^1$ boundedness of Theorem \ref{Intro:L1Thm}, we use a semi-classical reduction from Subsection \ref{Semiclasical reduction subsec} and Lemma \ref{Lp:semiclassical}, which will be used throughout the paper. Thereafter we use the semiclassical version of the Seeger-Sogge-Stein decomposition which was introduced in a microlocal form in \cite{SSS}.\\

For our global $L^2$ boundedness result, we also consider amplitudes with $\varrho<\delta$ which appear in the study of Fourier integral operators on nilpotent Lie groups and also in scattering theory. In Theorem \ref{Calderon-Vaillancourt for FIOs}, we show a global $L^2$ boundedness result for operators with smooth H\"ormander class amplitudes in $S^{\min(0,\frac{n}{2}(\varrho-\delta))}_{\varrho,\delta},$ $\varrho\in [0,1],$ $\delta \in [0,1).$  Also, in Theorem \ref{Intro:L2Thm} we prove the $L^2$ boundedness of operators with amplitudes belonging to $S^{m}_{\varrho,1},$ with $m<\frac{n}{2}(\varrho-1)$. In both of these theorems, the phase functions are assumed to satisfy the strong non-degeneracy condition and both of these results are sharp. These can be viewed as extensions of the celebrated Calder\'on-Vaillancourt theorem \cite{CV} to the case of Fourier integral operators. Indeed, the phase function of a pseudodifferential operator, which is $\langle x, \xi \rangle$ is in class $\Phi^2$ and satisfies the strong non-degeneracy condition and therefore our $L^2$ boundedness result completes the $L^2$ boundedness theory of smooth Fourier integral operators with homogeneous non-degenerate phases.\\
Finally, in Theorem \ref{Intro:LinftyThm} we prove the sharp global $L^{\infty}$ boundedness of Fourier integral operators, where in the proof we follow almost the same line of argument as in the proof of the $L^1$ boundedness case, but to obtain the sharp result which we desire, we make a more detailed analysis of the kernel estimates which bring us beyond the result implied by the mere utilisation of the Seeger-Sogge-Stein decomposition. Furthermore, in this case, no non-degeneracy assumption on the phase is required.
Our results above are summerised in the following global $L^p$ boundedness theorem, see Theorem \ref{main L^p thm for smooth Fourier integral operators}:
\subsection*{A. Global $L^{p}$ boundedness of smooth Fourier integral operators.}
Let $T$ be a Fourier integral operator given by \ref{Intro:Fourier integral operator} with amplitude $a \in S^m_{\varrho, \delta}$ and a strongly non-degenerate phase function $\varphi(x,\xi) \in \Phi^2.$ Setting $\lambda:=\min(0,n(\varrho-\delta)),$ suppose that either of the following conditions hold:
\begin{enumerate}
\item[(a)]  $1 \leq p \leq 2,$ $0\leq \varrho \leq 1,$ $0\leq \delta \leq 1,$ and
                      $$ m<n(\varrho -1)\bigg (\frac{2}{p}-1\bigg)+\big(n-1\big)\bigg(\frac{1}{2}-\frac{1}{p}\bigg)+ \lambda\bigg(1-\frac{1}{p}\bigg); $$ or
\item[(b)]  $2 \leq p \leq \infty,$ $0\leq \varrho \leq 1,$ $0\leq \delta \leq 1,$ and
                      $$ m<n(\varrho -1)\bigg (\frac{1}{2}-\frac{1}{p}\bigg)+ (n-1)\bigg(\frac{1}{p}-\frac{1}{2}\bigg) +\frac{\lambda}{p};$$ or
\item[(c)]   $p=2,$ $0\leq \varrho\leq 1,$ $0\leq \delta<1,$ and
$$m= \frac{\lambda}{2}.$$
\end{enumerate}
Then there exists a constant $C>0$ such that $ \Vert Tu\Vert_{L^{p}} \leq C \Vert u\Vert_{L^{p}}.$
For Fourier integral operators with rough amplitudes we show in Theorem \ref{main L^p thm for Fourier integral operators with smooth phase and rough amplitudes} the following:

\subsection*{B. Global $L^{p}$ boundedness of rough Fourier integral operators.}
\noindent Let $T$ be a Fourier integral operator given by \eqref{Intro:Fourier integral operator} with amplitude $a \in L^{\infty}S^m_{\varrho}, 0\leq \varrho \leq 1$ and a strongly non-degenerate phase function $\varphi(x,\xi) \in \Phi^2.$ Suppose that either of the following conditions hold:
      \begin{enumerate}
\item[(a)]  $1 \leq p \leq 2$ and
                      $$ m<\frac{n}{p}(\varrho -1)+\big(n-1\big)\bigg(\frac{1}{2}-\frac{1}{p}\bigg); $$ or
\item[(b)]  $2 \leq p \leq \infty$ and
                      $$ m<\frac{n}{2}(\varrho -1)+ (n-1)\bigg(\frac{1}{p}-\frac{1}{2}\bigg).$$
\end{enumerate}
Then there exists a constant $C>0$ such that $ \Vert Tu\Vert_{L^{p}} \leq C \Vert u\Vert_{L^{p}}.$
We also extend both of the results above, i.e. the $L^p-L^p$ regularity of smooth and rough operators, to the $L^p-L^q$ regularity, in Theorem \ref{main LpLq thm for Fourier integral operators}.\\
After proving the global regularity of Fourier integral operators with smooth phase functions, we turn to the problem of local and global boundedness of operators which are merely bounded in the spatial variables in both their phases and amplitudes.
A motivation for this investigation stems from the study of maximal estimates involving Fourier integral operators, where a standard stopping time argument (here referred to as linearisation) reduces the problem to a Fourier integral operator with a non-smooth phase and sometimes also a non-smooth amplitude. For instance, estimates for the maximal spherical average operator
\begin{align*}
   A u(x) = \sup_{t \in [0,1]} \Big| \int_{S^{n-1}} u(x+t\omega) \, \dd\sigma(\omega) \Big|
\end{align*}
which is directly related to the rough Fourier integral operator
$$ T u(x) =(2\pi)^{-n} \int_{\R^n} a(x,\xi) e^{it(x)|\xi|+i\langle x,\xi \rangle} \widehat{u}(\xi) \, \dd\xi$$
where $t(x)$ is a measurable function in $x$, with values in $[0,1]$ and $a(x,\xi) \in L^{\infty} S_{1}^{-\frac{n-1}{2}}.$ Here, the phase function of the Fourier integral operator is $\varphi(x,\xi)=it(x)|\xi|+i\langle x,\xi \rangle$ which is merely an $L^{\infty}$ function in the spatial variables $x$, but is smooth outside the origin in the frequency variables $\xi.$  As we shall see later, according to Definition \ref{defn of rough phases}, this phase belongs to the class $L^{\infty} \Phi^{2}.$\\
In our investigation of local or global $L^p$ boundedness of the rough Fourier integral operators above for $p\neq 2$, the results obtained are similar to those of the local results for amplitudes in the class $S^{m}_{1,0}$ obtained in \cite{MSS1}, \cite{MSS2} and \cite{So1} for \eqref{Intro:LinWave}. However, we consider more general classes of amplitudes (i.e. the $S^{m}_{\varrho, \delta}$ class) and also require only measurability and boundedness in the spatial variables (i.e. the $L^{\infty}S^{m}_{\varrho}$ class).
The main results in this context are the $L^2$ boundedness results which apart from the case of Fourier integral operators in dimension $n=1,$ yield a  problem of considerable difficulty in case one desires to prove a $L^2$ regularity result under the sole assumption of rough non-degeneracy, see Definition \ref{defn of rough nondegeneracy}.

Using the geometric conditions (imposed on the phase functions) which are the rough analogues of the non-degeneracy and corank conditions for smooth phases (the rough corank condition \ref{Rough corank condition}), we are able to prove a local $L^2$ boundedness result with a certain loss of derivatives depending on the rough corank of the phase. More explicitly we prove in Theorem \ref{Intro:L2ThmDeg}:

\subsection*{C. Local $L^2$ boundedness of Fourier integral operators with rough amplitudes and phases.}
\noindent Let $T$ be a Fourier integral operator given by \eqref{Intro:Fourier integral operator} with amplitude $a \in L^{\infty}S^m_\varrho$ and phase function $\phi \in L^{\infty}\Phi^2$.
      Suppose that the phase satisfies the rough corank condition \ref{Rough corank condition}, then  $T$ can be extended as a bounded operator from $L^{2}_{\rm comp}$
      to $L^{2}_{\rm loc}$ provided $m<-\frac{n+k-1}{4}+\frac{(n-k)(\varrho-1)}{2}.$\\
\noindent Despite the lack of sharpness in the above theorem, the proof is rather technical. However, in case $n=1$ this theorem can be improved to yield a local $L^2$ boundedness result with $m<\frac{-1}{2},$ and if the assumptions on the phase function are also strengthen with a Lipschitz condition on the $\xi$ derivatives of order $2$ and higher of the phase, then the above theorem holds with a loss $m<-\frac{k}{2}+\frac{(n-k)(\varrho-1)}{2}.$\\

In Chapter 3 we turn to the problem of weighted norm inequalities for Fourier integral operators. To our knowledge this question has never been investigated previously in the context of Muckenhoupt's $A_p$ weights which are the most natural class of weights in harmonic analysis. Here we start this investigation by establishing sharp boundedness results for a fairly wide class of Fourier integral operators, somehow modeled on the parametrices of hyperbolic partial differential equations. One notable feature of our investigation is that we also prove the results for Fourier integral operators whose phase functions and amplitudes are only bounded and measurable in the spatial variables and exhibit suitable symbol type behavior in the frequency variables.\\
As before, we begin by discussing the weighted estimates for the low frequency portion of the Fourier integral operators which can be handled by Lemma \ref{main low frequency estim}. As a matter of fact, the weighted $L^p$ boundedness of low frequency parts of Fourier integral operators is merely an analytic issue involving the right decay rates of the phase function and does not involve any rank condition on the phase. The situation in the high frequency case is entirely different. Here, there is also a significant distinction between the weighted a and unweighted case, in the sense that, if one desires to prove sharp weighted estimates, then a rank condition on the phase function is absolutely crucial. This fact has been discussed in detail in Section \ref{counterexamples in weighted setting}, where one finds various examples, including one related to the wave equation, and counterexamples which will lead us to the correct condition on the phase. Then we will proceed with the local high frequency and global high frequency boundedness estimates. As a rule, in the investigation of boundedness of Fourier integral operators, the local estimates require somewhat milder conditions on the phase functions compared to the global estimates and our case study of the weighted norm inequalities here is no exception to this rule. Furthermore, we are able to formulate the local weighted boundedness results in an invariant way involving the canonical relation of the Fourier integral operator in question. Our main results in this context are contained in Theorem \ref{endpointweightedboundthm}:

\subsection*{D. Weighted $L^{p}$ boundedness of Fourier integral operators.}
Let $a(x,\xi)\in L^{\infty}S^{-\frac{n+1}{2}\varrho+n(\varrho -1)}_{\varrho}$ and $\varrho \in [0,1].$ Suppose that either
\begin{enumerate}
\item[(a)] $a(x,\xi)$ is compactly supported in the $x$ variable and the phase function $\varphi(x,\xi)\in C^{\infty} (\mathbf{R}^ n \times \mathbf{R}^{n}\setminus 0)$, is positively homogeneous of degree $1$ in $\xi$ and satisfies, $\det\partial^2_{x\xi}\varphi(x,\xi) \neq 0$ as well as $\mathrm{rank}\,\partial^2_{\xi\xi}\varphi(x,\xi) =n-1$; or
\item[(b)] $\varphi (x,\xi)-\langle x,\xi\rangle \in L^{\infty}\Phi^1,$ $\varphi $ satisfies the rough non-degeneracy condition as well as $|\mathrm{det}_{n-1} \partial^2_{\xi\xi}\varphi(x,\xi)|\geq c>0$.
\end{enumerate}
Then the operator $T_{a,\varphi}$ is bounded on $L^{p}_{w}$ for $p\in (1,\infty)$ and all $w\in A_p$. Furthermore, for $\varrho=1$ this result is sharp.\\
Here, it is worth mentioning that in the non-endpoint case, i.e. if $a(x,\xi)\in L^{\infty}S^{m}_{\varrho}$ with $m<-\frac{n+1}{2}\varrho+n(\varrho -1),$ we can prove a result that requires no non-degeneracy assumption on the phase function.
The proof of these statements are long and technical and use several steps involving careful kernel estimates, uniform pointwise estimates on certain oscillatory integrals, unweighted local and global $L^p$ boundedness, interpolation, and extrapolation.\\

In Chapter 4 we are motivated by the fact that weighted norm inequalities with $A_p$ weights can be used as an efficient tool in proving vector valued inequalities and also boundedness of commutators of operators with {\it{functions of bounded mean oscillation}} BMO. Therefore, we start the chapter by showing boundedness of certain Fourier integral operators in {\it{weighted Triebel-Lizorkin spaces}} (see \eqref{Tribliz definition}). This is based on a vector valued inequality for Fourier integral operators.\\
But more importantly we prove for the first time, in Theorems \ref{Commutator estimates for FIO} and \ref{k-th commutator estimates}, the boundedness and weighted boundedness of BMO commutators of Fourier integral operators, namely
\subsection*{E. $L^{p}$ boundedness of BMO commutators of Fourier integral operators.}
\noindent Suppose either
\begin{enumerate}
  \item [(a)] $T\in I^{m}_{\varrho, \mathrm{comp}}(\mathbf{R}^{n}\times \mathbf{R}^{n}; \mathcal{C})$ with $\frac{1}{2} \leq \varrho\leq 1$ and $m<(\varrho-n)|\frac{1}{p}-\frac{1}{2}|,$ satisfies all the conditions of Theorem \ref{weighted boundedness for true amplitudes with power weights} or;
  \item [(b)] $T_{a,\varphi}$ with $a\in S^{m}_{\varrho, \delta},$ $0\leq \varrho \leq 1$, $0\leq \delta\leq 1,$ $\lambda= \min(0, n(\varrho-\delta))$ and $\varphi(x,\xi)$ is a strongly non-degenerate phase function with $\varphi(x,\xi)-\langle x,\xi\rangle \in \Phi^1,$ where in the range $1<p\leq 2,$
$$ m<n(\varrho -1)\bigg (\frac{2}{p}-1\bigg)+\big(n-1\big)\bigg(\frac{1}{2}-\frac{1}{p}\bigg)+ \lambda\bigg(1-\frac{1}{p}\bigg);$$ and in the range $2 \leq p <\infty$
                      $$ m<n(\varrho -1)\bigg (\frac{1}{2}-\frac{1}{p}\bigg)+ (n-1)\bigg(\frac{1}{p}-\frac{1}{2}\bigg) +\frac{\lambda}{p};$$ or
\item [(c)] $T_{a,\varphi}$ with $a\in L^{\infty}S^{m}_{\varrho},$ $0\leq \varrho \leq 1$ and $\varphi$ is a strongly non-degenerate phase function with $\varphi(x,\xi) -\langle x, \xi\rangle \in \Phi^1,$ where in the range $1<p\leq 2,$
$$ m<\frac{n}{p}(\varrho -1)+\big(n-1\big)\bigg(\frac{1}{2}-\frac{1}{p}\bigg),$$
and for the range $2 \leq p <\infty$
                      $$ m<\frac{n}{2}(\varrho -1)+ (n-1)\bigg(\frac{1}{p}-\frac{1}{2}\bigg).$$
\end{enumerate}
Then for $b\in \mathrm{BMO}$, the commutators $[b, T]$ and $[b, T_{a,\varphi}]$ are bounded on $L^p$ with $1<p<\infty.$ Here we like to mention that once again, the global $L^p$ bounded in Theorem \textbf{A} above is used in the proof of the $L^p$ boundedness of the BMO commutators.
Finally, the weighted norm inequalities with weights in all $A_p$ classes have the advantage of implying weighted boundedness of repeated commutators, namely one has
\subsection*{F. Weighted $L^{p}$ boundedness of k-th BMO commutators of Fourier integral operators.}
\noindent Let $a(x,\xi)\in L^{\infty}S^{-\frac{n+1}{2}\varrho+n(\varrho -1)}_{\varrho}$ and $\varrho \in[0,1].$ Suppose that either
\begin{enumerate}
\item[(a)] $a(x,\xi)$ is compactly supported in the $x$ variable and the phase function $\varphi(x,\xi)\in C^{\infty} (\mathbf{R}^ n \times \mathbf{R}^{n}\setminus 0)$, is positively homogeneous of degree $1$ in $\xi$ and satisfies, $\det\partial^2_{x\xi}\varphi(x,\xi) \neq 0$ as well as $\mathrm{rank}\,\partial^2_{\xi\xi}\varphi(x,\xi) =n-1$; or
\item[(b)] $\varphi (x,\xi)-\langle x,\xi\rangle \in L^{\infty}\Phi^1,$ $\varphi $ satisfies the rough non-degeneracy condition as well as $|\mathrm{det}_{n-1} \partial^2_{\xi\xi}\varphi(x,\xi)|\geq c>0$.
\end{enumerate}
Then, for $b \in \mathrm{BMO}$ and $k$ a positive integer, the $k$-th commutator defined by
\begin{equation*}
T_{a, b,k} u(x):= T_{a}\big((b(x)-b(\cdot))^{k}u\big)(x)
\end{equation*}
is bounded on $L^{p}_w$ for each $w \in A_p$ and $p\in(1, \infty)$.\\
These BMO estimates have no predecessors in the literature and are useful in connection to the study of hyperbolic partial differential equations with rough coefficients.\\
In the last section of Chapter 4, we also briefly discuss global unweighted and local weighted estimates for the solutions of the Cauchy problem for $m$-th and second order hyperbolic partial differential equations.
\subsubsection*{Acknowledgements} Part of this work was undertaken while one of the authors was visiting the department of Mathematics of the Heriot-Watt University. The first author wishes to express his gratitude for the hospitality of Heriot-Watt University.

\section{Prolegomena}
In this chapter, we gather some results which will be useful in the study of boundedness of Fourier integral operators. We also illustrate some of the connections
between global boundedness results for operators with smooth phases and amplitudes and local boundedness results for operators with rough phases and amplitudes,
thus justifying a joint study of those operators.
\subsection{Definitions, notations and preliminaries} \label{prelim}
\subsubsection{Phases and amplitudes}
In our investigation of the regularity properties of Fourier integral operators, we will be concerned with both smooth and non-smooth amplitudes and phase functions. Below, we shall recall some basic definitions and fix some notations which will be used throughout the paper. Also, in the sequel we use the notation $\langle \xi\rangle$ for $(1+|\xi|^2)^{\frac{1}{2}}.$ The following definition which is due to H\"ormander \cite{H0}, yields one of the most widely used classes of smooth symbols/amplitudes.
\begin{defn}\label{defn of hormander amplitudes}
Let $m\in \mathbf{R}$, $0\leq \delta\leq 1$, $0\leq \varrho\leq 1.$ A function $a(x,\xi)\in C^{\infty}(\mathbf{R}^{n} \times\mathbf{R}^{n})$ belongs to the class $S^{m}_{\varrho,\delta}$, if for all multi-indices $\alpha, \, \beta$
   it satisfies
   \begin{align*}
      \sup_{\xi \in \R^n} \langle \xi\rangle ^{-m+\varrho\vert \alpha\vert- \delta |\beta|}
      |\partial_{\xi}^{\alpha}\partial_{x}^{\beta}a(x,\xi)|< +\infty.
   \end{align*}
\end{defn}
We shall also deal with the class $L^{\infty}S^m_{\varrho}$ of rough symbols/amplitudes introduced by Kenig and Staubach in \cite{KS}.
\begin{defn}\label{defn of amplitudes}
   Let $m\in \R$ and $0\leq\varrho \leq 1$. A function $a(x,\xi)$ which is smooth in the frequency variable $\xi$ and bounded
   measurable in the spatial variable $x$, belongs to the symbol class $L^{\infty}S^{m}_{\varrho}$, if for all multi-indices $\alpha$
   it satisfies
   \begin{align*}
      \sup_{\xi \in \R^n} \langle \xi\rangle ^{-m+\varrho\vert \alpha\vert}
      \Vert \partial_{\xi}^{\alpha}a(\cdot\,,\xi)\Vert_{L^{\infty}(\R^{n})}< +\infty.
   \end{align*}
\end{defn}
We also need to describe the type of phase functions that we will deal with. To this end, the class $\Phi^{k}$  defined below, will play a significant role in our investigations.
\begin{defn}\label{Phik phases}
A real valued function $\phi(x,\xi)$ belongs to the class $\Phi^{k}$, if $\varphi (x,\xi)\in C^{\infty}(\R^n \times\R^n \setminus 0)$, is positively homogeneous of degree $1$ in the frequency variable $\xi$, and satisfies the following condition:
For any pair of multi-indices $\alpha$ and $\beta$, satisfying $|\alpha|+|\beta|\geq k$, there exists a positive constant $C_{\alpha, \beta}$ such that
   \begin{align*}
      \sup_{(x,\,\xi) \in \R^n \times\R^n \setminus 0}  |\xi| ^{-1+\vert \alpha\vert}\vert \partial_{\xi}^{\alpha}\partial_{x}^{\beta}\phi(x,\xi)\vert
      \leq C_{\alpha , \beta}.
   \end{align*}
\end{defn}
In connection to the problem of local boundedness of Fourier integral operators, one considers phase functions $\varphi(x,\xi)$ that are positively homogeneous of degree $1$ in the frequency variable $\xi$ for which $\det [\partial^{2}_{x_{j}\xi_{k}} \varphi(x,\xi)]\neq 0.$ The latter is referred to as the {\it{non-degeneracy condition}}. However, for the purpose of proving global regularity results, we require a stronger condition than the aforementioned weak non-degeneracy condition.
\begin{defn}\label{strong non-degeneracy} $($The strong non-degeneracy condition$).$ A real valued phase $\varphi\in C^{2}(\R^n \times\R^n \setminus 0)$ satisfies the strong non-degeneracy condition, if there exists a positive constant $c$ such that
                $$ \Big|\det \frac{\partial^{2}\varphi(x,\xi)}{\partial x_j \partial \xi_k}\Big| \geq c, $$
           for all $(x,\,\xi) \in \R^n \times\R^n \setminus 0$.
\end{defn}
The phases in class $\Phi^2$ satisfying the strong non-degeneracy condition arise naturally in the study of hyperbolic partial differential equations, indeed a phase function closely related to that of the wave operator, namely $\varphi(x,\xi)= |\xi|+ \langle x, \xi\rangle$ belongs to the class $\Phi^2$ and is strongly non-degenerate.\\
We also introduce the non-smooth version of the class $\Phi^k$ which will be used throughout the paper.
\begin{defn}\label{defn of rough phases}
   A real valued function $\phi(x,\xi)$ belongs to the phase class $L^{\infty}\Phi^{k}$, if it is homogeneous of degree $1$ and smooth on $\R^n \setminus 0$
   in the frequency variable $\xi$, bounded measurable in the spatial variable $x,$ and if for all multi-indices $|\alpha|\geq k$ it satisfies
   \begin{align*}
      \sup_{\xi \in \R^n \setminus 0}  |\xi| ^{-1+\vert \alpha\vert}\Vert \partial_{\xi}^{\alpha}\phi(\cdot\,,\xi)\Vert_{L^{\infty}(\R^{n})}< +\infty.
   \end{align*}
\end{defn}
We observe that if $t(x) \in L^{\infty}$ then the phase function $\varphi(x,\xi)= t(x)|\xi|+ \langle x, \xi\rangle$ belongs to the class $L^\infty\Phi^2 ,$ hence phase functions originating from the linearisation of the maximal functions associated with averages on surfaces, can be considered as members of the $L^{\infty} \Phi^2$ class.
We will also need a rough analogue of the non-degeneracy condition, which we define below.
\begin{defn}\label{defn of rough nondegeneracy} $($The rough non-degeneracy condition$).$ A real valued phase $\varphi$ satisfies the rough non-degeneracy condition, if it is $C^1$ on $\R^n \setminus 0$
   in the frequency variable $\xi$, bounded measurable in the spatial variable $x,$ and there exists a constant $c>0$ $($depending only on the dimension$)$ such that for all $x,y\in \mathbf{R}^n$ and $\xi\in \mathbf{R}^n \setminus 0$
       \begin{equation}\label{lower bound on gradients}
        |\d_{\xi}\phi(x,\xi)-\d_{\xi}\phi(y,\xi)| \geq c |x-y|.
       \end{equation}
\end{defn}

\subsubsection{Basic notions of weighted inequalities}
Our main reference for the material in this section are \cite{G} and \cite{S}. Given $u\in L^{p}_{\mathrm{loc}}$, the $L^p$ maximal function $M_p(u)$ is defined by
\begin{equation}
M_p(u)(x) = \sup_{B\ni x} \left(\q \int_{B} \vert u(y)\vert^{p} \, \dd y\right)^{\frac{1}{p}}
\end{equation}
where the supremum is taken over balls $B$ in $\R^{n}$ containing $x$. Clearly then, the Hardy-Littlewood maximal
function is given by
\[
M(u) := M_{1}(u).
\]
An immediate consequence of H\"older's inequality is that $M(u)(x)\leq M_{p}(u)(x)$ for $p\geq 1$.
We shall use the notation
\[
u_{B}:= \q \int_{B} \vert u(y)\vert \, \dd y
\]
for the average of the function $u$ over $B$.
One can then define the class of Muckenhoupt $A_p$ weights as follows.
\begin{defn} \label{weights}
Let $w\in L^{1}_{\mathrm{loc}}$ be a positive function. One says that $w\in A_1$ if there exists a constant $C>0$ such that
\begin{equation}
 M w (x)\leq C w(x), \quad \text{ for almost all } \quad x \in \R^{n}.
\end{equation}
One says that $w\in A_p$ for $p\in(1,\infty)$ if
\begin{equation}
 \sup_{B\, \textrm{balls in}\,\, \R^{n}}\,w_{B}\big(w^{-\frac{1}{p-1}}\big)_{B}^{p-1}<\infty.
\end{equation}
The $A_p$ constants of a weight $w\in A_p$ are defined by
\begin{equation}
 [w]_{A_1}:=   \sup_{B\, \textrm{balls in}\,\, \R^{n}}\,w_{B}\Vert w^{-1}\Vert_{L^{\infty}(B)},
 \end{equation}
 and
\begin{equation}\label{ap constant}
 [w]_{A_p}:=  \sup_{B\, \textrm{balls in}\,\, \R^{n}}\,w_{B}\big(w^{-\frac{1}{p-1}}\big)_{B}^{p-1}.
\end{equation}
\end{defn}
\begin{ex}\label{examples of weights}
The function $|x|^\alpha$ is in $A_1$ if and only if $-n<\alpha\leq 0$ and is in $A_p$ with $1<p<\infty$ iff $-n<\alpha<n(p-1)$.
Also $u(x)= \log{\frac{1}{|x|}} $ when $|x|<\frac{1}{e}$ and $u(x)=1$ otherwise, is an $A_1$ weight.
\end{ex}
\subsubsection{Additional conventions}
As is common practice, we will denote constants which can be determined by known parameters in a given situation, but whose
value is not crucial to the problem at hand, by $C$. Such parameters in this paper would be, for example, $m$, $\varrho$, $p$, $n$, $[w]_{A_p}$, and the constants $C_\alpha$ in Definition \ref{defn of amplitudes}. The value of $C$ may differ from line to line, but in each instance could be estimated if necessary. We sometimes write $a\lesssim b$ as shorthand for $a\leq Cb$.
Our goal is to prove estimates of the form
   $$ \Vert Tu\Vert_{L^p} \leq C \Vert u\Vert_{L^p}, \quad u \in \mathscr{S}(\R^n) $$
when $a \in L^{\infty}S^m_{\varrho}$, $\phi \in L^{\infty}\Phi^{k}$ and $m < -\sigma \leq 0$ or equivalently
   $$ \Vert Tu\Vert_{L^p} \leq C \Vert u\Vert_{H^{s,p}}, \quad u \in \mathscr{S}(\R^n) $$
when $a \in L^{\infty}S^0_{\varrho}$ and $s > \sigma$ and $H^{s,p}:=\{u\in \mathscr{S}';\, (I-\Delta)^{\frac{s}{2}}u\in L^{p}\}$. We will use indifferently one or the other equivalent formulation and we will refer to $\sigma$ as the loss
of derivatives in the $L^p$ boundedness of $T$.
\subsection{Tools in proving $L^p$ boundedness}
\subsubsection{Semi-classical reduction and decomposition of the operators}\label{Semiclasical reduction subsec}
It is convenient to work with semi-classical estimates: let $A$ be the annulus
   $$ A=\big\{ \xi \in \R^n; \tfrac{1}{2} \leq |\xi| \leq 2 \big\} $$
and $\chi \in \D(A)$ be a cutoff function, we will prove estimates on the following semi-classical
Fourier integral operator
   $$ T_hu = (2\pi h)^{-n} \int_{\R^n} e^{\frac{i}{h} \phi(x,\xi)} \chi(\xi) a(x,\xi/h)
      \widehat{u}(\xi/h) \, \dd\xi $$
with $h \in (0,1]$. We will also need to investigate the low frequency component of the operator
   $$ T_0u = (2\pi)^{-n} \int_{\R^n} e^{i \phi(x,\xi)} \chi_{0}(\xi) a(x,\xi) \widehat{u}(\xi) \, \dd\xi $$
where $\chi_{0} \in \D(B(0,2))$. The following lemma shows how semi-classical estimates translate into classical ones.
We choose to state the result in the realm of weighted $L^p$ spaces with weights in the Muckenhoupt's $A_p$ class. This extent of generality will be needed when we deal with the weighted boundedness of Fourier integral operators.
\begin{lem}
\label{Lp:semiclassical}
      Let $a \in L^{\infty}S^m_{\varrho}$ and $\phi \in L^{\infty}\Phi^{k}$, suppose that
      for all $h \in (0,1]$ and $w\in A_p$, there exist constants $C_{1},C_{2}>0$ (only depending on the $A_p$ constants of $w$) such that the following estimates hold
         $$ \Vert T_0 u\Vert_{L_{w}^p} \leq C_{0}  \Vert u\Vert_{L_{w}^p}, \quad \Vert T_hu\Vert_{L_{w}^p} \leq C_{1} h^{-m-s} \Vert u\Vert_{L_{w}^p}, \quad u \in \S(\R^n). $$
      This  implies the bound
         $$ \Vert Tu\Vert_{L_{w}^p} \leq C_{2} \Vert u\Vert_{L_{w}^{p}}, \quad u \in \S(\R^n) $$
      provided $m<-s$.
\end{lem}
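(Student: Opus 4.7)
The plan is to perform a dyadic frequency decomposition of $T$ and recognize each dyadic piece as the semi-classical operator $T_h$ at scale $h = 2^{-j}$, after which the assumed semi-classical bound can be summed geometrically. First, I would choose the cutoffs $\chi_0 \in \D(B(0,2))$ and $\chi \in \D(A)$ so that they form a partition of unity, $\chi_{0}(\xi) + \sum_{j \geq 1}\chi(2^{-j}\xi) = 1$ on $\R^n$; this is harmless since the statement of the lemma only fixes their supports. This yields the splitting
$$ Tu = T_{0}u + \sum_{j \geq 1} T_{j}u, \qquad T_{j}u(x) := (2\pi)^{-n}\int_{\R^n} e^{i\phi(x,\xi)}\,a(x,\xi)\,\chi(2^{-j}\xi)\,\widehat{u}(\xi)\,\dd\xi. $$

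The second step is to identify $T_{j}$ with the semi-classical operator $T_{h}$ from the hypothesis at $h = 2^{-j}$. Starting from the definition of $T_{h}$, the change of variable $\eta = \xi/h$ absorbs the prefactor $(2\pi h)^{-n}$ into $(2\pi)^{-n}$ via the Jacobian $h^{n}$, and the homogeneity of degree $1$ of $\phi$ in $\xi$ (so that $\phi(x,h\eta)/h = \phi(x,\eta)$) produces
$$ T_{h}u(x) = (2\pi)^{-n}\int_{\R^n} e^{i\phi(x,\eta)}\,a(x,\eta)\,\chi(h\eta)\,\widehat{u}(\eta)\,\dd\eta. $$
Hence $T_{j} = T_{2^{-j}}$ as operators on $\S(\R^n)$, with no additional frequency localisation of $u$ needed. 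The assumed bound then reads
$$ \|T_{j}u\|_{L^{p}_{w}} \leq C_{1}\,2^{j(m+s)}\,\|u\|_{L^{p}_{w}}. $$

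For the last step, since $m < -s$ the series $\sum_{j \geq 1} 2^{j(m+s)}$ converges geometrically, and combining with the hypothesised bound on $T_{0}$ the triangle inequality yields
$$ \|Tu\|_{L^{p}_{w}} \leq \|T_{0}u\|_{L^{p}_{w}} + \sum_{j \geq 1} \|T_{j}u\|_{L^{p}_{w}} \leq \Big(C_{0} + \frac{C_{1}}{1 - 2^{m+s}}\Big)\|u\|_{L^{p}_{w}}, $$
which is the desired estimate with $C_{2} = C_{0} + C_{1}/(1 - 2^{m+s})$. There is no real obstacle: the lemma is essentially a bookkeeping reduction whose content lies entirely in the semi-classical hypothesis. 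The only point that requires some attention is to check that the change of variables in $T_{h}$ correctly absorbs the factor $(2\pi h)^{-n}$ together with the rescalings of the amplitude, the phase and $\widehat{u}$ into the standard form; the homogeneity built into $\phi \in L^{\infty}\Phi^{k}$ is exactly what makes this clean, and it is also why no summation in a Littlewood--Paley square function is needed.
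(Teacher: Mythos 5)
Your proof is correct and follows essentially the same route as the paper: the same dyadic partition of unity $\chi_0+\sum_j \chi(2^{-j}\cdot)=1$, the identification of each dyadic piece with $T_h$ at $h=2^{-j}$ via the rescaling and the degree-one homogeneity of $\phi$, and the geometric summation under $m<-s$. The only cosmetic difference is that you verify the change of variables starting from $T_h$ rather than from the dyadic piece, which is the same computation.
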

\begin{proof}
    We start by taking a dyadic partition of unity
    \begin{align*}
         \chi_{0}(\xi) +\sum_{j=1}^{+\infty}\chi_{j}(\xi)=1,
     \end{align*}
     where $\chi_{0} \in \D(B(0,2))$, $\chi_{j}(\xi) =\chi(2^{-j}\xi)$ when $j \geq 1$ with $\chi \in \D(A)$ and we decompose the operator $T$ as
     \begin{align}
     \label{Intro:Tdecomp}
         T = T  \chi_{0}(D) + \sum_{j=1}^{+\infty} T \chi_{j}(D).
     \end{align}
     The first term in \eqref{Intro:Tdecomp} is bounded from $L^p_{w}$ to itself by assumption. After a change of variables, we have
     \begin{align*}
         T \chi_{j}(D)u = (2\pi)^{-n} 2^{jn} \int_{\R^n} e^{i 2^{j}\phi(x,\xi)} \chi(\xi) a(x,2^{j}\xi) \widehat{u}(2^{j}\xi) \, \dd\xi
     \end{align*}
     therefore using the semi-classical estimate with $h=2^{-j}$ we obtain
           $$ \Vert T \chi_{j}(D)u\Vert_{L_{w}^p}\leq C_{1} 2^{(m+s)j} \Vert u\Vert_{L_{w}^p}. $$
     This finally gives
     \begin{align*}
           \Vert Tu\Vert_{L_{w}^p} \leq C_{0}  \Vert u\Vert_{L_{w}^p} + C_{1} \sum_{j=1}^{+\infty}  2^{(m+s)j} \Vert u\Vert_{L_{w}^p}
     \end{align*}
     since the series is convergent when $m<-s$. This completes the proof of our lemma.
\end{proof}
\subsubsection{Seeger-Sogge-Stein decomposition}\label{SSS decomposition}
To get useful estimates for the symbol and the phase function, one imposes a second microlocalization on the former
semi-classical operator in such a way that the annulus $A$ is partitioned into truncated cones of thickness roughly $\sqrt{h}$.
Roughly $h^{-(n-1)/2}$ such pieces are needed to cover the annulus $A$. For each $h \in (0,1]$ we fix a collection of unit
vectors $\{\xi^{\nu}\}_{1 \leq \nu \leq J}$ which satisfy:
\begin{enumerate}
     \item $\vert \xi^{\nu}-\xi^{\mu}\vert \geq h^{-\frac{1}{2}},$ if $\nu \neq \mu$, \\
     \item If $\xi \in \mathbf{S}^{n-1}$, then there exists a $\xi^{\nu}$ so that $\vert \xi -\xi^{\nu}\vert  \leq h^{\frac{1}{2}}$. \\
\end{enumerate}
Let $\Gamma^{\nu}$ denote the cone in the $\xi$ space with aperture $\sqrt{h}$ whose central direction is $\xi^{\nu}$, i.e.
\begin{equation}
     \Gamma^{\nu}=\Big\{ \xi \in \R^n;\, \Big\vert \frac{\xi}{\vert\xi\vert}-\xi^{\nu} \Big\vert \leq \sqrt{h} \Big\}.
\end{equation}
One can construct an associated partition of unity given by functions $\psi^{\nu}$, each homogeneous of degree $0$ in $\xi$
and supported in $\Gamma^{\nu}$ with
\begin{equation}
     \sum_{\nu=1}^J \psi^{\nu}(\xi)=1,\quad \text{ for all}\, \xi \neq 0
\end{equation}
and
\begin{align}
\label{Linfty:PsiBounds}
    \sup_{\xi \in \R^n}|\d^{\alpha} \psi^{\nu}(\xi)| \leq C_{\alpha} h^{-\frac{|\alpha|}{2}}.
\end{align}
We decompose the operator $T_{h}$ as
\begin{align}
     T_{h} =  \sum_{\nu=1}^J T_{h}\psi^{\nu}(D) = \sum_{\nu=1}^J T_{h}^{\nu}
\end{align}
where the kernel of the operator $T_h^{\nu}$ is given by
\begin{align}\label{kernel of Thnu}
     T_h^{\nu}(x,y)&= (2\pi h)^{-n} \int_{\R^n} e^{\frac{i}{h} \phi(x,\xi)-\frac{i}{h}\langle y,\xi \rangle} \chi(\xi)\psi^{\nu}(\xi) a(x,\xi/h) \, \dd\xi \\ \nonumber
     &= (2\pi h)^{-n} \int_{\R^n} e^{\frac{i}{h}\langle \nabla_{\xi}\phi(x,\xi^{\nu})-y,\xi \rangle} b^{\nu}(x,\xi,h) \, \dd\xi
\end{align}
with amplitude $b^{\nu}(x,\xi,h)=e^{\frac{i}{h}\langle \nabla_{\xi}\phi(x,\xi)-\nabla_{\xi}\phi(x,\xi^{\nu}),\xi \rangle} \chi(\xi)\psi^{\nu}(\xi) a(x,\xi/h)$.
We choose our coordinates on $\R^n=\R \xi^{\nu}\oplus{\xi^{\nu}}^{\perp}$ in the following way
   $$ \xi = \xi_{1} \xi^{\nu}+\xi', \quad \xi' \perp \xi_{\nu}. $$
Also it is worth noticing that the symbol $\chi(\xi) a(x,\xi/h)$ satisfies the following bound
\begin{equation}
\label{Linfty:Symbolsc}
     \sup_{\xi}\Vert\partial_{\xi}^{\alpha}\big(\chi(\xi)\, a(\cdot,\xi /h)\big)\Vert_{L^{\infty}} \leq C_{\alpha} h^{-m-|\alpha|(1-\varrho)}.
\end{equation}

\begin{lem}
\label{Linfty:bLemma}
     Let $a \in L^{\infty}S^m_{\varrho}$ and $\varphi(x,\xi)\in L^{\infty} \Phi^2.$
   Then the symbol
     \begin{align*}
          b^{\nu}(x,\xi,h)=e^{\frac{i}{h}\langle \nabla_{\xi}\phi(x,\xi)-\nabla_{\xi}\phi(x,\xi^{\nu}),\xi \rangle}\psi^{\nu}(\xi)  \chi(\xi) a(x,\xi/h)
     \end{align*}
     satisfies the estimates
     \begin{align*}
         \sup_{\xi} \big\Vert\d_{\xi}^{\alpha}b^{\nu}(\cdot,\xi,h)\big\Vert_{L^{\infty}} \leq C_{\alpha} h^{-m-|\alpha|(1-\varrho)-\frac{|\alpha'|}{2}}.
     \end{align*}
\end{lem}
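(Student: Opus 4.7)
The plan is to factor $b^\nu = e^{i\Psi/h} A$, where
\[ \Psi(x,\xi) := \langle \nabla_\xi \phi(x,\xi) - \nabla_\xi \phi(x,\xi^\nu),\, \xi\rangle, \qquad A(x,\xi) := \psi^\nu(\xi)\chi(\xi)\, a(x,\xi/h), \]
and then apply Leibniz to distribute the derivatives between the two factors. For $A$, I would combine the semi-classical estimate \eqref{Linfty:Symbolsc}, $|\partial_\xi^\beta[\chi(\xi) a(x,\xi/h)]| \lesssim h^{-m-|\beta|(1-\varrho)}$, with the sharper cutoff bound $|\partial_\xi^\beta \psi^\nu| \lesssim h^{-|\beta'|/2}$ that refines \eqref{Linfty:PsiBounds} by counting only transverse derivatives. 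This refinement comes from the $0$-homogeneity of $\psi^\nu$: under the chain rule, a radial derivative $\partial_{\xi_1}$ hits $\psi^\nu$ only through the direction variable $\xi/|\xi|$, and $\partial_{\xi_1}(\xi/|\xi|) = O(|\xi'|) = O(\sqrt h)$ on the support, which precisely cancels the $h^{-1/2}$ cost of differentiating the spherical cap. Leibniz then yields $|\partial_\xi^\beta A| \lesssim h^{-m - |\beta|(1-\varrho) - |\beta'|/2}$.

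The heart of the matter is the estimate $|\partial_\xi^\gamma e^{i\Psi/h}| \lesssim h^{-|\gamma'|/2}$. It will rest on the fact that $\Psi$ vanishes to second order, in the transverse directions, along the ray $\{\xi_1\xi^\nu : \xi_1>0\}$. Two applications of Euler's identity are at play: first, since $\nabla_\xi \phi$ is $0$-homogeneous, $\nabla_\xi\phi(x,\xi_1\xi^\nu)=\nabla_\xi\phi(x,\xi^\nu)$, whence $\Psi(x,\xi_1\xi^\nu)=0$; second, applying Euler to each $0$-homogeneous component $\partial_{\xi_k}\phi$, a short computation gives $\partial_{\xi_k}\Psi = (\nabla_\xi\phi(x,\xi)-\nabla_\xi\phi(x,\xi^\nu))_k$, which again vanishes on the ray. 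Taylor expansion in $\xi'$ around $\xi' = 0$ therefore starts with quadratic terms,
\[ \Psi(x,\xi_1\xi^\nu + \xi') = \frac{1}{2} B_{jk}(x,\xi_1)\,\xi'_j\xi'_k + O(|\xi'|^3), \]
with $B_{jk}$ and all its $\xi_1$-derivatives uniformly bounded thanks to $\phi \in L^\infty\Phi^2$. Combined with $\Psi(x,\xi_1\xi^\nu)\equiv 0$, which kills every pure radial derivative on the ray, one concludes that on the support of $b^\nu$ (where $\xi_1\sim 1$ and $|\xi'|\lesssim\sqrt h$),
\[ |\partial_\xi^\beta \Psi| \lesssim h^{\max(0,\,1-|\beta'|/2)}, \]
namely $\lesssim h$ when $|\beta'|=0$, $\lesssim h^{1/2}$ when $|\beta'|=1$, and $O(1)$ otherwise.

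The bound on the exponential then falls out of Faà di Bruno: $\partial_\xi^\gamma e^{i\Psi/h}$ is a finite sum, indexed by partitions $\gamma = \beta_1 + \cdots + \beta_k$ with each $|\beta_j|\ge 1$, of terms $e^{i\Psi/h}\prod_j (ih^{-1}\partial^{\beta_j}\Psi)$. Each such product is bounded by $h^{-k}\prod_j h^{\max(0,\,1-|\beta'_j|/2)} \le h^{-k}\prod_j h^{1-|\beta'_j|/2} = h^{-|\gamma'|/2}$, using $\max(0,\,1-x/2) \ge 1-x/2$ and $\sum_j|\beta'_j|=|\gamma'|$. A final Leibniz on $b^\nu = e^{i\Psi/h} A$, together with $|\gamma'|+|(\alpha-\gamma)'|=|\alpha'|$ and $|\alpha-\gamma|(1-\varrho)\le|\alpha|(1-\varrho)$, delivers the announced estimate $|\partial_\xi^\alpha b^\nu| \lesssim h^{-m - |\alpha|(1-\varrho) - |\alpha'|/2}$. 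The main obstacle is the phase analysis of the second paragraph: one must extract the second-order vanishing of $\Psi$ along the ray from the double use of Euler's identity, and then carefully convert this geometric information, together with the $L^\infty\Phi^2$ regularity of $\phi$, into sharp derivative-by-derivative $L^\infty$ bounds.
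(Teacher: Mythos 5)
Your argument is correct and follows essentially the same route as the paper: the refined cutoff bound $|\partial_{\xi}^{\beta}\psi^{\nu}|\lesssim h^{-|\beta'|/2}$, the phase-factor estimate $|\partial_{\xi}^{\gamma}e^{i\Psi/h}|\lesssim h^{-|\gamma'|/2}$ obtained from Euler's identity (which gives $\partial_{\xi_k}\Psi=\partial_{\xi_k}\phi(x,\xi)-\partial_{\xi_k}\phi(x,\xi^{\nu})$ and the vanishing of $\Psi$ and $\nabla_{\xi}\Psi$ along the ray through $\xi^{\nu}$), and a concluding application of Leibniz's rule together with \eqref{Linfty:Symbolsc}. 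The only difference is organizational: you encode the anisotropic derivative hierarchy for $\Psi$ via an explicit Taylor expansion and Fa\`a di Bruno, whereas the paper runs an induction on the order of differentiation, but the underlying estimates are the same.
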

\begin{proof}
    We first observe that the bounds \eqref{Linfty:PsiBounds} may be improved to
    \begin{align}
    \label{Linfty:SymbolPsi}
         \sup_{\xi \in A} \big|\d_{\xi}^{\alpha}\psi^{\nu}(\xi)\big| \leq C_{\alpha} h^{-\frac{|\alpha'|}{2}}.
     \end{align}
     This can be seen by induction on $|\alpha|$; by Euler's identity, we have
     \begin{align*}
         \d_{\xi_{1}}\d^{\alpha}_{\xi}\psi^{\nu} = - \langle |\xi|^{-1}\xi-\xi^{\nu},\nabla \d^{\alpha}_{\xi}\psi^{\nu} \rangle + |\alpha| \d^{\alpha}_{\xi}\psi^{\nu}
     \end{align*}
     from which we deduce
     \begin{align*}
         |\d_{\xi_{1}}\d^{\alpha}_{\xi}\psi^{\nu}| &\leq \Big\vert \frac{\xi}{\vert\xi\vert}-\xi^{\nu} \Big\vert \, |\nabla \d^{\alpha}_{\xi}\psi^{\nu}| +|\alpha| |\d^{\alpha}_{\xi}\psi^{\nu}|
         \\ &\lesssim h^{\frac{1}{2}} h^{-\frac{1+|\alpha'|}{2}}+h^{-\frac{|\alpha'|}{2}}.
     \end{align*}
     This ends the induction.
     Similarly we have
     \begin{align}
     \label{Linfty:SymbolPhase}
         \sup_{\xi \in A \cap \Gamma^{\nu}} \big\Vert\d_{\xi}^{\alpha}\big(e^{\frac{i}{h}\langle \nabla_{\xi}\phi(\cdot,\xi)-\nabla_{\xi}\phi(\cdot,\xi^{\nu}),\xi \rangle}\big)\big\Vert_{L^{\infty}}
         \lesssim h^{-\frac{|\alpha'|}{2}}.
     \end{align}
     To prove this bound, we proceed by induction on $|\alpha|$, we have
     \begin{multline*}
          \nabla_{\xi}\d_{\xi}^{\alpha}\Big(e^{\frac{i}{h}\langle \nabla_{\xi}\phi(x,\xi)-\nabla_{\xi}\phi(x,\xi^{\nu}),\xi \rangle}\Big)= \\
          \frac{i}{h}\d_{\xi}^{\alpha}\Big( \big(\nabla_{\xi}\phi(x,\xi)-\nabla_{\xi}\phi(x,\xi^{\nu})\big)
          e^{\frac{i}{h}\langle \nabla_{\xi}\phi(x,\xi)-\nabla_{\xi}\phi(x,\xi^{\nu}),\xi \rangle}\Big)
     \end{multline*}
     and by the Leibniz rule, it suffices to verify that for $|\beta| \leq 1$
     \begin{align*}
         &\sup_{\xi \in A \cap \Gamma^{\nu}} \big\Vert\d_{\xi}^{\beta}\big( \d_{\xi'}\phi(\cdot,\xi)-\d_{\xi'}\phi(\cdot,\xi^{\nu})\big)\big\Vert_{L^{\infty}}
         \lesssim h^{\frac{1-|\beta'|}{2}} \\
         &\sup_{\xi \in A \cap \Gamma^{\nu}} \big\Vert\d_{\xi}^{\beta}\big( \d_{\xi_{1}}\phi(\cdot,\xi)-\d_{\xi_{1}}\phi(\cdot,\xi^{\nu})\big)\big\Vert_{L^{\infty}}
         \lesssim  h^{1-\frac{|\beta'|}{2}},
     \end{align*}
     where for the case $\beta=0$ one simply uses the mean value theorem on $\nabla_{\xi}\phi(x,\xi)-\nabla_{\xi}\phi(x,\xi^{\nu})$, which due the condition $\varphi\in L^{\infty}\Phi^2$
     yields the desired estimates.
     We note that a homogeneous function which vanishes at $\xi=\xi_{\nu}$ may be written in the form
         $$ \Big(\frac{\xi}{|\xi|}-\xi_{\nu}\Big)r(x,\xi)=\mathcal{O}(\sqrt{h}) \quad \textrm{ on } A \cap \Gamma^{\nu} $$
     and this gives the first bound for $\beta_{1} \neq 1$. We also have $\d_{\xi_{1}}\d_{\xi}\phi(x,\xi_{\nu})=0$ by Euler's identity, therefore the former remark yields $\d_{\xi_{1}}\d_{\xi}\phi(x,\xi)=\mathcal{O}(\sqrt{h})$ which is the first bound for $\beta_{1}=1$ (as well as the second bound for $\beta'\neq 0$).
     It remains to prove the second bound for $\beta'=0$: by the mean value theorem and the bounds we have already obtained
         $$ |\d_{\xi_{1}}\phi(x,\xi)-\d_{\xi_{1}}\phi(x,\xi^{\nu})| \lesssim \sqrt{h} \Big|\frac{\xi}{|\xi|}-\xi_{\nu}\Big| \lesssim h.  $$
     The estimates on $b_{\nu}$ are consequences of \eqref{Linfty:Symbolsc}, \eqref{Linfty:SymbolPsi} and \eqref{Linfty:SymbolPhase} and of Leibniz's rule.
\end{proof}
\subsubsection{Phase reduction}\label{phase reduction}
In our definition of class $L^\infty \Phi^{k}$ we have only required control of those frequency derivatives of the phase function which are greater or equal to $k$. This restriction is motivated by the simple model case phase function $\varphi(x,\xi)=t(x)|\xi|+ \langle x,\xi\rangle$, $t(x)\in L^\infty$, for which the first order $\xi$-derivatives of the phase are not bounded but all the derivatives of order equal or higher than 2 are indeed bounded and so $\varphi(x,\xi)\in L^\infty \Phi^2$. However in order to deal with low frequency portions of Fourier integral operators one also needs to control the first order $\xi$ derivatives of the phase. The following phase reduction lemma will reduce the phase of the Fourier integral operators to a linear term plus a phase for which the first order frequency derivatives are bounded.
\begin{lem}
Any Fourier integral operator $T$ of the type \eqref{Intro:Fourier integral operator} with amplitude $\sigma(x,\xi)\in L^{\infty}S^{m}_{\varrho}$ and phase function $\varphi(x,\xi)\in L^{\infty}\Phi^2$, can be written as a finite sum of operators of the form
\begin{equation}\label{reduced rep of Fourier integral operator}
  \frac{1}{(2\pi)^{n}} \int a(x,\xi)\, e^{i\theta(x,\xi)+i\langle \nabla_{\xi}\varphi(x,\zeta),\xi\rangle}\, \widehat{u}(\xi) \, \dd\xi
\end{equation}
where $\zeta$ is a point on the unit sphere $\mathbf{S}^{n-1}$, $\theta(x,\xi)\in L^{\infty}\Phi^{1},$ and $a(x,\xi) \in L^{\infty} S^{m}_{\varrho}$ is localized in the $\xi$ variable around the point $\zeta$.
\end{lem}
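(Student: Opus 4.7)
The plan is to reduce to a finite sum by performing an angular (conic) partition of unity in the frequency variable $\xi$ about a finite collection of unit vectors $\{\zeta_j\}_{j=1}^N$, and on each piece to subtract from $\varphi$ the linear function $\xi \mapsto \langle \nabla_\xi \varphi(x,\zeta_j), \xi\rangle$; the remainder will inherit $L^\infty \Phi^1$ regularity. Since the reduction is always to be combined with the low/high frequency split of Subsection \ref{Semiclasical reduction subsec}, I may assume that the amplitude $\sigma$ vanishes for $|\xi|$ near zero, so that $\xi/|\xi|$ is well defined on its support.

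First, I fix a finite collection $\zeta_1,\dots,\zeta_N \in \mathbf{S}^{n-1}$ together with a smooth partition of unity $\{\psi_j\}_{j=1}^N$ on $\mathbf{S}^{n-1}$, each $\psi_j$ supported in a small geodesic neighborhood of $\zeta_j$, extended homogeneously of degree zero to $\R^n \setminus \{0\}$. I put $\sigma_j(x,\xi) := \sigma(x,\xi)\psi_j(\xi/|\xi|)$ and $\theta_j(x,\xi) := \varphi(x,\xi) - \langle \nabla_\xi \varphi(x,\zeta_j), \xi \rangle$. Then
$$ T_\sigma u(x) = \sum_{j=1}^N \frac{1}{(2\pi)^n} \int_{\R^n} \sigma_j(x,\xi)\, e^{i\theta_j(x,\xi) + i\langle \nabla_\xi \varphi(x,\zeta_j),\xi\rangle}\, \widehat{u}(\xi)\, \dd\xi, $$
and each $\sigma_j$ still belongs to $L^\infty S^m_\varrho$ (since $\psi_j(\xi/|\xi|)$ is smooth on $\R^n \setminus \{0\}$ and homogeneous of degree zero, with derivatives satisfying $|\partial_\xi^\alpha \psi_j(\xi/|\xi|)| \lesssim |\xi|^{-|\alpha|}$, and on the support $|\xi|$ is bounded from below) while being conic-supported in a small neighborhood of $\zeta_j$.

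It remains to verify $\theta_j \in L^\infty \Phi^1$. Homogeneity of degree one in $\xi$ is inherited from both summands. For any multi-index $\alpha$ with $|\alpha| \geq 2$ the linear term is annihilated, so $\partial_\xi^\alpha \theta_j = \partial_\xi^\alpha \varphi$ and the required estimate is precisely the $L^\infty \Phi^2$ bound on $\varphi$. The only mild obstacle is the first-order bound $|\alpha|=1$, which is not directly furnished by the hypothesis: writing
$$ \partial_{\xi_k} \theta_j(x,\xi) = \partial_{\xi_k}\varphi(x,\xi/|\xi|) - \partial_{\xi_k}\varphi(x,\zeta_j) $$
using that $\partial_{\xi_k}\varphi(x,\cdot)$ is homogeneous of degree zero, and applying the mean value theorem along a geodesic on $\mathbf{S}^{n-1}$ from $\zeta_j$ to $\xi/|\xi|$, the relevant gradient is $\partial^2_\eta \partial_{\xi_k}\varphi(x,\eta)$ evaluated at points $\eta$ with $|\eta|=1$, which the $L^\infty \Phi^2$ hypothesis bounds by a uniform constant. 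Since the geodesic has length at most $\pi$, we conclude $\|\partial_{\xi_k} \theta_j(\cdot,\xi)\|_{L^\infty} \leq C$ uniformly in $\xi \neq 0$, which is precisely the $|\alpha|=1$ defining bound for $L^\infty \Phi^1$.
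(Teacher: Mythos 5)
Your proof is correct and takes essentially the same route as the paper: a homogeneous degree-zero conic partition of unity about finitely many directions $\zeta_j\in\mathbf{S}^{n-1}$, subtraction of the linear term $\langle \nabla_{\xi}\varphi(x,\zeta_j),\xi\rangle$, and the observation that the remainder has $\xi$-derivatives of order $\geq 2$ equal to those of $\varphi$, while the first-order bound follows from the mean value theorem combined with the $L^{\infty}\Phi^2$ control of the second derivatives on the unit sphere. The paper organizes the same computation via Taylor's formula and Euler's identity on small convex patches $U_l$ of the sphere and then extends $\theta$, whereas you define $\theta_j$ globally and use a geodesic mean value argument; this is only a cosmetic variant of the paper's proof.
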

\begin{proof}
We start by localizing the amplitude in the $\xi$ variable by introducing an open convex covering $\{U_l\}_{l=1}^{M},$ with maximum of diameters $d$, of the unit sphere $\mathbf{S}^{n-1}$.
Let $\Xi_{l}$ be a smooth partition of unity subordinate to the covering $U_l$ and set $a_{l}(x,\xi)=\sigma(x,\xi)\, \Xi_{l}(\frac{\xi}{|\xi|}).$ We set
\begin{equation}
  T_{l}u(x):= \frac{1}{(2\pi)^n} \int \, a_l(x,\xi)\, e^{i\varphi(x,\xi)}\,\widehat{u}(\xi) \, \dd\xi,
\end{equation}
and fix a point $\zeta \in U_l.$  Then for any $\xi\in U_l$, Taylor's formula and Euler's homogeneity formula yield
\begin{align*}
  \varphi(x,\xi) &= \varphi(x,\zeta) + \langle \nabla_{\xi}\varphi (x,\zeta), \xi-\zeta\rangle +\theta (x,\xi) \\
  &= \theta(x,\xi)+\langle \nabla_{\xi}\varphi(x,\zeta),\xi\rangle
\end{align*}
Furthermore, for $\xi\in U_l$, $\partial_{\xi_k} \theta(x,\xi)= \partial_{\xi_k} \varphi(x,\frac{\xi}{|\xi|})-\partial_{\xi_k} \varphi(x,\zeta)$, so the mean value theorem and the definition of class $L^{\infty}\Phi^2$ yield $|\partial_{\xi_k} \theta(x,\xi)|\leq Cd$ and for $|\alpha|\geq 2$, $|\partial^{\alpha}_{\xi} \theta(x,\xi)|\leq C |\xi|^{1-|\alpha|}.$ Here we remark in passing that in dealing with function $\theta(x,\xi),$ we only needed to control the second and higher order $\xi-$derivatives of the phase function $\varphi(x,\xi)$ and this gives a further motivation for the definition of the class $L^\infty \Phi^2 .$ We shall now extend the function $\theta(x,\xi)$ to the whole of $\mathbf{R}^{n}\times \mathbf{R}^{n}\setminus 0$, preserving its properties and we denote this extension by $\theta(x,\xi)$ again. Hence the Fourier integral operators $T_l$ defined by
\begin{equation}
  T_{l}u(x):=\frac{1}{(2\pi)^{n}} \int a_l(x,\xi)\,e^{i\theta(x,\xi)+i\langle \nabla_{\xi}\varphi(x,\zeta),\xi\rangle}\, \widehat{u}(\xi) \, \dd\xi,
\end{equation}
are the localized pieces of the original Fourier integral operator $T$ and therefore $T=\sum_{l=1}^{M} T_l$ as claimed.
\end{proof}
\subsubsection{Necessary and sufficient conditions for the non-degeneracy of smooth phase functions} \label{phase nondegeneracy}
The smoothness of phases of Fourier integral operators makes the study of boundedness considerably easier in the sense that the conditions of a phase being strongly non-degenerate and belonging to the class $\Phi^{2}$ are enough to secure $L^p$ boundedness for a wide range of rough amplitudes. The following proposition which is useful in proving global $L^2$ boundedness of Fourier integral operators, establishes a relationship between the strongly non-degenerate phases and the lower bound estimates for the gradient of the phases in question.
\begin{prop}\label{equivalence of lowerbound and non degeneracy}
Let $\varphi (x,\xi)\in C^{\infty}(\R^n \times\R^n \setminus 0)$ be a real valued phase then following statements hold true:
\begin{enumerate}
  \item Assume that
              $$ \Big|\det \frac{\partial^{2}\varphi(x,\xi)}{\partial x_j \partial \xi_k} \Big| \geq C_1, $$
           for all $(x,\,\xi) \in \R^n \times\R^n \setminus 0,$ and that
              $$ \Big\Vert\frac{\partial^{2}\varphi(x,\xi)}{\partial x \partial \xi}\Big\Vert\leq C_2, $$
         for all $(x,\,\xi) \in \R^n \times\R^n \setminus 0$ and some constant $C_2>0,$ where $\Vert \cdot\Vert$ denotes matrix norm. Then
           \begin{equation}\label{lowerbound on gradient}
    |\nabla_{\xi} \varphi(x,\xi)- \nabla_{\xi}\varphi(y,\xi)|\geq C |x-y|,
    \end{equation}
           for $x,y\in\mathbf{R}^{n}$ and $\xi\in \mathbf{R}^{n}\setminus 0$ and some $C>0.$
  \item Assume that $|\nabla_{\xi} \varphi(x,\xi)- \nabla_{\xi}(y,\xi)|\geq C |x-y|$ for $x,y\in\mathbf{R}^{n}$ and $\xi\in \mathbf{R}^{n}\setminus 0$ and some $C>0.$
          Then there exists a constant $C_1$ such that
              $$ \Big|\det \frac{\partial^{2}\varphi(x,\xi)}{\partial x_j \partial \xi_k}\Big| \geq C_1, $$
          for all $(x,\,\xi) \in \R^n \times\R^n \setminus 0$.
\end{enumerate}
\end{prop}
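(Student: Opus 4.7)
The plan is to prove (1) by viewing $x \mapsto \nabla_\xi \varphi(x,\xi)$ as a global diffeomorphism of $\R^n$ (for each fixed $\xi$) and (2) by an infinitesimal specialisation of the hypothesis.

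For part (1), fix $\xi \in \R^n \setminus 0$ and consider the smooth map $F_\xi \colon \R^n \to \R^n$ defined by $F_\xi(x) := \nabla_\xi \varphi(x,\xi)$. Its Jacobian is $J(x) := \partial^2_{x\xi}\varphi(x,\xi)$, which by hypothesis satisfies $|\det J(x)| \geq C_1$ and $\|J(x)\| \leq C_2$ uniformly in $x$. The adjugate formula $J(x)^{-1} = \mathrm{adj}(J(x))/\det J(x)$, combined with the fact that every entry of $\mathrm{adj}(J(x))$ is a polynomial of degree $n-1$ in the entries of $J(x)$, yields a uniform bound $\|J(x)^{-1}\| \leq M$ with $M$ depending only on $C_1$, $C_2$ and $n$. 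With this uniform control on $\|F_\xi'(x)^{-1}\|$ in hand, I would invoke Hadamard's global inverse function theorem (in its Hadamard--Levy form), which asserts that a $C^1$ local diffeomorphism of $\R^n$ into itself whose differential has uniformly bounded inverse is in fact a $C^1$ diffeomorphism of $\R^n$ onto itself. Hence $F_\xi$ is a diffeomorphism, $\|(F_\xi^{-1})'\| \leq M$ everywhere, and the mean value theorem applied to $F_\xi^{-1}$ produces
\[
  |x - y| \leq M\,|F_\xi(x) - F_\xi(y)| = M\,|\nabla_\xi \varphi(x,\xi) - \nabla_\xi \varphi(y,\xi)|,
\]
i.e.\ the desired lower bound with $C = M^{-1}$ independent of $\xi$.

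For part (2), fix $x$, $\xi \neq 0$ and an arbitrary unit vector $v \in \R^n$. Applying the hypothesis to the pair $(x, x+tv)$ with $t>0$ gives $|\nabla_\xi \varphi(x+tv,\xi) - \nabla_\xi \varphi(x,\xi)| \geq C\,t$; dividing by $t$ and letting $t \to 0^+$, smoothness of $\varphi$ yields $|\partial^2_{x\xi}\varphi(x,\xi)\,v| \geq C$ for every unit vector $v$. Hence the smallest singular value of the matrix $\partial^2_{x\xi}\varphi(x,\xi)$ is bounded below by $C$, and since $|\det A|$ is the product of the singular values of an $n \times n$ matrix $A$, one concludes $|\det \partial^2_{x\xi}\varphi(x,\xi)| \geq C^n > 0$ uniformly on $\R^n \times (\R^n \setminus 0)$.

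The substantive step is part (1). The naive attempt of writing $F_\xi(x) - F_\xi(y) = \int_0^1 J(y+t(x-y))(x-y)\,\dd t$ and bounding from below cannot succeed in general, since the matrices $J$ along the segment may average to one with a much smaller smallest singular value than any individual $J(y+t(x-y))$ (imagine, e.g., a smoothly rotating orthogonal field). The remedy is to abandon straight-line integration in favour of a global inversion argument, and Hadamard's theorem is precisely what converts the pointwise invertibility and the pointwise norm bound on $J$ into the global Lipschitz invertibility of $F_\xi$. Part (2) is the infinitesimal converse and needs no such tool.
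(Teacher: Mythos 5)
Your proposal is correct and follows essentially the same route as the paper: for part (1) the paper also treats $x\mapsto\nabla_{\xi}\varphi(x,\xi)$ as a global diffeomorphism via Schwartz's (Hadamard--Levy type) global inverse function theorem, bounds the inverse Jacobian by the inequality $\Vert A^{-1}\Vert\leq c_n|\det A|^{-1}\Vert A\Vert^{n-1}$ (your adjugate bound), and concludes with the Lipschitz estimate for the inverse map; for part (2) it likewise takes $y=x+hv$, lets $h\to 0$, and converts the resulting lower bound on $|\partial^2_{x\xi}\varphi\,v|$ into a determinant bound (via a Hadamard-type matrix inequality, equivalent to your singular-value argument). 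No substantive differences.
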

\begin{proof}
\setenumerate[0]{leftmargin=0pt,itemindent=20pt}
\begin{enumerate}
\item We consider the map $\mathbf{R}^{n}\ni x \to \nabla_{\xi}\varphi(x,\xi) \in\mathbf{R}^{n}$ and using our assumptions on $\varphi$, Schwartz's global inverse function theorem \cite{Sch} yields that this map is a global $C^1$-diffeomorphism whose inverse $\lambda_{\xi}$ satisfies
\begin{equation}
  |\lambda_{\xi}(z)-\lambda_{\xi}(w)|\leq \sup_{[z,w]} \Vert\lambda'_{\xi}\Vert \times |z-w|.
\end{equation}
Furthermore, $\lambda'_{\xi}(z)= [(\lambda^{-1}_{\xi})^{'}]^{-1}\circ \lambda_{\xi}(z)=[ \partial^{2}_{x,\xi} \varphi(\lambda_{\xi}(z) , \xi)]^{-1} $. Therefore using the wellknown matrix inequality $\Vert A^{-1}\Vert \leq c_{n} |\det A|^{-1} \Vert A\Vert^{n-1}$ which is valid for all $A\in \mathrm{GL}\,(n,\mathbf{R})$, we obtain using the assumption $\Vert\frac{\partial^{2}\varphi(x,\xi)}{\partial x \partial \xi}\Vert\leq C_2 $ that
\begin{equation*}
  \Vert \lambda'_{\xi}(z)\Vert \leq c_n |\det [\partial^{2}_{x,\xi} \varphi(\lambda_{\xi}(z), \xi)]|^{-1} \Vert \partial^{2}_{x,\xi} \varphi(\lambda_{\xi}(z), \xi)\Vert^{n-1}\leq \frac{c_n}{C_1} C_2\leq \frac{1}{C}.
\end{equation*}
This yields that $ |\lambda_{\xi}(z)-\lambda_{\xi}(w)|\leq C|z-w|$ and setting $z=\nabla_{\xi} \varphi(x,\xi)$ and $w=\nabla_{\xi} \varphi(y,\xi)$, we obtain \eqref{lowerbound on gradient}.
\item Given the lower bound on the difference of the gradients as in the statement of the second part of the proposition, setting $y=x+hv$ with $v\in \mathbf{R}^n$ yields,
$$\frac{|\nabla_{\xi} \varphi(x+hv,\xi)- \nabla_{\xi}\phi(x,\xi)|}{h}\geq C|v|$$
and letting $h$ tend to zero we have for any $v\in \mathbf{R}^n$
\begin{equation}\label{invertibility of hessian}
  |\partial^2_{x,\xi}\varphi(x,\xi)\cdot v|\geq C |v|.
\end{equation}
This means that $\partial^2_{x,\xi}\varphi(x,\xi)$ is invertible and $|[\partial^2_{x,\xi}\varphi(x,\xi)]^{-1}\cdot w|\leq \frac{|w|}{C}.$ Therefore, taking the supremum we obtain $\frac{1}{\Vert [\partial^2_{x,\xi}\varphi(x,\xi)]^{-1}\Vert^n} \geq \frac{1}{C^n}.$ Now using the wellknown matrix inequality
\begin{equation}\label{variant of hadamard inequality}
  \frac{1}{\gamma_{n} \Vert A^{-1}\Vert^{n}} \leq |\det A|\leq \gamma_{n} \Vert A\Vert^n,
\end{equation}
which is a consequence of the Hadamard inequality, yields for $A=\frac{\partial^{2}\varphi(x,\xi)}{\partial x_j \partial \xi_k}$
\begin{equation}
\bigg|\det\frac{\partial^{2}\varphi(x,\xi)}{\partial x_j \partial \xi_k}\bigg|\geq \frac{1}{\gamma_{n}C^n}.
\end{equation}
\end{enumerate}
This completes the proof.
\end{proof}
\begin{rem}
Proposition \ref{equivalence of lowerbound and non degeneracy} gives a motivation for our rough non-degeneracy condition in Definition \ref{defn of rough phases}, when there is no differentiability in the spatial variables.
\end{rem}
\subsubsection{Necessity of strong non-degeneracy for global regularity}\label{necessity of strong non-degeneracy}
We shall now discuss a simple example which illustrates the necessity of the strong non-degeneracy condition for the validity o global $L^p$ boundedness of Fourier integral operators. To this end, we take a smooth diffeomorphism $\kappa:\mathbf{R}^{n} \to \mathbf{R}^{n}$ with everywhere nonzero Jacobian determinant, i.e. $\det \kappa'(x)\neq 0$ for all $x\in \mathbf{R}^{n}.$ Now, if we let $\varphi(x,\xi)= \langle \kappa(x),\xi\rangle$ and take $a(x,\xi)=1 \in S^{0}_{1,0},$ then the Fourier integral operator $T_{a,\varphi} u(x)$ is nothing but the composition operator $u\circ \kappa(x).$ Therefore
\begin{equation}\label{necessity counterexample Lp bound}
\Vert T_{a, \varphi}u\Vert_{L^p} = \Vert u\circ \kappa\Vert_{L^p}=\{\int_{\mathbf{R}^{n}} |u(y)|^{p} \, |\det\kappa'^{-1}(\kappa^{-1} (y))|\, dy\}^{\frac{1}{p}},
\end{equation}
from which we see that $T_{a,\varphi}$ is $L^p$ bounded for any $p$, if and only if there exists a constant $C>0$ such that $|\det \kappa'^{-1}(x)|\leq C$ for all $x\in\mathbf{R}^{n}.$ The latter is equivalent to $|\det \kappa'(x)|\geq \frac{1}{C} >0.$ Now since $|\det \frac{\partial^{2}\varphi(x,\xi)}{\partial x \partial \xi}|=|\det \kappa'(x)|$ it follows at once that a necessary condition for the $L^p$ boundedness of the operator $T_{a, \varphi}$ is the strong non-degeneracy of the phase function $\varphi$. We observe that if we instead had chosen $a(x,\xi)$ to be equal to a smooth compactly supported function in $x,$ then the $L^p$ boundedness of $T_{a, \varphi}$ would have followed from the mere non-degeneracy condition $|\det \frac{\partial^{2}\varphi(x,\xi)}{\partial x \partial \xi}|=|\det \kappa'(x)|\neq 0.$
\subsubsection{Non smooth changes of variables}\label{nonsmooth change of variables}
In dealing with rough Fourier integral operators we would need at some point to make changes of variables when the substitution is not differentiable. This issue is problematic in general but in our setting, thanks to the rough non-degeneracy assumption on the phase, we can show that the substitution is indeed valid and furthermore has the desired boundedness properties. The discussion below is an abstract approach to the problem of non smooth substitution and we refer the reader interested in related substitution results to
De Guzman~\cite{Guz}.
\begin{lem}
\label{Lem:Substitution}
     Let $U$ be a measurable set and let $t: U \to \R^n$ be a bounded measurable map satisfying
     \begin{align}
     \label{InjectivityHyp}
          |t(x)-t(y)| \geq c |x-y|
     \end{align}
     for almost every $x,y \in U$. Then there exists a function $J_{t} \in L^{\infty}(\R^n)$ supported in $t(U)$
     such that the substitution formula
     \begin{align}\label{rough substitution formula}
          \int_{U} u\circ t(x) \, \dd x = \int u(z) J_{t}(z) \, \dd z
     \end{align}
     holds for all $u \in L^1(\R^n)$ and the Jacobian $J_t$ satisfies the estimate $\Vert J_t\Vert_{L^{\infty}} \leq \frac{2 \sqrt{n}}{c}.$
\end{lem}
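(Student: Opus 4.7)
The approach is purely measure-theoretic: I would define $J_t$ as the Radon--Nikodym density of the push-forward measure
\[
\mu := t_{\ast}\bigl(\mathbf{1}_U\,\dd x\bigr)
\]
with respect to Lebesgue measure on $\R^n$, and then obtain the $L^\infty$ bound via Lebesgue differentiation combined with a covering estimate driven by the injectivity hypothesis. The point is that there is no classical Jacobian available, so one has to replace ``$|\det t'|^{-1}$'' by the density of a measure and prove the substitution formula and the pointwise estimate simultaneously.

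First I would discard the null set of pairs on which \eqref{InjectivityHyp} fails, so that after this modification $t$ is genuinely injective and its set-theoretic inverse $s\colon t(U)\to U$ is well defined and $1/c$-Lipschitz. The decisive geometric step is then a covering bound: for any ball $B(z,r)\subset \R^n$, the preimage $t^{-1}(B(z,r))\cap U$ has diameter at most $2r/c$ by \eqref{InjectivityHyp}, hence its Lebesgue measure is bounded by $C_n\,(r/c)^n$ for a dimensional constant $C_n$. This yields
\[
\mu\bigl(B(z,r)\bigr) \le C_n\,(r/c)^n,
\]
which simultaneously establishes $\mu \ll \lambda$ and produces a uniform pointwise bound on $J_t=\dd \mu/\dd \lambda$ via Lebesgue's differentiation theorem. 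The precise numerical constant $2\sqrt{n}/c$ in the statement is then extracted by refining this geometric step, optimising the diameter-to-volume bound (for instance through Jung's theorem applied to sets of fixed diameter).

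The substitution formula \eqref{rough substitution formula} is essentially tautological once $J_t$ is identified. For $u=\mathbf{1}_E$ with $E$ Borel, both sides equal $\mu(E)$ by the very definition of the push-forward together with Radon--Nikodym; linearity and monotone convergence then extend the identity to all $u\in L^1(\R^n)$. The inclusion $\supp J_t\subset t(U)$ is automatic, since $\mu$ is concentrated on $t(U)$.

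The main obstacle is the non-smoothness of $t$: the usual change-of-variables formula is unavailable, and $t(U)$ need not be open or even particularly regular, so one cannot set up an atlas and differentiate $s$ directly. The remedy is to bypass any appeal to differentiability of $t$ entirely, working with the push-forward measure throughout, and letting Lebesgue differentiation together with the injectivity-driven diameter control of preimages play the role that $|\det t'(x)|^{-1}$ would play in the smooth case; a secondary technical point is the measurability of $t^{-1}(B(z,r))\cap U$, which follows from the measurability of $t$.
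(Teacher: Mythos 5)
Your proposal follows essentially the same route as the paper's proof: discard the null set so that $t$ is genuinely injective, work with the push-forward measure $\mu_t(A)=|t^{-1}(A)\cap U|$ (the paper packages this via the Riesz representation theorem applied to $f\mapsto\int_U f\circ t\,\dd x$), dominate $\mu_t$ by Lebesgue measure through a covering argument driven by \eqref{InjectivityHyp}, obtain $J_t$ from Radon--Nikodym, get the $L^\infty$ bound from Lebesgue differentiation, and extend \eqref{rough substitution formula} from a dense class (indicators for you, $C^0_0$ for the paper) to all of $L^1$. The only cosmetic difference is that the paper covers with $\ell^\infty$-balls (cubes) while you use Euclidean balls.

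The one point you must correct is the constant. Your diameter-to-volume step, done carefully, gives $\mu_t\big(B(z,r)\big)\le \omega_n\,(r/c)^n$, since the preimage has diameter at most $2r/c$ and the isodiametric inequality bounds the volume of a set of that diameter by $\omega_n (r/c)^n$; dividing by $|B(z,r)|=\omega_n r^n$ and applying Lebesgue differentiation yields $\Vert J_t\Vert_{L^\infty}\le c^{-n}$, which is sharp (for the dilation $t(x)=cx$ one has $J_t\equiv c^{-n}$ on $t(U)$). Consequently no refinement of the geometric step, via Jung's theorem or otherwise, can produce the bound $2\sqrt{n}/c$ asserted in the statement: for $n\ge 2$ and small $c$ one has $c^{-n}>2\sqrt{n}/c$, so that constant is simply not attainable. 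The discrepancy is not a defect of your argument but of the stated estimate: in the paper's own proof the passage from the cube covering to \eqref{MeasureBound} drops an exponent, since a cube of half-side $r_k$ pulls back into a cube of half-side $2\sqrt{n}\,r_k/c$ and the volumes therefore compare with the factor $(2\sqrt{n}/c)^n$, not $2\sqrt{n}/c$; the lemma's conclusion should read $\Vert J_t\Vert_{L^\infty}\le (2\sqrt{n}/c)^n$, or your sharper $c^{-n}$. Since every later use of the lemma (Corollary \ref{cor:main substitution estim} and the ensuing kernel estimates) only needs $J_t\in L^{\infty}$ with a bound depending on $c$ and $n$, nothing downstream is affected; just state the bound your covering argument actually proves instead of promising to recover $2\sqrt{n}/c$.
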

\begin{rem}
\label{BijectivityRem}
    If one works with a representative $t$ in the equivalence class of functions equal almost everywhere, then possibly after replacing $U$
    with $U \setminus N$ (where $N$ is a null-set where \eqref{InjectivityHyp} does not hold), one may assume that $t$ is an injective map
    with \eqref{InjectivityHyp} holding everywhere on $U$.
\end{rem}
For the convenience of the reader, we provide a proof of this simple lemma.
\begin{proof}
     As observed in Remark \ref{BijectivityRem}, we may assume that $t$ is an injective map from $U$ to $\R^n$ for which  \eqref{InjectivityHyp}
     holds on $U$. The formula
     \begin{align*}
          \mu_{t}(f) = \int_{U} f \circ t(x) \, \dd x, \quad f \in C^0_{0}(\R^n)
     \end{align*}
     defines a non-negative Radon measure, which by the Riesz representation theorem is associated to a Borel measure. In this case, the latter measure is explicitly given by
     \begin{align*}
          \mu_{t}(A) = |t^{-1}(A) \cap U |
     \end{align*}
     on all Lebesgue measurable sets $A \subset \R^n$, where we use the notation $|\cdot|$ for the Lebesgue measure of a set. By the Lebesgue
     decomposition theorem, this measure can be split into an absolutely continuous and a singular part, i.e.
          $$ \mu_{t} = \mu_{t}^{\rm ac} + \mu_{t}^{\rm sing}.$$
Now assumption \eqref{InjectivityHyp} yields
     \begin{align*}
          t^{-1}\big(B_{\infty}(w,r)\big) \subset B_{\infty}(x,2\sqrt{n}r/c), \quad \textrm{ if } t(x) \in B_{\infty}(w,r)
     \end{align*}
     where $B_{\infty}(w,r)$ is a ball of center $w$ and radius $r$ for the supremum norm. This implies that whenever
          $$ A \cap t(U) \subset \bigcup_{k=0}^{\infty} B_{\infty}(w_{k},r_{k}) $$
     it follows that
          $$ t^{-1}(A) \cap U \subset  \bigcup_{k=0}^{\infty} B_{\infty}(x_{k},2\sqrt{n}r_{k}/c) $$
     where the centers $x_{k}$ have been chosen in $t^{-1}(B_{\infty}(w_{k},r_{k}))$ when this set is nonempty.
     Furthermore, it is wellknown that the Lebesgue measure of a set can be computed using
     \begin{align*}
          |\Omega|=\inf \bigg\{ \sum_{k=0}^{\infty} |Q_{k}|, \; \Omega \subset \bigcup_{k=0}^{\infty} Q_{k}\bigg\}
     \end{align*}
     where the infimum is taken over all possible sequences $(Q_{k})_{k \in \N}$ of cubes with faces parallel to the axes.
    Therefore
     \begin{align}
     \label{MeasureBound}
          \mu_{t}(A) \leq \frac{2 \sqrt{n}}{c}  |A \cap t(U)| \leq \frac{2 \sqrt{n}}{c} |A|
     \end{align}
     for all Lebesgue measurable sets $A$ in $\R^n$. In particular, Lebesgue null-sets are also null-sets with respect to $\mu_{t},$ which in turn implies that the measure $\mu_{t}$ is absolutely continuous with respect to the Lebesgue measure. By the Radon-Nikodym theorem, there exists
     a positive Lebesgue measurable function $J_{t} \in L^1_{\rm loc}$ such that $\mu_{t}$ has density $J_{t}$
          $$ \mu_{t}(A) = \int_{A} J_{t}(x) \, \dd x. $$
     By Lebesgue's differentiation theorem, we may compute the Jacobian function $J_{t}$ from the measure $\mu_{t}$ by a limiting
     process on balls $B$, namely
     \begin{align}\label{the jacobian of mu}
          J_{t}(x) = \lim_{B \to \{x\}} \frac{1}{|B|} \int_{B} J_{t}(y) \, \dd y = \lim_{B \to \{x\}} \frac{\mu_{t}(B)}{|B|}.
     \end{align}
     Equality \eqref{the jacobian of mu} together with the estimate \eqref{MeasureBound} yields that $J_{t}$ is bounded and
     \begin{align*}
         \Vert J_{t}\Vert_{L^{\infty}} \leq \frac{2 \sqrt{n}}{c}.
     \end{align*}
Moreover, from the definition of $\mu_{t}$ it is clear that it is supported in $t(U)$.
     Finally, \eqref{rough substitution formula} follows from
     \begin{align*}
          \int_{U} u \circ t(x) \, \dd x = \mu_{t}(u) = \int u \, \dd \mu_{t} = \int u(z) J_{t}(z) \, \dd z
     \end{align*}
     for all $u \in C^0_{0}(U)$, and this extends to functions $u \in L^1(\R^n)$.
\end{proof}
\begin{rem}
     Note that if there is a representative $t$ in the equivalence class such that \eqref{InjectivityHyp} holds everywhere on $U$ and such that
     $t(U)$ is an open subset of $\R^n$, then $t^{-1}: t(U) \to U$ is a Lipschitz bijection. Furthermore, any open subset $V \subset t(U)$ is open
     in $\R^n$ and by Brouwer's theorem on the invariance of the domain $t^{-1}(W)$ is open. This means that the map $t$ is actually continuous.
\end{rem}
\begin{cor}\label{cor:main substitution estim}
     Let $t:\R^n \to \R^n$ be a map satisfying the assumptions in $\mathrm{Lemma\, \ref{Lem:Substitution}}$ with $U=\R^n$, then $u \mapsto u \circ t$ is a bounded
     map on $L^p$ for $p\in [1,\infty]$.
\end{cor}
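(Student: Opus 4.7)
The plan is to derive the corollary almost directly from the substitution formula \eqref{rough substitution formula} together with the $L^\infty$ bound $\|J_t\|_{L^\infty} \leq 2\sqrt{n}/c$ supplied by Lemma~\ref{Lem:Substitution}. The only point requiring a small argument is that the substitution formula was only stated for $u \in L^1$, so one needs to pass to $|u|^p$ for finite $p$ and treat $p=\infty$ separately.

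First, I would handle the endpoint $p=\infty$ trivially: since $t$ is defined almost everywhere on $\mathbf{R}^n$, one has $|u\circ t(x)| \leq \|u\|_{L^\infty}$ for a.e.\ $x$, hence $\|u\circ t\|_{L^\infty} \leq \|u\|_{L^\infty}$, with no use of \eqref{InjectivityHyp}.

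Next, for $1 \leq p < \infty$, given $u \in L^p(\mathbf{R}^n)$ the function $|u|^p$ lies in $L^1(\mathbf{R}^n)$, so applying \eqref{rough substitution formula} with $U = \mathbf{R}^n$ to $|u|^p$ in place of $u$ gives
\begin{equation*}
   \int_{\mathbf{R}^n} |u\circ t(x)|^p \, \dd x \;=\; \int_{\mathbf{R}^n} |u(z)|^p \, J_t(z) \, \dd z \;\leq\; \|J_t\|_{L^\infty} \int_{\mathbf{R}^n} |u(z)|^p \, \dd z.
\end{equation*}
Taking $p$-th roots and using the bound $\|J_t\|_{L^\infty} \leq 2\sqrt{n}/c$ from Lemma~\ref{Lem:Substitution} yields
\begin{equation*}
   \|u\circ t\|_{L^p} \;\leq\; \Big(\tfrac{2\sqrt{n}}{c}\Big)^{1/p} \|u\|_{L^p},
\end{equation*}
which is the desired bound.

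There is no serious obstacle here; the content was entirely in Lemma~\ref{Lem:Substitution}. The only mild subtlety is verifying that the change-of-variables identity, as stated for $u \in L^1$, applies to $|u|^p$; since $|u|^p \in L^1$ whenever $u \in L^p$ with $p < \infty$, this is immediate. One could alternatively obtain the intermediate cases $1<p<\infty$ by Riesz--Thorin interpolation between the $L^1$ and $L^\infty$ estimates, but the direct argument above is shorter and produces the same explicit constant.
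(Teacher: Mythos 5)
Your proof is correct and follows essentially the same route as the paper: apply the substitution formula of Lemma \ref{Lem:Substitution} to $|u|^p\in L^1$ and use $\Vert J_t\Vert_{L^\infty}\leq 2\sqrt{n}/c$, with the $L^\infty$ case handled separately. One tiny caveat: even for $p=\infty$ one implicitly uses the conclusion of the lemma (the measure $\mu_t$ being absolutely continuous with bounded density), since $u$ is only defined up to null sets and one needs $t^{-1}$ of a Lebesgue null set to be null for $\Vert u\circ t\Vert_{L^\infty}\leq\Vert u\Vert_{L^\infty}$ to hold independently of the representative.
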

\begin{proof}
     This easily follows from Lemma \ref{Lem:Substitution}:
     \begin{align*}
          \int |u\circ t(x)|^p \, \dd x = \int |u(z)|^p J_{t}(z) \, \dd z \leq \Vert J_{t}\Vert_{L^{\infty}} \Vert u\Vert_{L^p}
     \end{align*}
     when $p\in [1,\infty)$. The $L^{\infty}$ estimate is similar.
\end{proof}
\subsubsection{$L^p$ boundedness of the low frequency portion of rough Fourier integral operators}\label{small frequency Lp boundedness}
Here we will prove the $L^p$ boundedness for $p\in [1,\infty]$ of Fourier integral operators whose amplitude contains a smooth compactly supported function factor, the support of which lies in a neighbourhood of the origin. There are a couple of difficulties to overcome here, the first being the singularity of the phase function in the frequency variable at the origin. The second problem is the one caused by the lack of smoothness in the spatial variables. In order to handle these problems we need the following lemma
\begin{lem}\label{main low frequency estim}
     Let $b(x,\xi)$ be a bounded function which is $C^{n+1}(\R^n_{\xi} \setminus 0)$ and compactly supported in the frequency variable $\xi$ and $L^{\infty}(\R^n_{x})$
     in the space variable $x$ satisfying
     \begin{align*}
          \sup_{\xi \in \R^n \setminus 0} |\xi|^{-1+|\alpha|} \Vert\d^{\alpha}_{\xi}b(\cdot\,,\xi)\Vert_{L^{\infty}} < +\infty, \quad |\alpha| \leq n+1.
     \end{align*}
     Then for all $0 \leq \mu<1$ we have
     \begin{align}
     \label{LowFreq:KernelEst1}
          \sup_{x,y \in \R^{2n}} \langle y \rangle^{n+\mu} \Big| \int e^{-i \langle y,\xi \rangle} b(x,\xi) \, \dd \xi \Big| < +\infty.
     \end{align}
\end{lem}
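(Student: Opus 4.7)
The plan is to bound $I(x,y) := \int e^{-i\langle y,\xi\rangle} b(x,\xi)\, \dd\xi$ uniformly in $x$ by a constant multiple of $\langle y\rangle^{-(n+\mu)}$. For $|y| \leq 1$ the conclusion is immediate: the $|\alpha|=0$ case of the hypothesis gives $\|b(\cdot,\xi)\|_{L^\infty} \leq C|\xi|$, and since $b$ is compactly supported in $\xi$, one has $|I(x,y)| \lesssim \int_{\supp_\xi b} |\xi|\, \dd\xi < +\infty$, uniformly in $x$. So the substance is in the regime $|y| \geq 1$.

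For $|y| \geq 1$, I would perform a dyadic decomposition in $\xi$. Fix $\psi \in \D(\R^n)$ supported in $\{\tfrac{1}{2}\leq|\xi|\leq 2\}$ with $\sum_{j\in \mathbf{Z}} \psi(2^{-j}\xi)=1$ for $\xi \neq 0$, and set $b_j(x,\xi)=b(x,\xi)\psi(2^{-j}\xi)$. Because $b$ has compact $\xi$-support, $b_j \equiv 0$ for $j > j_{\max}$, but the small-scale terms $j \to -\infty$ must all be summed. Changing variables $\eta = 2^{-j}\xi$ yields
\begin{align*}
\int e^{-i\langle y,\xi\rangle} b_j(x,\xi) \, \dd\xi = 2^{jn} \int e^{-i 2^j \langle y,\eta\rangle} c_j(x,\eta)\, \dd\eta,
\end{align*}
where $c_j(x,\eta) = b(x,2^j\eta)\psi(\eta)$ is supported in a fixed annulus. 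A Leibniz expansion combined with the hypothesis $|\partial^\alpha_\xi b(x,\xi)| \lesssim |\xi|^{1-|\alpha|}$ evaluated at $|2^j\eta| \sim 2^j$ gives the scale-invariant bound $\|\partial^\alpha_\eta c_j\|_{L^\infty} \lesssim 2^j$ uniformly in $x$ and $j$, for every $|\alpha|\leq n+1$.

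Now apply standard integration by parts in $\eta$ on the inner integral: for any integer $N$ with $0 \leq N \leq n+1$,
\begin{align*}
\Bigl|\int e^{-i\langle y,\xi\rangle} b_j(x,\xi)\, \dd\xi\Bigr| \leq C_N\, 2^{j(n+1)} \langle 2^j y\rangle^{-N}.
\end{align*}
Choose $j_0 \in \mathbf{Z}$ with $2^{j_0} \sim 1/|y|$. Summing the $N=0$ bound over $j \leq j_0$ gives a geometric series controlled by $C\, 2^{j_0(n+1)} \lesssim |y|^{-(n+1)}$. Summing the $N=n+1$ bound over $j_0 < j \leq j_{\max}$ produces a per-term bound $C|y|^{-(n+1)}$, and the number of such $j$ is $O(1+\log|y|)$, contributing $O(|y|^{-(n+1)}\log|y|)$. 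Combining the two ranges one gets $|I(x,y)| \lesssim |y|^{-(n+1)}(1+\log|y|)$, and since $\log|y| \lesssim_\mu |y|^{1-\mu}$ for every $\mu<1$, this is $\lesssim \langle y\rangle^{-(n+\mu)}$. The only real subtlety is the logarithmic loss at the critical scale $2^j \sim 1/|y|$, which is precisely what forces the strict inequality $\mu<1$ in the statement.
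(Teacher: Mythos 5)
Your argument is correct, and it lands on the same intermediate bound $|y|^{-n-1}(1+\log|y|)$ as the paper, but via a genuinely different decomposition. The paper does not decompose dyadically: it first integrates by parts $n$ times globally with the operator $\langle y, D_\xi\rangle$, which gives $|B(x,y)|\lesssim |y|^{-n}$ and produces the new amplitude $\beta(x,y,\xi)=|y|^{-n}\langle y,D_\xi\rangle^n b(x,\xi)$ satisfying $|\partial^{\alpha}_\xi\beta|\lesssim |\xi|^{1-n-|\alpha|}$ for $|\alpha|\leq 1$; the extra factor $|y|^{-\mu}$ is then gained by a single near/far split with a cutoff $\chi(\xi/\varepsilon)$, the inner piece being $O(\varepsilon)$ and the outer piece, after one more integration by parts, being $O\big(|y|^{-1}(C_1-C_2\log\varepsilon)\big)$, and finally optimizing $\varepsilon=|y|^{-1}$. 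Your Littlewood--Paley decomposition with rescaling plays exactly the role of that single cutoff: your threshold $2^{j_0}\sim 1/|y|$ is the paper's $\varepsilon$, your geometric sum over $j\leq j_0$ corresponds to the paper's $C\varepsilon$ term, and your $O(\log|y|)$ critical dyadic scales reproduce the paper's $-\log\varepsilon$. Both proofs consume exactly the $n+1$ available $\xi$-derivatives and incur the same logarithmic loss at the scale $|\xi|\sim 1/|y|$, which is what forces the strict inequality $\mu<1$. What your version buys is that every integration by parts happens on an annulus away from $\xi=0$, so the singularity of $\partial^{\alpha}_\xi b$ at the origin never interferes (in the paper's global step this is harmless, since $|\partial^{\alpha}_\xi b|\lesssim|\xi|^{1-|\alpha|}$ keeps the integrands and boundary terms under control for $|\alpha|\leq n$, but it is left implicit); what the paper's version buys is brevity, with one optimization in $\varepsilon$ replacing the dyadic summation.
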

\begin{proof}
     Since $b(x,\xi)$ is assumed to be bounded, the integral in \eqref{LowFreq:KernelEst1} which we denote by $B(x,y),$ is uniformly bounded and therefore it suffices to consider the case $|y| \geq 1.$
     Integrations by parts yield
     \begin{align*}
          B(x,y)=|y|^{-2n}  \int e^{-i \langle y,\xi \rangle} \langle y,D_{\xi} \rangle^n b(x,\xi) \, \dd \xi
     \end{align*}
    and therefore we have the estimate
      \begin{align*}
          |B(x,y)| \leq C |y|^{-n}  \int_{|\xi|<M}  \frac{\dd \xi}{|\xi|^{n-1}}.
     \end{align*}
     We would like to gain an extra factor of $|y|^{-\mu};$ to this end consider the function $\beta(x,y,\xi)= |y|^{-n}\langle y,D_{\xi} \rangle^n b(x,\xi)$
     which is smooth in $\xi$ on $\R^n \setminus 0$ and satisfies
     \begin{align*}
          \sup_{\xi \in \R^n \setminus 0} |\xi|^{n+|\alpha|-1} \Vert\d_{\xi}^{\alpha}\beta(\cdot,\cdot,\xi)\Vert_{L^{\infty}} < +\infty, \quad |\alpha| \leq 1.
    \end{align*}
     Let $\chi$ be a $C^{\infty}_{0}(\R^n)$ function which is one on the unit ball and zero outside the ball of radius $2$. Taking $0<\eps \leq 1$, we have
     \begin{align*}
          |y|^nB(x,y)&=  \int e^{-i \langle y,\xi \rangle} \chi(\xi/\eps) \beta(x,y,\xi) \, \dd \xi \\ &\quad + \int e^{-i \langle y,\xi \rangle} \big(1-\chi(\xi/\eps)\big) \beta(x,y,\xi) \, \dd \xi
     \end{align*}
     the first term is bounded by a constant times $\eps$, while the second term is equal to
     \begin{align*}
          i |y|^{-2} \int e^{-i \langle y,\xi \rangle} \big(\eps^{-1}\langle y,\d_{\xi}\rangle \chi(\xi/\eps) \beta - \big(1-\chi(\xi/\eps)\big) \langle y,\d_{\xi}\rangle \beta \big) \, \dd \xi
     \end{align*}
     which may be bounded by
      \begin{align*}
          |y|^{-1} (C_{1}-C_{2} \log \eps).
     \end{align*}
     We minimize the bound $C_{0}\eps+ |y|^{-1} (C_{1}-C_{2} \log \eps)$ by taking $\eps=|y|^{-1}$, and obtain
     \begin{align*}
          |B(x,y)| \leq C |y|^{-n-1} \big(1+\log |y|\big) \leq C' |y|^{-n-\mu}
     \end{align*}
     for all $0 \leq \mu<1$. This is the desired estimate.
\end{proof}
Having this in our disposal we can show that the low frequency portion of the Fourier integral operators are $L^p$ bounded, more precisely we have
\begin{thm}\label{general low frequency boundedness for rough Fourier integral operator}
  Let $a(x,\xi)\in L^{\infty}S^{m}_{\varrho}$ with $m\in \mathbf{R}$ and $\varrho \in [0,1]$ and let the phase function $\varphi(x,\xi)\in L^{\infty}\Phi^2$ satisfy the rough
  non-degeneracy condition $($according to $\mathrm{Definition\, \ref{defn of rough nondegeneracy}}$$).$ Then for all $\chi_{0}(\xi)\in C_{0}^{\infty}$ supported around the origin,
  the Fourier integral operator
     $$T_{0} u(x)= \frac{1}{(2\pi)^{n}}\int_{\mathbf{R}^{n}}e^{i\varphi(x,\xi)}a(x,\xi) \chi_{0}(\xi) \, \widehat{u}(\xi)\, \dd\xi $$
  is bounded on $L^p$ for $p\in [1,\infty]$.
\end{thm}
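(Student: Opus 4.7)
The plan is to use Lemma \ref{phase reduction} to rewrite the kernel of $T_0$ as an oscillatory integral whose $\xi$-phase is purely linear (namely $\langle \nabla_\xi \varphi(x,\zeta_l)-y, \xi\rangle$) modulo a bounded factor absorbed into the amplitude. This places the problem in exactly the setting of Lemma \ref{main low frequency estim}, and the rough non-degeneracy of $\varphi$ combined with Corollary \ref{cor:main substitution estim} will then convert the resulting pointwise kernel bound into the claimed $L^p$ boundedness.

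Concretely, I would first decompose $T_0 = \sum_{l=1}^M T_{0,l}$ using a smooth partition of unity $\Xi_l(\xi/|\xi|)$ subordinate to an open convex covering $\{U_l\}$ of $\mathbf{S}^{n-1}$ with chosen points $\zeta_l \in U_l$; setting $\theta_l(x,\xi) := \varphi(x,\xi) - \langle \nabla_\xi \varphi(x,\zeta_l),\xi\rangle \in L^{\infty}\Phi^1$ and $t_l(x) := \nabla_\xi \varphi(x,\zeta_l)$, the kernel of $T_{0,l}$ takes the form
\[
K_l(x,y) = \frac{1}{(2\pi)^n}\int_{\R^n} e^{-i\langle y - t_l(x),\xi\rangle}\, b_l(x,\xi)\, \dd\xi, \qquad b_l(x,\xi) := a(x,\xi)\, \Xi_l(\xi/|\xi|)\, \chi_0(\xi)\, e^{i\theta_l(x,\xi)}.
\]
I would then verify the hypotheses of Lemma \ref{main low frequency estim} on $b_l$: boundedness and compact $\xi$-support are immediate, while for $1 \leq |\alpha| \leq n+1$ the Leibniz rule reduces $|\partial_\xi^\alpha b_l|$ to three elementary ingredients---$|\partial_\xi^\alpha(a\chi_0)| \leq C$ on $\supp\chi_0$, the homogeneity bound $|\partial_\xi^\alpha \Xi_l(\xi/|\xi|)| \leq C|\xi|^{-|\alpha|}$, and the inductive estimate $|\partial_\xi^\alpha e^{i\theta_l}| \leq C|\xi|^{1-|\alpha|}$ that follows from the $L^\infty \Phi^1$ bounds on $\theta_l$---yielding $|\partial_\xi^\alpha b_l(x,\xi)| \leq C|\xi|^{1-|\alpha|}$. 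Lemma \ref{main low frequency estim} will then deliver the pointwise bound $|K_l(x,y)| \leq C \langle y - t_l(x)\rangle^{-n-\mu}$ for any fixed $0 \leq \mu < 1$.

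Finally, applying the rough non-degeneracy condition at the fixed direction $\xi = \zeta_l \in \R^n \setminus 0$ yields $|t_l(x) - t_l(y)| \geq c|x-y|$ for all $x,y \in \R^n$, so Corollary \ref{cor:main substitution estim} ensures that the composition map $v \mapsto v \circ t_l$ is bounded on $L^p$ for every $p \in [1,\infty]$. Starting from
\[
|T_{0,l}u(x)| \leq C\int_{\R^n} \langle z\rangle^{-n-\mu}\,|u(t_l(x)+z)|\, \dd z,
\]
Minkowski's integral inequality (with a direct supremum bound when $p=\infty$), together with translation invariance of the $L^p$ norm and the composition bound above, will control $\|x \mapsto u(t_l(x)+z)\|_{L^p}$ by $C\|u\|_{L^p}$ uniformly in $z$, whence integrability of $\langle z\rangle^{-n-\mu}$ yields $\|T_{0,l}u\|_{L^p} \leq C'\|u\|_{L^p}$; summing over the finite collection $\{l\}$ completes the proof. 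The main technical subtlety will lie in the Leibniz calculation above: the singular factor $\Xi_l(\xi/|\xi|)$ and the singular $\xi$-derivatives of $e^{i\theta_l}$ both blow up at the origin, but their homogeneities conspire so the product satisfies precisely the $|\xi|^{1-|\alpha|}$ growth demanded by Lemma \ref{main low frequency estim}---this matching is essentially the raison d'\^etre of the classes $L^\infty \Phi^k$.
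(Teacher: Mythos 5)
Your proof is essentially the paper's own: the same phase reduction into finitely many pieces with linear phase $\langle \nabla_\xi\varphi(x,\zeta_l),\xi\rangle$ plus a remainder in $L^{\infty}\Phi^1$, the same kernel bound from Lemma \ref{main low frequency estim}, and the same use of rough non-degeneracy through the substitution result; your conclusion via Minkowski's integral inequality and Corollary \ref{cor:main substitution estim} at exponent $p$ is only a cosmetic variant of the paper's route, which bounds $\sup_x\int|T_0(x,y)|\,\dd y$ and $\sup_y\int|T_0(x,y)|\,\dd x$ (the latter via the same corollary at $p=1$) and then applies Young's/Schur's inequality. The one bookkeeping caveat — which the paper glosses over in exactly the same way by asserting that the conically localized amplitude stays in $L^{\infty}S^m_{\varrho}$ — is that the Leibniz term in which all $\xi$-derivatives fall on the homogeneous cutoff $\Xi_l(\xi/|\xi|)$ is only $\mathcal{O}(|\xi|^{-|\alpha|})$, not $\mathcal{O}(|\xi|^{1-|\alpha|})$, so the three ingredients you list do not literally yield the hypothesis of Lemma \ref{main low frequency estim} near $\xi=0$; this is an imprecision inherited from the paper's phase-reduction step rather than a new gap in your argument.
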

\begin{proof}
In proving the $L^p$ boundedness, according to the reduction of the phase procedure in Lemma \ref{reduced rep of Fourier integral operator}, there is no loss of
generality to assume that our Fourier integral operator is of the form
     $$T_{0} u(x)=  \frac{1}{(2\pi)^{n}} \int a(x,\xi)\,\chi_{0}(\xi) e^{i\theta(x,\xi)+i\langle \nabla_{\xi}\varphi(x,\zeta),\xi\rangle}\, \widehat{u}(\xi) \, \dd\xi,$$
for some $\zeta\in \mathbf{S}^{n-1}$, $a\in L^{\infty} S^{m}_{\varrho}$ and $\theta \in L^{\infty} \Phi^1.$ In the proof of the $L^p$ boundedness of $T_0$ we only
need to analyze the kernel of the operator
    $$\int a(x,\xi) \chi_{0}(\xi) e^{i\theta(x,\xi)+i\langle \nabla_{\xi}\varphi(x,\zeta),\xi\rangle} \widehat{u}(\xi) \, \dd\xi,$$
which is given by
    $$T_0 (x,y):= \int e^{i\langle \nabla_{\xi}\varphi (x,\zeta)-y,\xi\rangle} \,e^{i\theta (x,\xi)}\, a(x,\xi) \chi_{0}(\xi)\, \dd\xi.$$
Now the estimates on the $\xi$ derivatives of $\theta(x,\xi)$ above, yield $$\sup_{|\xi|\neq 0} |\xi|^{-1+|\alpha|} |\partial_{\xi}^{\alpha} \theta(x, \xi)|<\infty,$$ for $|\alpha|\geq 1$ uniformly in $x$, and therefore setting $b(x,\xi):=a(x,\xi) \chi_{0}(\xi) e^{i\theta(x,\xi)}$ we have that $b(x,\xi)$ is bounded and $\sup_{|\xi|\neq 0} |\xi|^{-1+|\alpha|} |\partial_{\xi}^{\alpha} b(x, \xi)|<\infty,$ for $|\alpha|\geq 1$ uniformly in $x$ and using Lemma \ref{main low frequency estim}, we have for all $\mu\in [0,1)$
\begin{equation*}
  |T_{0} (x,y)|\leq C\langle \nabla_{\xi}\varphi (x,\zeta)-y\rangle ^{-n-\mu}.
\end{equation*}
From this it follows that $$\sup_{x} \int  |T_{0} (x,y)|\, dy<\infty ,$$ and using our rough non-degeneracy assumption and Corollary \ref{cor:main substitution estim} in the case $p=1,$ we also have
$$\int  |T_{0} (x,y)|\, \dd x \lesssim \int \langle \nabla_{\xi}\varphi (x,\zeta)-y\rangle ^{-n-\mu}\, \dd x \lesssim \int \langle z\rangle ^{-n-\mu}\, \dd z<\infty,$$
uniformly in $y$.
This estimate and Young's inequality yield the $L^p$ boundedness of the operator $T_0$.
\end{proof}
\subsection{Some links between nonsmoothness and global boundedness}\label{nonsmoothness and global estimates}
In this paragraph, we illustrate some of the relations between boundedness for rough Fourier integral operators and the global boundedness
of operators with smooth amplitudes and phases. Our observation is that local estimates for non-smooth Fourier integral operators imply global estimates for certain classes of Fourier integral operators. This can be done either by compactification or by using a dyadic decomposition. To see the relation between compactification and global boundedness, consider the operator
\begin{align}
     Tu(x) = (2\pi)^{-n} \int e^{i \phi(x,\xi)} a(x,\xi) \widehat{u}(\xi) \, \dd \xi.
\end{align}
Let $\chi \in \D(B(0,2))$ be equal to one on the unit ball $B(0,1)$, and $\omega = 1-\chi$ be supported away from zero. Then
    $$ T = T_{0} + T_{1}, \quad T_{0}=\chi T, \quad T_{1}=\omega T. $$
For the global continuity of $T$, we are only interested in $T_{1}$ since the amplitude of $T_{0}$ is compactly supported in the space
variable and the boundedness of that operator follows from the local theory. Concerning $T_{1}$, we make the change of variables
\begin{align}
     z = \frac{x}{|x|^{1+\frac{1}{\theta}}}, \quad x = \frac{z}{|z|^{1+\theta}}, \quad \theta \in (0,1]
\end{align}
so that
\begin{align}
     \int |T_{1}u(x)|^p \, \dd x = \theta \int \Big|T_{1}u\Big(\frac{z}{|z|^{1+\theta}}\Big)\Big|^p |z|^{-n(1+\theta)} \, \dd z.
\end{align}
Therefore it suffices to study the $L^p$ boundedness of the Fourier integral operator
\begin{align}
     \tilde{T}_{1}u(z) = (2\pi)^{-n} \int e^{i \phi(z/|z|^{1+\theta},\zeta)} \underbrace{|z|^{-\frac{n}{p}(1+\theta)} \omega a\Big(\frac{z}{|z|^{1+\theta}},\zeta\Big)}_{=\tilde{a}(z,\zeta)}
     \widehat{u}(\zeta) \, \dd \zeta.
\end{align}
The amplitude $\tilde{a}(z,\zeta)$ is compactly supported (in the unit ball), and for a suitable choice of $\theta$ belongs to $L^{\infty}S^m$ provided%
\footnote{This decay assumption is due to the singularity at $0$ of $|z|^{-n(1+\theta)/p}$ of the Jacobian. Note that any improvement on the regularity
of $\tilde{a}, \tilde{\phi}$ should translate into decay properties at infinity of the original amplitude and phase $a,\phi$.}
\begin{align}
\label{DecayAssum}
     \langle x \rangle^{s} a(x,\xi) \in L^{\infty}S^m, \quad s>\frac{n}{p}.
\end{align}
Now suppose that $\phi$ satisfies the following (global) non-degeneracy assumption:
\begin{align}
     |\nabla_{\xi}\phi(x,\xi)-\nabla_{\xi}\phi(y,\xi)| \geq c |x-y|
\end{align}
for all $x,y \in \R^n$. Then since%
\footnote{In the case of the Kelvin transform $\theta=1$, it is easy to get a better lower bound (in fact an equality):
   $$  \Big|\frac{z}{|z|^{2}}-\frac{w}{|w|^{2}} \Big|^2 = |z|^{-2}+|w|^{-2}
    - \frac{2\langle w,z\rangle}{|z|^{2}|w|^{2}}=\frac{|z-w|^2}{|z|^2|w|^2}. $$}
\begin{align}
    \Big|\frac{z}{|z|^{1+\theta}}-\frac{w}{|w|^{1+\theta}} \Big|^2 &= |z|^{-2\theta}+|w|^{-2\theta}
    - \frac{2\langle w,z\rangle}{|z|^{1+\theta}|w|^{1+\theta}}  \\ \nonumber
    &\geq \frac{1}{\max(|w|,|z|)^{1+\theta}} \, |z-w|^2,
\end{align}
the phase $\tilde{\phi}(z,\zeta)=\phi(z/|z|^{1+\theta},\zeta)$ satisfies a similar non-degeneracy condition, namely
\begin{align}
     |\nabla_{\zeta}\tilde{\phi}(z,\zeta)-\nabla_{\zeta}\tilde{\phi}(w,\zeta)|&=\Big|\nabla_{\zeta}\phi\Big(\frac{z}{|z|^{1+\theta}},\zeta\Big)
     -\nabla_{\zeta}\phi\Big(\frac{w}{|w|^{1+\theta}},\zeta\Big)\Big| \\ \nonumber
     & \geq  \frac{c}{\max(|w|,|z|)^{\frac{1+\theta}{2}}} \, |z-w| \geq c |z-w|,
\end{align}
when $|w|,|z| \leq 1$.
In order to improve the decay assumption on the amplitude \eqref{DecayAssum}, one can consider more general changes of variables
which do not affect the angular coordinate in the polar decomposition, i.e. coordinate changes of the form
     $$ z=f(|x|)\frac{x}{|x|} \textrm{ where } f:(0,\infty) \to (0,1) \textrm{ is a diffeomorphism.}  $$
Then $x=g(|z|)z/|z|$ where $g$ is the inverse function of $f,$ and the Jacobian of such a change of variables is given by
     $$ |g'(|z|)|g^{n-1}(|z|)|z|^{1-n}. $$
We would like to choose $g$ in such a way that the singularities of its Jacobian become weaker than those in the case of $g(s)=s^{-\theta}.$
One possible choice is to take
     $$ g(s) = \log(1-s) $$
for which we have
     $$ |g'(s)|g^{n-1}(s)s^{1-n}= \frac{\log^{n-1}(1-s)}{s^{n-1}(1-s)} = \mathcal{O}\big((1-s)^{-1-\theta}\big) $$
if $s \in (0,1)$. For this choice, we need the following decay
\begin{align}
     \langle x \rangle^{s} a(x,\xi) \in L^{\infty}S^m, \quad s>\frac{1}{p}.
\end{align}
Furthermore, if one assumes that $g/s$ is decreasing (or increasing) then the phase $\tilde{\phi}$ satisfies our non-degeneracy assumption, because
\begin{align}
    \Big|\frac{z}{|z|}g(|z|)&-\frac{w}{|w|}g(|w|) \Big|^2 \\ \nonumber &= g(|z|)^{2}+g(|w|)^{2}
    - \frac{2\langle w,z\rangle}{|z||w|}g(|z|)g(|w|)  \\ \nonumber
    &\geq \Big(\frac{g}{s}\Big)\big(\min(|w|,|z|)\big) \, |z-w|^2 \geq g'(0) \, |z-w|^2 .
\end{align}
Alternatively, in order to investigate global boundedness using a dyadic decomposition, one takes a Littlewood-Paley partition of unity $1=\chi(x) + \sum_{j=1}^{\infty}\psi(2^{-j} x)$, which yields
\begin{align}
     T = \chi T + \sum_{j=1}^{\infty} T_{j}, \quad T_j:=\psi(2^{-j}\cdot)T.
\end{align}
Once again we are only interested in $T_{j}$ and following a change of variables, we want to prove
\begin{align}
     \int \big|T_{j}\big(u(2^{-j}\cdot\big)(2^jz) \big|^p \, \dd z \leq C_{p}  \int |u(z)|^p \, \dd z.
\end{align}
This leads us to the study of the operator
\begin{align}
    \tilde{T}_{j}u(z) & = T_{j}\big(u(2^{-j}\cdot\big)(2^j z) \\ \nonumber
    &= (2\pi)^{-n} \int e^{i 2^{-j}\phi(2^j z,\zeta)} \underbrace{\psi(z) a\big(2^j z,2^{-j}\zeta\big)}_{=\tilde{a}_{j}(z,\zeta)}
    \widehat{u}(\zeta) \, \dd \zeta.
\end{align}
The estimate
\begin{align}
     |\d^{\alpha}_{\zeta} \tilde{a}_{j}(z,\zeta)| &\leq \underbrace{2^{-j|\alpha|}(1+2^{j}|z|)^{m}}_{\simeq 2^{j(m-|\alpha|)}}  (1+2^{-j}|\zeta|)^{m-|\alpha|}   \\
     \nonumber &\leq C_{\alpha} \, (1+|\zeta|)^{m-|\alpha|},
\end{align}
yields that the amplitude $\tilde{a}_{j}(z,\zeta)$ belongs (uniformly with respect to $j$) to the class $L^{\infty}S^m$  provided
\begin{align}
     \langle x \rangle^{-m} a(x,\xi) \in L^{\infty}S^m .
\end{align}
The phase $\tilde{\phi}_{j}(z,\zeta)=2^{-j}\phi(2^j z,\zeta)$ satisfies the non-degeneracy assumption
\begin{align}
     |\nabla_{\zeta}\tilde{\phi}_{j}(z,\zeta)-\nabla_{\zeta}\tilde{\phi}_{j}(w,\zeta)| \geq c |z-w|.
\end{align}
Therefore, once again the problem of establishing the global $L^p$ boundedness is reduced to a local problem concerning operators with rough amplitudes.

\section{Global boundedness of Fourier integral operators}
In this chapter, partly motivated by the investigation in \cite{KS} of the $L^{p}$ boundedness of the so called pseudo-pseudodifferential operators where the symbols of the aforementioned operators are only bounded and measurable in the spatial variables $x$, we consider the global and local boundedness in Lebesgue spaces of Fourier integral operators of the form
\begin{align}\label{Fourier integral operator}
   Tu(x)  = (2\pi)^{-n} \int_{\R^n} e^{i\phi(x,\xi)} a(x,\xi) \widehat{u}(\xi) \, \dd\xi,
\end{align}
in case when the phase function $\phi(x,\xi)$ is smooth and homogeneous of degree 1 in the frequency variable $\xi,$ and the amplitude $a(x,\xi)$ is either in some H\"ormander class $S^{m}_{\varrho, \delta},$ or is a $L^{\infty}$ function in the spatial variable $x$ and belongs to some $L^\infty S^{m}_\varrho$ class. We shall also investigate the $L^p$ boundedness problem for Fourier integral operators with rough phases that are $L^{\infty}$ functions in the spatial variable. In the case of the rough phase, the standard notion of non-degeneracy of the phase function has no meaning due to lack of differentiability in the $x$ variables. However, there is a non-smooth analogue of the non-degeneracy condition which has already been introduced in Definition \ref{defn of rough nondegeneracy} which will be exploited further here.\\
We start by investigating the question of $L^1$ boundedness of Fourier integral operators with rough amplitudes but smooth phase functions satisfying the strong non-degeneracy condition. Thereafter we turn to the problem of $L^2$ boundedness of the Fourier integral operators with smooth phases, but rough or smooth amplitudes. In the case of smooth amplitudes, we show the analogue of the Calder\'on-Vaillancourt's $L^2$ boundedness of pseudodifferential operators in the realm of Fourier integral operators. Next, we consider Fourier integral operators with rough amplitudes and rough phase functions and show a global and a local $L^2$ result in that context. We also give a fairly general discussion of the symplectic aspects of the $L^2$ boundedness of Fourier integral operators.\\
After concluding our investigation of the $L^2$ boundedness, we proceed by proving a sharp $L^\infty$ boundedness theorem for Fourier integral operators with rough amplitudes and rough phases in class $L^\infty\Phi^2,$ without any non-degeneracy assumption on the phase.
Finally, we close this chapter by proving $L^p-L^p$ and $L^p-L^q$ estimates for operators with smooth phase function, and smooth or rough amplitudes.
\subsection{Global $L^1$ boundedness of rough Fourier integral operators}
As will be shown below, the global $L^1$ boundedness of Fourier integral operators is a consequence of Theorem \ref{general low frequency boundedness for rough Fourier integral operator}, the Seeger-Sogge-Stein decomposition, and elementary kernel estimates.
\begin{thm}
\label{Intro:L1Thm}
   Let $T$ be a Fourier integral operator given by \eqref{Intro:Fourier integral operator} with amplitude $a \in L^{\infty}S^m_{\varrho}$
   and phase function $\phi \in L^{\infty}\Phi^2$ satisfying the rough non-degeneracy condition. Then there exists a constant $C>0$ such that
      $$ \Vert Tu\Vert_{L^{1}} \leq C \Vert u\Vert_{L^{1}}, \quad u \in \S(\R^n) $$
   provided $m<-\frac{n-1}{2} +n(\varrho-1)$ and $0\leq \varrho\leq 1$.
\end{thm}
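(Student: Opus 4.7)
The plan is to apply the semi-classical reduction of Lemma~\ref{Lp:semiclassical} with $w\equiv 1$ and loss $s=\tfrac{n-1}{2}+n(1-\varrho)$. The low-frequency piece $T_0$ is handled directly by Theorem~\ref{general low frequency boundedness for rough Fourier integral operator}, whose hypotheses coincide exactly with those assumed here, so the entire argument reduces to establishing the semi-classical estimate $\|T_hu\|_{L^1}\lesssim h^{-m-s}\|u\|_{L^1}$ uniformly in $h\in(0,1]$.

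For the semi-classical part I would perform the Seeger-Sogge-Stein decomposition $T_h=\sum_{\nu=1}^{J}T_h^\nu$ of Subsection~\ref{SSS decomposition}, where $J\simeq h^{-(n-1)/2}$. By Schur's test for $L^1$, the bound on $T_h$ follows once we show
\begin{equation*}
\sup_{y\in\R^n}\int_{\R^n}|T_h^\nu(x,y)|\,\dd x \;\lesssim\; h^{-m-n(1-\varrho)}
\end{equation*}
uniformly in $\nu$, since summing $J$ such bounds produces exactly $h^{-m-s}$. Starting from the representation~\eqref{kernel of Thnu}, I would rescale frequencies by $\eta_1=\xi_1$ and $\eta'=\xi'/\sqrt{h}$; the extra $h^{|\alpha'|/2}$ from the chain rule absorbs the anisotropic factor in Lemma~\ref{Linfty:bLemma} and leaves an isotropic amplitude $B(x,\eta,h)$ of unit support satisfying $\|\partial_\eta^\alpha B(\cdot,\eta,h)\|_{L^\infty}\lesssim h^{-m-|\alpha|(1-\varrho)}$. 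Non-stationary phase applied to the rescaled integral, whose phase gradient in $\eta$ is $(z_1/h,z'/\sqrt{h})$ with $z:=y-\nabla_\xi\varphi(x,\xi^\nu)$ split as $z_1=\langle z,\xi^\nu\rangle$ and $z'\perp\xi^\nu$, then yields, for every integer $N\geq 0$, the pointwise bound
\begin{equation*}
|T_h^\nu(x,y)|\;\lesssim\; h^{-m-(n+1)/2}\bigl(1+h^{-\varrho}|z_1|\bigr)^{-N}\bigl(1+h^{1/2-\varrho}|z'|\bigr)^{-N}.
\end{equation*}

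To integrate this in $x$ with $y$ held fixed, I would change variables to $u=\nabla_\xi\varphi(x,\xi^\nu)$. The rough non-degeneracy of Definition~\ref{defn of rough nondegeneracy} is precisely the hypothesis required to apply Corollary~\ref{cor:main substitution estim}, which supplies a uniform Jacobian bound with no smoothness assumption on $x\mapsto\nabla_\xi\varphi(x,\xi^\nu)$. The resulting integral in $z=y-u$ separates into a one-dimensional integral in $z_1$ and an $(n-1)$-dimensional integral in $z'$, and evaluates to $h^{\varrho}\cdot h^{(n-1)(\varrho-1/2)}=h^{n\varrho-(n-1)/2}$ (for $N$ large enough). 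Multiplying by the prefactor $h^{-m-(n+1)/2}$ gives the target $h^{-m-n(1-\varrho)}$, and summing over $J\simeq h^{-(n-1)/2}$ cones yields $\|T_h\|_{L^1\to L^1}\lesssim h^{-m-(n-1)/2-n(1-\varrho)}=h^{-m-s}$, exactly in the range needed by Lemma~\ref{Lp:semiclassical}.

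The principal obstacle I anticipate is the anisotropic non-stationary phase calculation: because $\varrho<1$ each $\eta$-derivative of the amplitude costs $h^{-(1-\varrho)}$, so one must check that the two different integration-by-parts scales, along $\xi^\nu$ and transverse to it, both dominate this loss and together produce the claimed product of two decay factors; the elementary inequality $(1+a+b)^{-2N}\lesssim(1+a)^{-N}(1+b)^{-N}$ applied to $a=h^{-\varrho}|z_1|$ and $b=h^{1/2-\varrho}|z'|$ takes care of this. The other delicate point, and the one where the roughness of the phase genuinely enters, is the use of Corollary~\ref{cor:main substitution estim} to legitimise the non-smooth change of variables in the $x$-integral.
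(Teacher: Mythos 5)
Your proposal is correct and follows essentially the same route as the paper's proof: semiclassical reduction with the low-frequency part handled by Theorem~\ref{general low frequency boundedness for rough Fourier integral operator}, the Seeger--Sogge--Stein decomposition with the anisotropic kernel decay coming from Lemma~\ref{Linfty:bLemma}, the non-smooth substitution of Corollary~\ref{cor:main substitution estim} for the $x$-integral, and summation over the $\simeq h^{-(n-1)/2}$ cones before invoking Lemma~\ref{Lp:semiclassical}. The only differences are cosmetic: you rescale $\xi'\mapsto \xi'/\sqrt{h}$ and keep the $h^{-(1-\varrho)}$ losses inside the decay scales, which yields the per-cone bound $h^{-m-n(1-\varrho)}$ directly (and you only need $\sup_y\int|T_h^\nu(x,y)|\,\dd x$ for $L^1$), whereas the paper works with the operator $L=1-\d_{\xi_1}^2-h\d_{\xi'}^2$ and gets $h^{-m-M(1-\varrho)}$ for any $M>n$ via an interpolation to fractional exponents --- either version suffices because the hypothesis on $m$ is a strict inequality.
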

\begin{proof}
Using semiclassical reduction of Subsection \ref{Semiclasical reduction subsec}, we decompose $T$ into low and high frequency portions $T_0$ and $T_h$. Then we use the Seeger-Sogge-Stein decomposition of Subsection \ref{SSS decomposition} to decompose $T_h$ into the sum $\sum_{\nu=1}^{J} T_{h}^{\nu}.$ The boundedness of $T_0$ follows at once from Theorem \ref{general low frequency boundedness for rough Fourier integral operator}, so it remains to establish suitable semiclassical estimates for $T_{h}^{\nu}$.
To this end we consider the following differential operator
\begin{align*}
     L = 1-\d_{\xi_{1}}^2-h \d_{\xi'}^2
\end{align*}
for which we have according to Lemma \ref{Linfty:bLemma}
\begin{equation}
     \sup_{\xi} \Vert L^{N}b^{\nu}(\cdot,\xi,h)\Vert_{L^{\infty}} \lesssim  h^{-m-2N(1-\varrho)}.
\end{equation}
Integrations by parts yield
\begin{align*}
     |T_{h}^{\nu}(x,y)| \leq (2\pi h)^{-n} \big(1+g_{1}(y-\nabla_{\xi}\phi(x,\xi^{\nu})\big)^{-N} \int |L^N b^{\nu}(x,\xi,h) | \, \dd \xi
\end{align*}
for all integers $N$, with
\begin{equation}
g(z)=h^{-2}z_{1}^2+h^{-1}|z'|^{2}.
\end{equation}
This further gives
\begin{align*}
     |T_{h}^{\nu}(x,y)| \leq C_{N} h^{-m-\frac{n+1}{2}-2N(1-\varrho)} \big(1+g(y-\nabla_{\xi}\phi(x,\xi^{\nu})\big)^{-N}
\end{align*}
since the volume of the portion of cone $|A \cap \Gamma_{\nu}|$ is of the order of $h^{(n-1)/2}$.
By interpolation, it is easy to obtain the former bound when the integer $N$ is replaced by $M/2$ where $M$ is any given positive number ;
indeed write $M/2=N+\theta$ where $N=[\frac{M}{2}]$ and $\theta \in [0,1)$ and
\begin{align}\label{T(x,y) estim}
     |T^{\nu}_{h}(x,y)| &= |T^{\nu}_{h}(x,y)|^{\theta} |T^{\nu}_{h}(x,y)|^{1-\theta} \nonumber \\
     &\leq C^{1-\theta}_{N} C^{\theta}_{N+1}h^{-m-\frac{n+1}{2}-(1-\varrho)M}  \big(1+g(y-\nabla_{\xi}\phi(x,\xi^{\nu})\big)^{-\frac{M}{2}}.
\end{align}
This implies that for any real number $M>n$
\begin{align*}
     \sup_{x} \int |T_h^{\nu}(x,y)| \, \dd y \leq C_{M} h^{-m-M(1-\varrho)}.
\end{align*}
Furthermore the rough non-degeneracy assumption on the phase function $\varphi(x,\xi)$ and Corollary \ref{cor:main substitution estim} with $p=1$ yield
\begin{align*}
     \sup_{y} \int |T_h^{\nu}(x,y)| \, \dd x &\leq C^{1-\theta}_{N} C^{\theta}_{N+1} \int\big(1+g(\nabla_{\xi}\phi(x,\xi^{\nu})\big)^{-\frac{M}{2}}\, \dd x \\ \nonumber
     &\leq C_{M} h^{-m-M(1-\varrho)}
\end{align*}
thus using Young's inequality and summing in $\nu$
    $$ \Vert T_h u\Vert_{L^{1}} \leq  \sum_{\nu=1}^J \Vert T_h^{\nu} u\Vert_{L^{1}} \leq C_{M} h^{-m-\frac{n-1}{2}-M(1-\varrho)}\Vert u\Vert_{L^{1}} $$
since $J$ is bounded (from above and below) by a constant times $h^{-\frac{n-1}{2}}$. By Lemma \ref{Lp:semiclassical} one has
    $$ \Vert T u\Vert_{L^{1}} \lesssim \Vert u\Vert_{L^{1}} $$
provided $m<-\frac{n-1}{2}-M(1-\varrho)$ and $M>n$, i.e. if $m<-\frac{n-1}{2}+n(\varrho -1)$. This completes the proof of Theorem \ref{Intro:L1Thm}
\end{proof}
\subsection{Local and global $L^2$ boundedness of Fourier integral operators}
In this section we study the local and global $L^2$ boundedness properties of Fourier integral operators. Here we complete the global $L^2$ theory of Fourier integral operators with smooth strongly non-degenerate phase functions in class $\Phi^2$ and smooth amplitudes in the H\"ormander class $S^{m}_{\varrho, \delta}$ for all ranges of $\rho$'s and $\delta$'s. As a first step we establish global $L^2$ boundedness of Fourier integral operators with smooth phases and rough amplitudes in $L^\infty S^{m}_{\varrho}$, then we proceed by investigating the $L^2$ boundedness of Fourier integral operators with smooth phases and amplitudes and finally we consider the $L^2$ regularity of the operators with rough amplitudes in $L^{\infty}S^{m}_{\varrho}$ and rough non-degenerate phase functions in $L^{\infty} \Phi^2.$
\subsubsection{$L^2$ boundedness of Fourier integral operators with phases in $\Phi^2$}
 The global $L^2$ boundedness of Fourier integral operators which we aim to prove below, yields on one hand a global version of Eskin's and H\"ormander's local $L^2$ boundedness theorem for amplitudes in $S^{0}_{1,0}$, and on the other hand generalises the global $L^2$ result of Fujiwara's for amplitudes in $S^{0}_{0,0}$ to the case of rough amplitudes. Furthermore, as we shall see later, our result is sharp.
\begin{thm}
\label{global L2 boundedness smooth phase rough amplitude}
Let $a(x,\xi)\in L^{\infty}S^{m}_{\varrho}$ and the phase $\varphi(x,\xi) \in \Phi^2$ be strongly non degenerate. Then the Fourier integral operator
    $$T_{a,\varphi}u(x)=\frac{1}{(2\pi)^n} \int a(x,\xi)\, e^{i\varphi (x,\xi)}\,\widehat{u}(\xi)\, \dd\xi$$
is a bounded operator from $L^2$ to itself provided $m<\frac{n}{2}(\varrho-1).$ The bound on $m$ is sharp.
\end{thm}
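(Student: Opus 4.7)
The plan is to fit this theorem into the semiclassical reduction framework of Subsection~\ref{Semiclasical reduction subsec}. After the dyadic decomposition underlying Lemma~\ref{Lp:semiclassical}, the low-frequency piece $T_{0}$ is already $L^{2}$-bounded by Theorem~\ref{general low frequency boundedness for rough Fourier integral operator}, since Proposition~\ref{equivalence of lowerbound and non degeneracy} guarantees that a strongly non-degenerate phase in $\Phi^{2}$ satisfies the rough non-degeneracy condition. It therefore suffices to establish, uniformly in $h\in(0,1]$, a semiclassical bound of the form
\[
    \Vert T_{h}u\Vert_{L^{2}}\leq C\,h^{-m-s}\Vert u\Vert_{L^{2}}, \qquad s=\tfrac{n(1-\varrho)}{2},
\]
since then Lemma~\ref{Lp:semiclassical} gives the conclusion for every $m<-s=\tfrac{n}{2}(\varrho-1)$.

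The key idea is to work with $T_{h}T_{h}^{*}$ rather than $T_{h}^{*}T_{h}$: because $a$ is merely $L^{\infty}$ in the spatial variable, the kernel of $T_{h}^{*}T_{h}$ would be an oscillatory integral in the rough variable, whereas Plancherel's identity makes the kernel of $T_{h}T_{h}^{*}$ a single oscillatory integral in the variable where we have regularity, namely
\[
    K_{h}(x,x')=(2\pi h)^{-n}\int_{\R^{n}} e^{\frac{i}{h}\Psi(x,x',\xi)}\,\chi(\xi)^{2}\,a(x,\xi/h)\,\overline{a(x',\xi/h)}\,\dd\xi,
\]
with $\Psi(x,x',\xi)=\varphi(x,\xi)-\varphi(x',\xi)$. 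Here $x$ and $x'$ appear only as $L^{\infty}$ parameters, while the phase and the full amplitude are smooth in the integration variable $\xi$. Because $\varphi\in\Phi^{2}$ is strongly non-degenerate, Proposition~\ref{equivalence of lowerbound and non degeneracy} supplies the lower bound $|\nabla_{\xi}\Psi(x,x',\xi)|\geq c\,|x-x'|$, and the $\Phi^{2}$ condition ensures that $|\d_{\xi}^{\alpha}\Psi(x,x',\xi)|$ is uniformly bounded on $|\xi|\sim 1$ for every $|\alpha|\geq 1$.

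The next step is to iterate the non-stationary phase operator $L=\dfrac{h}{i|\nabla_{\xi}\Psi|^{2}}\,\nabla_{\xi}\Psi\cdot\nabla_{\xi}$. Each application of $L^{t}$ produces a factor $h/|\nabla_{\xi}\Psi|\lesssim h/|x-x'|$ from differentiating the phase factor, together with at most one derivative of the amplitude, for which the $L^{\infty}S^{m}_{\varrho}$ bound $|\d_{\xi}^{\alpha}a(x,\xi/h)|\lesssim h^{-m-|\alpha|(1-\varrho)}$ provides a loss of size $h^{-(1-\varrho)}$. After $N$ integrations by parts, combined with the trivial volume bound $|K_{h}(x,x')|\lesssim h^{-n-2m}$, one obtains
\[
    |K_{h}(x,x')|\lesssim h^{-n-2m}\bigl(1+h^{-\varrho}|x-x'|\bigr)^{-N}
\]
for every integer $N$. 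Schur's test then yields $\Vert T_{h}T_{h}^{*}\Vert_{L^{2}\to L^{2}}\lesssim h^{n(\varrho-1)-2m}$, and the identity $\Vert T_{h}\Vert_{L^{2}\to L^{2}}^{2}=\Vert T_{h}T_{h}^{*}\Vert_{L^{2}\to L^{2}}$ delivers the required semiclassical estimate with $s=\tfrac{n(1-\varrho)}{2}$.

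The main obstacle is the bookkeeping in this integration by parts argument when $\varrho<1$: the cost $h^{-(1-\varrho)}$ from each amplitude derivative must be exactly balanced against the gain $h/|x-x'|$ so that decay appears on the effective scale $h^{\varrho}$ rather than $h$, as this scale is what produces the sharp threshold and recovers the known endpoints ($m<0$ for $\varrho=1$ and $m<-n/2$ for $\varrho=0$). The sharpness of the threshold is obtained by a standard construction: specialising to $\varphi(x,\xi)=\langle x,\xi\rangle$ reduces the problem to pseudo-pseudodifferential operators of the class studied in \cite{KS}, for which counterexamples already show that the bound $m=\tfrac{n}{2}(\varrho-1)$ cannot be reached.
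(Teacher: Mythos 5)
Your argument is correct and follows essentially the same route as the paper: semiclassical reduction via Lemma \ref{Lp:semiclassical}, the low-frequency Theorem \ref{general low frequency boundedness for rough Fourier integral operator}, a $T_hT_h^*$ argument whose kernel is estimated by non-stationary phase using the gradient lower bound $|\nabla_\xi\varphi(x,\xi)-\nabla_\xi\varphi(y,\xi)|\geq c|x-y|$ supplied by Proposition \ref{equivalence of lowerbound and non degeneracy}, and then Schur/Young; your explicit integration-by-parts bookkeeping at scale $h^{\varrho}$ is just a hands-on version of the paper's appeal to H\"ormander's Theorem 7.7.1 plus the interpolation trick in $M$. The sharpness argument is also the paper's (specialize to $\varphi(x,\xi)=\langle x,\xi\rangle$ and use the known unbounded pseudodifferential example at $m=\tfrac n2(\varrho-1)$, which the paper takes from \cite{Rod} via $S^{m}_{\varrho,1}\subset L^\infty S^m_\varrho$ rather than from \cite{KS}).
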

\begin{proof}
 In light of Theorem \ref{general low frequency boundedness for rough Fourier integral operator}, we can confine ourselves to deal with the high frequency component $T_{h}$ of $T_{a,\varphi},$ hence we can assume that $\xi\neq 0$ on the support of the amplitude $a(x,\xi).$ Here we shall use a $T_{h}T_{h}^*$ argument, and therefore, the kernel of the operator $S_{h}=T_{h}T_{h}^*$ reads
\begin{align*}
     S_{h}(x,y) = \frac{1}{(2\pi h)^{n}} \int e^{\frac{i}{h}(\phi(x,\xi)-\phi(y,\xi))} \chi^2(\xi) a(x,\xi/h) \overline{a}(y,\xi/h) \, \dd\xi
\end{align*}
Now the strong non degeneracy assumption on the phase and Proposition \ref{equivalence of lowerbound and non degeneracy} yield that there is a constant $C>0$ such that $|\nabla_{\xi} \varphi(x,\xi)- \nabla_{\xi}\varphi(y,\xi)|\geq C |x-y|,$ for $x,y\in\mathbf{R}^{n}$ and $\xi\in \mathbf{R}^{n}\setminus 0.$ This enables us to use the non-stationary phase estimate in \cite{H1} Theorem 7.7.1, and the smoothness of the phase function $\varphi(x,\xi)$ in the spatial variable, yield that for all integers $N$
\begin{align*}
      |S_{h}(x,y)| \leq C_N h^{-2m-n-(1-\varrho)N} \langle h^{-1}(x-y) \rangle^{-N},
\end{align*}
for some constant $C_N >0.$
Let $M$ be a positive real number, we have $M=N+\theta$ where $N$ is the integer part of $M$ and $\theta \in [0,1)$ and therefore
\begin{align}
     \nonumber |S_{h}(x,y)| &= |S_{h}(x,y)|^{\theta} |S_{h}(x,y)|^{1-\theta} \\
     &\leq C^{1-\theta}_{N} C^{\theta}_{N+1}h^{-2m-n-(1-\varrho)M} \langle h^{-1}(x-y) \rangle^{-M}.
\end{align}
This implies
\begin{align}
     \sup_{x} \int |S_{h}(x,y)| \, \dd y \leq C_{M} h^{-2m-(1-\varrho)M}
\end{align}
for all $M>n$. By Cauchy-Schwarz and Young inequalities, we obtain
\begin{align}\label{semiclassical L2 pieces}
      \Vert T_{h}^*u\Vert_{L^2}^2 \leq \Vert S_{h}u\Vert_{L^2} \Vert u\Vert_{L^2} \leq C h^{-2m-(1-\varrho)M}  \Vert u\Vert^2_{L^2}.
\end{align}
Therefore, by Lemma \ref{Lp:semiclassical} we have the $L^2$ bound
   $$ \Vert Tu\Vert_{L^2} \lesssim \Vert u\Vert_{L^2} $$
provided $m<-(1-\varrho)M/2$ and $M>n$. This completes the proof of Theorem \ref{global L2 boundedness smooth phase rough amplitude}.
For the sharpness of this result we consider the phase function $\varphi(x,\xi)= \langle x,\xi\rangle \in \Phi^2$ which is strongly non-degenerate. It was shown in \cite{Rod} that for $m=\frac{n}{2}(\varrho-1)$
there are symbols $a(x,\xi)\in S^{m}_{\varrho, 1}$ such that the pseudodifferential operator
  $$ a(x,D)u(x)= \frac{1}{(2\pi)^{n}} \int a(x,\xi) \, e^{i\langle x, \xi\rangle}\, \widehat{u}(\xi)\, \dd \xi $$
is not $L^2$ bounded. Since $S^{m}_{\varrho, 1}\subset L^{\infty}S^{m}_{\varrho}$, it turns out that there are amplitudes in $L^{\infty}S^{m}_{\varrho}$ which yield an $L^2$
unbounded operator for a non-degenerate phase function in class $\Phi^2.$ Hence the order $m$ in the theorem is sharp.
\end{proof}
As a consequence, we obtain an alternative proof for the $L^2$ boundedness of pseudo-pseudodifferential operators introduced in \cite{KS}.
More precisely we have
\begin{cor}
\label{Intro:psipsi}
    Let $a(x,D)$ be a pseudo-pseudodifferential operator, i.e. an operator defined on the Schwartz class, given by
    \begin{equation}
         a(x,D)u=\frac{1}{(2\pi)^{n}} \int_{\R^n} e^{i\langle x, \xi\rangle} a(x,\xi) \widehat{u}(\xi) \, \dd \xi,
    \end{equation}
    with symbol $a \in L^{\infty} S^{m}_{\varrho}, $ $0\leq \varrho \leq 1$. If $m<n(\varrho -1)/2$, then $a(x,D)$ extends as an $L^2$ bounded operator.
\end{cor}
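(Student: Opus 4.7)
The plan is to recognize $a(x,D)$ as a special case of the Fourier integral operator $T_{a,\varphi}$ treated in Theorem \ref{global L2 boundedness smooth phase rough amplitude}, for the linear phase $\varphi(x,\xi) = \langle x,\xi\rangle$, and then simply invoke that theorem. The entire task is therefore the verification of the two hypotheses on $\varphi$ required by Theorem \ref{global L2 boundedness smooth phase rough amplitude}: membership in the class $\Phi^2$, and strong non-degeneracy.

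First I would check that $\varphi(x,\xi) = \langle x,\xi\rangle$ belongs to $\Phi^2$. This phase is smooth on $\R^n \times \R^n \setminus 0$ and positively homogeneous of degree $1$ in $\xi$. For any multi-indices $\alpha,\beta$ with $|\alpha|+|\beta| \geq 2$, the derivative $\partial_\xi^\alpha \partial_x^\beta \varphi(x,\xi)$ is either identically zero (whenever $|\alpha|\geq 2$, $|\beta|\geq 2$, or $|\alpha|=0$ and $|\beta|\geq 2$, or symmetrically) or, in the single case $|\alpha|=|\beta|=1$, a constant Kronecker $\delta_{jk}$. In either situation the estimate $|\xi|^{-1+|\alpha|}|\partial_\xi^\alpha \partial_x^\beta \varphi(x,\xi)| \leq C_{\alpha,\beta}$ required in Definition \ref{Phik phases} is trivially satisfied, so $\varphi \in \Phi^2$.

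Next, strong non-degeneracy according to Definition \ref{strong non-degeneracy}: the mixed Hessian is $\partial^2_{x_j \xi_k}\varphi(x,\xi) = \delta_{jk}$, the identity matrix, hence
\[
\Big|\det \frac{\partial^2 \varphi(x,\xi)}{\partial x_j \partial \xi_k}\Big| = 1
\]
uniformly in $(x,\xi) \in \R^n \times \R^n \setminus 0$.

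With both hypotheses verified and $a \in L^\infty S^m_\varrho$ with $0 \leq \varrho \leq 1$, Theorem \ref{global L2 boundedness smooth phase rough amplitude} applied to $T_{a,\varphi} = a(x,D)$ yields $L^2$ boundedness as soon as $m < \tfrac{n}{2}(\varrho - 1)$. There is no real obstacle here, only a verification; the content of the statement is entirely absorbed into the preceding theorem.
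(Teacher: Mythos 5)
Your proposal is correct and is exactly the argument the paper has in mind: the corollary is stated as an immediate consequence of Theorem \ref{global L2 boundedness smooth phase rough amplitude}, with the phase $\varphi(x,\xi)=\langle x,\xi\rangle$ visibly in $\Phi^2$ and strongly non-degenerate (mixed Hessian equal to the identity). Your verification of those two hypotheses is accurate, so nothing is missing.
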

Theorem \ref{global L2 boundedness smooth phase rough amplitude} can be used to show a simple local $L^2$ boundedness result for Fourier integral operators with smooth symbols in the
H\"ormander class $S^{m}_{\varrho, \delta}$ in those cases when the symbolic calculus of the Fourier integral operators, as defined in \cite{H3} breaks down (e.g. in case $\delta \geq \varrho$), more precisely we have
\begin{cor}
Let $a(x,\xi) \in S^{m}_{\varrho, \delta}$ with compact support in $x$ variable and let $\varphi(x,\xi) \in \Phi^2$ be strongly non-degenerate. Then for $m<\frac{n}{2}(\varrho -\delta-1)$ and $0\leq \varrho \leq 1,$ $0\leq \delta \leq 1.$ Then the corresponding Fourier integral operator is bounded on $L^2 .$
\end{cor}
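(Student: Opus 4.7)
The plan is to deduce this corollary directly from Theorem \ref{global L2 boundedness smooth phase rough amplitude}. The elementary observation is that the H\"ormander class embeds in the rough symbol class, $S^{m}_{\varrho,\delta}\subset L^{\infty}S^{m}_{\varrho}$: specializing the defining estimate of $S^{m}_{\varrho,\delta}$ to $\beta=0$ gives
$$\|\partial_{\xi}^{\alpha}a(\cdot,\xi)\|_{L^{\infty}(\mathbf{R}^n)}\leq C_{\alpha}\langle\xi\rangle^{m-\varrho|\alpha|}$$
for every multi-index $\alpha$, which is exactly what it means for $a$ to belong to $L^{\infty}S^{m}_{\varrho}$. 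The compact spatial support of $a$ guarantees that these $L^{\infty}$ bounds are attained with finite constants independent of $x$ (and incidentally it makes $T_{a,\varphi}$ well-defined as a local operator), so the reduction is unconditional.

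Since $\varphi\in\Phi^2$ is assumed strongly non-degenerate, Theorem \ref{global L2 boundedness smooth phase rough amplitude} applies to the amplitude $a$ viewed as an element of $L^{\infty}S^{m}_{\varrho}$ and yields the $L^2$ bound $\|T_{a,\varphi}u\|_{L^2}\lesssim\|u\|_{L^2}$ whenever $m<\tfrac{n}{2}(\varrho-1)$. Since $\delta\geq 0$, the hypothesis $m<\tfrac{n}{2}(\varrho-\delta-1)$ of the corollary is \emph{a fortiori} sufficient and the conclusion follows with no further work. There is no genuine obstacle here: the entire content of the proof is the frequency-side embedding above, together with an appeal to the theorem. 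The interest of the corollary lies precisely in the regime $\delta\geq\varrho$, in which H\"ormander's symbolic calculus for Fourier integral operators breaks down and the Seeger-Sogge-Stein machinery is unavailable; the reduction side-steps the calculus entirely by discarding the $x$-regularity of $a$ and invoking only the non-stationary phase and $TT^{*}$ arguments already performed for rough amplitudes in the proof of Theorem \ref{global L2 boundedness smooth phase rough amplitude}.
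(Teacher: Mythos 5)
Your argument is correct, and it takes a genuinely different — in fact shorter and stronger — route than the paper's. The paper does not use the inclusion $S^{m}_{\varrho,\delta}\subset L^{\infty}S^{m}_{\varrho}$ directly; it instead freezes the spatial variable of the amplitude, setting $f(x,y)=\int a(y,\xi)e^{i\varphi(x,\xi)}\widehat{u}(\xi)\,\dd\xi$ so that $Tu(x)=f(x,x)$, applies Theorem \ref{global L2 boundedness smooth phase rough amplitude} to the amplitudes $\partial^{\alpha}_{y}a(y,\cdot)\in L^{\infty}S^{m+\delta|\alpha|}_{\varrho}$ (uniformly in the frozen variable $y$), integrates in $y$ over the compact support, and then restricts to the diagonal via the Sobolev embedding $\int |f(x,x)|^{2}\,\dd x\lesssim \sum_{|\alpha|\leq N}\iint |\partial^{\alpha}_{y}f(x,y)|^{2}\,\dd x\,\dd y$ with $N>n/2$. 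The $N>n/2$ spatial derivatives of $a$, each costing $\delta$ in the order, are exactly what produce the threshold $m<\frac{n}{2}(\varrho-\delta-1)$, and the compact support in $x$ is used there to carry out the $y$-integration. Your reduction discards the $x$-regularity at the outset, so it needs neither of these, and it proves the conclusion under the weaker hypothesis $m<\frac{n}{2}(\varrho-1)$, of which the corollary's hypothesis is a special case since $\delta\geq 0$; that threshold is sharp for $\delta=1$ by the Rodino example quoted in the paper, while for $\delta<1$ the later Theorem \ref{Calderon-Vaillancourt for FIOs} improves matters further. What the paper's diagonal-restriction technique buys is not a better result here but a method that survives when the spatial dependence of the amplitude cannot simply be thrown away (for instance for compound symbols $a(x,y,\xi)$, or when the available $L^{2}$ theorem covers only $x$-independent amplitudes); for the statement at hand your embedding argument is complete as it stands. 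One minor point: the uniformity in $x$ of the bounds $\Vert\partial^{\alpha}_{\xi}a(\cdot,\xi)\Vert_{L^{\infty}}\lesssim\langle\xi\rangle^{m-\varrho|\alpha|}$ is already part of the definition of $S^{m}_{\varrho,\delta}$, so the compact support is not what makes these constants finite — in your proof it plays no role at all, whereas in the paper's proof it is genuinely used.
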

\begin{proof}
By Sobolev embedding theorem, for a function $f(x,y)$ one has
$$\int |f(x,x)|^2 \, \dd x \leq C_{n} \sum_{|\alpha|\leq N} \iint |\partial^{\alpha}_{y} f(x,y)|^2 \,\dd x \, \dd y$$ with $N>n/2$.
Now let $f(x,y):= \int a(y,\xi)\, e^{i\varphi(x,\xi)} \, \hat{u}(\xi) \, \dd\xi .$
Since $a(x,\xi) \in S^{m}_{\varrho, \delta}$, we have that $\partial^{\alpha}_{y}a(y,\xi) \in L^{\infty}S^{m+\delta|\alpha|}_{\varrho}$. Therefore, Theorem \ref{global L2 boundedness smooth phase rough amplitude} yields
$$\int |\partial^{\alpha}_{y} f(x,y)|^2 \,\dd x \lesssim \Vert u\Vert^2_{L^2},$$
provided that $m+\delta|\alpha| < \frac{n}{2}(\varrho-1)$. Since $|\alpha|\leq N$ and $N>n/2,$ one sees that it suffices to take $m<\frac{n}{2}( \varrho-\delta-1).$ We also note that in the argument above, the integration in the $y$ variable will not cause any problem due to the compact support assumption of the amplitude.
\end{proof}
However, as was shown by D. Fujiwara in \cite{Fuji}, Fourier integral operators with phases in $\Phi^2$ and amplitudes in $S^{0}_{0,0}$ are bounded in $L^2.$ This result suggests the possibility of the existence of an analog of the Calder\'{o}n-Vaillancourt theorem \cite{CV}, concerning $L^2$ boundedness of pseudodifferential operators with symbols in $S^{0}_{\varrho, \varrho}$ with $\varrho\in [0,1),$ in the realm of smooth Fourier integral operators. That this is indeed the case will be the content of Theorem \ref{Calderon-Vaillancourt for FIOs} below. However, before proceeding with the statement of that theorem, we will need two lemmas, the first of which is a continuous version of the Cotlar-Stein lemma, due to A. Calder\'{o}n and R. Vaillancourt, see i.e. \cite{CV} for a proof.
\begin{lem}\label{calderon-vaillancourt lemma}
Let $\mathscr{H}$ be a Hilbert space, and $A(\xi)$ a family of bounded linear endomorphisms of $\mathscr{H}$ depending on $\xi \in \mathbf{R}^n .$
Assume the following three conditions hold:
\begin{enumerate}
  \item the operator norm of $A(\xi)$ is less than a number $C$ independent of $\xi.$
  \item for every $u\in \mathscr{H}$ the function $\xi\mapsto A(\xi)u$ from $\mathbf{R}^{n} \mapsto \mathscr{H}$ is continuous for the norm topology of $\mathscr{H}.$
  \item for all $\xi_{1}$ and $\xi_2$ in $\mathbf{R}^n$
  \begin{equation}\label{cotlar estimates}
    \Vert A^{\ast}(\xi_1) A(\xi_2)\Vert \leq h(\xi_1 , \xi_2)^2,\,\,\,\text{and}\,\,\,  \Vert A(\xi_1) A^{\ast}(\xi_2)\Vert \leq h(\xi_1 , \xi_2)^2 ,
  \end{equation}
  with $h(\xi_1 , \xi_2 )\geq 0$ is the kernel of a bounded linear operator on $L^2$ with norm $K$.
\end{enumerate}
Then for every $E\subset \mathbf{R}^n$, with $|E|<\infty$, the operator $A_{E}=\int_{E} A(\xi) \, \dd\xi$ defined by $\langle A_{E} u, v\rangle _{\mathscr{H}} = \int _{E} \langle A(\xi) u, v\rangle _{\mathscr{H}}\, \dd\xi,$ is a bounded linear operator on $\mathscr{H}$ with norm less than or equal to $K.$
\end{lem}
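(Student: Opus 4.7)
The plan is to adapt the classical discrete Cotlar--Stein scheme to the continuous integral $A_E$. First, hypotheses (1) and (2) guarantee that $A_E$ is well defined as a bounded operator: for fixed $u,v\in\mathscr{H}$ the function $\xi\mapsto \langle A(\xi)u,v\rangle_{\mathscr{H}}$ is continuous (by (2)) and bounded by $C\Vert u\Vert\,\Vert v\Vert$ (by (1)), so its integral over $E$ defines a sesquilinear form of norm at most $C|E|$, which by Riesz representation corresponds to a bounded operator $A_E$. The same reasoning, applied iteratively, shows that compositions of the form $A^\ast(\xi_1)A(\xi_2)\cdots A(\xi_{2N})$ integrate against the product measure on $E^{2N}$, so that
\[
(A_E^\ast A_E)^N = \int_{E^{2N}} A^\ast(\xi_1)A(\xi_2)A^\ast(\xi_3)\cdots A^\ast(\xi_{2N-1})A(\xi_{2N})\, \dd\xi_1\cdots\dd\xi_{2N}
\]
holds in the weak operator topology, and the triangle inequality for operator-valued integrals dominates the norm of this integral by the integral of the pointwise operator norms. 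Since $A_E^\ast A_E$ is positive self-adjoint, $\Vert A_E\Vert^{2N}=\Vert(A_E^\ast A_E)^N\Vert$, so the task reduces to bounding the integrated norm.

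Next I would exploit condition (3) through two complementary pairings of the string of $2N$ operators in the integrand. Grouping adjacent factors as $(A^\ast(\xi_{2j-1})A(\xi_{2j}))_{j=1,\ldots,N}$ gives the bound $\prod_{j=1}^{N} h(\xi_{2j-1},\xi_{2j})^2$, while shifting the grouping to $(A(\xi_{2j})A^\ast(\xi_{2j+1}))_{j=1,\ldots,N-1}$, with the two loose endpoint factors $A^\ast(\xi_1)$ and $A(\xi_{2N})$ estimated by $C$ using (1), yields $C^2 \prod_{j=1}^{N-1} h(\xi_{2j},\xi_{2j+1})^2$. Taking the geometric mean of these two estimates telescopes into the single chain bound
\[
\Vert A^\ast(\xi_1)A(\xi_2)\cdots A(\xi_{2N})\Vert \leq C \prod_{k=1}^{2N-1} h(\xi_k,\xi_{k+1}).
\]

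Finally, the integral over $E^{2N}$ of this chain can be read as a quadratic form. Letting $H$ denote the operator on $L^2(\R^n)$ with kernel $h$, and $T:=M_E H M_E$ where $M_E$ is multiplication by $\chi_E$, a direct induction gives
\[
\int_{E^{2N}} \prod_{k=1}^{2N-1} h(\xi_k,\xi_{k+1})\, \dd\xi_1\cdots \dd\xi_{2N} = \langle \chi_E, T^{2N-1}\chi_E\rangle_{L^2}.
\]
Since $\Vert T\Vert \leq \Vert H\Vert \leq K$ and $\Vert\chi_E\Vert_{L^2}^{2}=|E|$, Cauchy--Schwarz yields the upper bound $|E|\,K^{2N-1}$, and altogether $\Vert A_E\Vert^{2N}\leq C\,|E|\,K^{2N-1}$. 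Extracting the $2N$-th root and letting $N\to\infty$ gives $\Vert A_E\Vert\leq K$, as claimed. The main obstacle is the setup of the first paragraph, namely justifying the iterated operator-valued integral expansion of $(A_E^\ast A_E)^N$ and the passage from the norm of this integral to the integral of the pointwise norms; this is precisely where the uniform bound (1) and strong continuity (2) are used, to reduce the verification to scalar Fubini statements by testing against fixed vectors in $\mathscr{H}$. The combinatorial chain bound via two pairings and the final quadratic-form computation are then purely algebraic.
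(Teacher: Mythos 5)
Your argument is correct, but note that the paper itself offers no proof of this lemma: it is stated as the continuous Cotlar--Stein lemma of Calder\'on and Vaillancourt and the reader is referred to \cite{CV}, so there is no in-paper proof to compare against. What you have written is essentially the classical argument from that reference: express $\Vert A_E\Vert^{2N}=\Vert (A_E^\ast A_E)^N\Vert$, expand $(A_E^\ast A_E)^N$ weakly as an integral over $E^{2N}$, bound the integrand by the geometric mean of the two pairings $A^\ast A$ and $AA^\ast$ (using (1) for the two loose endpoint factors) to get the chain $C\prod_{k=1}^{2N-1}h(\xi_k,\xi_{k+1})$, identify the resulting integral with the quadratic form $\langle \chi_E, T^{2N-1}\chi_E\rangle_{L^2}$, $T=\chi_E H\chi_E$, bound it by $|E|K^{2N-1}$, and let $N\to\infty$. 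All steps check out, including the final limit (the trivial case $|E|=0$ aside). The one place needing care is the one you flag: the weak-topology expansion of $(A_E^\ast A_E)^N$ and the domination of the scalar integrand. Hypothesis (2) gives norm continuity only of $\xi\mapsto A(\xi)u$, not of $\xi\mapsto A^\ast(\xi)u$, so the scalar integrand $\langle A^\ast(\xi_1)\cdots A(\xi_{2N})u,v\rangle$ is jointly continuous in the unstarred variables but only separately (weakly) continuous in the starred ones; separate continuity plus the uniform bound (1) still yields Borel measurability, and since one dominates the scalar integrand directly by the measurable chain $C\prod h(\xi_k,\xi_{k+1})\Vert u\Vert\,\Vert v\Vert$ rather than by the (possibly awkward) operator-norm function, Fubini applies on the finite-measure set $E^{2N}$ and the argument closes. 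So your proof supplies, correctly, exactly the proof the paper delegates to the literature.
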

We shall also use the following useful lemma.
\begin{lem}\label{integration by parts lem}
Let
  \begin{equation}\label{definition of integration by parts}
  Lu(x):= D^{-2} (1-i s(x)\langle\nabla_{x}F, \nabla_{x}\rangle)u(x),
  \end{equation}
    with $D:= (1+ s(x)|\nabla_{x} F|^{2})^{1/2}.$ Then
  \begin{enumerate}
    \item $L (e^{iF(x)})= e^{iF(x)}$
    \item if ${}^{t}L$ denotes the formal transpose of $L,$ then for any positive integer $N,$ $({}^{t}L)^{N} u(x)$ is a finite linear combination of terms of the form
    \begin{equation}\label{mainterm}
      CD^{-k} \{\prod_{\mu=1}^{p}\partial^{\alpha_\mu}_{x} s(x)\}\{\prod_{\nu =1}^{q}\partial^{\beta_{\nu}}_{x} F(x)\}\partial^{\gamma}_{x} u(x),
    \end{equation}
    with
    \begin{multline}\label{relations for order of derivatives}
      2N\leq k\leq 4N ;\, k-2N\leq p\leq k-N ;\,
      |\alpha_{\mu}|\geq 0;\, \sum_{\mu=1}^{p} |\alpha_{\mu}|\leq N\\
       k-2N\leq q\leq k-N;\, |\beta_{\nu}|\geq 1;\,\sum_{\mu=1}^{q} |\beta_{\nu}|\leq q+N;\, |\gamma| \leq N.
    \end{multline}
  \end{enumerate}
\end{lem}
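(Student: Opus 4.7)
Assertion (1) is a direct substitution: using $\nabla_x e^{iF(x)} = i e^{iF(x)} \nabla_x F$, one gets $\langle \nabla_x F, \nabla_x\rangle e^{iF(x)} = i |\nabla_x F|^2 e^{iF(x)}$, whence $(1 - is(x)\langle \nabla_x F, \nabla_x\rangle) e^{iF(x)} = (1 + s(x)|\nabla_x F|^2) e^{iF(x)} = D^2 e^{iF(x)}$, and multiplication by $D^{-2}$ closes the computation.

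For assertion (2), the plan is induction on $N$ after first identifying the formal transpose by integration by parts,
$$ {}^tL v = D^{-2} v + i \sum_{j=1}^n \partial_{x_j}\bigl(D^{-2} s\, \partial_{x_j} F \cdot v\bigr). $$
For $N=1$ one expands by Leibniz's rule, using $\partial_{x_j}(D^{-\ell}) = -\tfrac{\ell}{2} D^{-\ell-2} \partial_{x_j}(s|\nabla F|^2)$ to keep every term in the form \eqref{mainterm} with $k \in \{2,4\}$, and the bounds \eqref{relations for order of derivatives} are read off by inspection. The inductive step consists of applying ${}^tL$ once more to a generic term of the form \eqref{mainterm} and again distributing the resulting derivative.

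The core of the argument is to classify each application of ${}^tL$ into five exclusive sub-cases. The multiplication summand $D^{-2}\cdot$ (sub-case A) simply raises $k$ by $2$. The derivative summand $i \partial_{x_j}(D^{-2} s \partial_{x_j} F \,\cdot)$ first inserts the factor $D^{-2} s\, \partial_{x_j} F$ (contributing $+2$ to $k$, $+1$ to $p$ and $+1$ to $q$), after which $\partial_{x_j}$ is distributed across the product: it may hit $u$ (sub-case B$_u$, giving $|\gamma| \mapsto |\gamma|+1$), an existing $D^{-\ell}$ factor (sub-case B$_D$, giving a further $+2$ to $k$ and producing, via $\partial_{x_j}(s|\nabla F|^2)$, either a $\partial s \cdot |\nabla F|^2$ or an $s \cdot \partial F \cdot \partial^2 F$ term), an existing $\partial^{\alpha_\mu} s$ (sub-case B$_s$, giving $|\alpha_\mu| \mapsto |\alpha_\mu|+1$), or an existing $\partial^{\beta_\nu} F$ (sub-case B$_F$, giving $|\beta_\nu| \mapsto |\beta_\nu|+1$). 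Writing $n_A, n_u, n_D, n_s, n_F$ for the number of occurrences of each sub-case across the $N$ iterations, with $n_A + n_u + n_D + n_s + n_F = N$, elementary accounting yields
$$ k = 2N + 2 n_D, \qquad p = N - n_A + n_D, \qquad q = N - n_A + 2 n_D, \qquad |\gamma| = n_u, $$
together with $\sum_\mu |\alpha_\mu| \leq n_s + n_D$ and $\sum_\nu |\beta_\nu| \leq n_u + 4n_D + 2n_F + n_s$. The six inequalities in \eqref{relations for order of derivatives} then reduce to nonnegative combinations of the $n_*$, e.g.\ $p-(k-2N) = n_u+n_s+n_F$, $k-N-p = n_A + n_D$, and $q + N - \sum_\nu |\beta_\nu| \geq n_A + n_u + n_s$.

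The only delicate point is sub-case B$_D$: it is the unique mechanism producing the upper bound $k \leq 4N$ (by contributing $+4$ to $k$ at a single step rather than $+2$) and the only way that $\partial^2 F$ factors can enter the product, which is precisely what forces the slack $+N$ in the bound $\sum_\nu |\beta_\nu| \leq q + N$. Once this case is treated carefully, the remainder of the induction is a mechanical bookkeeping.
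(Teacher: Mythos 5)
Your proof is correct and is essentially the paper's own argument: the paper likewise computes ${}^{t}L$ explicitly, classifies each application into finitely many ``type'' increments of the tuple $\big(k,p,\sum_\mu|\alpha_\mu|,q,\sum_\nu|\beta_\nu|,|\gamma|\big)$ (six increment vectors with multiplicities $N_1,\dots,N_6$ summing to $N$, which correspond to your counts $n_A,n_u,n_D,n_s,n_F$ after merging the two branches of your B$_D$ and splitting B$_s$/B$_F$ slightly differently) and then reads off \eqref{relations for order of derivatives} from exactly the kind of tallies you display. One minor aside: your remark that B$_D$ is the only source of $\partial^2F$ factors and of the slack $+N$ in $\sum_\nu|\beta_\nu|\leq q+N$ is not literally accurate, since B$_F$ also raises the order of an $F$-factor and contributes to that slack, but your stated bound $\sum_\nu|\beta_\nu|\leq n_u+4n_D+2n_F+n_s$ already accounts for this, so the verification is unaffected.
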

\begin{proof}
First one notes that
\begin{equation*}
  \partial_{x_j} D^{-N}= -\frac{N}{2} D^{-N-2}\sum_{k=1}^{n}\{2s(x) \,\partial_{x_k}F\, \partial^{2}_{x_{j} x_{k}}F +  \partial_{x_{j}}s\, (\partial_{x_{k}}F)^2\}.
\end{equation*}
This and Leibniz's rule yield
\begin{align*}
  {}^{t}Lu(x) &=  D^{-2} u(x)+i\sum_{j=1}^{n}\partial_{x_{j}}(D^{-2}\, s(x)\,u(x)\, \partial_{x_j}F ) \\ &=D^{-2} u(x)-iD^{-4}\sum_{k,\,j=1}^{n} u (x)\, s(x)\,\partial_{x_j}F \,\big\{2s(x) \,\partial_{x_k}F\, \partial^{2}_{x_{j} x_{k}}F  \\ &\quad +  \partial_{x_{j}}s\, (\partial_{x_{k}}F)^2 +i D^{-2} \sum_{j=1}^{n} u(x)\, \partial_{x_{j}}s(x)\,\partial_{x_{j}}F
\\ &\quad+i D^{-2} \sum_{j=1}^{n} s(x)\,u(x)\,\partial^{2}_{x_{j}}F + i D^{-2} \sum_{j=1}^{n} s(x)\,\partial_{x_{j}}u(x)\,\partial_{x_{j}}F.
\end{align*}
From this it follows that ${}^{t}L$ is a linear combination of operators of the form
\begin{equation}
  \label{type1}
  D^{-2}\times
\end{equation}
\begin{equation}
  \label{type2}
  D^{-4}s^{2}(x)\,\partial_{x_j}F \,\partial_{x_k}F\, \partial^{2}_{x_{j} x_{k}}F \times
\end{equation}
\begin{equation}
  \label{type3}
  D^{-4}s\,\partial_{x_{j}}s\,\partial_{x_j}F \, (\partial_{x_{k}}F)^2 \times
\end{equation}
\begin{equation}
  \label{type4}
  D^{-2}\partial_{x_{j}}s(x)\,\partial_{x_{j}}F\times
\end{equation}
\begin{equation}
\label{type5}
  D^{-2} s(x)\,\partial^{2}_{x_{j}}F\times
  \end{equation}
\begin{equation}
  \label{type6}
  D^{-2} s(x)\,\partial_{x_{j}}F\,\partial_{x_{j}}.
\end{equation}
If we conventionally say that the term \eqref{mainterm} is of the type
  $$\bigg(k,p, \sum_{\mu=1}^{p}|\alpha_{\mu}|, q, \sum_{\nu=1}^q |\beta_{\nu}|, |\gamma|\bigg),$$
then ${}^{t} L$ is sum of terms of the types $(2,0,0,0,0,0),$ $(4,2,0,3,4,0),$ $(4,2,1,3,3,0),$ $(2,1,1,1,1,0),$ $(2,1,0,1,2,0)$ and $(2,1,0,1,1,1).$ Now operating the operators in \eqref{type1}, \eqref{type2}, \eqref{type3}, \eqref{type4}, \eqref{type5} on a term \eqref{mainterm} of type
  $$\bigg(k,p, \sum_{\mu=1}^{p}|\alpha_{\mu}|, q, \sum_{\nu=1}^q |\beta_{\nu}|, |\gamma|\bigg),$$
increases the types by $(2,0,0,0,0,0),$ $(4,2,0,3,4,0),$ $(4,2,1,3,3,0),$ $(2,1,1,1,1,0),$ $(2,1,0,1,2,0)$ respectively. To see how operating a term of form the \ref{type6} on \eqref{mainterm} changes the type we use Leibniz rule to obtain
\begin{multline*}
   D^{-2} s(x)\,\partial_{x_{j}}F\,\partial_{x_{j}} \bigg( D^{-k} \bigg\{\prod_{\mu=1}^{p}\partial^{\alpha_\mu}_{x} s(x)\bigg\}
   \bigg\{\prod_{\nu =1}^{q}\partial^{\beta_{\nu}}_{x} F(x)\bigg\}\,\partial^{\gamma}_{x} u(x)\bigg)=\\
   -\frac{k}{2} \bigg(D^{-k-4}\sum_{l=1}^{n}\partial_{x_{j}}F\,\Big\{2s(x) \,\partial_{x_l}F\, \partial^{2}_{x_{j} x_{l}}F +
    \partial_{x_{j}}s\, (\partial_{x_{l}}F)^2\Big\}\bigg)\times\\
    \bigg\{\prod_{\mu=1}^{p}\partial^{\alpha_\mu}_{x} s(x)\bigg\}\bigg\{\prod_{\nu =1}^{q}\partial^{\beta_{\nu}}_{x} F(x)\bigg\}\,\partial^{\gamma}_{x} u(x)\\
    +D^{-k-2}\partial_{x_{j}}F \sum_{\mu'=1}^{p}\bigg\{\prod_{\mu\neq \mu'}\partial^{\alpha_\mu}_{x} s(x)\bigg\} \bigg\{\partial_{x_{j}} \partial ^{\alpha_{\mu'}}_{x} s(x)\bigg\}
    \,\prod_{\nu =1}^{q}\partial^{\beta_{\nu}}_{x} F(x)\,\partial^{\gamma}_{x} u(x)\\
    +D^{-k-2}\partial_{x_{j}}F \, \prod_{\mu=1}^{p}\partial^{\alpha_\mu}_{x} s(x)\sum_{\nu'=1}^{q}\bigg\{\prod_{\nu\neq \nu'}\partial^{\alpha_\nu}_{x} F(x)\bigg\}
    \bigg\{\partial_{x_{j}} \partial ^{\alpha_{\nu'}}_{x} F(x)\bigg\}\,\partial^{\gamma}_{x} u(x)\\
    +D^{-k-2} \bigg\{\prod_{\mu=1}^{p}\partial^{\alpha_\mu}_{x} s(x)\bigg\}\bigg\{\prod_{\nu =1}^{q}\partial^{\beta_{\nu}}_{x} F(x)\bigg\}\,\partial_{x_{j}}\,\partial^{\gamma}_{x} u(x).
\end{multline*}
Therefore, upon application of ${}^{t} L$ to \eqref{mainterm}, the types of the resulting terms increase by $(2,0,0,0,0,0),$ $(4,2,0,3,4,0),$ $(4,2,1,3,3,0),$ $(2,1,1,1,1,0),$ $(2,1,0,1,2,0)$ and $(2,1,0,1,1,1).$ Iteration of this process yields
\begin{equation*}
  ({}^{t} L)^{N} u(x)= \sum C\,D^{-k} \bigg\{\prod_{\mu=1}^{p}\partial^{\alpha_\mu}_{x} s(x)\bigg\}\bigg\{\prod_{\nu =1}^{q}\partial^{\beta_{\nu}}_{x} F(x)\bigg\}\partial^{\gamma}_{x} u(x),
\end{equation*}
where the summation is taken over all non-negative integers $N_1 ,$ $N_2 ,$ $N_3 ,$ $N_4 ,$ $N_5 ,$ $N_6$ with $\sum_{j=1}^{6} N_j =N$ and
\begin{multline}
  \bigg(k,p, \sum_{\mu=1}^{p}|\alpha_{\mu}|, q, \sum_{\nu=1}^q |\beta_{\nu}|, |\gamma|\bigg)= N_1 (2,0,0,0,0,0)+ N_2 (4,2,0,3,4,0)+\\ N_3 (4,2,1,3,3,0)+ N_4 (2,1,1,1,1,0)+ N_5 (2,1,0,1,2,0)+ N_6 (2,1,0,1,1,1).
\end{multline}
Hence,
\begin{equation}
  \label{k}
  k= 2N_1 +4 N_2 + 4 N_3 + 2N_4 +2N_5 +2N_6
\end{equation}
\begin{equation}
  \label{p}
  p= 2N_2 +2N_3 + N_4 + N_5 + N_6
\end{equation}
\begin{equation}
  \label{sum alphamu}
  \sum_{\mu=1}^{p} |\alpha_{\mu}|= N_3 + N_4
\end{equation}
\begin{equation}
  \label{q}
  q= 3N_2 + 3 N_3 + N_4 + N_5 +N_6
\end{equation}
\begin{equation}
  \label{sum betanu}
 \sum_{\nu=1}^{q} |\beta_{\nu}|=4 N_2 + 3 N_3 + N_4 + 2 N_5 + N_6
\end{equation}
\begin{equation}
  \label{gamma}
  |\gamma|= N_6 .
\end{equation}
From this it also follows that $(k,p, \sum_{\mu=1}^{p}|\alpha_{\mu}|, q, \sum_{\nu=1}^q |\beta_{\nu}|, |\gamma|)$ satisfy \eqref{relations for order of derivatives}.
\end{proof}
\begin{thm}\label{Calderon-Vaillancourt for FIOs}
If $m=\min(0,\frac{n}{2}(\varrho-\delta)),$ $0\leq \varrho\leq 1$, $0\leq \delta<1,$ $a\in S^{m}_{\varrho, \delta}$ and $\varphi\in \Phi^{2},$ satisfies the strong non-degeneracy condition, then the operator $T_{a} u(x)= \int a(x,\xi)\, e^{i\varphi(x,\xi)} \hat{u}(\xi)\, \dd\xi$ is bounded on $L^2.$
\end{thm}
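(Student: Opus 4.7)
The plan is to apply the continuous Cotlar--Stein lemma of Calder\'on--Vaillancourt (Lemma \ref{calderon-vaillancourt lemma}) to a phase-space decomposition of the operator tailored to the symbol class $S^m_{\varrho,\delta}$, using the integration by parts identity of Lemma \ref{integration by parts lem} together with the strong non-degeneracy of $\varphi$ to produce the almost-orthogonality estimates.

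First I would reduce via the semiclassical machinery of Subsection \ref{Semiclasical reduction subsec}. The low-frequency part is handled by Theorem \ref{general low frequency boundedness for rough Fourier integral operator}, and by Lemma \ref{Lp:semiclassical} it suffices to prove the semiclassical bound $\Vert T_h u\Vert_{L^2}\lesssim h^{-m}\Vert u\Vert_{L^2}$ for each $h\in(0,1]$. After the semiclassical rescaling $\xi\mapsto\xi/h$, the amplitude $\chi(\xi)a(x,\xi/h)$ satisfies
\[
\sup_{x\in\R^n}\big|\partial_\xi^\alpha\partial_x^\beta\big(\chi(\xi)a(x,\xi/h)\big)\big| \lesssim h^{-m-(1-\varrho)|\alpha|-\delta|\beta|}, \quad \xi\in A.
\]
I would then choose the localization scale $\lambda=h^{1-\varrho}$ and, with $\Psi\in\D(\R^n)$ satisfying $\int\Psi=1$, define the continuous family
\[
A(\eta)u(x)=(2\pi h)^{-n}\int e^{\frac{i}{h}\varphi(x,\xi)}\chi(\xi)\Psi\!\left(\frac{\xi-\eta}{\lambda}\right)a(x,\xi/h)\,\widehat u(\xi/h)\,\dd\xi
\]
so that $T_h=\lambda^{-n}\int A(\eta)\,\dd\eta$. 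Within each frequency box $|\xi-\eta|\lesssim\lambda$ the rescaled symbol is essentially of type $(0,0)$, up to the overall loss factor $h^{-m}$ and a remaining cost $h^{-\delta|\beta|}$ per $x$-derivative, so $\lambda$ is the sharp scale for a Cotlar--Stein analysis.

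Second, I would estimate the two almost-orthogonality kernels. The kernel of $A(\eta_1)A^*(\eta_2)$ is an oscillatory integral in $\xi$ with phase $\varphi(x_1,\xi)-\varphi(x_2,\xi)$; by the strong non-degeneracy and Proposition \ref{equivalence of lowerbound and non degeneracy} we have $|\nabla_\xi(\varphi(x_1,\xi)-\varphi(x_2,\xi))|\gtrsim|x_1-x_2|$, and integration by parts in $\xi$ produces spatial decay of the form $\langle h^{-1}(x_1-x_2)\rangle^{-N}$ (the common $\xi$-support of the cutoffs forces also $|\eta_1-\eta_2|\lesssim\lambda$). The kernel of $A^*(\eta_1)A(\eta_2)$ is an oscillatory integral in $x$ with phase $\varphi(x,\xi_2)-\varphi(x,\xi_1)$; Taylor expansion of $\nabla_x\varphi$ in $\xi$ around $\eta_j$ combined with the strong non-degeneracy yields
\[
|\nabla_x(\varphi(x,\xi_2)-\varphi(x,\xi_1))|\gtrsim|\xi_1-\xi_2|.
\]
I would then apply Lemma \ref{integration by parts lem} to $F(x)=\tfrac{1}{h}(\varphi(x,\xi_2)-\varphi(x,\xi_1))$ with weight $s(x)$ calibrated to $|\eta_1-\eta_2|^{-2}$, producing the frequency decay $\langle(\eta_1-\eta_2)/\lambda\rangle^{-N}$ while each of the resulting $x$-derivatives on the amplitude costs $h^{-\delta}$.

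Third, combining the two estimates, the geometric mean kernel $h(\eta_1,\eta_2)$ of Lemma \ref{calderon-vaillancourt lemma} is controlled, via Schur's test, by a quantity whose finiteness requires trading the volume factor $\lambda^n=h^{n(1-\varrho)}$ coming from the $\eta$-integration against the loss $h^{-n\delta}$ coming from $\sim n$ integrations by parts on the amplitude. This is the exact ratio giving the sharp exponent $m=\min(0,\tfrac{n}{2}(\varrho-\delta))$. Lemma \ref{calderon-vaillancourt lemma} then yields $\Vert T_h\Vert_{L^2\to L^2}\lesssim h^{-m}$, and Lemma \ref{Lp:semiclassical} concludes the global $L^2$ boundedness.

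The main obstacle is the bookkeeping in the integration by parts. Lemma \ref{integration by parts lem} tracks the six-tuple $(k,p,\sum|\alpha_\mu|,q,\sum|\beta_\nu|,|\gamma|)$, and one must combine these multi-index bounds with the $h^{-\delta|\beta|}$ cost from $a\in S^m_{\varrho,\delta}$ and the $h^{|\alpha|}$ gain from each $\partial_\xi\varphi$, then choose the weight $s$ and the exponent $N$ of $({}^tL)^N$ so that the resulting Schur bound reproduces the factor $h^{-m}$ precisely when $m=\min(0,\tfrac{n}{2}(\varrho-\delta))$. The hypothesis $\delta<1$ is used in an essential way here: it is exactly what guarantees that the iterated weight $D^{-2}$ actually improves, so that the integration by parts procedure converges.
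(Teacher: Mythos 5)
Your overall strategy has a genuine gap at its very first step: the reduction to a semiclassical bound $\Vert T_h\Vert_{L^2\to L^2}\lesssim h^{-m}$ followed by Lemma \ref{Lp:semiclassical} cannot prove this theorem, because the theorem is an \emph{endpoint} (equality) result. Lemma \ref{Lp:semiclassical} sums the dyadic pieces via the geometric series $\sum_j 2^{(m+s)j}$ and therefore requires the strict inequality $m<-s$; with your claimed bound $h^{-m}$ this means $m<0$. But whenever $\delta\leq\varrho$ one has $m=\min\big(0,\tfrac n2(\varrho-\delta)\big)=0$ (this covers the central cases $S^0_{1,0}$, $S^0_{0,0}$, $S^0_{\varrho,\varrho}$), and uniform bounds on the dyadic shells $T\chi_j(D)$ do not sum: you would additionally need almost orthogonality \emph{between different shells}, i.e.\ decay of $\Vert T\chi_j(D)\big(T\chi_k(D)\big)^*\Vert$ in $|j-k|$, which your plan never addresses. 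A Cotlar--Stein argument carried out only inside a fixed shell cannot recover this. There is also a secondary quantitative issue: in your $A(\eta_1)A^*(\eta_2)$ estimate, each integration by parts in $\xi$ gains $(|x_1-x_2|/h)^{-1}$ but costs $h^{-(1-\varrho)}$ from both the amplitude and the cutoff $\Psi((\xi-\eta)/\lambda)$, so the achievable spatial decay is at scale $\langle h^{-\varrho}(x_1-x_2)\rangle^{-N}$, not $\langle h^{-1}(x_1-x_2)\rangle^{-N}$; this changes the volume counting in your Schur test.

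For comparison, the paper avoids the dyadic decomposition entirely for this theorem. It first reduces to $\varrho\leq\delta$ (using $S^0_{\varrho,\delta}\subset S^0_{\varrho,\varrho}$ for $\delta\leq\varrho$), performs the $TT^*$ reduction on the full operator, and integrates by parts with the operator $L$ built from the anisotropic weight $D=(1+\langle\xi\rangle^{\varrho}|\nabla_\xi\varphi(x,\xi)-\nabla_\xi\varphi(y,\xi)|^2)^{1/2}$ (Lemma \ref{integration by parts lem} together with Proposition \ref{equivalence of lowerbound and non degeneracy}). It then applies the continuous Cotlar--Stein lemma \ref{calderon-vaillancourt lemma} with the frequency variable $\xi$ itself as the continuous parameter, $T_b=\int A(\xi)\,\dd\xi$, obtains two bounds for $\Vert A^*(\xi_1)A(\xi_2)\Vert$ (one from Young's inequality, one gaining $|\xi_1-\xi_2|^{-N'}$ by a second integration by parts in the spatial variable), optimizes them, and closes with Schur's test, where $\delta<1$ enters. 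This is how the exact exponent $m=\min(0,\tfrac n2(\varrho-\delta))$ is reached without any lossy summation over shells; if you want to salvage your route you must either supply inter-shell almost orthogonality or restructure along these lines.
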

\begin{proof}
First we observe that since for $\delta \leq \varrho$, $S^{0}_{\varrho, \delta} \subset S^{0}_{\varrho, \varrho},$ it is enough to show the theorem for $0\leq \varrho\leq \delta<1$ and $m=\frac{n}{2}(\varrho-\delta).$ Also, as we have done previously, we can assume without loss of generality that $a(x,\xi)=0$ when $\xi$ is in a a small neighbourhood of the origin. Using the $TT^{\ast}$ argument, it is enough to show that the operator
\begin{equation}\label{Tb amplitude presentation}
  T_{b} u(x)= \iint b(x,y,\xi)\, e^{i\varphi(x,\xi)-i\varphi(y,\xi)}\, u(y)\, \dd y \, \dd\xi,
\end{equation}
where $b$ satisfies the estimate
\begin{equation}\label{derivtives of b}
|\partial^{\alpha}_{\xi}\partial^{\beta}_{x} \partial^{\gamma}_{y} b(x,y,\xi)|\leq C_{\alpha\, \beta\,\gamma} \langle \xi \rangle ^{m_1-\varrho|\alpha|+\delta(|\beta| +|\gamma|)},
\end{equation}
 with $m_1=n(\varrho-\delta)$ and $0\leq \varrho\leq \delta<1,$ is bounded on $L^2 .$\\
\noindent We introduce a differential operator
   $$ L:= D^{-2} \Big\{1-i\langle \xi\rangle^{\varrho}\big(\langle\nabla_{\xi}\varphi(x,\xi)-\nabla_{\xi}\varphi(y,\xi), \nabla_{\xi}\rangle\big)\Big\},$$
with $D=(1+\langle \xi\rangle ^{\varrho}|\nabla_{\xi}\varphi(x,\xi)-\nabla_{\xi}\varphi(y,\xi)|^2)^{\frac{1}{2}}.$ It follows from Lemma \ref{integration by parts lem} that
 \begin{equation*}
  L (e^{i\varphi(x,\xi)-i\varphi(y,\xi)})= e^{i\varphi(x,\xi)-i\varphi(y,\xi)}
\end{equation*}
and that $({}^t L)^{N}(b(x,y,\xi))$ is a finite sum of terms of the form
\begin{equation}
D^{-k} \bigg\{\prod_{\mu=1}^{p}\partial^{\alpha_\mu}_{\xi} \langle \xi\rangle^{\varrho}\bigg\}\bigg\{\prod_{\nu =1}^{q} \big(\partial^{\beta_{\nu}}_{\xi}\varphi(x,\xi)-\partial^{\beta_{\nu}}_{\xi}\varphi(y,\xi)\big)\bigg\}\,\partial^{\gamma}_{\xi}b(x,y,\xi).
\end{equation}
Furthermore since $\varphi \in \Phi^2$ is assumed to be strongly non-degenerate, we can use Proposition \ref{equivalence of lowerbound and non degeneracy} to deduce that
\begin{equation}
\label{lowerbound for phi in x}
|\nabla_{\xi} \varphi(x,\xi)-\nabla_{\xi}\varphi(y,\xi)|\geq c_1 |x-y|
\end{equation}
 \begin{equation}
\label{lowerbound for phi in xi}
|\nabla_{z} \varphi(z,\xi_2)-\nabla_{z}\varphi(z,\xi_1)|\geq c_2|\xi_1-\xi_2|.
\end{equation}
Using \eqref{lowerbound for phi in x}, \eqref{relations for order of derivatives} and \eqref{derivtives of b}, we have
\begin{equation}\label{x derivative estimate}
  |\partial^{\sigma}_{x}({}^t L)^{N}(b(x,y,\xi))|\leq C \Lambda(\langle \xi\rangle ^{\varrho} (x-y))\langle \xi\rangle ^{m_1+\delta|\sigma|},
\end{equation}
where $\Lambda$ is an integrable function with $\int \Lambda (x) \, dx\lesssim 1.$
 Integration by parts using $L$, $N$ times, in \eqref{Tb amplitude presentation} one has
 \begin{equation}
 T_{b} u(x)= \iint c(x,y,\xi)\, e^{i\varphi(x,\xi)-i\varphi(y,\xi)}\, u(y)\, \dd y \, \dd\xi,
 \end{equation}
 with $c(x,y,\xi)=({}^t L)^{N}(b(x,y,\xi))$ and
 \begin{equation}\label{derivative estimates for c}
 |\partial^{\sigma}_{x}c(x,y,\xi)|\leq C \Lambda(\langle \xi\rangle ^{\varrho} (x-y))\langle \xi\rangle ^{m_1+\delta|\sigma|}
 \end{equation}
 and the same estimate is valid for $\partial^{\sigma}_{y}c(x,y,\xi).$
 From this we get the representation
 \begin{equation}
   T_{b} = \int A(\xi)\, \dd\xi,
 \end{equation}
where $A(\xi) u(x):= \int c(x,y,\xi)\, e^{i\varphi(x,\xi)-i\varphi(y,\xi)}\, u(y)\, \dd y.$
Noting that $A(\xi)=0$ for $\xi$ outside some compact set, we observe that condition (1) of Lemma \ref{calderon-vaillancourt lemma} follows from Young's inequality and \eqref{derivative estimates for c} with $\sigma=0,$  and condition (2) of Lemma \ref{calderon-vaillancourt lemma} follows from the assumption of the compact support of the amplitude. To verify condition (3) we confine ourselves to the estimate of $\Vert A^{\ast}(\xi_1) A(\xi_2)\Vert$, since the one for $\Vert A(\xi_1) A^{\ast}(\xi_2)\Vert$ is similar. To this end,
a calculation shows that the kernel of $A^{\ast}(\xi_1) A(\xi_2) $ is given by
\begin{multline}\label{kernel of A star A}
K(x,y,\xi_1 , \xi_2):= \\ \int \overline{c}(z,x,\xi_1)\, c(z,y,\xi_2 )\, e^{i[\varphi(z,\xi_{2})-\varphi(z,\xi_{1})+\varphi(x,\xi_{1})-\varphi(y,\xi_{2})]}\, \dd z.
\end{multline}
The estimate \eqref{derivative estimates for c} yields
\begin{multline}\label{first estimate for K}
|K(x,y,\xi_1 , \xi_2)|\\\lesssim \langle \xi_1\rangle ^{m_1}\, \langle \xi_2\rangle ^{m_1} \int   \Lambda(\langle \xi_{1}\rangle ^{\varrho} (x-z))\,
\Lambda(\langle \xi_{2}\rangle ^{\varrho} (y-z))\, \dd z.
\end{multline}
Therefore by choosing $N$ large enough, Young's inequality and using the fact that $\int \Lambda (x) \, dx \lesssim 1$ yield
\begin{equation}\label{first estim for A star A}
\Vert A^{\ast}(\xi_1) A(\xi_2)\Vert \lesssim \langle \xi_1\rangle ^{m_1 -n\varrho}\, \langle \xi_2\rangle ^{m_1 -n\varrho}.
\end{equation}
At this point we introduce another first order differential operator $M:= G^{-2} \{1-i(\langle\nabla_{z}\varphi(z,\xi_2)-\nabla_{z}\varphi(z,\xi_1),\nabla_{z}\rangle)\}$, with $G=(1+|\nabla_{z}\varphi(z,\xi_2)-\nabla_{z}\varphi(z,\xi_1)|^2)^{\frac{1}{2}}.$ Using the fact that $M e^{i(\varphi(z,\xi_{2})-\varphi(z,\xi_{1}))}= e^{i(\varphi(z,\xi_{2})-\varphi(z,\xi_{1}))},$ integration by parts in \eqref{kernel of A star A} yields
\begin{equation}
  \int ({}^{t} M)^{N'}\{\overline{c}(z,x,\xi_1)\, c(z,y,\xi_2 )\}\, e^{i[\varphi(z,\xi_{2})-\varphi(z,\xi_{1})+\varphi(x,\xi_{1})-\varphi(y,\xi_{2})]}\, \dd z.
\end{equation}
Using the second part of Lemma \ref{integration by parts lem}, we find that $({}^{t} M)^{N'}\{\overline{c}(z,x,\xi_1)\, c(z,y,\xi_2 )\}$ is a linear combination of terms of the form
\begin{equation}\label{differential operator M}
  G^{-k}\bigg\{\prod_{\nu =1}^{q}(\partial^{\beta_{\nu}}_{z}\varphi(z,\xi_2)-\partial^{\beta_{\nu}}_{\xi}\varphi(z,\xi_1))\bigg\} \partial^{\gamma_1}_{z}\overline{c}(z,x,\xi_1)\, \partial^{\gamma_2}_{z} c(z,y,\xi_2),
\end{equation}
where $k,$ $q,$ $\beta_\nu$ satisfy the inequalities in \ref{relations for order of derivatives} and $|\gamma_1|+|\gamma_2| \leq N'.$ Now, \eqref{lowerbound for phi in xi}, \eqref{derivative estimates for c} and \eqref{differential operator M}, yield the following estimate for $K(x,y,\xi_1 , \xi_2)$
\begin{align}\label{second estimate for K}
 |K(x,y,\xi_1 , \xi_2)|\lesssim \langle \xi_1\rangle ^{m_1}\, \langle \xi_2\rangle ^{m_1}(1+|\xi_1|+|\xi_2|)^{\delta N'} |\xi_1 -\xi_2|^{-N'} \\ \nonumber
 \times \int  \Lambda(\langle \xi_{1}\rangle ^{\varrho} (x-z))\, \Lambda(\langle \xi_{2}\rangle ^{\varrho} (y-z))\, \dd z.
\end{align}
Once again, choosing $N$ large enough, Young's inequality yields
\begin{equation}\label{second estim for A star A}
\Vert A^{\ast}(\xi_1) A(\xi_2)\Vert \lesssim \langle \xi_1\rangle ^{m_1-n\varrho}\, \langle \xi_2\rangle ^{m_1-n\varrho} \frac{(1+|\xi_1|+|\xi_2|)^{\delta N'}}{|\xi_1 -\xi_2|^{N'}}.
\end{equation}
Using the fact that for $x>0,$ $\inf (1,x) \sim (1+\frac{1}{x})^{-1}$, one optimizes the estimates \eqref{first estim for A star A} and \eqref{second estim for A star A} by
\begin{align}\label{optimal estim for A star A}
\Vert A^{\ast}(\xi_1) A(\xi_2)\Vert &\lesssim \langle \xi_1\rangle ^{m_1 -n\varrho}\, \langle \xi_2\rangle ^{m_1 -n\varrho}
\bigg(1+ \frac{|\xi_1 -\xi_2|^{N'}}{(1+|\xi_1|+|\xi_2|)^{\delta N'}}\bigg)^{-1}
\\\nonumber &:= h^{2}(\xi_1 , \xi_2).
\end{align}
Therefore  recalling that $m_1=n(\varrho-\delta),$ in applying Lemma \ref{calderon-vaillancourt lemma}, we need to show that
\begin{equation}
  K(\xi_1 , \xi_2)= (1+|\xi_1|)^{\frac{-n\delta}{2}} (1+|\xi_2|)^{-\frac{n\delta}{2}}  \bigg(1+ \frac{|\xi_1 -\xi_2|^{N'}}{(1+|\xi_1|+|\xi_2|)^{\delta N'}}\bigg)^{-\frac{1}{2}}
\end{equation}
is the kernel of a bounded operator in $L^2.$ At this point we use Schur's lemma, which yields the desired conclusion provided that
$$ \sup_{\xi_1} \int K(\xi_1 , \xi_2)\, \dd \xi_2, \quad \sup_{\xi_2} \int K(\xi_1 , \xi_2)\, \dd \xi_1 $$
are both finite. Due to the symmetry of the kernel, we only need to show the finiteness of one of these quantities.\\
\noindent To this end, we fix $\xi_1$ and consider the domains $\mathcal{A}=\{(\xi_1, \xi_2);\, |\xi_2| \geq 2 |\xi_1|\},$ $\mathcal{B}=\{(\xi_1, \xi_2);\, \frac{|\xi_1|}{2}\leq |\xi_2| \leq 2 |\xi_1|\},$ and $\mathcal{C}=\{(\xi_1, \xi_2);\, |\xi_2| \leq \frac{ |\xi_1|}{2}\}.$ Now we observe that on the set $\mathcal{A},$ $K(\xi_1, \xi_2)$ is dominated by
\begin{equation}\label{estimate on A}
  (1+|\xi_1|)^{-\frac{n\delta}{2}} (1+|\xi_2|)^{-\frac{n\delta}{2}+\frac{N'}{2}(\delta -1)},
\end{equation}
on $\mathcal{B},$ $K(\xi_1, \xi_2)$ is dominated by
\begin{equation}\label{estimate on A}
  (1+|\xi_1|)^{-n \delta}  \bigg(1+ \frac{|\xi_1 -\xi_2|^{N'}}{(1+|\xi_1|)^{\delta N'}}\bigg)^{-\frac{1}{2}},
\end{equation}
and on $\mathcal{C},$ $K(\xi_1, \xi_2)$ is dominated by
\begin{equation}\label{estimate on A}
   (1+|\xi_2|)^{-\frac{n\delta}{2}} (1+|\xi_1|)^{-\frac{n\delta}{2}+\frac{N'}{2}(\delta -1)}.
\end{equation}
Therefore, if $I_{\Omega}:= \int_{\Omega} K(\xi_1 , \xi_2 )\, d\xi_2,$ then choosing $\frac{N'}{2}(\delta -1)<-n ,$ which is only possible if $\delta<1, $ we have that $I_{\mathcal{A}} <\infty$ uniformly in $\xi_1$. Also,
\begin{equation}
  I_{\mathcal{C}} \leq (1+|\xi_1|)^{n-\frac{n\delta}{2}+\frac{N'}{2}(\delta -1)}\leq C,
\end{equation}
which is again possible by the fact that $\delta<1$ and a suitable choice of $N'.$
In $I_\mathcal{B}$ let us make a change of variables to set $\xi_2 -\xi_1 = (1+|\xi_1| )^{\delta} y$, then
\begin{equation}
I_{\mathcal{B}} \leq \int (1+|y|^{N'}) ^{-\frac{1}{2}} \dd y <\infty,
\end{equation}
by taking $N'$ large enough. These estimates yield the desired result and the proof of there theorem is therefore complete.
\end{proof}
\subsubsection{$L^2$ boundedness of Fourier integral operators with phases in $L^\infty \Phi^2$}
Next we shall turn to the problem of $L^2$ boundedness of Fourier integral operators with non-smooth amplitudes and phases.  As was mentioned in the introduction, a motivation for considering fully rough Fourier integral operators stems from a "linearisation" procedure which reduces certain maximal operators to Fourier integral operators with a non-smooth phase and sometimes also a non-smooth amplitude. For instance, estimates for the maximal spherical average operator
\begin{align*}
   A u(x) = \sup_{t \in [0,1]} \bigg| \int_{S^{n-1}} u(x+t\omega) \, \dd\sigma(\omega) \bigg|
\end{align*}
are related to those for the maximal wave operator
   $$ W u(x) = \sup_{t \in [0,1]} \big|e^{it\sqrt{-\Delta}}u(x)\big|, $$
and can for instance be deduced from those of the linearized operator
\begin{align}
\label{Intro:LinWave}
   e^{it(x)\sqrt{-\Delta}}u=(2\pi)^{-n} \int_{\R^n} e^{it(x)|\xi|+i\langle x,\xi \rangle} \widehat{u}(\xi) \, \dd\xi,
\end{align}
where $t(x)$ is a measurable function in $x$, with values in $[0,1]$ and the phase here belongs to the class $L^\infty \Phi^2.$
As will be demonstrated later, the validity of the results in the rough case depend on the geometric conditions (imposed on the phase functions) which are the rough analogues of the non-degeneracy and corank conditions for smooth phases. In trying to understand the subtle interrelations between boundedness, smoothness and geometric conditions, we remark that even if one assumes the phase of the linearized operator \eqref{Intro:LinWave} to be smooth, there are cases for which the canonical relation of this operator ceases to be the graph of a symplectomorphism.
Indeed, contrary to the wave operator $e^{i t\sqrt{-\Delta}}$ at fixed time $t \in [0,1]$, the phase $\phi(x,\xi)=\langle x,\xi \rangle+t(x)|\xi|$
of the linearized operator cannot be a generating function of a canonical transformation, (see \cite{D}), in certain cases since
\begin{align*}
    \frac{\d^{2}\phi}{\d x \d\xi}(x,\xi) &= {\rm Id} + \nabla t(x) \otimes \frac{\xi}{|\xi|}, \\
    \ker  \frac{\d^{2}\phi}{\d x \d\xi}(x,\xi) &= \mathop{\rm span} \nabla t(x) \quad \textrm{ when }  \langle \xi,\nabla t(x)\rangle + |\xi|=0,
\end{align*}
and this happens when $|\nabla t(x)| \geq 1$ and $\xi = \varrho(-\frac{\nabla t(x)}{|\nabla t(x)|^{2}}+\eta)$ with $\varrho \in \R^*_{+}$ and $\eta$ is a vector orthogonal to $\nabla t(x)$ of norm $(1-|\nabla t(x)|^{-2})^{1/2}$. Therefore, one can not expect $L^2$ boundedness of \eqref{Intro:LinWave}
even when the function $t(x)$ is smooth. Nevertheless,  in this case the rank of the Hessian $\d^2\phi/\d x\d \xi$ drops by one with respect
to its maximal possible value, and one could still establish $L^2$ estimates with loss of derivatives (see section \ref{subsec:symplecticL2} for more details). The operators that we intend to study will fall into this category. Before we investigate the local $L^2$ boundedness of operators based on geometric conditions on their phase, we state and prove a purely analytic global $L^2$ boundedness result which will be used later.\\
\begin{thm}
\label{Intro:L2Thm}
      Let $T$ be a Fourier integral operator given by \eqref{Intro:Fourier integral operator} with amplitude $a \in L^{\infty}S^m_{\varrho}, 0\leq \varrho \leq 1$
      and a phase function $\phi(x,\xi) \in L^{\infty}\Phi^2$ satisfying the rough non-degeneracy condition.
     Then there exists a constant $C>0$ such that
           $$ \Vert T u\Vert_{L^{2}} \lesssim \Vert u\Vert_{L^{2}} $$
      provided $m<n(\varrho -1)/2-(n-1)/4$.
\end{thm}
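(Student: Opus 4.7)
My plan is a $TT^*$ argument combined with the Seeger--Sogge--Stein decomposition from Subsection \ref{SSS decomposition}, since the rough phase precludes the stationary-phase argument in $x$ used in the proof of Theorem \ref{global L2 boundedness smooth phase rough amplitude}. After applying Lemma \ref{Lp:semiclassical} together with the low-frequency bound of Theorem \ref{general low frequency boundedness for rough Fourier integral operator}, the problem reduces to obtaining a semi-classical estimate $\|T_h u\|_{L^2} \leq C h^{-m-s} \|u\|_{L^2}$ with $s$ approaching $\tfrac{n(1-\varrho)}{2} + \tfrac{n-1}{4}$ from above. I would then decompose $T_h = \sum_{\nu=1}^J T_h^\nu$ with $J \simeq h^{-(n-1)/2}$ cone pieces.

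The key observation is that the kernel of $T_h^\nu (T_h^\mu)^*$ equals
\[
(2\pi h)^{-n} \int e^{(i/h)(\phi(x,\xi)-\phi(z,\xi))} \chi(\xi)^2 \psi^\nu(\xi)\psi^\mu(\xi) a(x,\xi/h) \overline{a(z,\xi/h)}\, \dd\xi,
\]
which vanishes unless $\Gamma^\nu$ and $\Gamma^\mu$ overlap, so the sum $T_h T_h^* = \sum_{\nu \sim \mu} T_h^\nu (T_h^\mu)^*$ contains only $O(J)$ nonzero terms. For each such pair I would estimate the kernel through integration by parts in $\xi$ using $L = -i(\nabla_\xi\Phi/|\nabla_\xi\Phi|^2)\cdot\nabla_\xi$, where $\Phi(\xi) = (\phi(x,\xi)-\phi(z,\xi))/h$. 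The rough non-degeneracy yields $|\nabla_\xi\Phi| \geq c|x-z|/h$, while $\phi \in L^\infty\Phi^2$ controls the $\xi$-derivatives of $\phi$ of order at least two on the annulus. Bookkeeping the derivative losses on $\psi^\nu\psi^\mu$ (which cost $h^{-|\alpha'|/2}$ in the transverse directions, per Lemma \ref{Linfty:SymbolPsi}) and on $a(\cdot,\xi/h)$ (which cost $h^{-(1-\varrho)|\alpha|}$), in the spirit of Lemma \ref{Linfty:bLemma}, should produce an anisotropic estimate of the form
\[
|(T_h^\nu (T_h^\mu)^*)(x,z)| \leq C_M\, h^{-\frac{n+1}{2} - 2m - (1-\varrho)M} \bigl(1 + g_\nu(y_\nu(x) - y_\nu(z))\bigr)^{-M/2},
\]
where $y_\nu(x) = \nabla_\xi\phi(x,\xi^\nu)$ and $g_\nu(w) = h^{-2}w_1^2 + h^{-1}|w'|^2$ in SSS-adapted coordinates.

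Applying Schur's test, the $\dd z$ integral would be computed via the change of variables $w = y_\nu(x) - y_\nu(z)$, justified by Corollary \ref{cor:main substitution estim} and the lower bound $|y_\nu(x)-y_\nu(z)| \geq c|x-z|$ coming from rough non-degeneracy; this yields $\sup_x \int |(T_h^\nu(T_h^\mu)^*)(x,z)|\,\dd z \lesssim h^{-2m - (1-\varrho)M}$ for $M > n$, and a symmetric bound for the $\dd x$ integral. Summing over the $O(J) \simeq h^{-(n-1)/2}$ nonzero pairs and invoking Schur's lemma gives $\|T_hT_h^*\|_{L^2} \lesssim h^{-(n-1)/2 - 2m - (1-\varrho)M}$, so that $\|T_h\|_{L^2} \lesssim h^{-(n-1)/4 - m - (1-\varrho)M/2}$. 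Taking $M$ just above $n$ and appealing to Lemma \ref{Lp:semiclassical} yields $L^2$-boundedness provided $m < n(\varrho-1)/2 - (n-1)/4$, as claimed.

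The main obstacle is the precise tracking of derivative losses during the iterated integration by parts: the contributions from the divergence terms $\nabla_\xi \cdot (\nabla_\xi\Phi/|\nabla_\xi\Phi|^2)$, which involve $\partial_\xi^2\phi$, must be balanced against the decay so that the summation over adjacent cones introduces no loss beyond the expected $h^{-(n-1)/4}$. The use of Corollary \ref{cor:main substitution estim} in the Schur step is essential because $\nabla_\xi\phi(\cdot,\xi^\nu)$ need not depend smoothly (or even continuously) on the spatial variable, so the standard substitution formula is unavailable.
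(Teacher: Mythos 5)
Your overall architecture (semiclassical reduction, low-frequency bound, $TT^*$, SSS decomposition, Schur test with the non-smooth substitution, Lemma \ref{Lp:semiclassical}) is the same as the paper's, and your final bookkeeping is right; but there is a genuine gap at the heart of the argument, namely the asserted kernel estimate for $T_h^\nu(T_h^\mu)^*$. The operator $L=-i\bigl(\nabla_\xi\Phi/|\nabla_\xi\Phi|^2\bigr)\cdot\nabla_\xi$ with $\Phi=(\phi(x,\xi)-\phi(z,\xi))/h$ exploits only the isotropic lower bound $|\nabla_\xi\Phi|\geq c|x-z|/h$, and its direction has no reason to be aligned with $\xi^\nu$. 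Hence each application of ${}^tL$ gains at most $|\nabla_\xi\Phi|^{-1}\lesssim h/(c|x-z|)$ while costing $h^{-1/2}$ (the derivative can fall on $\psi^\nu\psi^\mu$, or on the non-linearized phase whose transverse second derivatives are only $O(h^{1/2})$ small on the cone, besides the $h^{-(1-\varrho)}$ from $a$). What this yields is decay at the single scale $h^{1/2}$ in all directions, i.e.\ a bound of the shape $C_N h^{-2m-\frac{n+1}{2}-\cdots}\bigl(1+h^{-1}|x-z|^2\bigr)^{-N}$, not the anisotropic bound with $g_\nu\bigl(y_\nu(x)-y_\nu(z)\bigr)$ that you wrote down. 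This is exactly the situation described in the Remark following the proof of Theorem \ref{Intro:L2Thm}: the isotropic estimate loses $n/4$ derivatives (at $\varrho=1$) instead of $(n-1)/4$, so your claimed threshold $m<n(\varrho-1)/2-(n-1)/4$ is not reached by this route. The extra gain of a full power of $h$ in the radial direction cannot be extracted from the size of $\nabla_\xi\Phi$ alone; note also that, since $\phi$ is merely $L^\infty$ in $x$, there is no upper Lipschitz bound relating $|x-z|$ to $|y_\nu(x)-y_\nu(z)|$, so decay in one quantity does not convert into decay in the other.

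The missing ingredient is the phase linearization that the SSS decomposition is designed for: write $e^{\frac{i}{h}(\phi(x,\xi)-\phi(z,\xi))}=e^{\frac{i}{h}\langle y_\nu(x)-y_\nu(z),\xi\rangle}$ times the correction factors $e^{\frac{i}{h}\langle\nabla_\xi\phi(\cdot,\xi)-\nabla_\xi\phi(\cdot,\xi^\nu),\xi\rangle}$, absorb the latter into the amplitude $b^\nu$, and integrate by parts with the anisotropic operator $L=1-\partial_{\xi_1}^2-h\partial_{\xi'}^2$. Lemma \ref{Linfty:bLemma}, whose proof uses Euler's identity to show that radial derivatives of the correction factor are harmless while transverse ones cost only $h^{-1/2}$ (compensated by the factor $h$ in $h\partial_{\xi'}^2$), gives $\sup_\xi\Vert L^N(b^\nu\overline{b^\nu})\Vert_{L^\infty}\lesssim h^{-2m-2N(1-\varrho)}$; and since the remaining phase is exactly linear in $\xi$, $L^N$ produces precisely the factor $\bigl(1+g(y_\nu(x)-y_\nu(z))\bigr)^{-N}$. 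With this replacement the rest of your argument, the substitution via Corollary \ref{cor:main substitution estim} in the Schur integrals, the summation over the $O(h^{-(n-1)/2})$ surviving near-diagonal pairs, and Lemma \ref{Lp:semiclassical}, goes through and reproduces the paper's proof; the paper merely inserts the partition of unity directly into the kernel of $T_hT_h^*$ rather than pairing $T_h^\nu(T_h^\mu)^*$, which is a cosmetic difference.
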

\begin{proof}
     Using semiclassical reduction of Subsection \ref{Semiclasical reduction subsec}, we decompose $T$ into low and high frequency portions $T_0$ and $T_h$.
     The boundedness of $T_0$ follows at once from Theorem \ref{general low frequency boundedness for rough Fourier integral operator}, so it remains to establish
     suitable semiclassical estimates for $T_{h}$. Once again we use the $TT^*$ argument. The kernel of the operator $S_{h}=T_{h}T_{h}^*$ reads
     \begin{align*}
          S_{h}(x,y) = (2\pi h)^{-n} \int e^{\frac{i}{h}(\phi(x,\xi)-\phi(y,\xi))} \chi^2(\xi) a(x,\xi/h) \overline{a}(y,\xi/h) \, \dd \xi.
     \end{align*}
     We now use the Seeger-Sogge-Stein decomposition (section \ref{SSS decomposition}) and split this operator as the sum $\sum_{j=1}^N S_h^{\nu}$
     where the kernel of $S_{h}^{\nu}$ takes the form
     \begin{align*}
          S^{\nu}_{h}(x,y) = (2\pi h)^{-n} \int e^{\frac{i}{h}\langle \nabla_{\xi}\phi(x,\xi^{\nu})-\nabla_{\xi}\phi(y,\xi^{\nu}),\xi\rangle} b_{\nu}(x,\xi,h) \overline{b_{\nu}}(y,\xi,h) \, \dd \xi.
     \end{align*}
     We consider the following differential operator
          $$ L = 1-\d_{\xi_{1}}^2-h\d_{\xi'}^2 $$
     for which we have according to Lemma \ref{Linfty:bLemma}
     \begin{equation}
          \sup_{\xi} \Vert L^{N}b^{\nu}(\cdot,\xi,h)\Vert_{L^{\infty}} \lesssim  h^{-m-2N(1-\varrho)}.
     \end{equation}
     Integration by parts yields
     \begin{multline*}
          |S_{h}^{\nu}(x,y)| \leq (2\pi h)^{-n} \big(1+g\big(\nabla_{\xi}\phi(y,\xi^{\nu})-\nabla_{\xi}\phi(x,\xi^{\nu})\big)\big)^{-N} \\
          \times \int \big|L^N \big(b^{\nu}(x,\xi,h) \overline{b^{\nu}}(y,\xi,h)\big)\big| \, \dd \xi
     \end{multline*}
     for all integers $N$, with
     \begin{equation}
          g(z)=h^{-2}z_{1}^2+h^{-1}|z'|^{2}.
     \end{equation}
     The standard interpolation trick gives the same inequality for  for all positive numbers $M>0$  and thus we have
     \begin{align*}
          |S_{h}^{\nu}(x,y)| \leq C h^{-2m-\frac{n+1}{2}-2M(1-\varrho)} \big(1+g\big(\nabla_{\xi}\phi(y,\xi^{\nu})-\nabla_{\xi}\phi(x,\xi^{\nu})\big)\big)^{-M}
     \end{align*}
     since the volume of the portion of cone $|A \cap \Gamma_{\nu}|$ is of the order of $h^{(n-1)/2}$. By the non-degeneracy assumption and Lemma \ref{Lem:Substitution}, we get
     \begin{align*}
         \int |S_{h}^{\nu}(x,y)| \, \dd y \leq  C h^{-2m-\frac{n+1}{2}-2M(1-\varrho)} \underbrace{\int \big(1+g(z)\big)^{-M} \, \dd z}_{= c h^{\frac{n+1}{2}}}.
     \end{align*}
     By Young's inequality (remembering that the kernel $S_{h}^{\nu}(x,y)$ is symmetric), we obtain
     \begin{align*}
          \|S_{h}^{\nu}u\|_{L^2} \leq C h^{-2m-2M(1-\varrho)}  \|u\|_{L^2}
     \end{align*}
     and summing the inequalities
     \begin{align*}
          \|T_{h}^*u\|_{L^2}^2  \leq \sum_{\nu=1}^J \|S_{h}^{\nu}u\|_{L^2} \|u\|_{L^2} \leq C h^{-2m+\frac{n-1}{2}-2(1-\varrho)M}  \|u\|_{L^2},
     \end{align*}
     since there are roughly $h^{-(n-1)/2}$ terms in the sum. By Lemma \ref{Lp:semiclassical}, we have the $L^2$ bound
         $$ \|Tu\|_{L^2} \lesssim \|u\|_{L^2} $$
     provided $m<-(n-1)/4+(\varrho-1)M$ and $M>n/2$, which yields the desired result.
\end{proof}
\begin{rem}
     The reason why we were led to perform the Seeger-Sogge-Stein decomposition is that under the rough non-degeneracy assumption
     (Definition \ref{defn of rough nondegeneracy}), the non-stationary phase (Theorem 7.7.1 \cite{H1}) provides the bound
     \begin{align}
          |S_{h}(x,y)| &\leq C_{N} h^{-2m-n+N} |x-y|^{2N} \\ \nonumber &\leq C_{N}h^{-2m-n}\big(1+h^{-1}|x-y|^2)^{-N}
     \end{align}
     leading, when say $\varrho=1$, to a loss of $n/4$ derivatives instead of $(n-1)/4$ derivatives in our case.
     This however can be improved to no loss of derivatives when one also assumes that there is a Lipschitz bound on the higher order derivatives
          $$ |\d^{\alpha}_{\xi}\phi(x,\xi)-\d^{\alpha}_{\xi}\phi(y,\xi)| \leq C_{\alpha}|x-y|, \quad |\alpha| \geq 2. $$
     This is indeed the case in dimension $n=1$, or if the phase can be decomposed as $\phi(x,\xi) =\phi^{\sharp}(x,\xi)+ \phi^{\flat}(x,\xi)$ where
     $ \phi^{\sharp}$ is linear in $\xi$ and $\phi^{\flat} \in \Phi^2$.
\end{rem}
Let $\pi_1$ denote the projection onto the spatial variables, i.e.
\begin{align*}
     \pi_{1}:T^*\R^n &\to \R^n \\
     (x,\xi) &\mapsto x.
\end{align*}
A geometric condition sufficient for the local $L^2$ boundedness of rough Fourier integral operators with phase functions $\varphi(x,\xi)$ and amplitudes $a(x,\xi)$ is as follows:
\begin{hyp}
\label{Rough corank condition}
     For each $x\in\pi_{1}(\supp a)$ and all $\xi\in \mathbf{S}^{n-1}$ there exists a linear subspace $V_{x,\xi}$ belonging to the Grassmannian ${\rm Gr}(n,n-k)$ varying continuously with $(x,\xi)$, and constants $c_{1},c_{2}>0$ such that if $\pi_{V_{x,\xi}}$ denotes the projection onto $V_{x,\xi}$, then
       $$ |\d_{\xi}\phi(x,\xi)-\d_{\xi}\phi(y,\xi)|+c_{1}|x-y|^2 \geq c_{2} |\pi_{V_{x,\xi}}(x-y)| $$
     for all $x,y \in \pi_1(\supp a)$.
\end{hyp}
\begin{thm}
\label{Intro:L2ThmDeg}
      Let $T$ be a Fourier integral operator given by \eqref{Intro:Fourier integral operator} with amplitude $a \in L^{\infty}S^m_\varrho$ and phase function $\phi \in L^{\infty}\Phi^2$.
      Suppose that the phase satisfies the rough corank condition $\ref{Rough corank condition}$, then  $T$ can be extended as a bounded operator from $L^{2}_{\rm comp}$
      to $L^{2}_{\rm loc}$ provided $m<-\frac{n+k-1}{4}+\frac{(n-k)(\varrho-1)}{2}.$
\end{thm}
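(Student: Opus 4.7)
The approach is to follow the semi-classical/$TT^*$/Seeger--Sogge--Stein scheme used in the proof of Theorem \ref{Intro:L2Thm}, replacing the rough non-degeneracy condition by the rough corank condition at the integration step. First I reduce via Lemma \ref{Lp:semiclassical} to a semi-classical $L^2$ estimate for $T_h$; the low-frequency piece $T_0$ is handled by the kernel decay of Lemma \ref{main low frequency estim} together with the compact support of $a$ in $x$, which replaces Corollary \ref{cor:main substitution estim} (the latter would require full rough non-degeneracy). For the dyadic piece I apply the Seeger--Sogge--Stein decomposition of Section \ref{SSS decomposition}, writing $T_h=\sum_{\nu=1}^J T_h^\nu$ with $J\sim h^{-(n-1)/2}$ cones of aperture $\sqrt h$ around unit directions $\xi^\nu$, and focus on the symmetric kernels $S_h^\nu(x,y)$ of $S_h^\nu=T_h^\nu (T_h^\nu)^*$.

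Integration by parts via $L=1-\partial_{\xi_1}^2-h\partial_{\xi'}^2$ together with the symbol bounds of Lemma \ref{Linfty:bLemma} yield, for any real $M>0$,
\[
|S_h^\nu(x,y)|\le C\, h^{-2m-(n+1)/2-2M(1-\varrho)}\bigl(1+g(\Phi(x,y,\xi^\nu))\bigr)^{-M},
\]
where $\Phi(x,y,\xi^\nu)=\nabla_\xi\phi(x,\xi^\nu)-\nabla_\xi\phi(y,\xi^\nu)$ and $g(z)=h^{-2}z_1^2+h^{-1}|z'|^2$ in coordinates adapted to $\xi^\nu$. Next I invoke the rough corank condition \ref{Rough corank condition}: decomposing $u=x-y=u_V+u_{V^\perp}$ along $V=V_{x,\xi^\nu}$, the compact support absorbs the quadratic correction $c_1|u|^2$, giving $|\Phi|\gtrsim |u_V|$ wherever the kernel is not already negligible. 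The $k$ coordinates in $V^\perp$ contribute an $O(1)$ factor from the compact support, and over the $n-k$ coordinates in $V$ I apply (after freezing $u_{V^\perp}$ by Fubini) the non-smooth substitution of Lemma \ref{Lem:Substitution} against the anisotropic weight $g$. This yields
\[
\int|S_h^\nu(x,y)|\,dy \le C\, h^{-2m-k/2-2M(1-\varrho)},\qquad M>(n-k)/2,
\]
and the symmetric estimate in $x$ follows from $S_h^\nu(x,y)=\overline{S_h^\nu(y,x)}$.

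Schur's test then gives $\|S_h^\nu\|_{L^2\to L^2}\le C\, h^{-2m-k/2-2M(1-\varrho)}$, and summing the almost-orthogonal pieces in the manner of Theorem \ref{Intro:L2Thm},
\[
\|T_h u\|_{L^2}^2\le \sum_\nu \|S_h^\nu u\|_{L^2}\|u\|_{L^2}\le C\, h^{-2m-(n+k-1)/2-2M(1-\varrho)}\|u\|_{L^2}^2,
\]
whence $\|T_h\|\le C h^{-m-(n+k-1)/4-M(1-\varrho)}$. Letting $M$ decrease to the integrability threshold $(n-k)/2$ and applying Lemma \ref{Lp:semiclassical} produces $L^2_{\mathrm{comp}}\to L^2_{\mathrm{loc}}$ boundedness exactly when $m<-(n+k-1)/4+(n-k)(\varrho-1)/2$.

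The hardest step is the integration: the naive bound $g(\Phi)\ge h^{-1}|u_V|^2$ together with a plain substitution over $V$ only yields a factor $h^{(n-k)/2}$, short by $h^{1/2}$ of the $h^{(n-k+1)/2}$ required for the stated exponent. Recovering that factor demands that one exploit the stronger weight $h^{-2}\Phi_1^2$ in the $\xi^\nu$-direction of $g$. By Euler's homogeneity, $\Phi_1=\phi(y,\xi^\nu)-\phi(x,\xi^\nu)$, so a one-dimensional variation of $y$ produces a linear variation in $\Phi_1$, which can be integrated out against $(1+h^{-2}\Phi_1^2)^{-M/2}$ to supply the missing $h^{1/2}$. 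Implementing this in the $L^\infty\Phi^2$ setting, where $\phi$ is only bounded measurable in $x$ and the map $y\mapsto\Phi$ lacks classical differentiability, forces a delicate Fubini decomposition and repeated careful applications of Lemma \ref{Lem:Substitution}, which is the source of the technicality the authors acknowledge immediately after the statement.
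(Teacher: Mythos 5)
Your plan departs from the paper's proof at the decisive structural point, and the departure opens a real gap. The paper does \emph{not} run a global $TT^*$ and then integrate the kernel of $S_h^\nu$ over all $n$ of the $y$-variables. It first takes a partition $\psi^\mu(x,\xi)$ supported in small conic neighbourhoods $\Gamma^\mu$ of points $(x^\mu,\xi^\mu)$ in \emph{both} variables, with aperture $\eps$ chosen (using the continuity of $(x,\xi)\mapsto\pi_{V_{x,\xi}}$) so that $\Vert\pi_{V_{x,\xi}}-\pi_{V_{x^\mu,\xi^\mu}}\Vert\le\tfrac14$ and $|x'-y'|\le c_2/(4c_1)$; it then \emph{freezes} the transverse variable $x''$ and performs the $TT^*$ argument only in the $x'$-variables along $V_{x^\mu,\xi^\mu}$, where the corank condition yields the clean partial non-degeneracy \eqref{L2:PartialNonDeg}, $|\d_\xi\phi(x',x'',\xi)-\d_\xi\phi(y',x'',\xi)|\ge\tfrac{c_2}{2}|x'-y'|$; the $k$ transverse directions are only recombined afterwards, at the operator level, via Minkowski's integral inequality and the compact support in $x''$. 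In your version the lower bound must be applied to pairs $(x,y)$ whose difference is mainly transverse to $V$, and there the rough corank condition gives nothing: $c_2|\pi_{V}(x-y)|$ can be much smaller than $c_1|x-y|^2$, so on a set of $y$'s of measure independent of $h$ (roughly $|\pi_V(x-y)|\lesssim|x-y|^2$ with the transverse component ranging over a fixed compact set) you are left with the trivial kernel bound $h^{-2m-\frac{n+1}{2}-2M(1-\varrho)}$ and no gain at all; compact support does not make $|x-y|$ small, and no localization makes $|\pi_V(x-y)|$ comparable to $|x-y|$ for such pairs. Hence your claimed estimate $\int|S_h^\nu(x,y)|\,\dd y\lesssim h^{-2m-k/2-2M(1-\varrho)}$ does not follow; freezing $x''$ is precisely what eliminates this bad region, since then $x-y=(x'-y',0)\in V_{x^\mu,\xi^\mu}$ and the quadratic error is absorbed by the $\eps$-localization. (You also need the joint $(x,\xi)$ localization to keep $V_{x,\xi}$ essentially constant on each piece, which "compact support" alone does not provide.)

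Your mechanism for the missing $h^{1/2}$ is also not available in this setting. Euler's identity does give $\Phi_1=\phi(x,\xi^\nu)-\phi(y,\xi^\nu)$, but for $\phi\in L^\infty\Phi^2$ the map $y\mapsto\phi(y,\xi^\nu)$ is merely bounded measurable, and the rough corank condition bounds from below only the full vector $\d_\xi\phi(x,\xi)-\d_\xi\phi(y,\xi)$, not its $\xi^\nu$-component; so there is no lower bound permitting you to integrate a $y$-variable against $(1+h^{-2}\Phi_1^2)^{-M/2}$ and upgrade $h^{1/2}$ to $h$. In the full-rank Theorem \ref{Intro:L2Thm} that anisotropic gain comes from the genuinely $n$-dimensional substitution $z=\nabla_\xi\phi(y,\xi^\nu)$ via Lemma \ref{Lem:Substitution}, which is exactly what is lost when only $n-k$ directions are expanding; the paper's proof confines the counting to the frozen-$x''$, $x'$-only $TT^*$ and invokes the argument of Theorem \ref{Intro:L2Thm} there, and this bookkeeping (which you correctly sensed is the delicate point) is the source of the technicality and of the lack of sharpness the authors acknowledge. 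As written, your proposal neither reproduces the paper's argument nor supplies a valid substitute for these two steps.
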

\begin{proof}
Since we aim to prove a local $L^2$ boundedness result, we may assume that the amplitude $a$
is compactly supported in the spatial variable $x$. Then since $S_{0}=T_{0}T^*_{0}$ has a bounded compactly supported kernel, it extends to a bounded operator on $L^2$. It remains to deal with the high frequency part of the operator. Given $(x_{\mu} , \xi_{\mu})\in \R ^n \times \R^n$, $\mu= 1, \dots,\, J,$
we consider a partition of unity
       $$ \sum_{\mu=1}^J \psi^{\mu}(x,\xi)=1, \quad \xi \neq 0 $$
given by functions $\psi^{\mu}$ homogeneous of degree $0$ in the frequency variable $\xi$ supported in cones
\begin{align*}
    \Gamma^{\mu}=\Big\{ (x,\xi) \in T^*\R^n;\, |x-x^{\mu}|^2+\Big\vert \frac{\xi}{\vert\xi\vert}-\xi^{\mu} \Big\vert^2 \leq \eps^2 \Big\}
\end{align*}
where $\eps$ is yet to be chosen. We decompose the operator as
\begin{align}
     T_{h} = \sum_{\mu=1}^N T_{h}^{\mu}
\end{align}
where the kernel of $T_{h}^{\mu}$ is given by
\begin{align*}
     T_{h}^{\mu}(x,y)=(2\pi h)^{-n} \int_{\R^n} e^{\frac{i}{h} \phi(x,\xi)-\frac{i}{h}\langle y,\xi \rangle} \psi^{\mu}(x,\xi)\chi(\xi) a(x,\xi/h) \, \dd\xi.
\end{align*}
We have the direct sum
    $$ \R^n = V_{x^{\mu},\xi^{\mu}} \oplus V_{x^{\mu},\xi^{\mu}}^{\perp}, \quad \dim V_{x^{\mu},\xi^{\mu}}=n-k $$
and we decompose vectors $x=x'+x''$ (i.e. $x=(x' , x'')$) according to this sum. Assumption \ref{Rough corank condition} implies
\begin{align*}
     |\d_{\xi}\phi(x' , x'',\xi)&-\d_{\xi}\phi(y' , x'',\xi)| \\ &\geq c_{2}|\pi_{V_{x,\xi}}(x'-y')| -c_{1}|x'-y'|^2 \\
     &\geq c_{2}|x'-y'| \Big(1-\Vert\pi_{V_{x,\xi}}-\pi_{V_{x^{\mu},\xi^{\mu}}}\Vert -  \frac{c_{1}}{c_{2}}|x'-y'|\Big).
\end{align*}
Now since $(x,\xi) \mapsto \pi_{V_{x,\xi}}$ is continuous, we can choose $\eps$ in the definition of the cone $\Gamma^{\mu}$ small enough so that
\begin{align*}
    \Vert\pi_{V_{x,\xi}}-\pi_{V_{x^{\mu},\xi^{\mu}}}\Vert \leq \frac{1}{4}, \quad |x'-y'| \leq |x'-{x^{\mu}}^{'}| + |y'-{x^{\mu}}^{'}| \leq \frac{c_2}{4c_1}
\end{align*}
and therefore we have
\begin{align}
\label{L2:PartialNonDeg}
     |\d_{\xi}\phi(x', x'',\xi)-\d_{\xi}\phi(y',x'',\xi)| \geq \frac{c_{2}}{2}|x' -y'|
\end{align}
when $(x,\xi)$ and $(y,\xi)$ belong to $\Gamma^{\mu}$.
We fix the $x''$ variable and use a $TT^*$ argument on the operator acting in the $x'$ variables.
We consider
\begin{align*}
     S^{\mu}_{h}(x',x'', y') = (2\pi h)^{-n} \int e^{\frac{i}{h}(\phi(x,\xi)-\phi(y',x'',\xi))} a^{\mu}_{h}(x,\xi) \overline{a^{\mu}_{h}}(y',x'',\xi) \, \dd \xi.
\end{align*}
Because of \eqref{L2:PartialNonDeg}, performing a Seeger-Sogge-Stein decomposition and reasoning as in the proof of Theorem \ref{Intro:L2Thm} we get
\begin{multline*}
    \bigg(\int_{V_{x^{\mu},\xi^{\mu}}} \bigg| \int_{V_{x^{\mu},\xi^{\mu}}} S^{\mu}_{h}(x',x'', y') u(y') \, \dd y' \bigg|^2 \, \dd x'\bigg)^{\frac{1}{2}}
    \\ \leq C h^{-2m-\frac{n-k-1}{2}-k-2M(1-\varrho)} \bigg(\int |u(y')|^2 \, \dd y'\bigg)^{\frac{1}{2}},
\end{multline*}
with a constant $C$ that is independent of $x'',$ provided $M>\frac{n-k}{2}$ and therefore
\begin{align*}
     \int \Big| \int_{V_{x^{\mu},\xi^{\mu}}}  \overline{T^{\mu}_{h}(x',x'',y)}u(x) \, \dd x' \Big|^{2} \, \dd y
     \leq C h^{-2m-\frac{n-1}{2}-\frac{k}{2}-2M(1-\varrho)} \|u\|^2_{L^2}.
\end{align*}
Hence by Minkowski's integral inequality
\begin{align*}
    \Vert T_{h}^*u\Vert_{L^2} &\leq  \int_{V_{x^{\mu},\xi^{\mu}}^{\perp}}\bigg(\int \Big| \int_{V_{x^{\mu},\xi^{\mu}}}
    \overline{T^{\mu}_{h}(x',x'',y)}u(x) \, \dd x' \Big|^{2} \, \dd y \bigg)^{\frac{1}{2}} \dd x'' \\
    &\leq C  h^{-m-\frac{n-1}{4}-\frac{k}{4}-M(1-\varrho)}\Vert u\Vert_{L^2}
\end{align*}
provided $M>\frac{n-k}{2}$ and the amplitude is compactly supported in $x''.$ This yields the $L^2$ bound for $m<-(n-1+k)/4-(1-\varrho)M$ provided $M>\frac{n-k}{2}$,
and completes the proof of Theorem \ref{Intro:L2ThmDeg}
\end{proof}
\begin{rem}
   The phase of the linearized maximal wave operator which is $\phi(x,\xi)=t(x)|\xi| + \langle x,\xi\rangle$, satisfies the assumptions of Theorem \ref{Intro:L2ThmDeg} since it belongs to $L^{\infty}\Phi^2$ and it also satisfies the rough corank condition~\ref{Rough corank condition}. Indeed if $\xi \in \mathbf{S}^{n-1}$ we can take $V_{x,\xi}=\xi^{\perp}$ and if $\pi_{\xi},\pi_{\xi^{\perp}}$ denote the projections onto $\mathop{\rm span} \xi$
   and $V_{x,\xi}$ respectively then it is clear that
   \begin{align*}
       |\d_{\xi}\phi(x,\xi)&-\d_{\xi}\phi(y,\xi)|^2 \\ &= |t(x)-t(y)|^2+|x-y|^2+2(t(x)-t(y)) \underbrace{\langle \xi,x-y \rangle}_{=\pm |\pi_{\xi}(x-y)|} \\
       &\geq \big|\pi_{\xi^{\perp}}(x-y)\big|^2+\big| |t(x)-t(y)|- |\pi_{\xi}(x-y)| \big|^2 \\
       &\geq |\pi_{\xi^{\perp}}(x-y)|^2.
   \end{align*}
Therefore, as mentioned earlier, the Fourier integral operators under consideration include the linearized maximal wave operator.
\end{rem}
 A consequence of this is a local $L^2$ boundedness result for Fourier integral operators with smooth phase functions and rough symbols.
\begin{cor}
    Suppose that $\phi(x,\xi)$ is a smooth phase function satisfying the non-degeneracy condition
    \begin{equation}
        \mathop{\rm rank} \frac{\d^2 \phi}{\partial x_{j} \partial \xi_{k}} \geq n-k, \quad \textrm{ on } \supp a
    \end{equation}
    and the entries of the Hessian matrix  have bounded derivatives with respect to both $x$ and $\xi$ separately.
    Assume also that the symbol $a$ belongs to $L^{\infty}S^{m}_{\varrho}, 0\leq \varrho \leq 1$. Then the associated Fourier integral operator is bounded
    from $L^2_{\rm comp}$ to  $L^2_{\rm loc}$ provided $m<-\frac{k}{2}+\frac{(n-k)(\varrho -1)}{2}$.
\end{cor}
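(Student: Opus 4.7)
The plan is to refine the proof of Theorem \ref{Intro:L2ThmDeg} by exploiting the smoothness of the phase to dispense with the Seeger--Sogge--Stein decomposition. Concretely, the rank assumption will play the role of the rough corank condition, while the Lipschitz hypothesis on the Hessian entries will permit a direct non-stationary phase estimate in the spirit of Theorem \ref{global L2 boundedness smooth phase rough amplitude}, as envisaged in the remark following Theorem \ref{Intro:L2Thm}. This is what accounts for the sharper exponent $k/2$ in place of $(n+k-1)/4$.

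Since $a$ is compactly supported in $x$, the low-frequency piece $T_{0}$ has bounded, compactly supported kernel and is trivially $L^{2}_{\rm comp}\to L^{2}_{\rm loc}$ bounded, so by Lemma \ref{Lp:semiclassical} it suffices to prove $\|T_{h}u\|_{L^{2}}\leq C h^{-m-s}\|u\|_{L^{2}}$ with $s=k/2+(n-k)(1-\varrho)/2$. We cover the compact set $\pi_{1}(\supp a)\times\mathbf{S}^{n-1}$ by finitely many cones $\Gamma^{\mu}$ of small opening $\eps$ centered at $(x^{\mu},\xi^{\mu})$, take a subordinate homogeneous partition of unity, and write $T_{h}=\sum_{\mu}T_{h}^{\mu}$. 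Since $A(x^{\mu},\xi^{\mu}):=\bigl(\d^{2}_{x_{j}\xi_{k}}\phi(x^{\mu},\xi^{\mu})\bigr)$ has rank at least $n-k$, we select an $(n-k)$-dimensional subspace $V^{\mu}\subset(\ker A(x^{\mu},\xi^{\mu}))^{\perp}$ on which $A(x^{\mu},\xi^{\mu})$ is bounded below by some $\sigma>0$. Writing $x=x'+x''$ with $x'\in V^{\mu}$ and $x''\in(V^{\mu})^{\perp}$, the Lipschitz bound on $\d^{2}_{x\xi}\phi$ together with the mean value theorem yields, for $\eps$ small enough,
\begin{equation*}
|\nabla_{\xi}\phi(x',x'',\xi)-\nabla_{\xi}\phi(y',x'',\xi)|\geq c\,|x'-y'|\quad\text{for }(x,\xi),(y,\xi)\in\Gamma^{\mu},
\end{equation*}
which is the precise analogue of \eqref{L2:PartialNonDeg}.

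Next we fix $x''$ and run the $TT^{*}$ argument on $T_{h}^{\mu}$ viewed as a family of operators in the $x'$ variable; the resulting kernel $S_{h}^{\mu}(x',x'',y')$ is exactly the one defined in the proof of Theorem \ref{Intro:L2ThmDeg}. Because the phase is smooth, we estimate $S_{h}^{\mu}$ directly by repeated integration by parts in $\xi$ against the oscillatory exponential---as in the proof of Theorem \ref{global L2 boundedness smooth phase rough amplitude}, but using only the partial non-degeneracy in the $V^{\mu}$ direction and the rough symbol bound $\|\d^{\alpha}_{\xi}a(\cdot,\xi/h)\|_{L^{\infty}}\leq C_{\alpha}h^{-m-|\alpha|(1-\varrho)}$---to obtain, for every integer $N$,
\begin{equation*}
|S_{h}^{\mu}(x',x'',y')|\leq C h^{-2m-n-N(1-\varrho)}\bigl\langle h^{-1}(x'-y')\bigr\rangle^{-N}.
\end{equation*}
Integration in $y'\in V^{\mu}$ (of dimension $n-k$) then produces $\sup_{x'}\int|S_{h}^{\mu}|\,\dd y'\leq Ch^{-2m-k-N(1-\varrho)}$ whenever $N>n-k$, so Schur's lemma yields an $L^{2}(V^{\mu})\to L^{2}(V^{\mu})$ bound of order $Ch^{-m-k/2-N(1-\varrho)/2}$, uniform in $x''$. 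Minkowski's integral inequality over the bounded $x''$-support of $a$ contributes only a finite factor; summing the finitely many $\mu$ and invoking Lemma \ref{Lp:semiclassical} with $N$ taken just larger than $n-k$ gives the claimed range of $m$.

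The main obstacle is the pointwise kernel estimate for $S_{h}^{\mu}$ in the last paragraph: one has to design a second order differential operator in the $V^{\mu}$-components of $\xi$ tailored to the partial non-degeneracy, and carefully track the loss $h^{-(1-\varrho)}$ per $\xi$-derivative falling on $a(\cdot,\xi/h)$. The standard interpolation trick between successive integer numbers of integrations by parts then extends the bound to any real $N$, after which the exponent bookkeeping is routine.
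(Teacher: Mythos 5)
Your proposal is correct and is essentially the argument the paper intends: the corollary is stated there without a written proof, as a consequence of Theorem \ref{Intro:L2ThmDeg} together with the remark following Theorem \ref{Intro:L2Thm}, and your write-up carries out exactly that plan — the conic localization and $x=x'+x''$ splitting from the proof of Theorem \ref{Intro:L2ThmDeg}, with the Seeger--Sogge--Stein step replaced by direct non-stationary phase estimates in the spirit of Theorem \ref{global L2 boundedness smooth phase rough amplitude}, which is precisely what upgrades the loss from $\frac{n+k-1}{4}$ to $\frac{k}{2}$. Your bookkeeping (the factor $h^{n-k}$ from the $y'$-integration over $V^{\mu}$, Schur's test, Minkowski in $x''$, then Lemma \ref{Lp:semiclassical}) correctly yields the threshold $m<-\frac{k}{2}+\frac{(n-k)(\varrho-1)}{2}$; the only imprecision is the passing claim that $T_{0}$ has a compactly supported kernel (it is compactly supported in $x$ only — the paper instead observes that $T_{0}T_{0}^{*}$ has a bounded, compactly supported kernel), which does not affect the argument.
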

This is sharp, for example in the case $k=0$ (i.e. pseudodifferential operators), since there exists $m_0$ with $m_0 > n(\varrho-\delta)/2$ such that the pseudodifferential operator
with symbol belonging to $S^{m_0}_{\varrho,\delta}$ is not bounded from $L^2_{\text{comp}}$ to $L^2_{\text{loc}}$, see \cite{H4}. Now since the phase of a pseudodifferential operator satisfies the condition of the above corollary with $k=0$ and since obviously $m_0 \geq n(\varrho -1)/2$ and $S^{m_0}_{\varrho,\delta}\subset L^{\infty}S^{m_0}_{\varrho},$ it follows that the above $L^2$ boundedness is sharp.
\subsubsection{Symplectic aspects of the $L^2$ boundedness}
\label{subsec:symplecticL2}
Here we shall discuss the symplectic aspects of the $L^2$ boundedness of Fourier integral operators which aims to highlight the essentially geometric nature of the problem of $L^2$ regularity of Fourier integral operators. We begin by recalling some of the well known $L^{2}$ continuity results in the case of smooth phases and amplitudes.
The kernel of the Fourier integral operator
\begin{align}
\label{L2:Fourier integral operator}
     Tu(x)  = (2\pi)^{-n} \int_{\R^n} e^{i\phi(x,\xi)} a(x,\xi) \widehat{u}(\xi) \, \dd \xi
\end{align}
is an oscillatory integral whose wave front set is contained in the closed subset of $\dot{T}^*\R^{2n}=T^*\R^{2n} \setminus 0$
\begin{align}
\label{L2:WFkernel}
      \mathop{\rm WF}(T) \subset \big\{ (x,\d_{x}\phi(x,\xi),\d_{\xi}\phi(x,\xi),-\xi) : (x,\xi) \in \supp a, \, \xi \neq 0 \big\}.
\end{align}
The cotangent space $T^*\R^n$ is endowed with the symplectic form
    $$ \sigma = \sum_{j=1}^n \dd\xi_j \wedge \dd x_j. $$
A canonical relation is a Lagrangian submanifold of the product $T^*\R^n \times T^*\R^n$ endowed with the symplectic form $\sigma \oplus (-\sigma)$, this means that the aforementioned symplectic form vanishes on the canonical relation.
In particular, by rearranging the terms in the closed cone \eqref{L2:WFkernel}, one obtains a canonical relation
\begin{align*}
     \mathcal{C}_{\phi} = \big\{ (x,\d_{x}\phi(x,\xi),\d_{\xi}\phi(x,\xi),\xi) : (x,\xi) \in \supp a \big\}
\end{align*}
in $T^*\R^n \times T^*\R^n$.
If $\mathcal{C}$ is a canonical relation, we consider the two maps $\pi_{1}: (x,\xi) \mapsto (x, \d_{x}\phi)$ and $\pi_{2}: (x,\xi)\mapsto(\d_{\xi} \phi , \xi),$
\begin{align*}
 \xymatrix @!0 @C=4pc @R=3pc {
    & \ar[ld]_{\pi_1} \mathcal{C} \subset T^*\R^n \times T^*\R^n  \ar[rd]^{\pi_2} &  \\
    T^*\R^n & & T^*\R^n.
  }
\end{align*}
The canonical relation $\mathcal{C}$ is (locally) the graph of a smooth function $\chi$ if and only if $\pi_1$ is a (local) diffeomorphism, and
in this case $\chi=\pi_2 \circ \pi_1^{-1}$. This function $\chi$ is a diffeomorphism if and only if $\pi_2$ is a diffeomorphism.
Note that if this is the case, $\chi$ is a symplectomorphism because the submanifold $\mathcal{C}$ is {\it{Lagrangian}} for the symplectic form, i.e.
$\sigma \oplus (-\sigma)$
   $$ \dd \xi \wedge \dd x - \dd \eta \wedge \dd y = 0 \quad \textrm{ when } (y,\eta) = \chi(x,\xi). $$
The canonical relation $\mathcal{C}_{\phi}$ is locally the graph of a symplectomorphism in the neighbourhood of
$(x_{0},\d_{x}\phi(x_{0},\xi_{0}),\d_{\xi}\phi(x_{0},\xi_{0}),\xi_{0})$ if and only if
\begin{align}
\label{L2:NonDegenerate}
    \det \frac{\d^{2}\phi}{\d x \d \xi}(x_{0},\xi_{0}) \neq 0.
\end{align}
It is well-known that the Fourier integral operators of order $0$ whose canonical relation $\mathcal{C}_{\phi}$ is
locally the graph of a symplectic transformation $\chi$, are locally $L^2$ bounded. More precisely
\begin{thm}
\label{L2:L2Fourier integral operator}
     Let $a \in S^0_{1,0}$ and $\phi$ be a real valued function in $C^{\infty}(\R^n \times \R^n \setminus 0)$ which is homogeneous
     of degree $1$ in $\xi$. Assume that the homogeneous canonical relation $\mathcal{C}_{\phi}$ is locally the graph%
     \footnote{Or equivalently that \eqref{L2:NonDegenerate} holds on $\supp a$.} of a symplectomorphism between
     two open neighbourhoods in $\dot{T}^*\R^n=T^*\R^n \setminus 0$. Then the Fourier integral operator \eqref{L2:Fourier integral operator} defines a bounded operator from
     $L^2_{\rm comp}$ to $L^2_{\rm loc}$.
\end{thm}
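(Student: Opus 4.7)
The plan is to combine a microlocal reduction, a dyadic semiclassical $TT^*$ argument, and a Cotlar--Stein almost-orthogonality argument. Since only local boundedness is sought, I first multiply $a$ by a smooth compactly supported cutoff to assume $a$ has compact support in $x$, and use a finite partition of unity on the unit sphere of $\xi$-directions to localize $a$ further to a small conic neighbourhood of some $\xi^{\nu}\in\mathbf{S}^{n-1}$. The graph hypothesis on $\mathcal{C}_{\phi}$ is exactly \eqref{L2:NonDegenerate}, and homogeneity together with compactness provide a uniform positive lower bound for $\big|\det \d^2_{x\xi}\phi\big|$ on $\supp a$; since $\d^2_{x\xi}\phi$ is itself uniformly bounded on this set, Proposition \ref{equivalence of lowerbound and non degeneracy} yields
\begin{equation*}
|\nabla_{\xi}\phi(x,\xi)-\nabla_{\xi}\phi(y,\xi)|\geq c|x-y|,\quad x,y\in\pi_{1}(\supp a).
\end{equation*}
Modulo a smoothing remainder, I may also assume $a$ vanishes near $\xi=0$, where $\phi$ is singular.

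Next I perform a Littlewood--Paley decomposition $a=\sum_{j\geq j_{0}}a_{j}$ with $a_{j}$ supported where $|\xi|\sim 2^{j}$. Rescaling via $h=2^{-j}$ converts $T_{j}$ into a semiclassical operator $T_{h}$ whose symbol is supported in $|\xi|\sim 1$ and uniformly bounded in $S^{0}_{1,0}$ as $h\to 0$. Following the $TT^*$ scheme of the proof of Theorem \ref{global L2 boundedness smooth phase rough amplitude}, the kernel of $S_{h}=T_{h}T_{h}^*$ is
\begin{equation*}
S_{h}(x,y)=(2\pi h)^{-n}\int e^{\frac{i}{h}(\phi(x,\xi)-\phi(y,\xi))}\chi^{2}(\xi)\,a(x,\xi/h)\,\overline{a(y,\xi/h)}\,\dd\xi,
\end{equation*}
and repeated integration by parts in $\xi$, exploiting the lower bound above together with the bounds $|\d_{\xi}^{\alpha}\phi|\leq C_{\alpha}$ for $|\xi|\sim 1$ and $|\alpha|\geq 1$, yields $|S_{h}(x,y)|\leq C_{N}h^{-n}(1+|x-y|/h)^{-N}$. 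Schur's test then gives $\|T_{h}\|_{L^{2}\to L^{2}}\leq C$ uniformly in $h$.

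To sum the dyadic pieces, Lemma \ref{Lp:semiclassical} is insufficient at the critical order $m=0$, and I invoke the Cotlar--Stein lemma. Frequency supports force $T_{j}T_{k}^*=0$ whenever $|j-k|\geq 2$, while $T_{j}^*T_{k}$ has kernel
\begin{equation*}
\int \overline{a_{j}(z,\xi)}\,a_{k}(z,\eta)\,e^{i\langle x,\xi\rangle-i\langle y,\eta\rangle-i\phi(z,\xi)+i\phi(z,\eta)}\,\dd z\,\dd\xi\,\dd\eta,
\end{equation*}
and reversing the roles of spatial and frequency variables in Proposition \ref{equivalence of lowerbound and non degeneracy} yields the twin lower bound $|\nabla_{z}\phi(z,\xi)-\nabla_{z}\phi(z,\eta)|\gtrsim|\xi-\eta|$ on $\supp a$. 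For $|j-k|\geq 2$ one has $|\xi-\eta|\sim\max(2^{j},2^{k})$, so non-stationary phase in $z$ produces rapid decay $\|T_{j}^*T_{k}\|_{L^{2}\to L^{2}}\leq C_{N}2^{-N|j-k|}$ for every $N$. The main obstacle lies precisely here: each $z$-derivative arising in the integration by parts must be controlled by $\max(2^{j},2^{k})$ using the $S^{0}_{1,0}$ symbol bounds on $a_{j},a_{k}$ together with the homogeneity of $\d_{z}\phi$, so that after combining with the $|\xi-\eta|^{-N}$ gain the decay is genuinely summable in $|j-k|$. Once this is established, the Cotlar--Stein lemma delivers $\|T\|_{L^{2}\to L^{2}}\leq C$, completing the proof.
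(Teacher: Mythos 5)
The paper does not actually prove this theorem: its ``proof'' is a citation of Theorem 25.3.1 in \cite{H2}. Your proposal is therefore by necessity a different route, and in outline it is the classical one — a dyadic/semiclassical $TT^*$ argument for the individual pieces plus Cotlar--Stein almost orthogonality to sum them at the critical order $m=0$, where, as you correctly note, Lemma \ref{Lp:semiclassical} gives nothing. It also correctly reuses the paper's own tools (the reduction of Subsection \ref{Semiclasical reduction subsec}, the uniform bound $|S_h(x,y)|\lesssim h^{-n}\langle h^{-1}(x-y)\rangle^{-N}$ as in Theorem \ref{global L2 boundedness smooth phase rough amplitude} with $\varrho=1$, and the vanishing $T_jT_k^*=0$ for $|j-k|\geq 2$ from disjoint frequency supports). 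So the architecture is sound and amounts to a self-contained reconstruction of the H\"ormander--Stein proof rather than an appeal to the literature.

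Two steps, however, are not justified as written. First, you obtain $|\nabla_\xi\phi(x,\xi)-\nabla_\xi\phi(y,\xi)|\geq c|x-y|$ on $\pi_1(\supp a)$, and its twin $|\nabla_z\phi(z,\xi)-\nabla_z\phi(z,\eta)|\gtrsim|\xi-\eta|$, by invoking Proposition \ref{equivalence of lowerbound and non degeneracy}. That proposition assumes the Hessian bounds on all of $\R^n\times\R^n\setminus 0$ and rests on a global inverse function theorem; under the present hypothesis, which is non-degeneracy on $\supp a$ only, uniform non-degeneracy on a compact set does not make $x\mapsto\nabla_\xi\phi(x,\xi)$ injective there, and the bi-Lipschitz bound can genuinely fail. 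The standard repair — and the reason the conclusion is only local — is a further finite partition of unity in $x$ into balls of small diameter, on which a Taylor expansion plus the invertibility of $\partial^2_{x\xi}\phi$ gives the bound; the $\xi$-version then follows in each narrow cone from degree-$0$ homogeneity of $\partial^2_{\xi x}\phi$, which is nearly constant in direction there, so the mean-value matrix stays invertible. Second, the assertion that non-stationary phase in $z$ ``produces'' $\Vert T_j^*T_k\Vert\leq C_N 2^{-N|j-k|}$ is a jump: the kernel of $T_j^*T_k$ has neither compact support nor decay in $x$ and $y$, so a pointwise gain of $2^{-N\max(j,k)}$ from the $z$-integration by parts does not by itself control the operator norm. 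One must run a Schur test, either after additional integrations by parts in $\xi,\eta$ to manufacture decay in $x,y$, or more cleanly after conjugating by the Fourier transform, where the kernel becomes $W_{jk}(\xi,\eta)=(2\pi)^{-n}\int e^{i(\phi(z,\eta)-\phi(z,\xi))}a_k(z,\eta)\overline{a_j(z,\xi)}\,\dd z$ with $z$ ranging over a compact set, and where the dyadic shell measures $2^{jn},2^{kn}$ must be absorbed by the rapid decay in $\max(j,k)$. Both repairs are standard, but they have to be carried out for the argument to stand.
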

\begin{proof}
This is Theorem 25.3.1 in \cite{H2}.
\end{proof}
But in fact, there are boundedness results even when $\mathcal{C}$ is not the graph of a symplectomorphism, i.e. when either the projection
$\pi_1$ or $\pi_2$ is not a diffeomorphism. There is an important instance for which this is the case and one could still prove local
$L^2$ boundedness with loss of derivatives. A suggestive example for this situation is the restriction operator to a linear subspace $H=\big\{x=(x',x'') \in \R^n=\R^{n'} \times \R^{n''} : x''=0 \big\}$
\begin{align*}
   R_H u = \langle D \rangle^{m}u(x',0) = (2\pi)^{-n} \int e^{i \langle x',\xi'\rangle} \langle \xi \rangle^m \widehat{u}(\xi) \, \dd\xi
\end{align*}
where $m \leq 0$. We know that this operator is bounded from $L^2_{\rm comp}$ to $L^2_{\rm loc}$; indeed for all $a \in C^{\infty}_{0}(\R^n)$
there exists a constant $C_{m,n}$ such that
\begin{align*}
   \Vert aR_Hu\Vert_{L^2} \leq C_{m,n} \Vert u\Vert_{L^2}
\end{align*}
provided $m \leq -\mathop{\rm codim}H/2$. The canonical relation of the Fourier integral operator $R_H$ is given by
\begin{align*}
     \xymatrix @!0 @C=5pc @R=4pc
     {
     & \ar[ld]_{\pi_1} \mathcal{C}_H=\big\{(x,\xi',0;x',0,\xi), \, (x,\xi) \in T^*\R^n \big\}  \ar[rd]^{\pi_2} &  \\
     \big\{ \xi''=0 \big\} \subset T^*\R^n & &  \big\{ x''=0 \big\} \subset T^*\R^n.
     }
\end{align*}
By $\sigma_{\mathcal{C}_{H}}$ we denote the pullback of the symplectic form $\sigma,$ by $\pi_1 ,$ to $\mathcal{C}_{H}$ (of course we could equally well consider the pullback $\pi_{2}^*\sigma$ without changing anything)
    $$ \sigma_{\mathcal{C}_{H}} = \pi_1^* \sigma = \dd\xi' \wedge \dd x'. $$
Then we have
    $$ \mathop{\rm corank} \sigma_{\mathcal{C}_{H}} = 2n''=2\mathop{\rm codim} H $$
and the condition of $L^2$ boundedness is therefore $m \leq -\mathop{\rm corank} \sigma_{\mathcal{C}_{H}}/4$.
In fact, this example models the general situation, and this is Theorem~25.3.8 in \cite{H2}.
\begin{thm}
\label{L2:L2Fourier integral operatordeg}
     Let $a \in S^m_{1,0}$ and $\phi$ be a real valued function in $C^{\infty}(\R^n \times \R^n \setminus 0)$ which is homogeneous
     of degree $1$ in $\xi$ such that
     \footnote{This ensures that $\mathcal{C}_{\phi}$ is a homogeneous canonical relation to which the radial vectors of $\dot{T}^*\R^n \times 0$
     and $0 \times \dot{T}^*\R^n$ are never tangential.}
     $d\phi \neq 0$ on $\supp a$.
     Then the Fourier integral operator \eqref{L2:Fourier integral operator} defines a bounded operator from
     $L^2_{\rm comp}$ to $L^2_{\rm loc}$ provided $m \leq - \mathop{\rm corank} \sigma_{\mathcal{C}_{\phi}}/4$.
     Here $\sigma_{\mathcal{C}_{\phi}}$ is the two form on $\mathcal{C}_{\phi}$ obtained by lifting to $C_{\phi}$ the symplectic form $\sigma$ on $\dot{T}^*\R^n$
     by one of the projections $\pi_{1}$ or $\pi_{2}$.
\end{thm}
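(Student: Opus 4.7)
The statement is Theorem~25.3.8 of \cite{H2}, and the plan is to prove it by microlocal reduction to a model restriction operator. The quantity $2k := \mathop{\rm corank}\sigma_{\mathcal{C}_\phi}$ is a symplectic invariant of the canonical relation, unchanged under the left-right action of pairs of homogeneous canonical transformations. Correspondingly, conjugating $T$ by elliptic Fourier integral operators $F_1, F_2$ of order zero associated with such canonical transformations preserves both the order $m$ and the corank, and is $L^2_{\rm comp} \to L^2_{\rm loc}$ bounded in view of Theorem \ref{L2:L2Fourier integral operator}. It therefore suffices to establish the estimate after an arbitrary such conjugation.

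I would first localize via a conic partition of unity so that $\supp a$ lies in a small conic neighborhood of a fixed point $(x_0, \xi_0)$. The heart of the argument is then a symplectic normal form: since $\sigma_{\mathcal{C}_\phi}$ is a closed two-form of constant corank $2k$ on the conic Lagrangian $\mathcal{C}_\phi$, and the non-tangency hypothesis $\dd \phi \neq 0$ on $\supp a$ ensures that the radial direction is transverse to $\ker \sigma_{\mathcal{C}_\phi}$, a Darboux-type argument (see Chapter~21 of \cite{H2}) yields homogeneous canonical transformations $\chi_1, \chi_2$ of $\dot{T}^*\R^n$ such that microlocally $(\chi_1 \times \chi_2)(\mathcal{C}_\phi) = \mathcal{C}_H$, where $H = \{x'' = 0\} \subset \R^n$ has codimension $k$. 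Quantizing $\chi_1, \chi_2$ as elliptic Fourier integral operators $F_1, F_2$ of order zero reduces the task to bounding an FIO of order $m$ whose canonical relation is $\mathcal{C}_H$.

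Any such operator can be written modulo smoothing as $P \circ R_H \circ Q$, where $R_H u(x') = u(x',0)$ is the model restriction to $H$ and $P, Q$ are classical pseudodifferential operators of orders $0$ and $m$ respectively. Since zero-order pseudodifferential operators are $L^2$-bounded locally, matters reduce to the estimate $R_H \langle D \rangle^m : L^2_{\rm comp}(\R^n) \to L^2_{\rm loc}(\R^{n-k})$ for $m \leq -k/2$. This bound follows from Plancherel applied to the partial Fourier transform in $x''$ combined with $\int \langle \xi', \xi'' \rangle^{2m} \, \dd \xi'' \lesssim \langle \xi' \rangle^{2m+k}$ valid for $m < -k/2$, the endpoint $m = -k/2$ being handled by a standard dyadic refinement in $|\xi'|$ together with the compact support in the spatial variable. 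The main obstacle is the symplectic reduction itself: one must verify that the corank is locally constant on $\supp a$ (stratifying if not) and carry out the Darboux procedure while respecting the cone structure and the non-tangency of the radial direction; modulo this geometric input, the $L^2$ estimate is essentially Plancherel and the trace embedding.
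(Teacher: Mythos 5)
You should first note that the paper does not prove this statement at all: it is quoted verbatim from H\"ormander (Theorem 25.3.8 of \cite{H2}), and the surrounding text only checks that the restriction operator $R_H$ models the general situation and that $\mathop{\rm corank}\sigma_{\mathcal{C}_{\phi}}=2\mathop{\rm corank}\partial^{2}_{x\xi}\phi$. Your outline follows the strategy behind that citation --- conjugation by elliptic order-zero Fourier integral operators associated to canonical graphs, reduction to the model $R_H\langle D\rangle^{m}$, Plancherel for the model --- but the two steps you defer are exactly where the substance lies, and as written they are genuine gaps.

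First, the normal form. Reducing $\mathcal{C}_{\phi}$ microlocally to $\mathcal{C}_H$ is not a routine Darboux argument for the closed two-form $\sigma_{\mathcal{C}_{\phi}}$: one has to straighten simultaneously, and homogeneously, the two null-foliations (the fibres of $\pi_1$ and $\pi_2$), i.e.\ put the two coisotropic images into a compatible normal form; this is the actual content of H\"ormander's proof and it requires $\sigma_{\mathcal{C}_{\phi}}$ to have \emph{constant} corank. Your fallback ``stratifying if not'' is not an argument: the corank is only upper semicontinuous, the strata are not open conic sets, and a Fourier integral operator cannot be microlocalized to a stratum, so the variable-corank case needs a separate treatment. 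Second, the endpoint. Writing $2k=\mathop{\rm corank}\sigma_{\mathcal{C}_{\phi}}$, at $m=-k/2$ the model $a\,R_H\langle D\rangle^{-k/2}$ is the critical-order trace onto a codimension-$k$ subspace, and the critical trace inequality is classically false: taking $\widehat{u}(\xi)=g(\xi')\,|\xi''|^{-k/2}(\log|\xi''|)^{-\beta}\mathbf{1}_{\{2<|\xi''|<R\}}$ with $\tfrac12<\beta<1$ keeps $\Vert u\Vert_{L^2}$ bounded while the $\xi''$-integral defining the trace diverges like $(\log R)^{1-\beta}$, and compact spatial support can be restored using pseudolocality of the kernel. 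Hence no ``standard dyadic refinement in $|\xi'|$ together with compact support'' can close the case of equality; your Plancherel computation only yields the strict inequality $m<-k/2$ (which is all that the paper's own rough analogues, e.g.\ Theorem \ref{Intro:L2ThmDeg}, assert), and the borderline case --- the only point at which the stated theorem goes beyond the easy estimate --- is left unproved and cannot be proved along the route you propose.
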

The fact that the canonical relation is parametrised by
     $$ F: (x,\xi) \mapsto (x,\d_{x}\phi(x,\xi),\d_{\xi}\phi(x,\xi),\xi) $$
allows us to compute
\begin{align*}
   F^*(\pi_{1}^* \sigma) &= \dd(\pi_{1}\circ F)^*(\xi \, \dd x) = \dd\big(\d_{x} \phi(x,\xi) \, \dd x\big) \\
   &= \underbrace{\sum_{j,k=1}^n\d^2_{x_{j}x_{k}}\phi(x,\xi) \, \dd x_{j} \wedge \dd x_{k}}_{=0} + \sum_{j,k=1}^n\d^2_{\xi_{j}x_{k}}\phi(x,\xi) \, \dd \xi_{j} \wedge \dd x_{k}.
\end{align*}
Therefore we have
\begin{align*}
     F^*\sigma_{\mathcal{C}_{\phi}} = \sum_{j,k=1}^n\d^2_{\xi_{j}x_{k}}\phi(x,\xi) \, \dd \xi_{j} \wedge \dd x_{k}
\end{align*}
which yields
\begin{align*}
     \mathop{\rm corank} \sigma_{\mathcal{C}_{\phi}} = 2 \mathop{\rm corank} \frac{\d^{2}\phi}{\d x \d \xi}.
\end{align*}
The geometric assumption in Theorem \ref{L2:L2Fourier integral operatordeg} (which is valid for general Fourier integral operators, not necessarily of the form \eqref{L2:Fourier integral operator}) is therefore
equivalent to
\begin{align}
\label{L2:CorankHyp}
     m \leq -\frac{1}{2} \mathop{\rm corank} \frac{\d^{2}\phi}{\d x \d \xi}.
\end{align}
\begin{rem}
     If the function $t(x)$ in the linearized maximal wave operator \eqref{Intro:LinWave} were smooth, then that operator would fall into the category of Fourier integral operators
      satisfying the assumptions of Theorem \ref{L2:L2Fourier integral operatordeg}. Indeed as already noted in the introduction, the corank of $\d^2 \phi/\d x\d \xi$ when $\phi(x,\xi)=t(x)|\xi|+\langle x,\xi \rangle$
     is at most $1$. Therefore $e^{it(x)\sqrt{-\Delta}}$ defines a bounded operator from $H^{1/2}_{\rm comp}$ to $L^2_{\rm loc}$ when $t(x)$ is a smooth function on $\R^n$.
\end{rem}
Theorem \ref{Intro:L2Thm} for $\varrho=1$ is the non-smooth analogue of Theorem \ref{L2:L2Fourier integral operator} where the non-degeneracy condition \eqref{L2:NonDegenerate}
which requires smoothness in $x$ has been replaced by Definition \ref{defn of rough nondegeneracy}. Note nevertheless that Theorem \ref{Intro:L2Thm} is a \textit{global} $L^2$
result. Similarly Theorem \ref{Intro:L2ThmDeg} for $\varrho=1$ is the non-smooth analogue of Theorem \ref{L2:L2Fourier integral operatordeg} with \eqref{L2:CorankHyp} replaced by assumption  \ref{Rough corank condition}.

\subsection{Global $L^\infty$ boundedness of rough Fourier integral operators}
In this section, we establish the $L^\infty$ boundedness of Fourier integral operators. To prove the $L^\infty$ boundedness of the high frequency portion of the operator, we need to use the semiclassical estimates of Subsection \ref{SSS decomposition}. However, using only the Seeger-Sogge-Stein decomposition yields a loss of derivatives no better than $m<-\frac{n-1}{2}+ n(\varrho-1)$, and to obtain the sharp $L^\infty$ boundedness result claimed in Theorem \ref{Intro:LinftyThm}, further analysis is needed.
\begin{thm}
\label{Intro:LinftyThm}
   Let $T$ be a Fourier integral operator given by \eqref{Intro:Fourier integral operator} with amplitude $a \in L^{\infty}S^m_{\varrho}$
   and phase function $\phi \in L^{\infty}\Phi^2.$ Then there exists a constant $C>0$ such that
      $$ \Vert Tu\Vert_{L^{\infty}} \lesssim \Vert u\Vert_{L^{\infty}}, \quad u \in \S(\R^n) $$
   provided $m<-\frac{n-1}{2} +\frac{n}{2}(\varrho-1)$ and $0\leq \varrho\leq 1$. Furthermore, this result is sharp.
\end{thm}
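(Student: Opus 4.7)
The plan is to mimic the semiclassical architecture already used for the $L^{1}$ theorem: apply the reduction of Lemma \ref{Lp:semiclassical}, dispose of the low-frequency operator $T_{0}$ using the pointwise kernel estimate \eqref{LowFreq:KernelEst1}, and concentrate the work on a sharp kernel estimate for the semiclassical piece $T_{h}$ after a Seeger--Sogge--Stein decomposition. For $T_{0}$, combining the phase reduction of Lemma \ref{phase reduction} with Lemma \ref{main low frequency estim} produces $|T_{0}(x,y)|\leq C\langle \nabla_{\xi}\phi(x,\zeta)-y\rangle^{-n-\mu}$, and $\sup_{x}\int |T_{0}(x,y)|\,\dd y<\infty$ is then a single translation-change of variables. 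Observe that Schur's test for $L^{\infty}\to L^{\infty}$ only involves the $y$-integral, so no non-degeneracy hypothesis on $\phi$ is ever used; this is precisely why the statement requires no strong or rough non-degeneracy, in contrast to Theorem \ref{Intro:L1Thm}.

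For the semiclassical piece I would write $T_{h}=\sum_{\nu=1}^{J}T_{h}^{\nu}$ via Subsection \ref{SSS decomposition}, with $J\sim h^{-(n-1)/2}$ and kernels $T_{h}^{\nu}(x,y)$ as in \eqref{kernel of Thnu}. A direct transplant of the $L^{1}$ argument would use the pointwise bound $|T_{h}^{\nu}(x,y)|\lesssim h^{-m-(n+1)/2-M(1-\varrho)}(1+g(y-\nabla_{\xi}\phi(x,\xi^{\nu})))^{-M/2}$ with $g(z)=h^{-2}z_{1}^{2}+h^{-1}|z'|^{2}$, but the $L^{1}$-integrability of $(1+g)^{-M/2}$ over $\R^{n}$ forces $M>n$, and after summing in $\nu$ one would only reach $m<-\tfrac{n-1}{2}+n(\varrho-1)$, the weaker bound referred to right before the theorem. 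The sharp exponent requires replacing this by a weighted Cauchy--Schwarz.

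Concretely, the operator $L=1-\d_{\xi_{1}}^{2}-h\d_{\xi'}^{2}$ already used in the $L^{1}$ proof satisfies $L^{N}(e^{i\langle \nabla_{\xi}\phi(x,\xi^{\nu})-y,\xi\rangle/h})=(1+g(y-\nabla_{\xi}\phi(x,\xi^{\nu})))^{N}e^{i\langle\cdot,\cdot\rangle/h}$, so integration by parts yields
\begin{equation*}
  (1+g)^{N}T_{h}^{\nu}(x,y)=(2\pi h)^{-n}\int e^{i\langle \nabla_{\xi}\phi(x,\xi^{\nu})-y,\xi\rangle/h}\, L^{N}b^{\nu}(x,\xi,h)\,\dd\xi.
\end{equation*}
Splitting $|T_{h}^{\nu}|=(1+g)^{-N}\cdot(1+g)^{N}|T_{h}^{\nu}|$ and applying Cauchy--Schwarz in $y$ gives
\begin{equation*}
  \int |T_{h}^{\nu}(x,y)|\,\dd y\leq \|(1+g)^{-N}\|_{L^{2}_{y}}\;\|(1+g)^{N}T_{h}^{\nu}(x,\cdot)\|_{L^{2}_{y}}.
\end{equation*}
The anisotropic rescaling $\tilde z_{1}=z_{1}/h$, $\tilde z'=z'/\sqrt h$ produces $\|(1+g)^{-N}\|_{L^{2}_{y}}\leq C h^{(n+1)/4}$ for every $N>n/4$, while Plancherel in $y$ combined with Lemma \ref{Linfty:bLemma} and $|\supp_{\xi}b^{\nu}|\sim h^{(n-1)/2}$ gives $\|(1+g)^{N}T_{h}^{\nu}(x,\cdot)\|_{L^{2}_{y}}\leq C h^{-(n+1)/4-m-2N(1-\varrho)}$. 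Multiplying, we obtain $\int |T_{h}^{\nu}(x,y)|\,\dd y\leq C h^{-m-2N(1-\varrho)}$ under the weakened threshold $N>n/4$.

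Summing $\nu$ costs the factor $J\sim h^{-(n-1)/2}$ and gives $\sup_{x}\int |T_{h}(x,y)|\,\dd y\leq C h^{-m-(n-1)/2-2N(1-\varrho)}$; letting $2N$ approach $n/2$ from above and invoking Lemma \ref{Lp:semiclassical} yields $\|Tu\|_{L^{\infty}}\lesssim \|u\|_{L^{\infty}}$ whenever $m<-\tfrac{n-1}{2}+\tfrac{n}{2}(\varrho-1)$. Sharpness follows from tailoring a rough-symbol counterexample analogous to the one in the proof of Theorem \ref{global L2 boundedness smooth phase rough amplitude}, exploiting that $\langle x,\xi\rangle\in\Phi^{2}$. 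The heart of the argument, and the only step that is not simply a repackaging of earlier material, is the Cauchy--Schwarz splitting above: recognising that the integrability threshold $M>n$ inherited from $L^{1}$ is not structural but an $L^{1}$-feature of $(1+g)^{-M/2}$, and that passing to $L^{2}$-integrability halves it, is what delivers the sharp exponent $\tfrac{n}{2}(1-\varrho)$ in place of $n(1-\varrho)$.
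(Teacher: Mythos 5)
Your architecture is the paper's own: semiclassical reduction, Seeger--Sogge--Stein decomposition, disposal of $T_{0}$ via Lemma \ref{main low frequency estim} together with the observation that an $L^{\infty}$ bound only needs $\sup_{x}\int |T_{0}(x,y)|\,\dd y$, hence no non-degeneracy, and then an $L^{2}_{y}$-based estimate of $\int |T_{h}^{\nu}(x,y)|\,\dd y$ driven by Lemma \ref{Linfty:bLemma} and semiclassical Plancherel. The only real difference is cosmetic: you run a single global Cauchy--Schwarz with weights $(1+g)^{\pm N}$, whereas the paper splits the $y$-integral at $\sqrt{g(y)}=h^{\varrho}$ and estimates the two pieces $\mathbf{I}_{1},\mathbf{I}_{2}$ separately; your computation gives $\int|T_{h}^{\nu}(x,y)|\,\dd y\lesssim h^{-m-2N(1-\varrho)}$, the paper gets $h^{-m+\frac{n\varrho}{2}-2l}$, and both collapse to the same threshold as $N,l\downarrow n/4$, so this is a legitimate streamlining rather than a different method. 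Your identification of why the naive transplant of the $L^{1}$ argument only yields $m<-\frac{n-1}{2}+n(\varrho-1)$ also matches the paper's remark preceding the theorem.

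The one genuine gap is the passage to non-integer $N$. The identity obtained by integrating by parts with $L=1-\d_{\xi_{1}}^{2}-h\d_{\xi'}^{2}$, and hence your bound $\|(1+g)^{N}T_{h}^{\nu}(x,\cdot)\|_{L^{2}_{y}}\lesssim h^{-(n+1)/4-m-2N(1-\varrho)}$, is justified only for integer $N$, while your conclusion requires ``letting $2N$ approach $n/2$ from above''. For $\varrho<1$ this matters: if you must take $N$ the least integer exceeding $n/4$, the loss $2N(1-\varrho)$ strictly exceeds $\frac{n}{2}(1-\varrho)$ and you only reach $m<-\frac{n-1}{2}-2N(1-\varrho)$, missing part of the claimed range (for $n=2$, $N=1$ gives a loss $2(1-\varrho)$ instead of $1-\varrho$). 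The paper closes exactly this hole by proving the weighted Plancherel estimate \eqref{integer l estimates for I2} for all real $l\geq 0$, via H\"older's inequality interpolating between the integer exponents $[l]$ and $[l]+1$; you need that step (or the analogous interpolation trick used on the kernel in the $L^{1}$ proof) before taking $2N\downarrow n/2$. Finally, your sharpness remark is only a gesture -- the Rodino example quoted for Theorem \ref{global L2 boundedness smooth phase rough amplitude} is calibrated to the $L^{2}$ exponent $\frac{n}{2}(\varrho-1)$, not to $-\frac{n-1}{2}+\frac{n}{2}(\varrho-1)$ -- though the paper's own proof of this theorem likewise does not substantiate the sharpness claim, so the substantive defect to repair is the interpolation step above.
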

\begin{proof}
  As a first step, we use the semiclassical reduction of Subsection \ref{Semiclasical reduction subsec} to decompose $T$ into $T_0$ and $T_h$. Thereafter we split the semiclassical piece $T_h$ further into $\sum_{\nu=1}^{J} T_{h}^{\nu}$ using the Seeger-Sogge-Stein decomposition of Subsection \ref{SSS decomposition} applied to the amplitude $a(x,\xi)$ and the phase $\varphi(x,\xi).$ Once again, the boundedness of $T_0$ follows from Theorem \ref{general low frequency boundedness for rough Fourier integral operator}, but here we don't need the rough non-degeneracy of the phase function due to the fact that we are dealing with the $L^\infty$ boundedness of $T_0$ which only requires that the integral with respect to the $y$ variable of the Schwartz kernel $T_0(x,y)$ being finite. See Theorem \ref{general low frequency boundedness for rough Fourier integral operator} for further details.

From equation \ref{kernel of Thnu} one deduces that the kernel of the semiclassical high frequency operator $T_{h}^{\nu}$ is given by
$$ T_{h}^{\nu}(x,y)=(2\pi h)^{-n} \int_{\R^n} e^{\frac{i}{h}\langle \nabla_{\xi}\varphi(x,\xi^{\nu})-y,\xi \rangle} b^{\nu}(x,\xi,h) \, \dd \xi,$$
with
\begin{equation*}
   b^{\nu}(x,\xi,h)=e^{\frac{i}{h}\langle \nabla_{\xi}\varphi(x,\xi)-\nabla_{\xi}\varphi(x,\xi^{\nu}),\xi \rangle}\psi^{\nu}(\xi)  \chi(\xi) a(x,\xi/h).
\end{equation*}
Now since
 $$ \Vert T_h^{\nu} u\Vert_{L^{\infty}} \leq  \Vert u\Vert_{L^{\infty}}\int | T_h^{\nu}(x,y)|\,\dd y, $$
it remains to show a suitable estimate for $\int | T_h^{\nu}(x,y)|\,dy$. As in the proof of $L^1$ boundedness, we use the differential operator
\begin{align*}
     L = 1-\d_{\xi_{1}}^2-h \d_{\xi'}^2
\end{align*}
for which we have according to Lemma \ref{Linfty:bLemma}
\begin{equation}\label{estimate on bnu}
     \sup_{\xi} \Vert L^{N}b^{\nu}(\cdot,\xi,h)\Vert_{L^{\infty}} \lesssim  h^{-m-2N(1-\varrho)}.
\end{equation}
Setting
\begin{equation}\label{metric g}
g(z)=h^{-2}z_{1}^2+h^{-1}|z'|^{2},
\end{equation}
we have
\begin{equation*}
     L^N\,e^{\frac{i}{h}\langle \nabla_{\xi}\varphi (x,\xi^{\nu})-y,\xi \rangle} =\big(1+g(y-\nabla_{\xi}\varphi (x,\xi^{\nu})\big)^{N}\,
     e^{\frac{i}{h}\langle \nabla_{\xi}\varphi (x,\xi^{\nu})-y,\xi \rangle}
\end{equation*}
for all integers $N$.
Now we observe that
 \begin{align*}
(2\pi h)^{\frac{n}{2}}\int\vert T_h^{\nu}(x,y)\vert\, \dd y &= (2\pi h)^{\frac{n}{2}}\int\vert T_h^{\nu}(x,y+\nabla_{\xi}\varphi (x,\xi^{\nu}))\vert\, \dd y \\ \nonumber
&= \int |\widehat{b^{\nu}} (x,y,h)| \, \dd y  \\ \nonumber&=\int_{\sqrt{g(y)}\leq h^{\varrho}}+\int_{\sqrt{g(y)}> h^{\varrho}}\vert \widehat{b^{\nu}} (x,y,h)\vert\, \dd y
:=\textbf{I}_1 + \textbf{I}_2 ,
\end{align*}
where
    $$ \widehat{b^{\nu}} (x,y,h)=(2\pi h)^{-\frac{n}{2}}\int e^{-\frac{i}{h}\langle y,\xi\rangle}\,b^{\nu}(x,\xi,h)\, \dd \xi $$
is the semiclassical Fourier transform of $b^\nu$.
To estimate $\textbf{I}_1$ we use the Cauchy-Schwarz inequality, the semiclassical Plancherel theorem, the definition of $g$ in
\eqref{metric g} and \eqref{estimate on bnu}. Hence remembering the fact that the measure of the $\xi$-support of $b^{\nu}(x,\xi, h)$ is $O(h^{\frac{(n-1)}{2}})$ we have
\begin{align*}
\textbf{I}_1 &\leq \bigg\{\int_{\sqrt{g(y)}\leq h^{\varrho}}\dd y\bigg\}^{\frac{1}{2}}\bigg\{\int\vert\widehat{b^{\nu}} (x,y,h)\vert^{2} \dd y\bigg\}^{\frac{1}{2}} \\
&\lesssim h^{\frac{n+1}{4}}\bigg\{\int_{|y| \leq h^\varrho}\dd y\bigg\}^{\frac{1}{2}}\bigg\{\int |b^{\nu} (x,\xi,h)|^2 \, \dd\xi\bigg\}^{\frac{1}{2}} \\
&\lesssim h^{\frac{n+1}{4}} h^{\frac{n\varrho}{2}} h^{-m+\frac{n-1}{4}} \lesssim h^{\frac{n}{2}} h^{-m+\frac{n\varrho}{2}}.
\end{align*}
Before we proceed with the estimate of $\textbf{I}_2$, we observe that if $l$ is a non-negative integer then the semiclassical Plancherel theorem and \eqref{estimate on bnu} yield
\begin{align}\label{integer l estimates for I2}
\bigg(\int\vert \widehat{b^{\nu}} (x,y,h)\vert^{2}(1+g(y))^{2l}\,\dd y\bigg)^{\frac{1}{2}} &\leq \bigg(\int |L^{l} b^{\nu} (x,\xi, h)|^2\, \dd \xi\bigg)^{\frac{1}{2}}
\\ \nonumber &\leq h^{-m-2l(1-\varrho)+\frac{n-1}{4}}.
\end{align}
Moreover, any positive real number $l$ which is not an integer can be written as $[l]+\{l\}$ where $[l]$ denotes the integer part of $l$ and $\{l\}$ its fractional part, which is $0<\{l\}<1$. Therefore, H\"older's inequality with conjugate exponents $\frac{1}{\{l\}}$ and $\frac{1}{1-\{l\}}$ yields
\begin{multline*}
\int \vert \widehat{b^{\nu}}(x,y,h)\vert^{2}(1+g(y))^{2l}\,\dd y \\ =
\int\vert \widehat{b^{\nu}} \vert^{2\{l\}}\, \vert \widehat{b^{\nu}}\vert^{2(1-\{l\})} (1+g(y))^{2\{l\}([l]+1)} \, (1+g(y))^{2[l](1-\{l\})} \,\dd y \\
\leq \bigg(\int \vert \widehat{b^{\nu}} \vert^{2} (1+g(y))^{2([l]+1)} \, \dd y\bigg)^{\{l\}} \bigg(\int \vert \widehat{b^{\nu}} \vert^{2}(1+g(y))^{2[l]}
\, \dd y\bigg)^{1-\{l\}}.
\end{multline*}
Therefore, using \eqref{integer l estimates for I2} we obtain
\begin{align*}
\bigg(\int \vert \widehat{b^{\nu}} &(x,y,h)\vert^{2}(1+g(y))^{2l}\, \dd y\bigg)^{\frac{1}{2}} \\ \nonumber
&\leq \bigg(\int |L^{[l]+1} b^{\nu} (x,\xi, h)|^2\, \dd \xi\bigg)^{\frac{\{l\}}{2}} \bigg(\int |L^{[l]} b^{\nu} (x,\xi, h)|^2\, \dd\xi\bigg)^{\frac{1-\{l\}}{2}} \\ \nonumber
&\leq h^{\{l\}(-m-2([l]+1)(1-\varrho)+\frac{n-1}{4})}h^{(1-\{l\})(-m-2[l](1-\varrho)+\frac{n-1}{4})} \\ \nonumber
&\leq h^{-m-2l(1-\varrho)+\frac{n-1}{4}},
\end{align*}
and hence \eqref{integer l estimates for I2} is actually valid for all non-negative real numbers $l$.
Turning now to the estimates for $\textbf{I}_2 ,$ we use the same tools as in the case of $\textbf{I}_1$ and \eqref{integer l estimates for I2} for $l\in [0,\infty)$. This yields for any $l>\frac{n}{4}$
\begin{align*}
    \textbf{I}_2 &\leq \bigg\{\int_{\sqrt{g(y)}>  h^{\varrho}}(1+g(y))^{-2l}\, \dd y\bigg\}^{\frac{1}{2}}
    \times\bigg\{\int\vert \widehat{b^{\nu}} (x,y,h)\vert^{2}(1+g(y))^{2l} \, \dd y\bigg\}^{\frac{1}{2}} \\ \nonumber
    &\lesssim h^{\frac{n+1}{4}}\bigg\{\int_{|y|> h^{\varrho}}|y|^{-4l} \, \dd y\bigg\}^{\frac{1}{2}} h^{-m-2l(1-\varrho)+\frac{n-1}{4}} \\ \nonumber
    &\lesssim h^{\frac{n+1}{4}} h^{\varrho(\frac{n}{2}-2l)} h^{-m-2l(1-\varrho)+\frac{n-1}{4}}\lesssim h^{\frac{n}{2}} h^{-m+\frac{n\varrho}{2}-2l}.
\end{align*}
Therefore
\begin{align}\label{integral of Thnu dy}
     \sup_{x} \int |T_h^{\nu}(x,y)| \, \dd y \leq C_{l}h^{-m+\frac{n\varrho}{2}-2l}
\end{align}
and summing in $\nu$ yields
    $$ \Vert T_h u\Vert_{L^{\infty}} \leq  \sum_{\nu=1}^J \Vert T_h^{\nu} u\Vert_{L^{\infty}} \leq C_{l} h^{-m+\frac{n\varrho}{2}-2l-\frac{n-1}{2}}\Vert u\Vert_{L^{\infty}}, $$
since $J$ is bounded (from above and below) by a constant times $h^{-\frac{n-1}{2}}$. By Lemma \ref{Lp:semiclassical} one has
    $$ \Vert T u\Vert_{L^{\infty}} \lesssim \Vert u\Vert_{L^{\infty}} $$
provided $m<-\frac{n-1}{2}+\frac{n\varrho}{2}-2l$ and $l>\frac{n}{4}$, i.e. if $m<-\frac{n-1}{2}+\frac{n}{2}(\varrho -1)$. This completes the proof of Theorem \ref{Intro:LinftyThm}.
\end{proof}
\subsection{Global $L^p$-$L^p$ and $L^p$-$L^q$ boundedness of Fourier integral operators}
In this section we shall state and prove our main boundedness results for Fourier integral operators. Here, we prove results both for smooth and rough operators with phases satisfying various non-degeneracy conditions. As a first step, interpolation yields the following global $L^p$ results:
\begin{thm}\label{main L^p thm for smooth Fourier integral operators}
      Let $T$ be a Fourier integral operator given by \eqref{Intro:Fourier integral operator} with amplitude $a \in S^m_{\varrho, \delta}, 0\leq \varrho \leq 1,$ $0\leq \delta \leq 1,$ and a phase function $\varphi(x,\xi) \in \Phi^2$ satisfying the strong non-degeneracy condition. Setting $\lambda:=\min(0,n(\varrho-\delta)),$ suppose that either of the following conditions hold:
      \begin{enumerate}
\item[$(a)$]  $1 \leq p \leq 2$ and
                      $$ m<n(\varrho -1)\bigg (\frac{2}{p}-1\bigg)+\big(n-1\big)\bigg(\frac{1}{2}-\frac{1}{p}\bigg)+ \lambda\bigg(1-\frac{1}{p}\bigg); $$ or
\item[$(b)$]  $2 \leq p \leq \infty$ and
                      $$ m<n(\varrho -1)\bigg (\frac{1}{2}-\frac{1}{p}\bigg)+ (n-1)\bigg(\frac{1}{p}-\frac{1}{2}\bigg) +\frac{\lambda}{p};$$ or
\item[$(c)$]   $p=2,$ $0\leq \varrho\leq 1,$ $0\leq \delta<1,$ and
$$m= \frac{\lambda}{2}.$$
\end{enumerate}
Then there exists a constant $C>0$ such that $ \Vert Tu\Vert_{L^{p}} \leq C \Vert u\Vert_{L^{p}}.$
\end{thm}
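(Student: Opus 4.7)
The plan is to derive cases (a) and (b) from Stein's analytic interpolation theorem applied between three endpoint estimates, while case (c) coincides with Theorem \ref{Calderon-Vaillancourt for FIOs} and so requires no further argument. The three endpoints---at $p=1$, $p=2$ and $p=\infty$---are made available by the following reductions: since $S^m_{\varrho,\delta}\subset L^\infty S^m_\varrho$, the rough endpoint theorems \ref{Intro:L1Thm} and \ref{Intro:LinftyThm} apply to the present operator; and since $\varphi\in\Phi^2$ is strongly non-degenerate, Proposition \ref{equivalence of lowerbound and non degeneracy} converts this into the rough non-degeneracy condition needed for Theorem \ref{Intro:L1Thm} (no non-degeneracy is required at the $L^\infty$ endpoint).

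The interpolation will be run on the analytic family
\begin{equation*}
     T_z u(x) = e^{z^2}\,(2\pi)^{-n}\int e^{i\varphi(x,\xi)}\,(1+|\xi|^2)^{z/2}\,a(x,\xi)\,\widehat{u}(\xi)\,\dd\xi,
\end{equation*}
so that $T_0=T$, the amplitude of $T_z$ lies in $S^{m+\re z}_{\varrho,\delta}$ with seminorms growing only polynomially in $|\im z|$, and the Gaussian factor $e^{z^2}$ absorbs this polynomial growth on each vertical line, ensuring admissibility on strips. Concretely, I would establish $L^1\to L^1$ boundedness of $T_z$ on the line $m+\re z = m_1$ for any $m_1<-\tfrac{n-1}{2}+n(\varrho-1)$, $L^\infty\to L^\infty$ boundedness on $m+\re z = m_2$ for any $m_2<-\tfrac{n-1}{2}+\tfrac{n}{2}(\varrho-1)$, and $L^2\to L^2$ boundedness on $m+\re z=\lambda/2$ from Theorem \ref{Calderon-Vaillancourt for FIOs}. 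Interpolating the $L^1$ and $L^2$ endpoints with parameter $\theta = 2-2/p$ for $p\in[1,2]$, and the $L^2$ and $L^\infty$ endpoints with $\theta = 1-2/p$ for $p\in[2,\infty]$, yields at $z=0$ intermediate orders of the form $(2/p-1)m_1+(1-1/p)\lambda$ and $\lambda/p+(1-2/p)m_2$, which an elementary rearrangement identifies with the bounds stated in (a) and (b); the strict inequalities at the $L^1$ and $L^\infty$ endpoints then propagate to intermediate $p$ by letting $m_1,m_2$ tend to their suprema.

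The main obstacle is the borderline case $\delta=1$, for which Theorem \ref{Calderon-Vaillancourt for FIOs} is not directly available at the sharp $L^2$ order $\lambda/2=n(\varrho-1)/2$. Here I would appeal to the $L^2$ result for smooth amplitudes in $S^m_{\varrho,1}$ announced in the introduction (a smooth variant of Theorem \ref{Intro:L2Thm}), whose proof follows the $T T^\ast$ scheme of Theorem \ref{global L2 boundedness smooth phase rough amplitude} but exploits $C^\infty$ regularity in $x$ to perform extra integrations by parts on the kernel of $T_hT_h^\ast$ and thereby recover the sharp threshold. Once that endpoint is in hand, the interpolation runs uniformly in $\delta\in[0,1]$ and delivers the conclusions of both (a) and (b).
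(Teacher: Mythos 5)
Your proposal follows essentially the same route as the paper: the paper's proof is precisely the interpolation of the $L^1$ bound of Theorem \ref{Intro:L1Thm} and the $L^\infty$ bound of Theorem \ref{Intro:LinftyThm} (both applicable since $S^m_{\varrho,\delta}\subset L^{\infty}S^m_{\varrho}$, $\Phi^2\subset L^{\infty}\Phi^2$, and strong non-degeneracy yields rough non-degeneracy via Proposition \ref{equivalence of lowerbound and non degeneracy}) against the $L^2$ endpoint of Theorem \ref{Calderon-Vaillancourt for FIOs}, with the details left to the reader; your analytic family and the bookkeeping $(2/p-1)m_1+(1-1/p)\lambda$, $\lambda/p+(1-2/p)m_2$ reproduce exactly the thresholds in (a) and (b). The one place you deviate is the $\delta=1$ paragraph, and there the appeal to an unproved ``smooth variant of Theorem \ref{Intro:L2Thm}'' is unnecessary: for $\delta=1$ one has $\lambda/2=\tfrac{n}{2}(\varrho-1)$, and since the inequalities in (a) and (b) are strict, it suffices to interpolate with $L^2$ endpoints of order $m_0<\tfrac{n}{2}(\varrho-1)$, which are already supplied by Theorem \ref{global L2 boundedness smooth phase rough amplitude} because $S^{m_0}_{\varrho,1}\subset L^{\infty}S^{m_0}_{\varrho}$; letting $m_0\to\lambda/2$ together with $m_1,m_2$ tending to their suprema covers the full range, so no new $TT^{\ast}$ argument is needed.
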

\begin{proof}
  The proof is a direct consequence of interpolation of the the $L^1$ boundedness result of Theorem \ref{Intro:L1Thm} with the $L^2$ boundedness of Theorem \ref{Calderon-Vaillancourt for FIOs} on one hand, and the interpolation of the latter with the $L^\infty$ boundedness result of Theorem \ref{Intro:LinftyThm}. The details are left to the reader.
\end{proof}
\begin{thm}\label{main L^p thm for Fourier integral operators with smooth phase and rough amplitudes}
      Let $T$ be a Fourier integral operator given by \eqref{Intro:Fourier integral operator} with amplitude $a \in L^{\infty}S^m_{\varrho}, 0\leq \varrho \leq 1$ and a strongly non-degenerate phase function $\varphi(x,\xi) \in \Phi^2.$ Suppose that either of the following conditions hold:
      \begin{enumerate}
\item[$(a)$]  $1 \leq p \leq 2$ and
                      $$ m<\frac{n}{p}(\varrho -1)+\big(n-1\big)\bigg(\frac{1}{2}-\frac{1}{p}\bigg); $$ or
\item[$(b)$]  $2 \leq p \leq \infty$ and
                      $$ m<\frac{n}{2}(\varrho -1)+ (n-1)\bigg(\frac{1}{p}-\frac{1}{2}\bigg).$$
\end{enumerate}
Then there exists a constant $C>0$ such that $ \Vert Tu\Vert_{L^{p}} \leq C \Vert u\Vert_{L^{p}}.$
\end{thm}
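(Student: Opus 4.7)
The proof strategy is to interpolate the semiclassical estimates underlying the proofs of Theorems \ref{Intro:L1Thm}, \ref{global L2 boundedness smooth phase rough amplitude}, and \ref{Intro:LinftyThm}, uniformly in $h$, and then sum the resulting dyadic series via Lemma \ref{Lp:semiclassical}. It is worth noting that one cannot simply apply Riesz--Thorin to the three global statements: for $\varrho$ close to $1$ and $p$ strictly between $1$ and $2$, the threshold in (a) strictly exceeds the $L^1$ threshold of Theorem \ref{Intro:L1Thm}, so the operator $T$ in the relevant range of $m$ is not known to be $L^1$-bounded. Performing the interpolation at the semiclassical scale and only summing afterwards resolves this, since the uniform-in-$h$ bound on each $T_h$ produces a dyadically summable bound on $T$.

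First I would decompose $T = T_0 + \sum_{j \geq 1} T\chi_j(D)$ as in Subsection \ref{Semiclasical reduction subsec}, with the high-frequency pieces rescaling to semiclassical operators $T_h$ at scale $h = 2^{-j}$. The low-frequency part $T_0$ is $L^p$-bounded for every $p \in [1,\infty]$ by Theorem \ref{general low frequency boundedness for rough Fourier integral operator}: strong non-degeneracy of $\varphi$ together with the boundedness of $\partial^2_{x\xi}\varphi$ (encoded in $\varphi \in \Phi^2$) imply, via part (1) of Proposition \ref{equivalence of lowerbound and non degeneracy}, the rough non-degeneracy condition required there.

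The proofs of the three endpoint theorems actually establish, for every $\varepsilon>0$ and uniformly in $h \in (0,1]$, the semiclassical bounds
\begin{align*}
   \Vert T_h\Vert_{L^1 \to L^1} &\lesssim h^{-m - \frac{n-1}{2} - n(1-\varrho) - \varepsilon}, \\
   \Vert T_h\Vert_{L^2 \to L^2} &\lesssim h^{-m - \frac{n}{2}(1-\varrho) - \varepsilon}, \\
   \Vert T_h\Vert_{L^\infty \to L^\infty} &\lesssim h^{-m - \frac{n-1}{2} - \frac{n}{2}(1-\varrho) - \varepsilon},
\end{align*}
obtained by choosing the free parameters $M$, $l$ in those proofs just above their admissible lower bounds. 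Applying the Riesz--Thorin theorem to the fixed semiclassical operator $T_h$ -- with $\theta = 2(1 - 1/p)$ between $L^1$ and $L^2$ in case (a), and $\theta = 1 - 2/p$ between $L^2$ and $L^\infty$ in case (b) -- taking convex combinations of these loss exponents yields
\begin{align*}
   \Vert T_h\Vert_{L^p \to L^p} \lesssim h^{-m - s_p - \varepsilon},
\end{align*}
with $s_p = (n-1)\bigl(\tfrac{1}{p} - \tfrac{1}{2}\bigr) + \tfrac{n}{p}(1-\varrho)$ in case (a) and $s_p = (n-1)\bigl(\tfrac{1}{2} - \tfrac{1}{p}\bigr) + \tfrac{n}{2}(1-\varrho)$ in case (b). Lemma \ref{Lp:semiclassical} then sums the dyadic pieces precisely when $m < -s_p$, which is the stated condition.

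The hard work is essentially already contained in the earlier theorems; the remaining task is to verify that the convex combinations of the three loss exponents collapse onto the stated thresholds in (a) and (b), which is a short and routine computation.
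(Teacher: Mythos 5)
Your proposal is correct, and it reaches the result by the same basic mechanism the paper has in mind -- interpolation between the $L^1$, $L^2$ and $L^\infty$ theorems -- but it executes the interpolation at a different level. The paper's own proof is a one-line statement that the theorem "follows from interpolation" of Theorems \ref{Intro:L1Thm}, \ref{global L2 boundedness smooth phase rough amplitude} and \ref{Intro:LinftyThm}; read literally, that is not quite available throughout the claimed range, for exactly the reason you point out: for $p$ strictly between $1$ and $2$ the threshold in (a) is strictly larger than the $L^1$ threshold (the gap is $(1-\tfrac1p)\bigl[(n-1)+n(1-\varrho)\bigr]>0$), so a fixed admissible $m$ need not satisfy the hypotheses at the endpoint, and Riesz--Thorin cannot be applied to $T$ itself. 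Your remedy -- extract from the three endpoint proofs the uniform semiclassical bounds on $T_h$ (which I checked: $h^{-m-\frac{n-1}{2}-n(1-\varrho)-\varepsilon}$, $h^{-m-\frac n2(1-\varrho)-\varepsilon}$, $h^{-m-\frac{n-1}{2}-\frac n2(1-\varrho)-\varepsilon}$ are exactly what those proofs give upon taking $M$, resp.\ $l$, near their admissible lower bounds), interpolate for each fixed $h$, and only then sum via Lemma \ref{Lp:semiclassical} -- is sound, and your convex-combination exponents do collapse to the stated thresholds in (a) and (b). The treatment of $T_0$ via Theorem \ref{general low frequency boundedness for rough Fourier integral operator} together with Proposition \ref{equivalence of lowerbound and non degeneracy}(1) is also exactly how the paper handles the low frequencies in its endpoint proofs. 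In short, what your route buys is a complete argument where the paper's terse proof leaves the key point implicit; an alternative fix, in the spirit of Lemma \ref{interpolgen} later in the paper, would be complex interpolation in the order $m$ via the analytic family $a_z=\langle\xi\rangle^{z}a$, but your semiclassical interpolation is simpler and uses only machinery already set up in Chapter 2.
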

\begin{proof}
  The proof follows once again from interpolation of the $L^1$ boundedness result of Theorem \ref{Intro:L1Thm} with the $L^2$ boundedness of Theorem \ref{global L2 boundedness smooth phase rough amplitude} on one hand, and the interpolation of the latter with the $L^\infty$ boundedness result of Theorem \ref{Intro:LinftyThm}.
\end{proof}
As an immediate consequence of the theorem above one has
\begin{cor}\label{LinftySm1 cor}
  For a Fourier integral operator $T$ with amplitude $a \in L^{\infty}S^m_{1}$
      and a strongly non-degenerate phase function $\varphi(x,\xi) \in \Phi^2,$ one has $L^p$ boundedness for $p\in [1,\infty]$ provided $m<-(n-1)|\frac{1}{p}-\frac{1}{2}|.$
\end{cor}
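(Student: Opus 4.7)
The plan is to derive this corollary as an immediate specialization of Theorem \ref{main L^p thm for Fourier integral operators with smooth phase and rough amplitudes} to the case $\varrho = 1$, with the two parameter ranges unified via an absolute value.

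First, I would simply substitute $\varrho = 1$ into both conditions of that theorem. In condition (a), the first term $\frac{n}{p}(\varrho - 1)$ vanishes, leaving the bound
\[
m < (n-1)\bigg(\frac{1}{2} - \frac{1}{p}\bigg) \qquad \text{for } 1 \leq p \leq 2.
\]
Similarly, in condition (b), the term $\frac{n}{2}(\varrho - 1)$ vanishes, leaving
\[
m < (n-1)\bigg(\frac{1}{p} - \frac{1}{2}\bigg) \qquad \text{for } 2 \leq p \leq \infty.
\]

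Next I would observe that $\frac{1}{2} - \frac{1}{p} \leq 0$ precisely when $p \leq 2$ and $\frac{1}{p} - \frac{1}{2} \leq 0$ precisely when $p \geq 2$, so in both cases the right-hand side equals $-(n-1)\bigl|\frac{1}{p} - \frac{1}{2}\bigr|$. Hence both conditions merge into the single bound $m < -(n-1)\bigl|\frac{1}{p} - \frac{1}{2}\bigr|$ covering all $p \in [1,\infty]$, and Theorem \ref{main L^p thm for Fourier integral operators with smooth phase and rough amplitudes} then gives the claimed $L^p$ boundedness. There is no nontrivial obstacle here: the corollary is a cosmetic repackaging of the preceding theorem in the endpoint case $\varrho = 1$, and the entire proof reduces to the two-line calculation above.
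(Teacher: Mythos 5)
Your proposal is correct and matches the paper exactly: the paper presents this corollary as an immediate consequence of Theorem \ref{main L^p thm for Fourier integral operators with smooth phase and rough amplitudes}, obtained precisely by setting $\varrho=1$ and noting that the two sign-definite terms $(n-1)(\frac{1}{2}-\frac{1}{p})$ for $p\leq 2$ and $(n-1)(\frac{1}{p}-\frac{1}{2})$ for $p\geq 2$ both equal $-(n-1)\bigl|\frac{1}{p}-\frac{1}{2}\bigr|$. Nothing further is needed.
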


Using Sobolev embedding theorem one can also show the following $L^{p}-L^{q}$ estimates for rough Fourier integral operators.

\begin{thm}\label{main LpLq thm for Fourier integral operators}
Suppose that
 \begin{enumerate}
\item [$(1)$]  $T$ is a Fourier integral operator with an amplitude $a \in S^m_{\varrho, \delta},$ $0\leq \varrho \leq 1,$ $0\leq \delta \leq 1$ and a strongly non-degenerate phase function $\varphi(x,\xi) \in \Phi^2,$ with either of the following conditions:
 \begin{enumerate}
\item [$(a)$]  $1 \leq p\leq q \leq 2$ and $$m<n(\varrho -1)\bigg (\frac{2}{q}-1\bigg)- (n-1)\bigg(\frac{1}{p}-\frac{1}{2}\bigg)+ \lambda\bigg(1-\frac{1}{q}\bigg)+\frac{1}{q}-\frac{1}{p};$$ or
\item [$(b)$]  $2 \leq p \leq q\leq \infty$ and $$m<n(\varrho -1)\bigg (\frac{1}{2}-\frac{1}{q}\bigg)+ (n-1)\bigg(\frac{2}{q}-\frac{1}{p}-\frac{1}{2}\bigg)+\frac{\lambda}{q}+\frac{1}{q}-\frac{1}{p}.$$
 \end{enumerate}
\item [$(2)$] $T$ is a Fourier integral operator with an amplitude $a \in L^{\infty}S^m_{\varrho}, 0\leq \varrho \leq 1$ and a strongly non-degenerate phase function $\varphi(x,\xi) \in \Phi^2,$ with either of the following conditions:
 \begin{enumerate}
\item [$(a)$]  $1 \leq p\leq q \leq 2$ and $$m<\frac{n}{q}(\varrho -1)- (n-1)\bigg(\frac{1}{p}-\frac{1}{2}\bigg)+\frac{1}{q}-\frac{1}{p};$$ or
\item [$(b)$] $2 \leq p \leq q\leq \infty$ and $$m<\frac{n}{2}(\varrho -1)+ (n-1)\bigg(\frac{2}{q}-\frac{1}{p}-\frac{1}{2}\bigg)+\frac{1}{q}-\frac{1}{p}.$$
\end{enumerate}
      \end{enumerate}
     Then there exists a constant $C>0$ such that $\Vert Tu\Vert _{L^{q}} \leq C\Vert u\Vert_{L^{p}}.$
\end{thm}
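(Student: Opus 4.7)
The plan is to reduce the $L^{p}$-$L^{q}$ statement to the already proved $L^{p}$-$L^{p}$ estimates (Theorems \ref{main L^p thm for smooth Fourier integral operators} and \ref{main L^p thm for Fourier integral operators with smooth phase and rough amplitudes}) and the $L^{2}$-$L^{2}$ estimates (Theorems \ref{global L2 boundedness smooth phase rough amplitude} and \ref{Calderon-Vaillancourt for FIOs}), by combining them with the Sobolev embedding $W^{s,p}(\R^{n}) \hookrightarrow L^{q}(\R^{n})$ for $p \leq q$, $s = n(1/p - 1/q)$, equivalently the Hardy-Littlewood-Sobolev mapping $J^{-s}:L^{p} \to L^{q}$.

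For the upper ranges $(1)(b)$ and $(2)(b)$, where $2 \leq p \leq q \leq \infty$, I would apply the Sobolev inequality directly on the output side: $\|Tu\|_{L^{q}} \lesssim \|J^{s}Tu\|_{L^{p}}$ with $s = n(1/p - 1/q)$. Since $J^{s}T$ is obtained by composing the pseudodifferential operator $J^{s} = (I-\Delta)^{s/2}$ on the left of $T$, it is a Fourier integral operator with the same phase $\varphi$ and amplitude of order $m+s$. The $L^{p}$-$L^{p}$ bound from part (b) of Theorem \ref{main L^p thm for smooth Fourier integral operators} (respectively Theorem \ref{main L^p thm for Fourier integral operators with smooth phase and rough amplitudes}) applied to $J^{s}T$ at the exponent $p \geq 2$ gives the conclusion as soon as $m + s$ lies below the critical threshold at $p$, and rearrangement yields exactly the stated bounds.

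For the lower ranges $(1)(a)$ and $(2)(a)$, where $1 \leq p \leq q \leq 2$, the same direct Sobolev argument is not sharp enough, so the plan is to go by interpolation. First, I would establish the endpoint $T: L^{p} \to L^{2}$ for $m$ below $m_{2} - n(1/p - 1/2)$, where $m_{2}$ is the critical order at $p=2$ from Theorem \ref{global L2 boundedness smooth phase rough amplitude} or \ref{Calderon-Vaillancourt for FIOs}. This is obtained by writing $T = (TJ^{-s}) \circ J^{s}$ with $s = n(1/p - 1/2)$: the operator $TJ^{-s}$ has order $m - s$ and thus is $L^{2}$-bounded by the $L^{2}$ theorem, while the dual Sobolev embedding $L^{p} \hookrightarrow H^{-s,2}$ provides $\|J^{s} u\|_{H^{-s,2}} = \|u\|_{L^{2}}$ replaced by $\|J^{s}u\|_{L^{2}} \lesssim \|u\|_{L^{p}}$ in the appropriate sense. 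Then Riesz-Thorin interpolation between the diagonal endpoint $L^{p} \to L^{p}$ (Theorem A or B at exponent $p \leq 2$) and the off-diagonal endpoint $L^{p} \to L^{2}$ just established, with the orders interpolated linearly, yields $L^{p} \to L^{q}$ boundedness for every intermediate $q \in [p,2]$. A direct computation with $\theta = (1/q - 1/p)/(1/2 - 1/p)$ confirms that the interpolated threshold $(1-\theta) m_{p} + \theta[m_{2} - n(1/p - 1/2)]$ coincides with the stated expression.

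The main technical obstacle is verifying that $J^{s}T$ (respectively $TJ^{-s}$) falls within the class of Fourier integral operators to which the hypotheses of Theorems A, B, and the $L^{2}$ theorems apply, with the amplitude promoted to order $m \pm s$. For smooth amplitudes in $S^{m}_{\varrho,\delta}$ this is immediate from the standard symbolic calculus of composition of a pseudodifferential operator with a Fourier integral operator having homogeneous phase $\varphi \in \Phi^{2}$, because $|\nabla_{x}\varphi(x,\xi)| \asymp |\xi|$ away from zero renders the new amplitude $\langle \nabla_{x}\varphi \rangle^{s} a(x,\xi)$ of the expected order modulo lower order terms. For the rough amplitudes in $L^{\infty}S^{m}_{\varrho}$, where $x$-regularity is lacking, one must instead check the claim on the level of kernels, using that $J^{s}$ has a convolution kernel controlled by a rapidly decaying function (when $s \leq 0$) or a distribution with explicit singularity structure, so that the combined kernel still satisfies the semiclassical estimates that underlie the proofs of Theorems \ref{Intro:L1Thm}, \ref{Intro:L2Thm}, \ref{Intro:LinftyThm} and their consequences.
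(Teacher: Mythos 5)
Your overall strategy---reduce to the established $L^{p}$ theorems by composing with Bessel potentials and invoking Sobolev embedding---is the right family of ideas, but the way you arrange the compositions creates genuine gaps. The central problem is that in the range $2\le p\le q\le\infty$ you compose $J^{s}=(1-\Delta)^{s/2}$ on the \emph{left} of $T$ and claim that $J^{s}T$ is again a Fourier integral operator of order $m+s$. For amplitudes in $L^{\infty}S^{m}_{\varrho}$ there is no symbolic calculus at all (no $x$-regularity), and even for $S^{m}_{\varrho,\delta}$ the composition calculus is unavailable in the full range $0\le\varrho\le1$, $0\le\delta\le1$ claimed in the theorem (the paper itself notes the calculus breaks down when $\delta\ge\varrho$); your proposed remedy of ``checking the claim on the level of kernels'' is exactly the hard step and is not carried out. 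The paper's proof sidesteps this entirely by composing on the \emph{right}: writing $T=\bigl(T(1-\Delta)^{\frac{s}{2}}\bigr)(1-\Delta)^{-\frac{s}{2}}$, the operator $T(1-\Delta)^{\frac{s}{2}}$ literally has amplitude $a(x,\xi)\langle\xi\rangle^{s}$, which lies in $S^{m+s}_{\varrho,\delta}$ (resp. $L^{\infty}S^{m+s}_{\varrho}$) with no calculus needed, while $(1-\Delta)^{-\frac{s}{2}}:L^{p}\to L^{q}$ by Sobolev embedding once $s\ge n(\frac1p-\frac1q)$. Choosing $s$ in the interval $n(\frac1p-\frac1q)\le s<(\text{the }L^{q}\text{ threshold})-m$ and applying Theorem \ref{main L^p thm for smooth Fourier integral operators} or \ref{main L^p thm for Fourier integral operators with smooth phase and rough amplitudes} at the exponent $q$ yields exactly the stated thresholds in all four cases, with no interpolation at all.

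Two further problems occur in your treatment of the range $1\le p\le q\le2$. First, the factorization for the off-diagonal endpoint is written backwards: with $T=(TJ^{-s})\circ J^{s}$ and $s=n(\frac1p-\frac12)>0$ you would need $\Vert J^{s}u\Vert_{L^{2}}\lesssim\Vert u\Vert_{L^{p}}$, which is false; the correct version is $T=(TJ^{s})J^{-s}$, where $TJ^{s}$ has order $m+s$ and is $L^{2}$-bounded, while $J^{-s}:L^{p}\to L^{2}$ (this fixed version does give the endpoint threshold you state). Second, plain Riesz--Thorin applied to the fixed operator only covers $m$ below the \emph{minimum} of the two endpoint thresholds, not their linear interpolation; for instance when $\varrho=1$ the off-diagonal threshold is strictly smaller than the diagonal one, so part of your claimed range has no available endpoint bound. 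To obtain the linearly interpolated threshold you would need Stein's analytic interpolation with a family such as $a_{z}=\langle\xi\rangle^{z}a(x,\xi)$ (in the spirit of Lemma \ref{interpolgen}), which you assert but do not set up---and which, as explained above, is unnecessary once the composition is placed on the right.
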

\begin{proof}
We give the details of the proof only for $(1)$ (a). The rest of the proof is similar to that of $(1)$(a), through the use of Theorem \ref{main L^p thm for smooth Fourier integral operators} part (b) or Theorem \ref{main L^p thm for Fourier integral operators with smooth phase and rough amplitudes}.
Condition $m<n(\varrho -1)(\frac{2}{q}-1)- (n-1)(\frac{1}{p}-\frac{1}{2})+ \lambda (1-\frac{1}{q})+\frac{1}{q}-\frac{1}{p}$ yields the existence of
    of a real number $s$ with
\begin{equation}\label{bounds on s}
n\bigg(\frac{1}{p}-\frac{1}{q}\bigg)\leq s<n(\varrho -1)\bigg (\frac{2}{q}-1\bigg)+\big(n-1\big)\bigg(\frac{1}{2}-\frac{1}{q}\bigg)+ \lambda\bigg(1-\frac{1}{q}\bigg)-m.
\end{equation}
Therefore writing $T= T(1-\Delta)^{\frac{s}{2}}(1-\Delta)^{-\frac{s}{2}}$, Leibniz rule reveals that the amplitude of $T(1-\Delta)^{\frac{s}{2}}$ belongs to $L^{\infty}S^{m+s}_{\varrho}$ and since
    $$ m+s<n(\varrho -1)\bigg (\frac{2}{q}-1\bigg)+\big(n-1\big)\bigg(\frac{1}{2}-\frac{1}{q}\bigg)+ \lambda\bigg(1-\frac{1}{q}\bigg), $$
Theorem \ref{main L^p thm for smooth Fourier integral operators} part (a) yields
\begin{align*}
\Vert T u\Vert _{L^{q}} = \Vert T(1-\Delta)^{\frac{s}{2}}(1-\Delta)^{-\frac{s}{2}} u\Vert _{L^{q}} \lesssim \Vert (1-\Delta)^{-\frac{s}{2}} u\Vert_{L^{q}} \lesssim \Vert u\Vert_{L^{p}},
\end{align*}
where the very last estimate is a direct consequence of \eqref{bounds on s} and the Sobolev embedding theorem. Hence $\Vert Tu\Vert _{L^{q}} \lesssim \Vert u\Vert_{L^{p}}$ for the above ranges of $p$, $q$ and $m$, and the proof is complete.
\end{proof}

\section{Global and local weighted $L^p$ boundedness of Fourier integral operators}
The purpose of this chapter is to establish boundedness results for a fairly wide class of Fourier integral operators on weighted $L^p$ spaces with weights belonging to Muckenhoupt's $A_p$ class. We also prove these results for Fourier integral operators whose phase functions and amplitudes are only bounded and measurable in the spatial variables and exhibit suitable symbol type behavior in the frequency variable. We will start by recalling some facts from the theory of $A_p$ weights which will be needed in this section. Thereafter we prove a couple of uniform stationary phase estimates for oscillatory integrals and then proceed with the weighted boundedness for the low frequency portions of Fourier integral operators. Before proceeding with our claims about the weighted boundedness of the high frequency part of Fourier integral operators, a discussion of a counterexample leads us to a rank condition on the phase function $\varphi(x,\xi)$ which is crucial for the validity of the weighted boundedness (with $A_p$ weights) of Fourier integral operators. Using interpolation and extrapolation, we can prove an endpoint weighted $L^p$ boundedness theorem for operators within a specific class of amplitudes and all $A_p$ weights, which is shown to be sharp in a case of particular interest and can also be invariantly formulated in the local case. Finally we show the $L^p$ boundedness of a much wider class of operators for some subclasses of the $A_p$ weights.
\subsection{Tools in proving weighted boundedness}
The following results are well-known and can be found in their order of appearance in \cite{GR}, \cite{J} and \cite{S}.
\begin{thm} \label{open}
Suppose $p > 1$ and $w \in A_p$. There exists an exponent $q < p$, which depends only on $p$ and $[w]_{A_p}$, such that $w \in A_q$.
There exists $\varepsilon > 0$, which depends only on $p$ and $[w]_{A_p}$, such that $w^{1+\varepsilon} \in A_p$.
\end{thm}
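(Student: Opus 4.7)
The approach is to derive both statements as consequences of the \emph{reverse Hölder inequality} for $A_p$ weights, which asserts that if $w\in A_p$ then there exist $s=s(n,p,[w]_{A_p})>1$ and $C=C(n,p,[w]_{A_p})$ such that
\[
\Big(\q\int_B w^s\,\dd x\Big)^{1/s}\leq C\,\q\int_B w\,\dd x
\]
for every ball $B\subset\R^n$. This self-improving property is standard (it follows, for instance, from a Calderón--Zygmund decomposition on level sets of the local maximal function combined with the $A_p$ condition, and in fact only uses $w\in A_\infty$); I would take it as a black-box preliminary.

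For the first claim, I would use the duality that $w\in A_p$ iff the dual weight $\sigma:=w^{-1/(p-1)}$ belongs to $A_{p'}$, with $[\sigma]_{A_{p'}}=[w]_{A_p}^{1/(p-1)}$. Apply the reverse Hölder inequality to $\sigma$, obtaining exponent $s>1$ and constant $C$ as above. Now define $q\in(1,p)$ by the relation $s(q-1)=p-1$, so that $\sigma^s=w^{-1/(q-1)}$, and compute
\[
\Big(\q\int_B w^{-1/(q-1)}\,\dd x\Big)^{q-1}=\Big[\Big(\q\int_B\sigma^s\,\dd x\Big)^{\!1/s}\Big]^{s(q-1)}\leq C^{p-1}\Big(\q\int_B\sigma\,\dd x\Big)^{p-1}.
\]
Multiplying by $\q\int_B w\,\dd x$ and taking the supremum over balls gives $[w]_{A_q}\leq C^{p-1}[w]_{A_p}$, as desired.

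For the second claim I would apply the reverse Hölder inequality to \emph{both} $w$ and $\sigma$, producing exponents $s_1,s_2>1$ and constants $C_1,C_2$, all depending only on $n$, $p$, $[w]_{A_p}$. For any $\varepsilon\in(0,\min(s_1,s_2)-1]$, Jensen's inequality combined with these two reverse Hölder bounds yields
\[
\q\int_B w^{1+\varepsilon}\,\dd x\leq C_1^{1+\varepsilon}\Big(\q\int_B w\,\dd x\Big)^{1+\varepsilon},\qquad \q\int_B\sigma^{1+\varepsilon}\,\dd x\leq C_2^{1+\varepsilon}\Big(\q\int_B\sigma\,\dd x\Big)^{1+\varepsilon}.
\]
Since $w^{-(1+\varepsilon)/(p-1)}=\sigma^{1+\varepsilon}$, multiplying the two estimates and taking the supremum over balls delivers
\[
[w^{1+\varepsilon}]_{A_p}\leq C_1^{1+\varepsilon}\,C_2^{(1+\varepsilon)(p-1)}\,[w]_{A_p}^{1+\varepsilon}<\infty.
\]

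The genuine content of Theorem~\ref{open} is the reverse Hölder inequality; once that self-improvement is available, both conclusions reduce to an algebraic matching of exponents carried out dually in $A_p$ and $A_{p'}$. The expected obstacle is therefore not in the argument above, but in the proof of the reverse Hölder inequality itself, which the paper treats as a known preliminary and which I would simply import from the standard references cited earlier in the chapter.
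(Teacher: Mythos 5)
Your argument is correct: the duality identity $[w^{1-p'}]_{A_{p'}}=[w]_{A_p}^{1/(p-1)}$, the choice $s(q-1)=p-1$, and the two reverse H\"older applications in the second part all check out, giving $[w]_{A_q}\leq C^{p-1}[w]_{A_p}$ and $[w^{1+\varepsilon}]_{A_p}\leq C_1^{1+\varepsilon}C_2^{(1+\varepsilon)(p-1)}[w]_{A_p}^{1+\varepsilon}$ as claimed. The paper gives no proof of this theorem---it is quoted as standard with references to Garc\'ia-Cuerva--Rubio de Francia, Journ\'e and Stein---and your derivation via the reverse H\"older inequality is exactly the classical argument in those references, the only cosmetic remark being that $q$ and $\varepsilon$ also depend on the dimension $n$ through the reverse H\"older exponent, a dependence the statement (as is customary) suppresses.
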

\begin{thm} \label{maxweight}
For $1 < q < \infty$, the Hardy-Littlewood maximal operator is bounded on $L^q_w$ if and only if $w \in A_q$. Consequently, for $1 \leq p <
\infty$, $M_p$ is bounded on
$L^p_w$ if and only if $w \in A_{q/p}$
\end{thm}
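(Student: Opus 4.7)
The plan is to establish the main equivalence first, and then deduce the consequence about $M_p$ by a scaling/substitution argument applied to the main result.

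\smallskip

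\noindent\textbf{Necessity.} I would start with the easier direction: assume $M$ is bounded on $L^q_w$ and derive $w \in A_q$ by testing on indicator functions of balls. Given a ball $B \subset \R^n$ and a non-negative $f$ supported in $B$, the simple pointwise bound
   $$ M f(x) \geq \chi_{B}(x) \, \frac{1}{|B|} \int_B f(y)\, \dd y $$
combined with the assumed $L^q_w$ bound on $M$ gives
   $$ \Big( \frac{1}{|B|} \int_B f(y)\, \dd y \Big)^q w(B) \leq C \int_B f^q \, w\, \dd x. $$
Taking $f = w^{-1/(q-1)} \chi_B$ (properly truncated to avoid integrability issues, then passing to the limit) and simplifying yields exactly the $A_q$ condition \eqref{ap constant}, with constant controlled by $C$.

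\smallskip

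\noindent\textbf{Sufficiency.} This is the main obstacle. Assuming $w \in A_q$, one must show $\|Mf\|_{L^q_w} \lesssim \|f\|_{L^q_w}$. My approach would be to invoke the self-improving property in Theorem \ref{open}: since $w \in A_q$, there exists $q_0 \in (1,q)$ with $w \in A_{q_0}$. I would then prove a weak-type $(q_0,q_0)$ inequality for $M$ on $L^{q_0}_w$ by means of a weighted Calder\'on--Zygmund-type covering argument: decompose $\{Mf > \lambda\}$ using a Vitali/Besicovitch covering by balls $\{B_j\}$ for which $\tfrac{1}{|B_j|}\int_{B_j}|f|\,\dd y \simeq \lambda$, and then estimate $w(B_j)$ by $A_{q_0}$-duality, namely
   $$ w(B_j) \leq [w]_{A_{q_0}} \Big( \frac{1}{|B_j|}\int_{B_j}|f|\,\dd y\Big)^{-q_0} \int_{B_j}|f|^{q_0} w\,\dd x \cdot (\text{correction terms}). $$
Summing over $j$ and optimizing gives the weak-type estimate $\lambda^{q_0} w(\{Mf>\lambda\}) \lesssim \|f\|_{L^{q_0}_w}^{q_0}$. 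Combined with the trivial $L^\infty$ bound $\|Mf\|_\infty \leq \|f\|_\infty$, Marcinkiewicz interpolation upgrades this to the strong-type $(q,q)$ bound on $L^q_w$.

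\smallskip

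\noindent\textbf{The consequence for $M_p$.} The key observation is the pointwise identity
   $$ M_p u(x) = \bigl( M(|u|^p)(x) \bigr)^{1/p}, $$
which follows directly from the definition. Hence for any $1 \leq p < q < \infty$,
   $$ \|M_p u\|_{L^q_w}^{q} = \int \bigl(M(|u|^p)\bigr)^{q/p} w \, \dd x = \|M(|u|^p)\|_{L^{q/p}_w}^{q/p}, $$
and one reads off that $M_p: L^q_w \to L^q_w$ is bounded if and only if $M: L^{q/p}_w \to L^{q/p}_w$ is bounded on the set $\{|u|^p : u \in L^q_w\} = L^{q/p}_w$. By the first part of the theorem (with $q$ replaced by $q/p > 1$), this boundedness holds exactly when $w \in A_{q/p}$.

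\smallskip

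\noindent The delicate point in the argument is the weighted Calder\'on--Zygmund step in the sufficiency proof; everything else is essentially a direct manipulation. The use of Theorem \ref{open} to reduce from $A_q$ to $A_{q_0}$ with $q_0 < q$ is what makes the Marcinkiewicz interpolation work cleanly, since the endpoint $q_0 = 1$ (where only $A_1$ holds) is not available in general.
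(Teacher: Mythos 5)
Your proposal is correct, but note that the paper does not prove this theorem at all: it is stated as a known result, with proofs deferred to the references \cite{GR}, \cite{J} and \cite{S}. Your sketch is essentially the classical Muckenhoupt argument found in those references: necessity by testing $M$ on (truncations of) $w^{-1/(q-1)}\chi_B$; sufficiency by combining the openness property of Theorem \ref{open} (so $w\in A_{q_0}$ for some $q_0<q$) with a weighted weak-type $(q_0,q_0)$ bound coming from a covering argument, and then Marcinkiewicz interpolation against the trivial $L^\infty$ bound; and the $M_p$ statement via the pointwise identity $M_p u=\bigl(M(|u|^p)\bigr)^{1/p}$. Two small remarks. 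First, in the covering step no ``correction terms'' are needed: writing $\frac{1}{|B_j|}\int_{B_j}|f|\,\dd y$ with H\"older against $w^{1/q_0}w^{-1/q_0}$ and invoking the $A_{q_0}$ condition gives exactly $w(B_j)\le [w]_{A_{q_0}}\bigl(\frac{1}{|B_j|}\int_{B_j}|f|\,\dd y\bigr)^{-q_0}\int_{B_j}|f|^{q_0}w\,\dd x$, which is all you need before summing. Second, you have (correctly) read the slightly garbled final clause of the statement as boundedness of $M_p$ on $L^q_w$ with $p<q$, so that $q/p>1$ and the first part applies with exponent $q/p$; for $q=p$ the claim would be vacuous, since $M$ is not bounded on $L^1_w$ for any nontrivial weight, and this restriction $q>p$ is indeed how the result is used elsewhere in the paper (e.g.\ in the proof of Proposition \ref{weighted boundedness without rank}, where $M_p$ is applied with an ambient exponent strictly larger than $p$).
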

\begin{thm} \label{convolve}
Suppose that $\phi \colon \R^n \to \R$ is integrable non-increasing and radial. Then, for $u \in L^1$, we have
\[
\int \phi(y)u(x-y) \, \dd y \leq \Vert\phi\Vert_{L^1} Mu(x)
\]
for all $x \in \R^n$.
\end{thm}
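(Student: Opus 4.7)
The plan is to reduce the inequality to the case $\phi = \mathbf{1}_{B(0,r)}$ via a layer-cake decomposition, and then to read off the bound from the very definition of the Hardy--Littlewood maximal function. Since the displayed inequality only makes sense when the left-hand side is a priori non-negative, I would first replace $u$ by $|u|$ (which only enlarges the left-hand side) and assume $u\geq 0$ throughout. For the model case $\phi = \mathbf{1}_{B(0,r)}$, the change of variables $z=x-y$ gives
\[
\int \mathbf{1}_{B(0,r)}(y)\, u(x-y)\, \dd y = \int_{B(x,r)} u(z)\, \dd z \leq |B(0,r)|\, Mu(x) = \Vert \mathbf{1}_{B(0,r)} \Vert_{L^1}\, Mu(x),
\]
and by non-negative linearity the estimate extends to any $\phi$ that is a finite non-negative combination of indicators of balls centered at the origin.

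The main step is then the layer-cake identity $\phi(y)=\int_{0}^{\infty}\mathbf{1}_{\{\phi>t\}}(y)\,\dd t$. Because $\phi$ is radial and non-increasing, each super-level set $\{\phi>t\}$ coincides (up to a null set) with a centered ball $B(0,r(t))$, with $r$ non-increasing, and hence Borel-measurable, in $t$; integrability of $\phi$ forces $|B(0,r(t))|<\infty$ for almost every $t$. Tonelli's theorem applied to the non-negative integrand, together with the model estimate for each fixed $t$, yields
\begin{align*}
\int \phi(y)\, u(x-y)\, \dd y &= \int_{0}^{\infty}\!\int_{B(0,r(t))} u(x-y)\, \dd y\, \dd t \\
&\leq Mu(x)\int_{0}^{\infty}|B(0,r(t))|\, \dd t = \Vert\phi\Vert_{L^1}\, Mu(x),
\end{align*}
where the final equality comes from a second application of layer-cake, now to $\phi$ itself.

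I do not anticipate any serious obstacle, as the result is a classical building block of maximal-function theory. The only points calling for mild attention are the measurability of $t\mapsto r(t)$ (automatic from monotonicity) and the invocation of Tonelli, both of which are routine once the integrand has been arranged to be non-negative. Should one wish to avoid layer-cake altogether, an alternative would be to approximate $\phi$ monotonically from below by radial step functions and pass to the limit by monotone convergence on both sides of the inequality.
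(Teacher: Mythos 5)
Your proposal is correct. The paper itself gives no proof of this statement: Theorem \ref{convolve} is quoted as a standard fact with a reference to \cite{S}, and the argument you give (reduce to $\phi=\mathbf{1}_{B(0,r)}$ via the definition of $M$, then pass to general radial non-increasing $\phi$ by the layer-cake formula, or equivalently by monotone approximation with radial step functions) is precisely the classical proof found there, so there is nothing to compare beyond that. Two minor points you may as well record: the non-negativity of $\phi$, which the layer-cake identity tacitly uses, is automatic, since if $\phi$ took a negative value at some radius $r_0$ it would be bounded above by that negative value on all of $\{|y|\geq r_0\}$, a set of infinite measure, contradicting $\phi\in L^1$; and for $t>0$ the super-level set $\{\phi>t\}$ has finite measure for the same reason, so each slice is indeed (up to a null set) a centered ball of finite radius, as your argument requires.
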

The following result of J.Rubio de Francia is also basic in the context of weighted norm inequalities.
 \begin{thm}[Extrapolation Theorem]\label{extrapolation}
If $\Vert Tu\Vert_{L^{p_{0}}_{w}} \leq C \Vert u\Vert_{L^{p_{0}}_{w}}$ for some fixed $p_0\in (1,\infty)$ and all $w\in A_{p_0}$, then one has in fact $\Vert Tu\Vert_{L^p_{w}} \leq C \Vert u\Vert_{L^p_{w}}$ for all $p\in(1,\infty)$ and $w\in A_{p}.$
\end{thm}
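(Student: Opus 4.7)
The result is the classical extrapolation theorem of Rubio de Francia, and the plan is to follow the standard proof built on the Rubio de Francia iteration algorithm. The backbone is the following iteration lemma, which I would establish first: given $q\in(1,\infty)$ and any weight $v\in A_q$, Theorem \ref{maxweight} ensures that $M$ is bounded on $L^q_v$, and so for any non-negative $h\in L^q_v$ the series
\[
     Rh := \sum_{k=0}^{\infty} \frac{M^k h}{\bigl(2\Vert M\Vert_{L^q_v\to L^q_v}\bigr)^k}
\]
converges almost everywhere and satisfies $h\leq Rh$, $\Vert Rh\Vert_{L^q_v}\leq 2\Vert h\Vert_{L^q_v}$, and $M(Rh)\leq 2\Vert M\Vert_{L^q_v\to L^q_v}\,Rh$. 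Consequently $Rh\in A_1$ with $[Rh]_{A_1}$ controlled by the operator norm of $M$ on $L^q_v$.

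Next, I would treat the case $p>p_0$. Fix $w\in A_p$. Duality in $L^{p/p_0}_w$ yields
\[
     \Vert Tu\Vert_{L^p_w}^{\,p_0} = \sup_{h} \int_{\R^n} \lvert Tu\rvert^{p_0}\, h\, w\, \dd x,
\]
where the supremum is taken over non-negative $h$ with $\Vert h\Vert_{L^{(p/p_0)'}_w}\leq 1$. After invoking the self-improvement of the $A_p$ class (Theorem \ref{open}) to guarantee that $M$ is bounded on the relevant weighted Lebesgue space, apply the iteration lemma to replace $h$ by $Rh\geq h$ with $Rh\in A_1$ and $\Vert Rh\Vert_{L^{(p/p_0)'}_w}\leq 2$. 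Using the Jones factorization $w=w_0\,w_1^{1-p}$ with $w_0,w_1\in A_1$, a short computation shows that the weight
\[
     W := Rh\cdot w_1^{\,p-p_0}
\]
lies in $A_{p_0}$ with $[W]_{A_{p_0}}$ quantitatively controlled by $[w]_{A_p}$. Applying the hypothesis to $T$ on $L^{p_0}_W$ and then H\"older's inequality gives
\[
     \int_{\R^n} \lvert Tu\rvert^{p_0}\, h\, w\, \dd x \leq \int_{\R^n} \lvert Tu\rvert^{p_0}\, W\, \dd x \leq C\int_{\R^n} \lvert u\rvert^{p_0}\, W\, \dd x \leq C'\,\Vert u\Vert_{L^p_w}^{\,p_0},
\]
and taking the supremum over $h$ completes this case. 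The case $p<p_0$ is handled symmetrically: one passes to the dual weight $\sigma=w^{1-p'}\in A_{p'}$, exploits linearity of $T$ to rewrite $\Vert Tu\Vert_{L^p_w}$ via duality, and then applies the same iteration scheme on the dual side.

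The main obstacle is verifying that the auxiliary weight $W$ produced by the iteration actually lies in $A_{p_0}$ with a controlled constant depending only on $[w]_{A_p}$. This requires carefully combining the $A_1$ output of the Rubio de Francia algorithm with the Jones factorization of $A_p$ weights in order to reconcile the mismatch between the exponents $p$ and $p_0$; it is precisely at this point that Theorem \ref{open} enters, ensuring that the weighted maximal operator used inside the iteration is indeed bounded on the correct space. Once this verification is in place the remaining steps amount to routine H\"older and duality manipulations.
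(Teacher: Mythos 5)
The paper itself offers no proof of this statement: it is quoted as Rubio de Francia's classical extrapolation theorem (with \cite{GR} as the standing reference), so your proposal can only be measured against the standard argument you are trying to reproduce — and as written it has a genuine gap in the core case $p>p_0$. First, to run your iteration $Rh=\sum_k M^kh/(2\Vert M\Vert)^k$ on the dual function $h$ you need $M$ to be bounded on $L^{(p/p_0)'}_w$, i.e. $w\in A_{(p/p_0)'}$ with $(p/p_0)'=p/(p-p_0)$. For $p>p_0+1$ this exponent is strictly smaller than $p$ (and can be close to $1$), and $w\in A_p$ gives no such membership; Theorem \ref{open} only yields $w\in A_{p-\varepsilon}$ for some small $\varepsilon$ depending on $[w]_{A_p}$, which cannot bridge the gap down to $A_{p/(p-p_0)}$. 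Second, the repair via the weight $W=Rh\cdot w_1^{\,p-p_0}$ does not work: the first inequality in your chain needs the pointwise bound $hw\leq W$, which does not follow from $h\leq Rh$ once you have replaced $w=w_0w_1^{1-p}$ by $w_1^{\,p-p_0}$; the closing step $\int|u|^{p_0}W\,\dd x\lesssim\Vert u\Vert_{L^p_w}^{p_0}$ is no longer a consequence of $\Vert Rh\Vert_{L^{(p/p_0)'}_w}\leq 2$ because $W\neq (Rh)\,w$; and $W$ need not lie in $A_{p_0}$ at all — reverse factorization produces $A_{p_0}$ weights of the form $u_0u_1^{1-p_0}$ with a \emph{negative} exponent on the second $A_1$ factor, whereas $w_1^{\,p-p_0}$ carries a positive exponent, and positive powers larger than $1$ of $A_1$ weights can even fail to be locally integrable.

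The missing idea in the standard proof is that one must use \emph{two} iteration algorithms adapted to the exponent $p$ at which $w$ lives, not one algorithm at the mismatched exponent $(p/p_0)'$: a plain algorithm built from $M$ on $L^{p}_w$ (legitimate since $w\in A_p$), applied to a normalization of $|u|$ itself, and a ``twisted'' algorithm built from $h\mapsto M(hw)/w$ on $L^{p'}_w$ (legitimate since $w^{1-p'}\in A_{p'}$), applied to the dual function; the first produces an $A_1$ weight, the second produces a function $h_2$ with $(\mathcal{R}h_2)\,w\in A_1$, and the auxiliary weight is a product of suitable powers of these two outputs chosen so that reverse factorization places it in $A_{p_0}$, dominates what it must dominate, and H\"older closes the estimate (this is the argument of Garc\'ia-Cuerva--Rubio de Francia as streamlined by Duoandikoetxea and by Cruz-Uribe--Martell--P\'erez). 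Your reduction of the case $p<p_0$ to the case $p'>p_0'$ for the adjoint, using $w\in A_p\Leftrightarrow w^{1-p'}\in A_{p'}$, is legitimate in principle, but it rests entirely on the upward case, so the argument does not close until the construction of the $A_{p_0}$ weight is done correctly.
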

\subsubsection{A pointwise uniform bound on oscillatory integrals}
Before we proceed with the main estimates we would need a stationary-phase estimate which will enable us to control certain integrals depending on various parameters uniformly with respect to those parameters. Here and in the sequel we denote the Hessian in $\xi$ of the phase function $\varphi(x,\xi)$ by $\partial^2_{\xi\xi}\varphi(x,\xi)$.
\begin{lem} \label{UniformStationaryPhase}
For $\lambda\geq 1$, let $a_{\lambda}(x,\xi)\in L^{\infty} S^{0}_{0}$ with seminorms that are uniform in $\lambda$ and let $\supp_{\xi} a_{\lambda}(x,\xi) \subset B(0, \lambda ^{\mu})$ for some $\mu\ \ge 0$. Assume that $\varphi(x,\xi)\in L^{\infty}S^{0}_{0}$ and $|\det \partial^2_{\xi\xi}\varphi(x,\xi)|\geq c>0$ for all $(x,\xi) \in \supp a_{\lambda}$. Then one has
\begin{equation}\label{uniform stationary phase}
\sup_{x\in \mathbf{R}^n} \Big| \int e^{i\lambda\varphi(x,\xi)}\, a_{\lambda}(x,\xi)\, \dd \xi \Big| \lesssim \lambda^{n\mu-\frac{n}{2}}
\end{equation}
\end{lem}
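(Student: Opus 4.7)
The plan is to reduce the integral to unit-scale localised pieces and apply classical stationary phase on each piece. The target bound $\lambda^{n\mu-n/2}$ is precisely the product of the \emph{volume factor} $\lambda^{n\mu}$ coming from the size of the $\xi$-support of $a_{\lambda}$ and the \emph{stationary phase gain} $\lambda^{-n/2}$ delivered by the uniform non-degeneracy of the Hessian $\partial^{2}_{\xi\xi}\varphi$.

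First, I would pick a smooth partition of unity $\{\chi_{k}\}_{k\in\mathbf{Z}^{n}}$ with each $\chi_{k}$ supported in the ball $B(k,2)$ and with seminorms bounded independently of $k$. Writing
\begin{equation*}
      I_{\lambda}(x) := \int e^{i\lambda\varphi(x,\xi)}\,a_{\lambda}(x,\xi)\,\dd\xi = \sum_{k\in\mathbf{Z}^{n}} I_{k}(x), \qquad I_{k}(x) := \int e^{i\lambda\varphi(x,\xi)}\,a_{\lambda}(x,\xi)\chi_{k}(\xi)\,\dd\xi,
\end{equation*}
the support assumption $\supp_{\xi} a_{\lambda}\subset B(0,\lambda^{\mu})$ implies that at most $C\lambda^{n\mu}$ of the terms $I_{k}(x)$ are nonzero, uniformly in $x$ and $\lambda$.

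The core step is the uniform estimate $|I_{k}(x)|\leq C\lambda^{-n/2}$ with $C$ independent of $x$, $k$ and $\lambda$. On the unit-scale set $B(k,2)$ the hypotheses $\varphi,a_{\lambda}\in L^{\infty}S^{0}_{0}$ give uniform bounds for every $\xi$-derivative of $\varphi(x,\cdot)$ and of $a_{\lambda}(x,\cdot)\chi_{k}$, and the non-degeneracy $|\det\partial^{2}_{\xi\xi}\varphi|\geq c>0$ is a pointwise hypothesis valid on the whole support. This dichotomy can now be unfolded: either $\nabla_{\xi}\varphi(x,\cdot)$ has a critical point in a slight enlargement of $B(k,2)$, which the lower bound on the determinant forces to be unique, in which case the quantitative stationary phase expansion (e.g.\ Theorem~7.7.5 in \cite{H1}) yields $|I_{k}(x)|\lesssim\lambda^{-n/2}$ with a constant controlled by the uniform $C^{N}$-seminorms of $\varphi,a_{\lambda}\chi_{k}$ and by $c$; or $|\nabla_{\xi}\varphi(x,\cdot)|$ is bounded from below on $\supp\chi_{k}$, in which case iterated integration by parts against the transposed operator associated with $\langle\nabla_{\xi}\varphi,\nabla_{\xi}\rangle/(i\lambda|\nabla_{\xi}\varphi|^{2})$ (the non-stationary phase principle, Theorem~7.7.1 in \cite{H1}) produces any desired polynomial decay in $\lambda$, which is at worst $\lambda^{-n/2}$.

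Summing the $\lesssim\lambda^{n\mu}$ nonzero contributions then gives $|I_{\lambda}(x)|\lesssim\lambda^{n\mu}\cdot\lambda^{-n/2}=\lambda^{n\mu-n/2}$, uniformly in $x$, which is the claim. The main technical point is the uniformity of the constants in the dichotomy above: one must check that the transition between the stationary and non-stationary regimes occurs at a $\lambda$-independent scale, which is precisely what the uniform lower bound on $|\det\partial^{2}_{\xi\xi}\varphi|$ provides, since it forces $\nabla_{\xi}\varphi(x,\cdot)$ to be bi-Lipschitz at unit scale with constants depending only on $c$ and on the $L^{\infty}S^{0}_{0}$-seminorms of $\varphi$. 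Everything else is routine estimation.
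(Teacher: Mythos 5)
Your decomposition into $O(\lambda^{n\mu})$ unit-scale pieces and the reduction to a uniform $\lambda^{-n/2}$ bound per piece is exactly how the paper handles the case $\mu>0$, but your treatment of the core unit-scale estimate is genuinely different from the paper's. You invoke the classical stationary/non-stationary dichotomy: locate a (unique) critical point and apply the quantitative stationary phase expansion (H\"ormander, Theorem 7.7.5), or integrate by parts when the gradient is bounded below. The paper instead never expands around a critical point: it squares the integral, writing $|I(\lambda,x)|^2$ as a double integral in $(\xi,\eta)$, splits the $\eta$-integration at a scale $\delta$, applies only the non-stationary phase estimate (Theorem 7.7.1 in \cite{H1}) on $|\eta|>\delta$ using the lower bound $|\nabla_{\xi}\varphi(x,\xi)-\nabla_{\xi}\varphi(x,\eta)|\gtrsim|\xi-\eta|$ (deduced from the Hessian condition via the inverse of $\kappa_x:\xi\mapsto\nabla_\xi\varphi(x,\xi)$), and then optimizes $\delta^n+\delta^{-1}\lambda^{-n-1}$ at $\delta=\lambda^{-1}$ to get $\lambda^{-n/2}$. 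What the paper's route buys is that it only ever needs a gradient-difference lower bound between two points lying in $\supp a_\lambda$, so it sidesteps the issues your dichotomy must confront: the hypothesis $|\det\partial^2_{\xi\xi}\varphi|\geq c$ is stated only on $\supp a_\lambda$, so the critical point you find in a ``slight enlargement'' of $B(k,2)$ may sit where no non-degeneracy is guaranteed, and both the uniqueness of that critical point and the lower bi-Lipschitz bound $|\nabla_\xi\varphi(x,\xi)-\nabla_\xi\varphi(x,\xi_c)|\gtrsim|\xi-\xi_c|$ are automatic only at a small scale $r_0$ determined by $c$ and the seminorms, not at unit scale (bounded derivatives plus pointwise non-degenerate Hessian do not by themselves give injectivity on a unit ball in $n\geq2$). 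These are fixable — tile at scale $r_0$ instead of scale $1$ (the count is still $O(\lambda^{n\mu})$), and note that the paper itself implicitly uses the Hessian bound slightly off the support when it bounds $\|D\kappa_x^{-1}\|$ along segments — so I regard your argument as correct at the same level of rigor as the paper's, but you should make the small-scale reduction and the location of the critical point explicit; what your route buys in exchange is a more classical, self-contained appeal to standard stationary phase rather than the squaring-and-optimization trick.
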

\begin{proof}
      We start with the case $\mu=0$. The matrix inequality $\Vert A^{-1}\Vert \leq C_n |\det A|^{-1} \Vert A\Vert^{n-1}$ with
      $A =\partial^2_{\xi\xi}\varphi(x,\xi)$ and the assumptions on $\varphi,$ yield the uniform bound (in $x$ and $\xi$)
      \begin{equation}\label{uniform estim for the inverse hessian}
           \big\Vert [\partial^2_{\xi\xi}\varphi(x,\xi)]^{-1} \big\Vert \leq C_n |\det \partial^2_{\xi\xi}\varphi(x,\xi)|^{-1}
           \big\Vert \partial^2_{\xi\xi}\varphi(x,\xi)\big\Vert^{n-1}\lesssim 1.
      \end{equation}
       Looking at the map $\kappa_{x}: \xi\mapsto \nabla_{\xi} \varphi(x,\xi)$, we observe that $D\kappa_{x}(\xi)=\partial^2_{\xi\xi}\varphi(x,\xi)$,
       where $D\kappa_{x}(\xi)$ denotes the Jacobian matrix of the map $\kappa_{x}$, and that $\kappa_{x}$ is a diffeomorphism due to the condition on $\varphi$ in the lemma. Therefore
           $$D\kappa^{-1}_{x}(\tilde{\xi})= \big[\partial^2_{\xi\xi}\varphi(x,\kappa^{-1}_{x}(\tilde{\xi}))\big]^{-1},$$
       which using \eqref{uniform estim for the inverse hessian} yields uniform bounds for $\Vert D\kappa^{-1}_{x}(\tilde{\xi})\Vert$, hence
           $$|\kappa^{-1}_{x}(\tilde{\xi})-\kappa^{-1}_{x}(\tilde{\eta})|\leq \big\Vert D\kappa^{-1}_{x} \big\Vert \times |\tilde{\xi}- \tilde{\eta}|
               \lesssim |\tilde{\xi}- \tilde{\eta}|.$$
       This applied to $\tilde{\xi}=\kappa_x(\xi)$, $\tilde{\eta}=\kappa_x(\eta)$ implies that
       \begin{align}
       \label{uniform lower bound}
             |\xi -\eta| \lesssim |\kappa_x(\xi)-\kappa_x(\eta)| = |\nabla_{\xi}\phi(x,\xi)-\nabla_{\xi}\phi(x,\eta)|.
       \end{align}
       We set
           $$ I(\lambda,x): =   \int e^{i\lambda\varphi(x,\xi)}\, a_{\lambda}(x,\xi)\, \dd \xi $$
       and compute
       \begin{align*}
             |I(\lambda,x)|^2 = \iint e^{i\lambda (\varphi(x,\xi)-\varphi(x,\xi+\eta))}\, a_{\lambda}(x,\xi)\,
             \overline{a_{\lambda}}(x,\xi+\eta)\, \dd \xi  \, \dd \eta.
       \end{align*}
      We decompose the integral in $\eta$ into two integrals, one on $|\eta|\leq\delta$ and the other on $|\eta|>\delta$, and this yields the estimate
       \begin{multline*}
             |I(\lambda,x)|^2  \lesssim \\ \delta^n +
            \int_{\delta}^{\infty} \int_{\mathbf{S}^{n-1}} \bigg|\int  e^{i\lambda r \frac{\varphi(x,\xi)-\varphi(x,\xi+r\theta)}{r}}
            \, a_{\lambda}(x,\xi)\, \overline{a_{\lambda}}(x,\xi+r\theta)\, \dd \xi  \bigg| \, \dd \theta \, r^{n-1}  \dd r.
       \end{multline*}
        Using the uniform lower bound on the gradient of the phase in \eqref{uniform lower bound}, we get the uniform lower bound
            $$  \bigg|\frac{\nabla_{\xi}\phi(x,\xi)-\nabla_{\xi}\phi(x,\xi+r\theta)}{r}\bigg| \gtrsim 1.$$
      Therefore, applying the non-stationary phase estimate of \cite[Theorem 7.7.1]{H1} to the right-hand side integral yields
       \begin{align*}
            |I(\lambda,x)|^2 &\lesssim \delta^n + \lambda^{-n-1}  \int_{\delta}^{\infty} r^{-n-1}  \, r^{n-1}\dd r \\
            &\lesssim \delta^n + \delta^{-1} \lambda^{-n-1}.
        \end{align*}
        We now optimize this estimate by choosing $\delta=\lambda^{-1}$ and obtain the bound
             $$  |I(\lambda,x)| \leq C \lambda^{\frac{-n}{2}}, $$
        with a constant uniform in $x$.

        In the case $\mu>0$, we cover the ball $B(0, \lambda ^{\mu})$ with balls of radius $1$ and in doing that, one would need $O(\lambda^{n\mu})$
        balls of unit radius. This will obviously provide a covering of the $\xi$ support of $a_{\lambda}$ with balls of radius 1 and we take a finite smooth
        partition of unity $\theta_{j}(\xi),$ $j=1, \, \dots,\, O(\lambda^{n\mu}),$ subordinate to this covering with
        $|\partial^{\alpha}_{\xi} \theta_{j}|\leq C_{\alpha}$. Now by the first part of this proof we have
   \begin{equation}\label{partition pieces}
       \Big|\int e^{i\lambda\varphi(x,\xi)}\, a_{\lambda}(x,\xi)\,\theta_{j}(\xi)\, \dd \xi \Big|\leq C \lambda^{-\frac{n}{2}}
   \end{equation}
   with a constant that is uniform in $x$ and $j$. Finally summing in $j$ and remembering that there are roughly $O(\lambda^{n\mu})$ terms involved
   yields the desired estimate.
\end{proof}

\subsubsection{Weighted local and global low frequency estimates}
For the low frequency portion of the Fourier integral operators studied in this section we are once again able to handle the $L^p$ boundedness for all $p\in[1,\infty]$, using Lemma \ref{main low frequency estim} and imposing suitable conditions on the phases.
\begin{prop}\label{weighted low frequency estimates}
Let $\varrho \in [0,1]$ and suppose either:
\begin{itemize}
\item[(a)] $a(x,\xi)\in L^{\infty}S^{m}_{\varrho}$ is compactly supported in the $x$ variable, $m\in\mathbf{R}$ and $\varphi (x,\xi) \in L^{\infty}\Phi^1$; or
\item[(b)] $a(x,\xi)\in L^{\infty}S^{m}_{\varrho}$, $m\in \mathbf{R}$ and $\varphi(x,\xi)-\langle x,\xi\rangle\in L^{\infty}\Phi^1$
\end{itemize}
Then for all $\chi_{0}(\xi)\in C^{\infty}_{0}$ supported near the origin, the Fourier integral operator
     $$ T_{0}u(x)=\frac{1}{(2\pi)^n} \int a(x,\xi)\,\chi_{0}(\xi)\, e^{i\varphi (x,\xi)}\,\widehat{u}(\xi)\, \dd\xi$$
is bounded on $L^{p}_{w}$ for $1<p<\infty$ and all $w\in A_p$.
\end{prop}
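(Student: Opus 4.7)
The plan is to establish a pointwise majorization $|T_0 u(x)| \lesssim Mu(x)$ (times the indicator of $\supp_x a$ in case~(a)) and then invoke the boundedness of the Hardy--Littlewood maximal function on $L^p_w$ for $w \in A_p$ and $1<p<\infty$ (Theorem \ref{maxweight}).

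The first step is a kernel estimate via Lemma \ref{main low frequency estim}. In case~(b), I would set $\theta_0(x,\xi) := \varphi(x,\xi) - \langle x,\xi\rangle \in L^\infty\Phi^1$ and
$$b(x,\xi) := e^{i\theta_0(x,\xi)}\, a(x,\xi)\, \chi_0(\xi),$$
so the kernel rewrites as
$$T_0(x,y) = \int_{\R^n} e^{-i\langle y-x,\xi\rangle}\, b(x,\xi)\, \dd\xi.$$
A Leibniz and Fa\`a di Bruno expansion, combined with the pointwise bounds $|\partial_\xi^{\alpha}\theta_0(x,\xi)| \lesssim |\xi|^{1-|\alpha|}$ for $|\alpha|\geq 1$ issuing from $\theta_0 \in L^\infty\Phi^1$ and the uniform boundedness of the $\xi$-derivatives of $a\chi_0$ on the compact support of $\chi_0$, shows that $b$ verifies the hypotheses of Lemma \ref{main low frequency estim}. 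That lemma then delivers, for any $\mu \in (0,1)$,
$$|T_0(x,y)| \lesssim \langle y-x\rangle^{-n-\mu}$$
uniformly in $x \in \R^n$. In case~(a) I would run the same scheme with $b := e^{i\varphi}a\chi_0$, using $\varphi \in L^\infty\Phi^1$ directly; the lemma produces $|T_0(x,y)| \lesssim \langle y\rangle^{-n-\mu}$ on $K \times \R^n$, where $K := \supp_x a$. Since $K$ is bounded, the elementary comparison $\langle y-x\rangle \leq \langle y\rangle + \mathrm{diam}(K)$ upgrades this to
$$|T_0(x,y)| \lesssim_{K}\, \mathbf{1}_K(x)\, \langle y-x\rangle^{-n-\mu}.$$

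Given the kernel estimate, Theorem \ref{convolve} applied to the radial, decreasing, integrable majorant $\phi(z) := \langle z\rangle^{-n-\mu}$ yields
$$|T_0 u(x)| \leq C \int \phi(y-x)\,|u(y)|\, \dd y \leq C\,\Vert\phi\Vert_{L^1}\, Mu(x)$$
in both cases (with an extra $\mathbf{1}_K(x)$ in case~(a)). The weighted bound $\Vert T_0 u\Vert_{L^p_w} \lesssim \Vert u\Vert_{L^p_w}$ for $1<p<\infty$ and $w \in A_p$ is then immediate from Theorem \ref{maxweight}.

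The principal technical hurdle is the verification that $b$ (or its case-(a) analogue) satisfies the derivative estimates required by Lemma \ref{main low frequency estim}, namely $\sup_{\xi \in \R^n\setminus 0} |\xi|^{-1+|\alpha|}\Vert\partial_\xi^\alpha b(\cdot,\xi)\Vert_{L^\infty}<\infty$ for $1 \leq |\alpha| \leq n+1$. This amounts to controlling, via Fa\`a di Bruno, products of the form $\prod_j \partial_\xi^{\alpha_j}\theta_0$ with $\sum|\alpha_j|=|\alpha|$ and each $|\alpha_j|\geq 1$ by the worst-case exponent $|\xi|^{1-|\alpha|}$, on the bounded $\xi$-support of $\chi_0$. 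A secondary but essential subtlety in case~(a) is the translation comparison of $\langle y\rangle^{-n-\mu}$ with $\langle y-x\rangle^{-n-\mu}$ on the compact set $K$, without which one would only obtain a pointwise bound in terms of $\Vert u\Vert_{L^1}$ rather than the maximal function.
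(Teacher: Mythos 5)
Your argument is correct and follows essentially the same route as the paper: verify that the modulated amplitude satisfies the hypotheses of Lemma \ref{main low frequency estim}, deduce the kernel bound, dominate $|T_0u(x)|$ pointwise by $Mu(x)$ via Theorem \ref{convolve}, and conclude with Theorem \ref{maxweight}. The only (harmless) deviation is in case (a), where the paper subtracts the linear part $\langle x,\xi\rangle$ from the phase first (using the compact $x$-support to bound the first $\xi$-derivatives of the reduced phase) and writes the operator in convolution form, whereas you keep the full phase and instead invoke the compact support at the end to pass from $\langle y\rangle^{-n-\mu}$ to $\langle y-x\rangle^{-n-\mu}$; the two are equivalent.
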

\begin{proof}
\setitemize[0]{leftmargin=0pt,itemindent=20pt}
\begin{itemize}
\item[(a)] The operator $T_0$ can be written as $T_0 u(x)= \int K_{0}(x,y) \, u(x-y) \, \dd y$ with
     $$ K_{0}(x,y)= \frac{1}{(2\pi)^n} \int e^{i\psi(x,\xi)-i\langle y,\xi\rangle}\chi_{0}(\xi)\, a(x,\xi) \, \dd \xi , $$
where $\psi(x,\xi):= \varphi(x,\xi)-\langle x,\xi\rangle$ satisfies the estimate
$$\sup_{|\xi|\neq 0} |\xi|^{-1+|\alpha|} |\partial_{\xi}^{\alpha} \psi(x, \xi)|\leq C_\alpha,$$
for $|\alpha|\geq 1$, on support of the amplitude $a$.
Therefore setting $b(x,\xi):=a(x,\xi) \chi_{0}(\xi) e^{i\psi(x,\xi)}$ we have that $b$ is bounded and $$\sup_{|\xi|\neq 0} |\xi|^{-1+|\alpha|} |\partial_{\xi}^{\alpha} b(x, \xi)|<\infty,$$ for $|\alpha|\geq 1$ uniformly in $x$ and using Lemma \ref{main low frequency estim}, we have for all $\mu \in [0,1)$
\begin{equation}
  |K^{l}_{0} (x,y)|\leq C_{l} \langle y\rangle ^{-n-\mu},
\end{equation}
for all $x$. From this and Theorem \ref{convolve}, it follows that $|T_0 u(x)|\lesssim Mu(x)$ and Theorem \ref{maxweight} yields the $L_{w}^{p}$ boundedness of $T_0$.
\item[(b)] The only difference from the local case, is that instead of the assumption of compact support in $x$, the assumption $\varphi(x,\xi)-\langle x,\xi\rangle\in L^{\infty}\Phi^1$ yields that $b(x,\xi)$ in the previous proof satisfies the very same estimate, whereupon the same argument will conclude the proof.
\end{itemize}
\end{proof}

\subsection{Counterexamples in the context of weighted boundedness}\label{counterexamples in weighted setting}
The following counterexample going back to \cite{KW}, shows that for smooth Fourier integral operators (smooth phases and well as amplitudes), the non-degeneracy of the phase function i.e. the non-vanishing of the determinant of the mixed hessian of the phase, is not enough to yield weighted $L^p$ boundedness, unless one is prepared to loose a rather unreasonable amount of derivatives.
\begin{cex}
Let $\varphi(x,\xi)= \langle x, \xi\rangle +\xi_1$ which is non-degenerate but $$\mathrm{rank}\, \partial^{2}_{\xi\xi}\varphi=0,$$ and let $a(x,\xi)= \langle\xi\rangle^{m}$ with $-n<m<0$. Then it has been shown in \cite{Yab} that for $1<p<\infty$ there exists $w\in A_p$ and $f\in L^{p}_{w}$ such that the Fourier integral operator $Tu(x)= (2\pi)^{-n} \int e^{i\langle x, \xi\rangle +i\xi_1} \langle\xi\rangle^{m} \widehat{u}(\xi) \, \dd\xi,$ does not belong to $L^{p}_{w}$.
\end{cex}
However, as the following proposition shows, even with a phase of the type above, one can prove weighted $L^p$ boundedness provided certain (comparatively large) loss of derivatives.
\begin{prop}\label{weighted boundedness without rank}
  Let $a(x,\xi)\in L^{\infty}S_{1}^{m}$, $m\leq -n$ and $\varphi(x,\xi)-\langle x, \xi\rangle \in L^{\infty}\Phi^1$. Then $T_{a,\varphi}u(x):=\int e^{i\varphi(x,\xi)}
  \sigma(x,\xi) \widehat{u}(\xi)\, \dd\xi$ is bounded on $L_{w}^{p}$ for $w\in A_p$ and $1<p<\infty$. This result is sharp.
\end{prop}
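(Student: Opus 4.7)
The plan is to rewrite $T_{a,\varphi}$ as a convolution-type operator and to dominate its kernel by an integrable, radial, non-increasing function. Setting $\psi(x,\xi):=\varphi(x,\xi)-\langle x,\xi\rangle\in L^{\infty}\Phi^{1}$, one has $T_{a,\varphi}u(x)=\int K(x,x-y)u(y)\,\dd y$ with
\[
K(x,z):=(2\pi)^{-n}\int e^{i\langle z,\xi\rangle+i\psi(x,\xi)}\,a(x,\xi)\,\dd\xi.
\]
If one can show $|K(x,z)|\le H(z)$ uniformly in $x$ for some radial non-increasing $H\in L^{1}(\R^{n})$, then Theorem~\ref{convolve} gives $|T_{a,\varphi}u(x)|\le \|H\|_{L^{1}}Mu(x)$, and Theorem~\ref{maxweight} yields $\|T_{a,\varphi}u\|_{L^{p}_{w}}\lesssim \|u\|_{L^{p}_{w}}$ for every $w\in A_{p}$ and $1<p<\infty$.

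To obtain the kernel bound I would decompose $a$ dyadically as $a=a_{0}+\sum_{j\ge 1}a_{j}$ with $a_{j}$ supported in $|\xi|\sim 2^{j}$; the piece corresponding to $a_{0}$ is controlled by Proposition~\ref{weighted low frequency estimates}(b). For $j\ge 1$ the trivial bound yields $|K_{j}(x,z)|\lesssim 2^{j(m+n)}\le 1$ (the second inequality being precisely the hypothesis $m\le -n$), while $N$ integrations by parts in $\xi$, carried out with $e^{i\psi}$ kept inside the oscillatory exponential so that the differentiations fall on the genuine amplitude $a\in L^{\infty}S^{m}_{1}$ (which enjoys the full symbol gain $|\partial^{\alpha}_{\xi}a|\lesssim \langle\xi\rangle^{m-|\alpha|}$), produce the complementary bound $|K_{j}(x,z)|\lesssim 2^{j(m+n)}(2^{j}|z|)^{-N}$ for any $N$.

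When $m<-n$ both estimates are geometrically summable in $j$ and give $|K(x,z)|\lesssim (1+|z|)^{-N}$ for every $N$. At the endpoint $m=-n$, balancing the two bounds at $j_{0}\sim \log_{2}(1/|z|)$ produces
\[
|K(x,z)|\lesssim \log(e/|z|)\,\mathbf{1}_{|z|\le 1}+|z|^{-N}\,\mathbf{1}_{|z|>1},
\]
which is integrable and pointwise dominated by a radial non-increasing $H$; combined with the maximal function domination above this establishes the claimed $L^{p}_{w}$ boundedness. Sharpness is exactly the counterexample preceding the statement: for $\varphi(x,\xi)=\langle x,\xi\rangle+\xi_{1}$ (so that $\psi=\xi_{1}\in L^{\infty}\Phi^{1}$) and $a=\langle\xi\rangle^{m}$ with $-n<m<0$, the cited result of \cite{Yab} exhibits $w\in A_{p}$ and $f\in L^{p}_{w}$ with $Tf\notin L^{p}_{w}$.

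The principal obstacle is the endpoint $m=-n$: the trivial bound no longer decays in $j$ and the naive dyadic sum diverges, so the $|z|$-adapted truncation at $j_{0}\sim \log(1/|z|)$ is needed to extract the integrable logarithmic singularity at the origin. A subsidiary but essential technical point is that the integration by parts must be carried out on the original amplitude $a$ rather than on the composite symbol $\tilde{a}:=a e^{i\psi}$: the latter lies only in $L^{\infty}S^{m}_{0}$ because $\partial_{\xi}\psi$ is merely bounded and does not decay, so IBP on $\tilde{a}$ would provide no gain in $j$. It is precisely the sharper class $L^{\infty}S^{m}_{1}$ imposed on $a$ that makes the argument work.
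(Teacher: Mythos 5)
Your reduction to a convolution-type kernel bound contains a genuine gap at the key step. The claimed estimate $|K_{j}(x,z)|\lesssim 2^{j(m+n)}(2^{j}|z|)^{-N}$ cannot be obtained the way you describe: integrating by parts against the full phase $\langle z,\xi\rangle+\psi(x,\xi)$ does not make all derivatives fall on $a$ alone — the transposed operator necessarily involves the factor $\big(z+\nabla_{\xi}\psi(x,\xi)\big)/|z+\nabla_{\xi}\psi(x,\xi)|^{2}$ and derivatives of $\nabla_{\xi}\psi$ — and, more importantly, $|z+\nabla_{\xi}\psi(x,\xi)|$ admits no lower bound in terms of $|z|$ when $|z|\lesssim \|\nabla_{\xi}\psi\|_{L^{\infty}}$, since for $\psi\in L^{\infty}\Phi^{1}$ the gradient $\nabla_{\xi}\psi$ is merely bounded, not small. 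In that intermediate region the phase has genuine critical points and your bound fails. A concrete instance already allowed by the hypotheses is $\psi(x,\xi)=\xi_{1}$ (the phase of Counterexample 1) with $m=-n$: then $K(x,z)=\sum_{j}\check a_{j}(x,z+e_{1})$ has a logarithmic singularity at $z=-e_{1}$, so any radial non-increasing majorant $H$ with $H(z)\ge |K(x,z)|$ would have to be infinite on the whole unit ball, and Theorem \ref{convolve} cannot be invoked; similarly for $\psi=|\xi|$ the dyadic kernels concentrate on the sphere $|z|\sim 1$ with only stationary-phase size, not the rapid decay in $j$ you claim. Since at the endpoint $m=-n$ the trivial bound gives no decay in $j$ either, your balancing at $j_{0}\sim\log(1/|z|)$ rests entirely on the invalid estimate, and the whole strategy of dominating $|T_{a,\varphi}u|$ by $C\,Mu$ collapses — indeed no rank or curvature condition is assumed here, so pointwise kernel decay of this strength is simply not available.

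The paper's proof circumvents exactly this obstruction. It absorbs the phase into the symbol, $\sigma(x,\xi)=a(x,\xi)e^{i\psi(x,\xi)}\in L^{\infty}S^{-n}_{0}$, takes Littlewood--Paley pieces $\sigma_{k}$, and instead of pointwise kernel bounds uses H\"older's inequality with an auxiliary weight $\omega$ together with the Hausdorff--Young inequality to control $\int |K_{k}(x,y)|^{p'}\omega(y)^{p'}\,\dd y$ by $L^{p}_{\xi}$-norms of $\sigma_{k}$ and $\nabla^{l}_{\xi}\sigma_{k}$ — which only require boundedness of the $\xi$-derivatives, no decay. This yields $|\sigma_{k}(x,D)u(x)|\lesssim 2^{k(\frac{n}{p}-n)/p}M_{p}u(x)$, summable in $k$ precisely because $p>1$, and the weighted bound then follows from the maximal theorem (Theorem \ref{maxweight}, combined with the self-improvement of $A_{p}$), not from domination by the Hardy--Littlewood maximal function. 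Your treatment of the low-frequency piece and of sharpness matches the paper, but the high-frequency endpoint argument needs to be replaced by an $L^{p'}$-average (Hausdorff--Young) argument of this type.
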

\begin{proof}
For the low frequency part of the Fourier integral operator we could for example use Proposition \ref{weighted low frequency estimates}. For the high frequency part we may assume that $a(x,\xi)=0$ when $\xi$ is in a neighborhood of the origin. The proof in the case $m<-n$ can be done by a simple integration by parts argument in the integral defining the Schwartz kernel of the operator. Hence the main point of the proof is to deal with the case $m=-n.$  Now the Fourier integral operator $T_{a,\varphi}$ can be written as
\begin{equation}
  T_{a,\varphi}u(x)=\int e^{i\varphi(x,\xi)} a(x,\xi) \widehat{u}(\xi)\, \dd\xi=\int \sigma(x,\xi) e^{i\langle x,\xi\rangle} \widehat{u}(\xi)\, \dd\xi
\end{equation}
with $\sigma(x,\xi)= a(x,\xi) e^{i (\varphi(x,\xi)-\langle x,\xi\rangle)}$ and we can assume that $\sigma(x,\xi)=0$ in a neighborhood of the origin. Therefore, since we have assumed that $\varphi(x,\xi)-\langle x, \xi\rangle \in L^{\infty}\Phi^1$, the operator $T_{a,\varphi}= \sigma(x,D)$ is a pseudo-pseudodifferential operator in the sense of \cite{KS}, belonging to the class $\mathrm{OP}L^{\infty}S^{-n}_{0}.$
We use the Littlewood-Paley partition of unity and decompose the symbol as
\begin{equation*}
\sigma(x,\xi)= \sum_{k=1}^{\infty}\sigma_{k}(x,\xi)
\end{equation*}
with $\sigma_k(x,\xi)= \sigma(x,\xi)\varphi_{k}(\xi)$, $k\geq 1$. We have
$$ \sigma_{k}(x,D)(u)(x)=\frac{1}{(2\pi)^{n}}\int \sigma_{k}(x,\xi)\widehat{u}(\xi)e^{i\langle x, \xi\rangle} \, \dd\xi$$
for $k\geq 1$. We note that
$\sigma_k (x,D)u(x)$ can be written as
\begin{equation*}
\sigma_{k}(x,D)u(x)=\int K_{k}(x,y)u(x-y) \, \dd y
\end{equation*}
with
\begin{equation*}
K_{k}(x,y)=\frac{1}{(2\pi)^{n}}\int \sigma_{k}(x,\xi) e^{i\langle y,\xi\rangle} \, \dd\xi= \check{\sigma}_{k}(x,y),
\end{equation*}
where $\check{\sigma}_k$ here denotes the inverse Fourier transform
of $\sigma_{k}(x,\xi)$ with respect to $\xi$. One observes that
\begin{align*}
 |\sigma_{k}(x,D)u(x)|^{p}&=
\Big|\int K_{k}(x,y)u(x-y)\,\dd y\Big|^{p} \\&=
\Big|\int K_{k}(x,y)\omega(y)\frac{1}{\omega (y)}u(x-y)\, \dd y\Big|^{p},
\end{align*}
with weight functions $\omega(y)$ which will be chosen
momentarily. Therefore, H\"older's inequality yields
\begin{equation}\label{eq2.14}
|\sigma_{k}(x,D)u(x)|^{p}\leq
\bigg\{ \int |K_{k}(x,y)|^{p'} | \omega (y)|^{p'} \, \dd y\bigg\}^{\frac{p}{p'}}
  \bigg\{ \int \frac{| u(x-y)|^{p}}{|\omega (y)|^{p}} \, \dd y\bigg\},
\end{equation}
where $\frac{1}{p} +\frac{1}{p'}=1.$ Now for an $l>\frac{n}{p}$, we
define $\omega$ by
\begin{equation*}
\omega(y)=\begin{cases}
1, &| y| \leq 1; \\
| y|^{l}, & | y| >1.
\end{cases}
\end{equation*}
By Hausdorff-Young's theorem and the symbol estimate, first
for $\alpha=0$ and then for $| \alpha|=l$, we have
\begin{align}\label{eq2.17}
\int |K_k (x,y)|^{p'} \, \dd y &\leq \bigg\{\int |\sigma_k (x,\xi)|^p\, \dd\xi\bigg\}^{\frac{p'}{p}} \lesssim \bigg\{\int_{| \xi| \sim 2^{k}}2^{-npk} \, \dd\xi\bigg\}^{\frac{p'}{p}} \\
\nonumber &\lesssim 2^{kp'(\frac{n}{p}-n)},
\end{align}
and
\begin{align}\label{eq2.18}
\int | K_k (x,y)|^{p'} |y|^{p'l}\,\dd y &\lesssim \bigg\{\int |\nabla_{\xi}^{l}\sigma_k (x,\xi)|^p \, \dd\xi\bigg\}^{\frac{p'}{p}}
\\Ê\nonumber &\lesssim \bigg\{\int_{| \xi| \sim 2^{k}}2^{-kpn}\, \dd \xi\bigg\}^{\frac{p'}{p}} \lesssim 2^{kp'(\frac{n}{p}-n)}.
\end{align}
Hence, splitting the integral into $| y| \leq 1$
and $,| y| > 1$ yields
\begin{equation*}
\bigg\{\int | K_k (x,y)|^{p'}| \omega(y)|^{p'} \, \dd y\bigg\}^{\frac{p}{p'}}
\lesssim \big\{2^{kp'(\frac{n}{p}-n)}\big\}^{\frac{p}{p'}}=2^{kp(\frac{n}{p}-n)}.
\end{equation*}
Furthermore, using Theorem 3.1.3 we have
\begin{equation*}
\int \frac{| u(x-y)|^{p}}{| \omega (y)|^{p}} \, \dd y \lesssim \big(M_{p} u(x)\big)^{p}
\end{equation*}
with a constant that only depends on the dimension $n$. Thus
\eqref{eq2.14} yields
\begin{equation}\label{pointwiseakestim}
 |\sigma_{k}(x,D) u(x)|^{p} \lesssim 2^{k(\frac{n}{p}-n)}\big(M_{p} u(x)\big)^{p}
\end{equation}
Summing in $k$ using \eqref{pointwiseakestim} we obtain
\begin{align*}
|T_{a, \varphi} u(x)|&=|\sigma(x,D) u(x)|\lesssim \bigg\{\sum_{k=1}^{\infty}|\sigma_{k}(x,D)u(x)|^{p}\bigg\}^{\frac{1}{p}} \\
&\lesssim \big(M_{p} u(x)\big)\bigg(\sum_{k=1}^{\infty}2^{k(\frac{n}{p}-n)}\bigg)^{\frac{1}{p}}
\end{align*}
We observe that the series above converges for any $p>1$ and therefore an application of Theorem \ref{maxweight} ends the proof. The sharpness of the result follows from the Counterexample 1.
\end{proof}
The above discussion suggests that without further conditions on the phase, which as it will turn out amounts to a rank condition, the weighted norm inequalities of the type considered in this paper will be false. The following counterexample shows that, even if the phase function satisfies an appropriate rank condition, there is a critical threshold on the loss of derivatives, below which the weighted norm inequalities will fail.
\begin{cex}
We consider the following operator
    $$ T_m = e^{i|D|} \langle D \rangle^{m} $$
and the following functions
    $$ w_{b}(x)=|x|^{-b}, \quad f_{\mu}(x)=\int e^{-i|\xi|+i x \cdot \xi} \langle \xi \rangle^{-\mu} \, \dd \xi. $$
As was mentioned in Example \ref{examples of weights} in Chapter 1, $w_{b} \in A_1$ if $0 \leq b <n,$ from which it also follows that $w:=w_b\chi_{|x|<2}\in A_1$ for $0\leq b<n$, were $\chi_A$ denotes the characteristic function of the set $A$.
Now if $\mu <m+n$ then we claim that $|T_m f_{\mu}(x)| \geq C_{m\mu} |x|^{\mu-m-n}$ on $|x| \leq 2$.
Indeed, we have
   $$ T_m f_{\mu}(x) = \int e^{i x \cdot \xi} \langle \xi \rangle^{m-\mu} \, \dd \xi $$
which is a radial function equal to
\begin{align*}
   |\mathbf{S}^{n-1}| \, |x|^{\mu-m-n} \int_0^{\infty} \widehat{\dd \omega}(r) \big(|x|^2+r^2\big)^{\frac{m-\mu}{2}} r^{n-1} \, \dd r.
\end{align*}
If we denote by $g_{\mu m}$ the function given by the integral, and take a dyadic partition of unity $1=\psi_{0}+\sum_{j=1}^{\infty}\psi(2^{-j}\cdot)$
then
\begin{align*}
     g_{\mu m}(s)&=\int_0^{2} \widehat{\dd \omega}(r) (s^2+r^2)^{\frac{m-\mu}{2}} r^{n-1} \psi_0(r) \, \dd r \\ &\quad + \sum_{j=1}^{\infty} 2^{jn} \int_0^{\infty} \widehat{\dd \omega}(2^jr) (s^2+2^{2j}r^2)^{\frac{m-\mu}{2}} r^{n-1} \psi(r) \, \dd r.
\end{align*}
The first term is continuous if $m-\mu+n>0$ and writing down the integral representing $\widehat{\dd \omega}(2^jr)$ and integrating by parts yields that the series in the second term of $g_{\mu m}$ is a smooth function of $s$. Moreover
\begin{align*}
    g_{\mu m}(0)= \int e^{i\langle \xi, e_{1}\rangle} |\xi|^{m-\mu} \, \dd\xi= C_n |e_1|^{-n-m+\mu}\neq 0,
\end{align*}
since the inverse Fourier transform of a radial homogeneous distribution of degree $\alpha$ is a radial homogeneous distribution of order $-n-\alpha.$
This proves the claim. From this claim it follows that \begin{align*}
     \int |T_{m}f_{\mu}|^p w \, \dd x \geq C_{\mu m} \int_{|x|\leq 2}  |x|^{(\mu-m-n)p-b} \, \dd x,
\end{align*}
and therefore $T_m f_{\mu}  \notin L^p_w$ if $(\mu-m-n)p-b \leq -n.$

Now, we also have $|f_{\mu}(x)| \leq A_{\mu} \big|1-|x|\big|^{\mu-\frac{n+1}{2}}+B_{\mu}$  on $|x| \leq 2$. This is because the function $f_{\mu}$ is radial
\begin{align*}
     f_{\mu}(x) = |\mathbf{S}^{n-1}| \int_{0}^{\infty} \widehat{\dd \omega}(r|x|) e^{-i r} (1+r^2)^{\frac{\mu}{2}} r^{n-1} \, \dd r
\end{align*}
and using the information on the Fourier transform of the measure of the sphere,
\begin{align}
     f_{\mu}(x) = |\mathbf{S}^{n-1}| \, \sum_{\pm} \int_{0}^{\infty} e^{-i r(1\pm|x|)} a_{\pm}(r|x|) (1+r^2)^{\frac{\mu}{2}} r^{n-1} \, \dd r
\end{align}
where
    $$ |\d^{\alpha}a_{\pm}(r)| \leq C_{\alpha} \langle r \rangle^{-\frac{n-1}{2}-\alpha}. $$
We now use a dyadic partition of unity $1=\psi_0+\sum_{k=1}^{\infty} \psi(2^{-k}\cdot)$ on the integral and obtain a sum of terms of the form
\begin{align*}
   2^{kn} \int_{0}^{\infty} e^{-2^ki r(1\pm|x|)} \underbrace{a_{\pm}(2^kr|x|) \psi(r) (1+2^{2k}r^2)^{\frac{\mu}{2}} r^{n-1}}_{=b^{\pm}_k(r,|x|)} \, \dd r
\end{align*}
with
 $$ |\d^{\alpha}_r b^{\pm}_k(r,|x|)| \leq C_{\alpha}
    2^{-(\frac{n-1}{2}-\mu+\alpha)k}. $$
Integration by parts yields
\begin{align*}
   |f_{\mu}| &\leq C_1 + C_2 \sum_{2^k|1-|x|| \leq 1} 2^{-(\frac{n+1}{2}-\mu)k} + C_3 \sum_{2^k|1-|x|| > 1} 2^{-(\frac{n+1}{2}-\mu+N)k} \big|1-|x|\big|^{-N} \\
   & \leq C_1 + C'_2 \big|1-|x|\big|^{\mu-\frac{n+1}{2}}.
\end{align*}
Hence one has
\begin{align*}
     \int |f_{\mu}|^p w \, \dd x \leq A_{\mu} \int_{|x| \leq 2} \big|1-|x|\big|^{\mu p-\frac{n+1}{2}p} |x|^{-b} \, \dd x+B_{\mu} \int_{|x|\leq 2} |x|^{-b} \, \dd x,
\end{align*}
which in turn yields $f_{\mu} \in L^p_w$ if $ \mu>\frac{n+1}{2}-\frac{1}{p}$ and $0 \leq b<n$.
From the estimates above it follows that if $1<p<\infty$ and $T_{m}$ is bounded on $L^p_{w}$ then
   \begin{align}\label{estim on m}
       m \leq -\frac{n-1}{2}-\frac{1}{p}.
   \end{align}
Indeed if $T_{m}$ is bounded on $L^p_{w}$ then we have
\begin{align*}
    -m>\frac{b-n}{p}+n-\mu
\end{align*}
for all $0\leq b <n$ and all $\mu>\frac{n+1}{2}-\frac{1}{p}$. Letting $\mu$ tend to $\frac{n+1}{2}-\frac{1}{p}$ we obtain
\begin{align*}
    m \leq -\frac{b-n}{p}-\frac{n-1}{2}-\frac{1}{p}
\end{align*}
for all $0 \leq b<n$, and letting $b$ tend to $n$ yields \eqref{estim on m}.\\
\noindent Now by Theorem \ref{extrapolation} if $T_{m}$ is bounded on $L^{q}_{w}$ for a fixed $q>1$ and for all $w \in A_{q},$ then by extrapolation it is bounded on all $L^p_{w}$
for all $w\in A_{p}$ and all $1<p<\infty$, therefore since $w \in A_{1}\subset A_{p}$, we conclude that $T_{m}$ is bounded on $L^p_{w}$ for all $1<p<\infty$, which implies that $m$ has to satisfy the inequality
\begin{align*}
    m\leq-\frac{n-1}{2}-\frac{1}{p},
\end{align*}
for all $1<p<\infty$. Letting $p$ tend to $1$, we obtain
\begin{align*}
    m\leq-\frac{n+1}{2},
\end{align*}
which is the desired critical threshold for the validity of the weighted $L^p$ boundedness of the class of Fourier integral operators under consideration in this paper.
\end{cex}
We end up with an example showing that in the most general situation one cannot expect \emph{global} $L^p$ weighted estimates unless the phase satisfies some slightly stronger property than a rank condition.
\begin{cex}
Let $B$ be the unit ball in $\R^n$, we consider the operator
     $$ Tu(x) = (1_{B} * u)(2x) $$
and suppose that this operator is bounded on $L^p_{w}$ with bound $C_{p}=C\big([w]_{A_{p}}\big)$ only depending on $[w]_{A_{p}}$
\begin{align*}
     \|Tu\|_{L^p_{w}} \leq C_{p} \|u\|_{L^p_{w}}, \quad u \in \S(\R^n).
\end{align*}
Note that the $A_{p}$-constant $[w]_{A_{p}}$ is scale invariant. If we apply the estimate to the function $u(\eps \cdot)$ and the weight $w(\eps \cdot)$
and scale it, we obtain
\begin{align*}
     \|T_{\eps}u\|_{L^p_{w}} \leq C_{p} \|u\|_{L^p_{w}}, \quad u \in \S(\R^n)
\end{align*}
with $T_{\eps}u=\eps^{-n} (1_{\eps B}*u)(2x)$. Since $T_{\eps}u$ tends to $u(2x)$ in $L^p_{w}$, by letting $\eps$ tend to $0$ we deduce from the former
\begin{align*}
     \|u(2\,\cdot\,)\|_{L^p_{w}} \leq  C_{p} \|u\|_{L^p_{w}}, \quad u \in \S(\R^n).
\end{align*}
After a change of variable, this would imply
\begin{align*}
    2^{-n} \int |u(x)|^p w(x/2) \, \dd x \leq  C^p_{p} \int |u(x)|^p w(x) \, \dd x
\end{align*}
for all $u \in \S(\R^n),$ hence
\begin{align*}
    w(x/2) \ \leq  C_{p}^p2^n w(x).
\end{align*}
This means that one can expect weighted $L^p$ estimates for $T$ only if $w$ satisfies a doubling property. Note that $T$ can be written as a sum of
Fourier integral operators with amplitudes in $S^{-\frac{n+1}{2}}_{1,0}$ with phases of the form
     $$ \phi_{\pm} = 2\langle x, \xi \rangle \pm |\xi| $$
which satisfy a rank condition. Nevertheless, one  has
     $$ \phi_{\pm} - \langle x,\xi \rangle \notin L^{\infty}\Phi^1. $$
In particular, one cannot generally expect global weighted estimate for Fourier integral operators with phases such that  $\phi - \langle x,\xi \rangle \notin L^{\infty}\Phi^1$
unless the weight satisfies some doubling property.
\end{cex}
\subsection{Invariant formulation in the local boundedness}
The aim of this section is to give an invariant formulation of the rank condition on the phase function, which will be used to prove our weighted norm inequalities for Fourier integral operators. In Counterexample 1 we saw that a rank condition is necessary for the validity of weighted $L^p$ estimates. The following discussion will enable us to give an invariant formulation of the local weighted norm inequalities for operators with amplitudes in $S^{\frac{-(n+1)}{2}\varrho+n(\varrho -1)}_{\varrho, 1-\varrho},$ $\varrho\in[\frac{1}{2},1].$ We refer the reader to \cite{H3} for the properties of Fourier integral operators with amplitudes in $S^{m}_{\varrho, 1-\varrho},$ $\varrho\in (\frac{1}{2}, 1]$ and the paper by A. Greenleaf and G. Uhlmann for the case when $\varrho=\frac{1}{2}.$

The central object in the theory of Fourier integral operators is the canonical
relation
\begin{align*}
     \mathcal{C}_{\phi} = \big\{ (x,\d_{x}\phi(x,\xi),\d_{\xi}\phi(x,\xi),\xi) : (x,\xi) \in \supp a \big\}
\end{align*}
in $T^*\R^n \times T^*\R^n$. We consider the following projection on the space variables
\begin{align*}
    \xymatrix @!0 @C=7pc @R=4pc {& \mathcal{C}_{\phi} \subset T^*\R^n_{x} \times T^*\R^n_{y} &
    \simeq T^{*}(\R^{2n}_{x,y})  \ar[d]^{\pi_0} &  \\
    & & \pi_{0}(\mathcal{C}) \subset \R^{2n}_{xy}&
  }
\end{align*}
with
\begin{align*}
     \pi_{0}(x,\xi,y,\eta)= (x,y).
\end{align*}
The differential of this projection is given by
\begin{align*}
     \dd \pi_{0}(t_{x},t_{\xi},t_{y},t_{\eta}) = (t_{x},t_{y}), \quad t_{\xi}&=\d^2_{x x}\phi \, t_{x} + \d^2_{x\xi}\phi \, t_{\eta} \\
     t_{y}&=\d^2_{\xi x} \phi \, t_{x} + \d^2_{\xi\xi}\phi \, t_{\eta}
\end{align*}
so that its kernel is given by
\begin{align*}
     \big\{(0,\d^2_{x\xi}\phi \, t_{\eta},0,t_{\eta}): t_{\eta} \in \ker \d^2_{\xi\xi}\phi \big\}
\end{align*}
This implies
    $$ \mathop{\rm rank} \dd \pi_{0} = \mathop{\rm codim} \ker \dd \pi_{0} = \mathop{\rm codim} \ker \d^2_{\xi \xi}\phi =  n+\mathop{\rm rank}  \d^2_{\xi \xi}\phi. $$
Our assumption on the phase $\mathop{\rm rank}  \d^2_{\xi \xi}\phi=n-1$ can be invariantly formulated as
    $$ \mathop{\rm rank} \dd \pi_{0} = 2n-1. $$
Using these facts, we will later on be able to show that if $T$ is a Fourier integral operator with amplitude in $S^{-\frac{n+1}{2}\varrho +n(\varrho -1)}_{\varrho ,1-\varrho}$ with $\varrho\in [\frac{1}{2},1]$ whose canonical relation $\mathcal{C}$ is locally the graph of a symplectomorphism, and if
           $$ \mathop{\rm rank}\dd \pi_{0} = 2n-1 $$
     everywhere on $\mathcal{C}$, with $\pi_{0} : \mathcal{C} \to \R^{2n}$ defined by $\pi_{0}(x,\xi,y,\eta)=(x,\eta)$, then there exists a constant $C>0$
     such that
     \begin{align*}
          \|Tu\|_{L^{p}_{w,{\rm loc}}} \leq \|u\|_{L^p_{w,{\rm comp}}}
     \end{align*}
     for all $w \in A_{p}$ and all $1 <p <\infty$.
However, we will actually prove local weighted $L^p$ boundedness of operators with amplitudes in the class $L^{\infty}S^{-\frac{n+1}{2}\varrho +n(\varrho -1)}_{\varrho}$ with $\varrho\in [0,1]$ for which the invariant formulation above lacks meaning, and therefore to keep the presentation of the statements as simple as possible, we will not always (with the exception of Theorem \ref{weighted boundedness for true amplitudes with power weights}) state the local boundedness theorems in an invariant form.

\subsection{Weighted local and global $L^p$ boundedness of Fourier integral operators}
We start this by showing the local weighted $L^p$ boundedness of Fourier integral operators. In Counterexample 1 which was related to Fourier integral operators with linear phases, the Hessian in the frequency variable $\xi$ of the phase function vanishes identically. This suggests that some kind of condition on the Hessian of the phase might be required. It turns out that the condition we need can be formulated in terms of the rank of the Hessian of the phase function in the frequency variable. Furthermore Counterexample 2 which was related to the wave operator, suggests a condition on the order of the amplitudes involved. It turns out that these conditions, appropriately formulated, will indeed yield weighted boundedness of a wide range of Fourier integral operators even having rough phases and amplitudes.
\begin{thm}\label{local weighted boundedness}
  Let $a(x,\xi)\in L^{\infty}S^{m}_{\varrho}$ with $m<-\frac{n+1}{2}\varrho+ n(\varrho-1)$ and $\varrho \in[0,1]$ be compactly supported in the $x$ variable. Let the phase function $\varphi(x,\xi)\in C^{\infty}(\mathbf{R}\times \mathbf{R}\setminus{0})$ homogeneous of degree $1$ in $\xi$ satisfy $\mathrm{rank}\,\partial^2_{\xi\xi}\varphi(x,\xi) =n-1$. Then the corresponding Fourier integral operator is $L^p_w$ bounded for $1<p<\infty$ and all $w\in A_p$.
   \end{thm}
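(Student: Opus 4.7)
The strategy is to derive a pointwise majorisation of the form $|T_{a,\varphi}u(x)|\lesssim M_r u(x)$ for some $r>1$ close to $1$, and then deduce the weighted estimate by combining the weighted maximal function theorem with the openness property of $A_p$. First I would split $T_{a,\varphi}$ into a low frequency part $T_0$ and a high frequency part $\sum_{k\geq 1} T_k$ via a Littlewood-Paley decomposition in $\xi$, with each $T_k$ localised in $|\xi|\sim 2^k$. The piece $T_0$ is bounded on $L^p_w$ by Proposition \ref{weighted low frequency estimates}(a), whose hypotheses are satisfied because $a$ is compactly supported in $x$ and $\varphi$ is smooth and homogeneous of degree $1$ away from the origin.

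For each high frequency piece I would rescale semiclassically with $h=2^{-k}$ and apply the Seeger-Sogge-Stein decomposition of Subsection \ref{SSS decomposition}, writing $T_h=\sum_{\nu=1}^{J}T_h^\nu$ with $J\sim h^{-(n-1)/2}$ cones of aperture $\sqrt{h}$; the symbols $b^\nu$ then obey the bounds of Lemma \ref{Linfty:bLemma}. The key new ingredient is the rank condition $\mathrm{rank}\,\partial^2_{\xi\xi}\varphi=n-1$. Since $\varphi$ is homogeneous of degree $1$, Euler's identity forces $\partial^2_{\xi\xi}\varphi\cdot\xi=0$, so the rank assumption is equivalent to non-degeneracy of the Hessian on the $(n-1)$-dimensional subspace transverse to $\xi$. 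Applying the uniform stationary phase Lemma \ref{UniformStationaryPhase} in these $n-1$ curved directions on each cone produces an $L^\infty$ gain of order $h^{(n-1)/2}$ over the trivial kernel bound, while integrations by parts with $L=1-\partial_{\xi_1}^2-h\partial_{\xi'}^2$, exactly as in the proof of Theorem \ref{Intro:L1Thm}, supply the off-diagonal decay required to integrate the kernel.

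Combining the two estimates yields an $L^{p'}$-bound in $y$ for the Schwartz kernel $T_h^\nu(x,\cdot)$ against a suitably chosen polynomial weight $\omega$. H\"older's inequality, in the spirit of the proof of Proposition \ref{weighted boundedness without rank}, then gives a pointwise bound $|T_k u(x)|\lesssim 2^{-k\epsilon} M_r u(x)$ for some $\epsilon>0$ and some $r>1$, provided precisely $m<-\frac{n+1}{2}\varrho+n(\varrho-1)$. This threshold reflects the balance between the $h^{(n-1)/2}$ gain from the rank condition and the $h^{-m-n(1-\varrho)}$ cost coming from the rough symbol and the geometry of the Seeger-Sogge-Stein pieces. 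Summing the geometric series in $k$ yields $|T_{a,\varphi}u(x)|\lesssim Mu(x)+M_r u(x)$.

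To conclude, given $w\in A_p$ with $1<p<\infty$, the openness property (Theorem \ref{open}) furnishes $q<p$ with $w\in A_q$. Choosing $r>1$ sufficiently close to $1$ so that $q<p/r$, Theorem \ref{maxweight} gives the $L^p_w$ boundedness of $M_r$, and combined with the pointwise bound this finishes the proof. I expect the main technical obstacle to be arranging the kernel estimates so that the $(n-1)$-dimensional stationary phase gain is extracted uniformly in both $x$ and $\nu$, since one must align the single flat frequency direction $\xi^\nu$ of the Seeger-Sogge-Stein splitting with the radial kernel direction of $\partial^2_{\xi\xi}\varphi$ forced by Euler's identity, and then verify that the matching happens with constants independent of the dyadic scale.
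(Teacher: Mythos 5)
Your low-frequency step and the final reduction to a maximal function are fine, but the central high-frequency argument has a genuine gap: there is no stationary-phase gain to be extracted \emph{per Seeger--Sogge--Stein cone}. On a cone of aperture $\sqrt h$ the phase has already been replaced by its linearization, and Lemma \ref{Linfty:bLemma} says precisely that the residual factor $e^{\frac{i}{h}\langle \nabla_\xi\varphi(x,\xi)-\nabla_\xi\varphi(x,\xi^\nu),\xi\rangle}$ behaves like a symbol for the anisotropic scaling, i.e.\ its total oscillation across the piece is $O(1)$; applying Lemma \ref{UniformStationaryPhase} piecewise therefore gives nothing beyond the kernel bounds already used in Theorem \ref{Intro:L1Thm}. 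The curvature hypothesis $\mathrm{rank}\,\partial^2_{\xi\xi}\varphi=n-1$ can only pay off either through bounded overlap/cancellation \emph{across} the cones, or -- as the paper does -- by not performing the SSS decomposition at all: after the semiclassical reduction one makes a fixed, $h$-independent conic localization near a direction $e_n$ where $\det\partial^2_{\xi'\xi'}\varphi\neq0$ (this is where compact $x$-support and smoothness of $\varphi$ enter, giving $\|\nabla_\xi\theta\|_{L^\infty}<\infty$ with $\theta=\varphi-\langle x,\xi\rangle$), rescales $\xi\to h^{-\varrho}\xi$ -- the scale dictated by the rough amplitude -- and applies Lemma \ref{UniformStationaryPhase} with $\lambda=h^{-\varrho}$ in the $n-1$ transverse variables. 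This yields $|T_h(x,y)|\lesssim h^{-m-\frac{n+1}{2}\varrho-n(1-\varrho)}$ on the region $|y|\leq 1+2\|\nabla_\xi\theta\|_{L^\infty}$, rapid decay outside by nonstationary phase, hence $|T_hu(x)|\lesssim h^{-m-\frac{n+1}{2}\varrho-n(1-\varrho)}Mu(x)$, and one concludes with Lemma \ref{Lp:semiclassical} and Theorem \ref{maxweight} alone -- no $M_r$, no openness of $A_p$.

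Quantitatively, your route cannot reach the stated threshold when $\varrho<1$. The correct stationary-phase gain is $h^{\frac{n-1}{2}\varrho}$ (that is $\lambda^{-\frac{n-1}{2}}$ with $\lambda=h^{-\varrho}$), not the $h^{\frac{n-1}{2}}$ you claim; the two agree only at $\varrho=1$. With the SSS pieces each $\xi$-derivative of $a(x,\xi/h)$ costs $h^{-(1-\varrho)}$, so even an optimal summation over $\nu$ (bounded overlap of the $h\times h^{1/2}\times\cdots\times h^{1/2}$ boxes centred at $\nabla_\xi\varphi(x,\xi^\nu)$ -- which is in fact where the rank condition would enter your scheme) gives at best a kernel bound of size $h^{-m-\frac{n+1}{2}-(1-\varrho)M}$ with $M>n-1$, i.e.\ the restriction $m<-\frac{n+1}{2}-(n-1)(1-\varrho)$, which falls short of $m<-\frac{n+1}{2}\varrho+n(\varrho-1)$ by $\frac{n-1}{2}(1-\varrho)$ derivatives. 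The H\"older/weight device you borrow from Proposition \ref{weighted boundedness without rank} cannot close this gap: the gain it produces is a power $1/r'$ of the measure of the kernel support and degenerates as $r\to1$, while covering all $w\in A_p$ via Theorem \ref{open} forces you to take $r$ arbitrarily close to $1$, so the exponent $\epsilon$ in your bound $|T_ku(x)|\lesssim 2^{-k\epsilon}M_ru(x)$ cannot be kept positive for $m$ near the claimed threshold. As written, the proposal proves the theorem essentially only for $\varrho=1$ (or for strictly smaller $m$); to get the full range of $\varrho$ you should drop the SSS decomposition here and run the direct rescaled stationary-phase argument on the whole annulus piece.
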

\begin{proof}
The low frequency part of the Fourier integral operator is handled by Proposition \ref{weighted low frequency estimates} part (a). For the high frequency portion, once again we use a Littlewood-Paley decomposition in the frequency variables as in Subsection \ref{Semiclasical reduction subsec} to reduce the operator to its semiclassical piece
  $$T_h u(x)= (2\pi)^{-n} \iint e^{i(\varphi(x,\xi)-\langle y, \xi\rangle)}\,\chi(h\xi)\, a(x,\xi)\, u(y)\, \dd y\, \dd\xi$$
with $\chi(\xi)$ smooth and supported in the annulus $\frac{1}{2}\leq |\xi|\leq 2.$
 Now if we let $\theta(x,\xi):=\varphi(x,\xi)-\langle x, \xi\rangle$, then we have
  \begin{equation}\label{first derivative of theta}
  \vert\nabla_{\xi} \theta\vert \lesssim 1
  \end{equation}
on the support of $a$. Furthermore, if
 \begin{equation}\label{Kernel of local weihted Fourier integral operator}
 T_h (x,y)= (2\pi)^{-n} \int e^{i(\theta(x,\xi)-\langle y, \xi\rangle)}\, \chi(h\xi) a\big(x,\xi\big)\, \dd\xi,
 \end{equation}
then in order to get useful pointwise estimates for the operator $T_h$ we would need to estimate the kernel $T_h(x,y)$.
 Localising in frequency and rotating the coordinates we may assume that  $a(x,\xi)$ is supported in a small conic neighbourhood of  a $\xi_0 =e_n$.
At this point we split the modulus of $T_h$ into
\begin{align*}
|T_h u(x)| &\leq \int_{|y|> 1+2\Vert\nabla_{\xi} \theta\Vert_{L^\infty}}  + \int_{|y|\leq 1+2\Vert\nabla_{\xi} \theta\Vert_{L^\infty}}| T_h (x,y)| \, |u(x-y)| \, \dd y
\\ &:=\textbf{I} + \textbf{II}.
\end{align*}
where there are obviously no critical points on the domain of integration for~\textbf{I}. \\
\textbf{Estimate of I.}
Making the change of variables $\xi\to h^{-1} \xi$ we obtain
  \begin{align*}
  T_h u(x)&= (2\pi)^{-n} h^{-n}\iint e^{ih(\theta(x,\xi)-\langle y, \xi\rangle)}\,\chi(\xi)\, a\big(x,\xi/h\big)\, u(y)\, \dd y\, \dd \xi.
  \end{align*}
Here, since $2\Vert \nabla_{\xi} \theta\Vert_{L^\infty} < 1+ 2\Vert \nabla_{\xi} \theta\Vert_{L^\infty} < |y|$, we have
\begin{equation}\label{local weighted non stationary}
  |\nabla_{\xi} \theta(x,\xi) -y| \geq |y| -\Vert \nabla_{\xi} \theta\Vert_{L^\infty}> \frac{|y|}{2}.
\end{equation}
Also, $|\partial^{\alpha}_{\xi} (\theta(x,\xi)- \langle y, \xi\rangle)| \leq C_\alpha$ for all $|\alpha|\geq 2$ uniformly in $x$ and $y$. Therefore using the non-stationary phase estimate in \cite{H1} Theorem 7.7.1 to \eqref{Kernel of local weihted Fourier integral operator}, the derivative estimates on $a(x,\xi/h)$ and \eqref{local weighted non stationary} yield for $N>0$
\begin{align*}\label{kernel estim local weighted I}
 | T_h (x,y)| &\lesssim h^{-n} h^{N} \sum_{\alpha \leq N} \sup\Big\{\big|\partial ^{\alpha} (\chi a(x,\xi/h)\big|\,|\nabla_{\xi} \theta(x,\xi) -y|^{|\alpha|-2N}\Big\}\\ \nonumber
 &\lesssim h^{-m-n+N\varrho} |y|^{-2N}\lesssim  h^{-m-n+N\varrho} \langle y \rangle ^{-2N},
\end{align*}
where we have used the fact that $|y|>1$ in \textbf{I}. Hence taking $N>\frac{n}{2}$, Theorem \ref{convolve} yields
\begin{equation}
\textbf{I}\lesssim h^{-m-n+N \varrho} \int \langle y \rangle ^{-N} |u(x-y)|\, \dd y \lesssim h^{-m-n+N \varrho} Mu(x).
\end{equation}
\textbf{Estimate of II.}
Making a change of variables $\xi\to h^{-\varrho} \xi$ we obtain
  \begin{align*}
  T_h(x,y)&= (2\pi)^{-n} h^{-n\varrho}\int e^{ih^{-\varrho}(\theta(x,\xi)-\langle y, \xi\rangle)}\,\chi(h^{-\varrho+1}\xi)\, a\big(x,h^{-\varrho}\xi\big)\, \dd \xi
  \\ \nonumber &:=(2\pi)^{-n} h^{-n\varrho}\int   e^{ih^{-\varrho}(\varphi(x,\xi)-\langle y, \xi\rangle)}\, a_{h}\big(x,\xi\big) \, \dd\xi
  \end{align*}
where $a_{h}(x,\xi)$ is compactly supported in $x$, supported in $\xi$ in the annulus $\tfrac{1}{2}h^{\varrho-1} \leq |\xi| \leq 2 h^{\varrho-1}$
and is uniformly bounded together with all its derivatives in $\xi$, by $h^{-m}$.
Here the assumption, $\mathrm{rank}\,\partial^2_{\xi\xi}\varphi(x,\xi) =n-1$ for all $\xi$, yields that $\mathrm{ker}\,\partial^2_{\xi\xi}\varphi(x,\xi_{0}) =\mathrm{span}\, \{\xi_0\}=\mathrm{span}\, \{e_n\}.$ Therefore by the definition of $\theta(x,\xi)$
\begin{equation}
\det \partial^2_{\xi'\xi'}\theta (x,e_n)\neq 0.
\end{equation}
The assumption that $a$ has its $\xi$-support in a small conic neighborhood of $e_n$ implies that if that support is sufficiently small, then
\begin{equation}
    |\det \partial^2_{\xi'\xi'}\theta (x,\xi)| \geq 0, \quad (x,\xi) \in \supp a_h.
\end{equation}
Finally, due to the restriction $1+ 2\Vert \nabla_{\xi} \theta\Vert_{L^\infty} \geq |y|$ and \eqref{first derivative of theta}, one has
\begin{equation} \label{boundedness of derivatives of theta}
|\partial^{\alpha}_{\xi} (\theta(x,\xi)- \langle y, \xi\rangle)| \leq C_\alpha
\end{equation}
for all $|\alpha|\geq 1$ uniformly in $x$ and $y$.

Hence $\theta(x,\xi)- \langle y', \xi'\rangle$ and $h^ma_{h}$ satisfy all the assumptions of the stationary phase estimate in
Lemma \ref{UniformStationaryPhase} with $\lambda = h^{-\varrho}$ and $\lambda^{\mu}=h^{\varrho-1}$, we obtain
\begin{align*}
      \bigg|\int   e^{ih^{-\varrho}(\varphi(x,\xi)-\langle y, \xi\rangle)}\, a_{h}\big(x,\xi\big) \, \dd\xi' \bigg| \lesssim h^{-m} h^{\frac{n-1}{2}\varrho}
      h^{(n-1)(\varrho-1)}
\end{align*}
and  using the fact that the integral in $\xi_{n}$ lies on a segment of size $h^{\varrho-1}$, we get
 \begin{equation}
   |T_h(x, y)|\lesssim h^{-n\varrho} h^{-m} h^{\frac{n-1}{2}\varrho-(1-\varrho)n}\lesssim h^{-m-\frac{n+1}{2}\varrho-(1-\varrho)n}.
 \end{equation}
 This yields that
 \begin{align*}
   \textbf{II} &\lesssim h^{-m-\frac{n+1}{2}\varrho-(1-\varrho)n}  \int_{|y|\leq 1+ 2\Vert \nabla_{\xi} \theta\Vert_{L^\infty}} |u(x-y)|\, \dd y \\
   &\lesssim h^{-m-\frac{n+1}{2}\varrho-(1-\varrho)n} Mu(x)
 \end{align*}
Now adding \textbf{I} and \textbf{II}, taking $N>n$ large enough, using Lemma \ref{Lp:semiclassical}, the assumption $m<-\frac{n+1}{2}\varrho +n(\varrho-1)$ and Theorem \ref{maxweight}, we will obtain the desired result.
\end{proof}
Here  we remark that the condition on the rank of the Hessian on the metric is quite natural and is satisfied by phases like $\langle x, \xi\rangle +|\xi|$ and $\langle x, \xi\rangle +\sqrt{|\xi'|^2 -|\xi''|^2}$ where $\xi=(\xi' , \xi'')$ (with an amplitude supported in $|\xi'|>|\xi''|$), but if we put a slightly stronger condition than the rank condition on the phase, then it turns out that we would not only be able to extend the local result to a global one but also lower the regularity assumption on the phase function to the sole assumption of measurability and boundedness in the spatial variable $x$. Therefore the estimates we provide below will aim to achieve this level of generality.
Having the uniform stationary phase above in our disposal we will proceed with the main high frequency estimates, but before that let us fix a notation.
\subsubsection*{Notation.} Given an $n\times n$ matrix $M$, we denote by $\mathrm{det}_{n-1} M$ the determinant of the matrix $PMP$ where $P$ is the projection to the orthogonal complement of $\mathrm{ker} M$.
\begin{thm}\label{global weighted boundedness}
   Let $a(x,\xi)\in L^{\infty}S^{m}_{\varrho}$ with $m<-\frac{n+1}{2}\varrho+ n(\varrho-1)$ and $\varrho \in[0,1]$. Let the phase function $\varphi(x,\xi)$ satisfy $|\mathrm{det}_{n-1} \partial^2_{\xi\xi}\varphi(x,\xi)|\geq c>0$. Furthermore, suppose that $\varphi (x,\xi)-\langle x,\xi\rangle \in L^{\infty}\Phi^1$
Then the associated Fourier integral operator is bounded on $L_{w}^{p}$, for $1<p<\infty$ and $w\in A_p$.
\end{thm}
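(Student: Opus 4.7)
The plan is to mimic the structure of Theorem \ref{local weighted boundedness} while using the global hypothesis $\varphi-\langle x,\xi\rangle\in L^{\infty}\Phi^1$ to replace the compact support assumption. First I would apply the semiclassical reduction (Lemma \ref{Lp:semiclassical}) to write $T=T_0+T_h$. The low frequency piece $T_0$ is $L^p_w$-bounded by Proposition \ref{weighted low frequency estimates}(b), whose hypotheses are tailored to the present setting. The whole task is therefore to establish a semiclassical bound of the form $\Vert T_h u\Vert_{L^p_w}\lesssim h^{-m-s}\Vert u\Vert_{L^p_w}$ with $s=\tfrac{n+1}{2}\varrho+n(1-\varrho)$; the hypothesis $m<-s$ is then exactly what is assumed, and Lemma \ref{Lp:semiclassical} concludes.

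To analyse $T_h$, I would invoke the phase reduction of Subsection \ref{phase reduction}; this is legitimate because $\varphi\in L^{\infty}\Phi^2$ (splitting $\varphi=\langle x,\xi\rangle+(\varphi-\langle x,\xi\rangle)$, with the linear term trivially in $L^{\infty}\Phi^2$ and the remainder in $L^{\infty}\Phi^1\subset L^{\infty}\Phi^2$). The reduction expresses $T_h$ as a finite sum of operators with phase $\theta(x,\xi)+\langle\nabla_\xi\varphi(x,\zeta),\xi\rangle$, amplitude $a^\nu\in L^{\infty}S^m_\varrho$ localised in $\xi$ around some $\zeta\in\mathbf{S}^{n-1}$, and $\theta\in L^{\infty}\Phi^1$; after rotation I may take $\zeta=e_n$. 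The key structural input is that evaluating the $L^{\infty}\Phi^1$ condition at $|\alpha|=1$ yields $|\nabla_\xi\varphi(x,\zeta)-x|\le C$ uniformly in $(x,\zeta)$, so the $x$-dependent translation $y\mapsto\nabla_\xi\varphi(x,\zeta)$ in the Schwartz kernel is only a uniformly bounded perturbation of the identity. Consequently, after the substitution $y=\nabla_\xi\varphi(x,\zeta)-z$,
\begin{equation*}
|T_h^\nu u(x)|\le\int|K_h^\nu(x,z)|\,|u(\nabla_\xi\varphi(x,\zeta)-z)|\,\dd z,
\end{equation*}
where $K_h^\nu(x,z)=\int e^{i(\theta(x,\xi)+\langle z,\xi\rangle)}\chi(h\xi)a^\nu(x,\xi)\,\dd\xi$ and the bounded shift allows me to dominate the right-hand side by a convolution of a suitable kernel with $|u|$, which on a ball of controlled radius is pointwise $\lesssim Mu(x)$.

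Next I would split the $z$-integral into $|z|\ge R$ and $|z|\le R$ for a large fixed $R$. On the far region, the phase gradient in $\xi$ is $z+\nabla_\xi\theta$ with $|\nabla_\xi\theta|\lesssim 1$, so $N$-fold non-stationary integration by parts in $\xi$ — legitimate because $\theta\in L^{\infty}\Phi^1$ gives uniform bounds on all $\xi$-derivatives of $\theta$ in $x$ — yields $|K_h^\nu(x,z)|\le C_N h^{-m-n+N\varrho}|z|^{-2N}$, and Theorem \ref{convolve} bounds the corresponding contribution by $h^{-m-n+N\varrho}Mu(x)$. On the near region I would rescale $\xi\mapsto h^{-\varrho}\xi$ and apply the uniform stationary phase lemma (Lemma \ref{UniformStationaryPhase}) to the $(n-1)$-dimensional $\xi'$-integration over $e_n^\perp$. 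The hypothesis $|\mathrm{det}_{n-1}\partial^2_{\xi\xi}\varphi(x,\xi)|\ge c$ together with Euler's identity $\partial^2_{\xi\xi}\varphi\cdot\xi=0$ pin down $\ker\partial^2_{\xi\xi}\varphi=\mathrm{span}\{\xi\}$ and thus make $\partial^2_{\xi'\xi'}\theta(x,e_n)=\partial^2_{\xi'\xi'}\varphi(x,e_n)$ uniformly non-degenerate on the conic support of $a^\nu$ — which is exactly the hypothesis of Lemma \ref{UniformStationaryPhase}. The remaining $\xi_n$-integration is supported on an interval of length $\sim h^{\varrho-1}$, yielding $|K_h^\nu(x,z)|\le C h^{-m-\tfrac{n+1}{2}\varrho-(1-\varrho)n}$ on the near region and hence a contribution $\lesssim h^{-m-\tfrac{n+1}{2}\varrho-(1-\varrho)n}Mu(x)$.

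The main obstacle is maintaining uniformity in $x$ throughout, since $\varphi$ is only $L^\infty$ in $x$: both the non-stationary and the stationary phase steps must be carried out with $L^\infty_x$ estimates on all relevant seminorms rather than pointwise ones, which is precisely the setting for which Lemma \ref{UniformStationaryPhase} was designed, and the $L^{\infty}\Phi^1$-control on $\varphi-\langle x,\xi\rangle$ is what simultaneously keeps the $x$-dependent translation bounded and supplies the uniform $\xi$-regularity of $\theta$. Summing the pointwise estimates over the finitely many pieces from the phase reduction gives $|T_h u(x)|\lesssim h^{-m-\tfrac{n+1}{2}\varrho-(1-\varrho)n}\,Mu(x)$; the $L^p_w$-boundedness of the Hardy--Littlewood maximal operator (Theorem \ref{maxweight}) for $w\in A_p$ and $p\in(1,\infty)$ promotes this to the required semiclassical norm bound with $s=\tfrac{n+1}{2}\varrho+n(1-\varrho)$, and Lemma \ref{Lp:semiclassical} then completes the proof since $m<-s=-\tfrac{n+1}{2}\varrho+n(\varrho-1)$.
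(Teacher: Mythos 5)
Your proposal is correct and takes essentially the same route as the paper's proof: semiclassical reduction via Lemma \ref{Lp:semiclassical}, Proposition \ref{weighted low frequency estimates}(b) for the low frequencies, a far/near split of the kernel with non-stationary phase on the far region and the uniform stationary phase of Lemma \ref{UniformStationaryPhase} applied to the $(n-1)$-dimensional $\xi'$-integration on the near region (the hypothesis $|\mathrm{det}_{n-1}\,\partial^2_{\xi\xi}\varphi|\geq c$ supplying the non-degeneracy), then domination by $Mu$, Theorem \ref{maxweight}, and summation with $s=\tfrac{n+1}{2}\varrho+n(1-\varrho)$. The only cosmetic difference is that you route through the phase reduction and a bounded-shift argument, whereas the paper sets $\theta(x,\xi)=\varphi(x,\xi)-\langle x,\xi\rangle\in L^{\infty}\Phi^{1}$ directly, so that the kernel already acts on $u(x-y)$ and no shift is needed.
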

\begin{proof}
As before, the low frequency part of the Fourier integral operator is treated using Proposition \ref{weighted low frequency estimates} part (b). For the high frequency part we follow the same line of argument as in the proof of Theorem \ref{local weighted boundedness}. More specifically at the level of showing the estimate \eqref{first derivative of theta}, the lack of compact support in $x$ variable lead us to use our assumption $\varphi(x,\xi)-\langle x,\xi\rangle \in L^{\infty}\Phi^1$ instead, which yields that $\Vert \nabla_{\xi}\theta(x,\xi)\Vert_{ L^{\infty}}\lesssim 1$.
Splitting the kernel of the Fourier integral operator into the same pieces \textbf{I} and \textbf{II} as in the proof of Theorem \ref{local weighted boundedness}. We estimate the \textbf{I} piece exactly in the same way as before but for piece \textbf{II} we proceed as follows. First of all, the assumption $|\mathrm{det}_{n-1} \partial^2_{\xi\xi}\varphi(x,\xi)|\geq c>0$ for all $(x,\xi)$, yields in particular that $|\mathrm{det}_{n-1} (\partial^2_{\xi\xi}(\theta(x,\xi)- \langle y, \xi\rangle)|\geq c>0$. Due to the restriction $1+ 2\Vert \nabla_{\xi} \theta\Vert_{L^\infty} \geq |y|$ and \eqref{first derivative of theta}, one also has
$$|\partial^{\alpha}_{\xi} (\theta(x,\xi)- \langle y, \xi\rangle)| \leq C_\alpha$$
for all $|\alpha|\geq 1$ uniformly in $x$ and $y$, which yields that $\theta(x,\xi)- \langle y, \xi\rangle\in L^\infty S^{0}_{0}.$
This means that all the assumptions in Lemma \ref{UniformStationaryPhase} are satisfied and therefore
we get
\begin{multline*}
   \textbf{II} \lesssim  h^{-m-\frac{n+1}{2}\varrho-(1-\varrho)n} \\ \times
   \int_{|y|\leq 1+ 2\Vert \nabla_{\xi} \theta\Vert_{L^\infty}} |u(x-y)|\, \dd y \lesssim h^{-m-\frac{n+1}{2}\varrho-(1-\varrho)n} Mu(x)
\end{multline*}
Now adding \textbf{I} and \textbf{II}, using Lemma \ref{Lp:semiclassical} and the assumptions $N>n$,  $m<-\frac{n+1}{2}\varrho+n(\varrho-1)$ and Theorem \ref{maxweight} all together yield the desired result.
\end{proof}
\subsubsection{Endpoint weighted boundedness of Fourier integral operators}
The Following interpolation lemma is the main tool in proving the endpoint weighted boundedness of Fourier integral operators.
\begin{lem} \label{interpolgen}
Let $0 \leq \varrho \leq 1$, $1 < p < \infty$ and $m_1 < m_2$. Suppose that
\begin{enumerate}
\item [$(a)$] the Fourier integral operator $T$ with amplitude $a(x,\xi)\in L^\infty S^{m_1}_{\varrho}$ and the phase $\varphi(x,\xi)$ are bounded on $L^p_w$ for a fixed $w \in A_p$, and
\item [$(b)$] the Fourier integral operator $T$ with amplitude $a(x,\xi)\in L^\infty S^{m_2}_{\varrho}$ and the same type of phases as in $($a$)$ are bounded on $L^p$,
\end{enumerate}
where the bounds depend only on a finite number of seminorms in Definition $\ref{defn of amplitudes}.$
Then, for each $m \in (m_1,m_2)$, operators with amplitudes in $L^\infty S^{m}_{\varrho}$ are bounded on $L^p_{w^\nu}$, and
\begin{equation}
\nu = \frac{m_2 - m}{m_2 - m_1}.
\end{equation}
\end{lem}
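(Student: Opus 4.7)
The plan is to apply Stein's complex interpolation theorem to an analytic family of Fourier integral operators whose amplitude orders and whose underlying weighted spaces both vary holomorphically in the strip $\{z\in\mathbf{C};\ 0\leq\operatorname{Re}z\leq 1\}$. Given our amplitude $a\in L^{\infty}S^{m}_{\varrho}$ and our phase $\varphi$ (of whichever type allowed by the hypotheses), set
\[
a_{z}(x,\xi) := e^{z^{2}-\theta^{2}}\,a(x,\xi)\,\langle\xi\rangle^{(m_{1}-m)+(m_{2}-m_{1})z},\qquad \theta:=\frac{m-m_{1}}{m_{2}-m_{1}}\in(0,1),
\]
and let $T_{z}$ be the Fourier integral operator with amplitude $a_{z}$ and phase $\varphi$. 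Then $a_{iy}\in L^{\infty}S^{m_{1}}_{\varrho}$ and $a_{1+iy}\in L^{\infty}S^{m_{2}}_{\varrho}$ for every $y\in\mathbf{R}$, while $T_{\theta}=a\,e^{0}$-amplitude operator $=T$, so recovering the Lemma reduces to interpolating the endpoint bounds for $T_{z}$.

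Differentiating $\langle\xi\rangle^{i(m_{2}-m_{1})y}$ produces only polynomial growth in $|y|$, so finitely many seminorms of $a_{iy}$ in $L^{\infty}S^{m_{1}}_{\varrho}$ and of $a_{1+iy}$ in $L^{\infty}S^{m_{2}}_{\varrho}$ are bounded by $C(1+|y|)^{N}|e^{z^{2}}|$ for some $N$ depending only on $n$, $\varrho$ and $|m_{2}-m_{1}|$. Since $|e^{z^{2}}|=e^{(\operatorname{Re}z)^{2}-y^{2}}$ decays like a Gaussian in $y$, hypotheses (a) and (b) combined with the fact that the operator norms there depend only on a finite number of amplitude seminorms yield
\[
\|T_{iy}u\|_{L^{p}_{w}}\leq A(y)\|u\|_{L^{p}_{w}},\qquad \|T_{1+iy}u\|_{L^{p}}\leq B(y)\|u\|_{L^{p}},
\]
with $A(y),B(y)\leq Ce^{-y^{2}/2}$, i.e.\ admissible growth for Stein's theorem.

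Next, freeze the weight by conjugation: define
\[
S_{z}u := w^{(1-z)/p}\, T_{z}\bigl(w^{-(1-z)/p}u\bigr).
\]
The factors $w^{\pm iy/p}$ are multipliers of modulus one, hence bounded on every $L^{p}$, and the real part of the exponent interpolates linearly; at $\operatorname{Re}z=0$, conjugation by $w^{\pm 1/p}$ is precisely the isometric identification $L^{p}\simeq L^{p}_{w}$, so the first endpoint bound gives $\|S_{iy}u\|_{L^{p}}\lesssim A(y)\|u\|_{L^{p}}$; at $\operatorname{Re}z=1$ the weight factor collapses to a unimodular multiplier and the second endpoint bound gives $\|S_{1+iy}u\|_{L^{p}}\lesssim B(y)\|u\|_{L^{p}}$. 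The family $S_{z}$ is admissible (analyticity on each simple function is immediate since $a_{z}$ depends holomorphically on $z$ and the integral defining $T_{z}u$ converges absolutely for $u\in\mathscr{S}$), so Stein's theorem yields
\[
\|S_{\theta}u\|_{L^{p}}\leq C\|u\|_{L^{p}},\qquad u\in\mathscr{S}(\mathbf{R}^{n}).
\]
Unwinding the definition, $S_{\theta}u=w^{\nu/p}T\bigl(w^{-\nu/p}u\bigr)$ with $\nu=1-\theta=(m_{2}-m)/(m_{2}-m_{1})$, and substituting $v=w^{-\nu/p}u$ gives $\|Tv\|_{L^{p}_{w^{\nu}}}\leq C\|v\|_{L^{p}_{w^{\nu}}}$, which is the claim.

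The only real subtlety is verifying the admissible growth for Stein's theorem, which is exactly why the Gaussian factor $e^{z^{2}}$ was inserted into $a_{z}$: it kills the polynomial blow-up in $|\operatorname{Im}z|$ produced by differentiating $\langle\xi\rangle^{i(m_{2}-m_{1})y}$ together with any polynomial dependence of the endpoint operator norms on the finitely many seminorms of the amplitude. Everything else is routine bookkeeping, and the analyticity of $z\mapsto T_{z}u$ for $u\in\mathscr{S}$ follows from dominated convergence and the holomorphy of $z\mapsto\langle\xi\rangle^{(m_{2}-m_{1})z}$.
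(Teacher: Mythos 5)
Your argument follows essentially the same route as the paper: embed the amplitude in the analytic family $\langle\xi\rangle^{\text{linear}(z)}a(x,\xi)$, conjugate by complex powers of the weight so that the two endpoint hypotheses become unweighted $L^p$ bounds on the boundary lines, apply Stein--Weiss interpolation, and unwind at the interior point to get boundedness on $L^p_{w^\nu}$ with $\nu=(m_2-m)/(m_2-m_1)$. The normalisation of the strip to $0\le\operatorname{Re}z\le1$ and the Gaussian factor $e^{z^2-\theta^2}$ (where the paper simply tracks the polynomial growth $(1+|\operatorname{Im}z|)^{C_2}$, which is already admissible) are cosmetic differences; your endpoint computations with $|w^{(1-z)/p}|$ and the final substitution $v=w^{-\nu/p}u$ are correct.

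The one genuine gap is the admissibility of the family $S_z$, which you dismiss as ``immediate since the integral defining $T_zu$ converges absolutely for $u\in\mathscr S$.'' That justification does not apply: inside $S_z$ the operator $T_z$ is evaluated at $w^{-(1-z)/p}u$, which is not a Schwartz function, and the analyticity of $z\mapsto\int S_zu\cdot v\,\dd x$ requires controlling the complex powers $w^{\pm(1-z)/p}$, i.e.\ local integrability of the relevant positive and negative powers of $w$. This is exactly the point where the paper is careful: for $1<p\le2$ it uses $A_p\subset A_2$, so that $w^{1/p}$ and $w^{-1/p}$ lie in $L^p_{\mathrm{loc}}$ and the conjugated family is analytic in the Stein--Weiss sense, and the case $p>2$ is then obtained not by running the interpolation directly but by passing to the adjoint, which is bounded on $L^{p'}_{w^{1-p'}}$ with $w^{1-p'}\in A_{p'}$ and $p'\le2$, and dualising back (note $w^{(1-p')(1-p)\nu}=w^{\nu}$). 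As written, your proof asserts admissibility uniformly for all $1<p<\infty$ without this verification; you should either check it directly (for instance by testing against simple functions and using $w\in L^1_{\mathrm{loc}}$, $w^{-1/(p-1)}\in L^1_{\mathrm{loc}}$, together with a regularisation of the operator) or adopt the paper's splitting into $p\le2$ plus duality.
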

\begin{proof}
For $a\in L^{\infty} S^{m}_{\varrho}$ we introduce a family of amplitudes $a_{z}(x,\xi):= \langle
\xi \rangle ^{z} a(x,\xi)$, where $z\in \Omega:=\{z\in\mathbb{C};\, m_1 - m\leq \text{Re}\,z\leq m_2 - m \}$.
It is easy to see that, for $| \alpha | \leq C_1$ with $C_1$ large enough and $z\in \Omega$,
\begin{equation*}
| \partial^{\alpha}_{\xi} a_z (x,\xi)| \lesssim (1+| \text{Im}\, z| )^{C_2}\langle \xi \rangle ^{\text{Re}\, z +m-\varrho | \alpha|},
\end{equation*}
for some $C_2$. We introduce the operator $$T_z u:= w^{\frac{m_2 - m - z}{p(m_2 - m_1)}} T_{a_{z}, \varphi}\big(w^{-\frac{m_2 - m - z}{p(m_2 - m_1)}} u\big).$$
First we consider the case of $p\in [1,2]$. In this case, $A_p\subset A_2$ which in turn implies that both $w^{\frac{1}{p}}$ and $w^{-\frac{1}{p}}$ belong to $L^p_{\mathrm{loc}}$ and therefore  for $z\in \Omega$,
$T_z$ is an analytic family of operators in the sense of Stein-Weiss \cite{SW}.
Now we claim that for $z_1\in \mathbb{C}$ with $\text{Re}\, z_1=m_{1} - m$, the
operator $(1+| \text{Im}\,z_1 |)^{-C_2}T_{a_{z_{1}}, \varphi}$ is bounded on $L^{p}_{w}$ with bounds uniform in $z_1$. Indeed the amplitude of this operator is $(1+| \text{Im}\,z_1 |)^{-C_2}a_{z_{1}}(x,\xi)$
which belongs to $L^{\infty}S^{m_1}_{\varrho}$ with constants uniform in $z_1$. Thus, the claim follows from assumption (a). Consequently, we have
\begin{align*}
\Vert T_{z_1} u\Vert^{p}_{L^p} & = (1+| \text{Im}\,z_1 |)^{pC_2}\Big\Vert (1+| \text{Im}\,z_1|)^{-C_2}w^{\frac{m_2 - m - z_1}{p(m_2 - m_1)}}
T_{a_{z_1},\varphi}\big(w^{-\frac{m_2 - m - z_1}{p(m_2 - m_1)}} u\big)\Big\Vert ^{p}_{L^{p}} \\ \nonumber
& \lesssim (1+| \text{Im}\,z_1 |)^{pC_2} \Big\Vert w^{-\frac{m_2 - m - z_1}{p(m_2 - m_1)}} u \Big\Vert ^{p}_{L^{p}_{w}} \\ \nonumber
& =(1+| \text{Im}\,z_1 |)^{pC_2}\Vert u\Vert^{p}_{L^p},
\end{align*}
where we have used the fact that $\big|w^{\frac{m_2 - m - z_1}{(m_2 - m_1)}} \big|= w$.

Similarly if $z_2 \in \mathbb{C}$ with $\text{Re}\,z_2 = m_2 - m$, then $\big|w^{\frac{m_2 - m - z_2}{(m_2 - m_1)}} \big|= 1$
and the amplitude of the operator $(1+| \text{Im}\,z_2 |)^{-C_2}T_{a_{z_2}}$ belongs to
$L^\infty S^{m_2}_\varrho$ with constants uniform in $z_2$. Assumption (b) therefore implies that
\begin{equation*}
\Vert T_{z_2} u\Vert^{p}_{L^p}\lesssim (1+| \text{Im}\,z_2 |)^{pC_2}\Vert u\Vert^{p}_{L^p}.
\end{equation*}
Therefore the complex interpolation of operators in \cite{BS} implies that for $z = 0$ we have
\begin{equation*}
\Vert T_{0} u\Vert^{p}_{L^p}= \Big\Vert w^{\frac{m_2 - m}{p(m_2 - m_1)}} T_{a,\varphi}\big(w^{-\frac{m_2 - m}{p(m_2 - m_1)}}u\big)\Big\Vert^{p}_{L^p}\leq C\Vert u\Vert^{p}_{L^p}.
\end{equation*}
Hence, setting $v = w^{-\frac{m_2 - m}{p(m_2 - m_1)}}u$ this reads
\begin{equation}
\Vert T_{a,\varphi}v\Vert^{p}_{L^p_{w^\nu}}\leq C\Vert u\Vert^{p}_{L^p_{w^\nu}},
\end{equation}
where $\nu = (m_2 - m)/(m_2 - m_1)$.
This ends the proof in the case $1\leq p\leq 2$.
At this point we recall the fact that if a linear operator $T$
is bounded on $L^p_w$, then its adjoint $T^{\ast}$ is bounded on $L^{p'}_{ w^{1-p'}}$. Therefore, in the case $p > 2$, we apply the above proof to
$T^{\ast}_{a, \varphi}$, with $p'\in [1,2)$ and $v= w^{1-p'}$, which yields that $T^{\ast}_{a, \varphi}$ is bounded on
$L^{p'}_ {v^\nu}$ and since  $w \in A_p$, we have $v \in A_{p'}$ and so $T_a$ is bounded on
$L^p_{v^{(1-p)\nu}} = L^p_{w^{(1-p')(1-p)\nu}}= L^p_{w^\nu}$, which concludes the proof of the theorem.
\end{proof}
Now we are ready to prove our main result concerning weighted boundedness of Fourier integral operators. This is done by combining our previous results with a method based on the properties of the $A_p$ weights and complex interpolation.
\begin{thm}\label{endpointweightedboundthm}
Let $a(x,\xi)\in L^{\infty}S^{-\frac{n+1}{2}\varrho+n(\varrho -1)}_{\varrho}$ and $\varrho \in [0,1].$ Suppose that either
\begin{enumerate}
\item[$(a)$] $a(x,\xi)$ is compactly supported in the $x$ variable and the phase function $\varphi(x,\xi)\in C^{\infty} (\mathbf{R}^ n \times \mathbf{R}^{n}\setminus 0)$, is positively homogeneous of degree $1$ in $\xi$ and satisfies, $\det\partial^2_{x\xi}\varphi(x,\xi) \neq 0$ as well as $\mathrm{rank}\,\partial^2_{\xi\xi}\varphi(x,\xi) =n-1$; or
\item[$(b)$] $\varphi (x,\xi)-\langle x,\xi\rangle \in L^{\infty}\Phi^1,$ $\varphi $ satisfies the rough non-degeneracy condition as well as $|\mathrm{det}_{n-1} \partial^2_{\xi\xi}\varphi(x,\xi)|\geq c>0$.
\end{enumerate}
Then the operator $T_{a,\varphi}$ is bounded on $L^{p}_{w}$ for $p\in (1,\infty)$ and all $w\in A_p$. Furthermore, for $\varrho=1$ this result is sharp.
\end{thm}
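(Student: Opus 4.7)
The plan is to establish the endpoint boundedness by combining the sub-critical weighted estimates (Theorem \ref{local weighted boundedness} in case (a), Theorem \ref{global weighted boundedness} in case (b)) with an unweighted super-critical estimate through the complex interpolation device of Lemma \ref{interpolgen}, and then to promote the conclusion to every $(p,w)$ with $p\in(1,\infty)$ and $w\in A_p$ by Rubio de Francia's extrapolation Theorem \ref{extrapolation}.

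To set this up, I would fix $p_0=2$ and pick an arbitrary $w\in A_2$. By the self-improvement property of Muckenhoupt weights (Theorem \ref{open}), there exists $\eps>0$, depending only on $[w]_{A_2}$, such that $v:=w^{1+\eps}$ still belongs to $A_2$; writing $\nu=1/(1+\eps)\in(0,1)$, this gives $w=v^{\nu}$. Next, I would choose a super-critical order $m_2$ with $m<m_2<n(\varrho-1)/2$, so that the unweighted $L^2$ estimate from Theorem \ref{main L^p thm for Fourier integral operators with smooth phase and rough amplitudes} applies to amplitudes in $L^{\infty}S^{m_2}_{\varrho}$ (one checks that the critical $m=\frac{n-1}{2}\varrho-n$ lies strictly below $n(\varrho-1)/2$, so such an $m_2$ exists), and a sub-critical $m_1:=m_2-(m_2-m)/\nu<m$, for which Theorem \ref{local weighted boundedness} or \ref{global weighted boundedness} delivers the $L^2_{v}$ bound for amplitudes in $L^{\infty}S^{m_1}_{\varrho}$.

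Invoking Lemma \ref{interpolgen} with these two endpoints, $p=2$ and weight $v$, then yields precisely the $L^2_{v^{\nu}}=L^2_{w}$ estimate at the critical order $m$, since $\nu=(m_2-m)/(m_2-m_1)$ by construction. Because $w\in A_2$ was arbitrary, the extrapolation theorem upgrades this to the desired $L^p_{w}$ bound for every $p\in(1,\infty)$ and every $w\in A_p$. For the sharpness at $\varrho=1$, I would invoke Counterexample 2: the operator $e^{i|D|}\langle D\rangle^{m}$, whose phase $\langle x,\xi\rangle+|\xi|$ satisfies $\varphi-\langle x,\xi\rangle=|\xi|\in L^{\infty}\Phi^{1}$, the rough non-degeneracy, and $|\mathrm{det}_{n-1}\partial^{2}_{\xi\xi}\varphi|\geq c>0$, is shown there to fail the weighted $L^p$ bound as soon as $m>-(n+1)/2$, which coincides with the critical exponent when $\varrho=1$.

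The main obstacle I anticipate is the bookkeeping around Lemma \ref{interpolgen}: its hypothesis (a) is required only for the single weight $v$, not uniformly over $A_2$, but one must still check that the constants coming from Theorem \ref{local weighted boundedness}/\ref{global weighted boundedness} depend on a controlled finite number of seminorms of the amplitude, so that the analytic family $\langle\xi\rangle^{z}a(x,\xi)$ used in the proof of Lemma \ref{interpolgen} produces polynomial-in-$\mathrm{Im}\,z$ bounds on both boundary strips of the interpolation. A minor but important point is that $\eps$ and $\nu$ depend on $w$; this is harmless because the interpolation is done weight by weight and the final step is extrapolation, so no uniformity across $A_2$ is required.
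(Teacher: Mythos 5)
Your overall architecture is exactly the paper's: a sub-critical weighted estimate (Theorem \ref{local weighted boundedness} for (a), Theorem \ref{global weighted boundedness} for (b)), a super-critical unweighted $L^2$ estimate, the complex-interpolation Lemma \ref{interpolgen} applied weight by weight at $p=2$ with the self-improved weight $w^{1+\eps}\in A_2$ and $\nu=(m_2-m)/(m_2-m_1)=1/(1+\eps)$, then extrapolation, and Counterexample 2 for sharpness at $\varrho=1$. Your explicit choice $m_1=m_2-(m_2-m)/\nu$ is just the algebraic form of the paper's ``straight line through $(m_1,1+\eps)$ and $(m_2,0)$ passing through $(m,1)$'' construction, so the interpolation step is set up correctly.

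The one genuine flaw is the ingredient you cite for the super-critical unweighted $L^2$ bound. Theorem \ref{main L^p thm for Fourier integral operators with smooth phase and rough amplitudes} requires a phase that is smooth in $x$, strongly non-degenerate and in $\Phi^2$, and neither case of the present theorem supplies this: in case (a) the phase is only assumed non-degenerate ($\det\partial^2_{x\xi}\varphi\neq 0$) and homogeneous, with no $\Phi^2$ bounds or uniform lower bound on the determinant, and in case (b) the phase is merely $L^{\infty}$ in $x$, so that theorem simply does not apply. The paper handles (a) by invoking Theorem \ref{global L2 boundedness smooth phase rough amplitude} ``or rather its proof,'' using the compact $x$-support of the amplitude to get the $L^2$ bound for orders $m_2<\frac{n}{2}(\varrho-1)$ under mere non-degeneracy, and handles (b) via Theorem \ref{Intro:L2Thm}, whose threshold is the stricter $m_2<\frac{n}{2}(\varrho-1)-\frac{n-1}{4}$; consequently in case (b) the inequality you need to verify is $-\frac{n+1}{2}\varrho+n(\varrho-1)<\frac{n}{2}(\varrho-1)-\frac{n-1}{4}$ (true, the gap being $\frac{\varrho}{2}+\frac{n+1}{4}>0$), not the one you checked against $\frac{n}{2}(\varrho-1)$. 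With these two substitutions your argument closes and coincides with the paper's proof.
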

\begin{proof}
The sharpness of this result for $\varrho=1$ is already contained in Counterexample 2 discussed above. The key issue in the proof is that both assumptions in the statement of the theorem guarantee the weighted boundedness for $m<-\frac{n+1}{2}\varrho +n(\varrho -1).$ The rest of the argument is rather abstract and goes as follows. By the extrapolation Theorem \ref{extrapolation}, it is enough to show the boundedness of $T_{a,\varphi}$ in weighted $L^2$ spaces with weights in the class $A_2$. Let us fix $m_2$ such that $-\frac{n+1}{2}\varrho+n(\varrho -1)<m_2 < \frac{n}{2}(\varrho-1).$ By Theorem \ref{open}, given $w\in A_{2}$ choose $\varepsilon$ such that $w^{1+\varepsilon}\in A_{2}$. For this $\varepsilon$ take $m_1<-\frac{n+1}{2}\varrho+n(\varrho -1)$ in such a way that the straight line $L$ that joins points with coordinates $(m_1,1+ \varepsilon)$ and $(m_2,0)$, intersects the line $x=-\frac{n+1}{2}\varrho+n(\varrho -1)$ in the $(x,y)$ plane in a point with coordinates $(-\frac{n+1}{2}\varrho+n(\varrho -1),1)$. Clearly this procedure is possible due to the fact that we can choose the point $m_1$ on the negative $x$ axis as close as we like to the point $-\frac{n+1}{2}\varrho+n(\varrho -1).$ By Theorem \ref{local weighted boundedness}, given $\varphi(x,\xi)\in C^{\infty} (\mathbf{R}^ n \times \mathbf{R}^{n}\setminus 0)$, positively homogeneous of degree $1$ in $\xi,$ satisfying $\mathrm{rank}\,\partial^2_{\xi\xi}\varphi(x,\xi) =n-1,$ and $a\in L^{\infty} S^{m_1}_{\varrho},$ the Fourier integral operators $T_{a, \varphi}$ are bounded operators on $L^2_{w^{1+\varepsilon}}$
for $w \in A_2$ and, by Theorem \ref{global L2 boundedness smooth phase rough amplitude}, or rather its proof, the Fourier integral operators with amplitudes in $L^{\infty}S^{m_2}_{\varrho}$ compactly supported in the spatial variable, and phases that are positively homogeneous of degree $1$ in the frequency variable and satisfying the non-degeneracy condition $\det\partial^2_{x\xi}\varphi(x,\xi) \neq 0,$ are bounded on $L^2$. Therefore, by Lemma \ref{interpolgen}, the Fourier integral operators $T_{a, \varphi}$ with phases and amplitudes as in the statement of the theorem are bounded operators on $L^2_w$. The proof of part (b) is similar and uses instead the interpolation between the weighted boundedness of Theorem \ref{global weighted boundedness} and the unweighted $L^2$ boundedness result of Theorem \ref{Intro:L2Thm}. The details are left to the interested reader.
\end{proof}
If we don't insist on proving weighted boundedness results valid for all $A_p$ weights then, it is possible to improve on the number of derivatives in the estimates and push the numerology almost all the way to those that guaranty unweighted $L^p$ boundedness.  Therefore, there is the trade-off between the generality of weights and loss of derivatives as will be discussed below.
\begin{thm}\label{weighted boundedness for true amplitudes with power weights}
Let $\mathcal{C} \subset (\mathbf{R}^{n} \times \mathbf{R}^{n}\setminus {0})\times (\mathbf{R}^{n}\times \mathbf{R}^{n}\setminus {0}),$ be a conic manifold which is locally a canonical graph, see e.g. \cite{H3} for the definitions. Let $\pi: \mathcal{C}\to \mathbf{R}^{n}\times \mathbf{R}^{n}$ denote the natural projection. Suppose that for every $\lambda_{0}=(x_0, \xi _0 , y_0 , \eta_0 ) \in\mathcal {C}$ there is a conic neighborhood $U_{\lambda_{0}}\subset \mathcal{C}$ of $\lambda_{0}$ and a smooth map $\pi_{\lambda_{0}}: \mathcal{C}\cap U_{\lambda_{0}}\to \mathcal{C}$, homogeneous of degree $0$, with $\mathrm{rank}\,d\pi_{\lambda_{0}} = 2n-1,$ such that
  $$\pi= \pi\circ \pi_{\lambda_{0}}.$$
Let $T\in I^{m}_{\varrho, \mathrm{comp}}(\mathbf{R}^{n}\times \mathbf{R}^{n}; \mathcal{C})$ $($see \cite{H3}$)$ with $\frac{1}{2}\leq\varrho \leq 1$ and $m < (\varrho-n)|\frac{1}{p}-\frac{1}{2}|.$
Then for all $w \in A_p $ there exists $\alpha\in (0,1)$ depending on $m$, $\varrho$, $\delta$, $p$ and $[w]_{A_p}$ such that,
for all $\varepsilon \in[0,\alpha] ,$ the Fourier integral operator $T_{a, \varphi}$ is bounded on $L^{p}_{w^\varepsilon}$ where $1<p<\infty.$
\end{thm}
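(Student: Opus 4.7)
The overall strategy is to use Stein-Weiss complex interpolation (formalised in Lemma~\ref{interpolgen}) to bridge the endpoint weighted boundedness of Theorem~\ref{endpointweightedboundthm}(a), valid for \emph{all} $A_p$ weights at the order $m_1 := -\tfrac{n+1}{2}\varrho + n(\varrho-1)$, and the sharp unweighted $L^p$ boundedness of operators in $I^{m_2}_{\varrho,\mathrm{comp}}$ satisfying the rank-$(2n-1)$ condition at any $m_2$ with $m < m_2 < (\varrho-n)|\tfrac{1}{p}-\tfrac{1}{2}|$. The elementary auxiliary fact that powers the interpolation is that $w\in A_p$ implies $w^{\sigma}\in A_p$ for every $\sigma\in[0,1]$, with $[w^{\sigma}]_{A_p}\leq[w]_{A_p}^{\sigma}$, as follows from two applications of Jensen's inequality to the definition of the $A_p$-constant.

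First I would translate the invariant geometric hypotheses into local phase coordinates. As already discussed in the paragraphs preceding the theorem, the local canonical-graph structure of $\mathcal{C}$ coupled with $\mathrm{rank}\,d\pi_{\lambda_{0}} = 2n-1$ is equivalent, in any smooth $1$-homogeneous phase parametrisation of $\mathcal{C}$ in a conic neighbourhood of $\lambda_{0}$, to the non-degeneracy $\det\partial^{2}_{x\xi}\varphi \neq 0$ together with $\mathrm{rank}\,\partial^{2}_{\xi\xi}\varphi = n-1$ on the support of the amplitude. A standard microlocal partition of unity then writes $T$ as a finite sum of operators $T_{a,\varphi}$ with $a\in S^{m}_{\varrho,1-\varrho}\subset L^{\infty}S^{m}_{\varrho}$ compactly supported in $x$, each falling exactly into the hypothesis range of Theorem~\ref{endpointweightedboundthm}(a).

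Given $w \in A_p$ and the prescribed $m$, pick $m_2$ with $m < m_2 < (\varrho-n)|\tfrac{1}{p}-\tfrac{1}{2}|$ and set $\nu := (m_2-m)/(m_2-m_1)\in(0,1)$. For each $\sigma \in [0,1]$ the weight $w^{\sigma}$ lies in $A_p$ with uniformly controlled constant, hence Theorem~\ref{endpointweightedboundthm}(a) gives $L^{p}_{w^{\sigma}}$ boundedness of the localised $T_{a,\varphi}$ at order $m_1$, while the unweighted $L^p$ bound at order $m_2$ comes from the sharp $L^p$ theory for FIOs in $I^{m_2}_{\varrho,\mathrm{comp}}$ with the rank-$(2n-1)$ condition (the extension of Seeger-Sogge-Stein to the $\varrho$-class). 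Applying Lemma~\ref{interpolgen} to these two endpoint bounds yields $L^{p}_{(w^{\sigma})^{\nu}} = L^{p}_{w^{\sigma\nu}}$ boundedness of $T_{a,\varphi}$ at the target order $m$. Letting $\sigma$ range over $[0,1]$ the exponent $\sigma\nu$ sweeps the interval $[0,\nu]$, so the choice $\alpha := \nu$ produces the sought conclusion for every $\varepsilon \in [0,\alpha]$; by construction $\alpha$ depends on $m$ (through the gap $m_{2}-m$), on $\varrho$ and $p$ (through the two endpoint orders), and only implicitly on $[w]_{A_{p}}$ through the norm constants.

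The principal technical obstacle is the very first step: verifying that the invariant $(2n-1)$-rank hypothesis on $d\pi_{\lambda_{0}}$ reduces to $\mathrm{rank}\,\partial^{2}_{\xi\xi}\varphi = n-1$ on $\pi_{1}(\mathrm{supp}\,a)$, which is the content of the invariant formulation discussion. Once this geometric reduction is in place, the remainder of the argument is a direct assembly of Theorem~\ref{endpointweightedboundthm}(a), the unweighted $L^{p}$ estimate, Lemma~\ref{interpolgen}, and the $A_{p}$-power-closure property recorded above.
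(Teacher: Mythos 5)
Your argument is essentially the paper's own proof: reduce $T$ (via the equivalence-of-phase-functions theorem and the invariant-formulation discussion) to finitely many operators with $a\in S^{m}_{\varrho,1-\varrho}$ compactly supported in $x$, $\det\partial^{2}_{x\xi}\varphi\neq 0$ and $\mathrm{rank}\,\partial^{2}_{\xi\xi}\varphi=n-1$, and then interpolate via Lemma \ref{interpolgen} between the endpoint weighted bound of Theorem \ref{endpointweightedboundthm}(a) and the Seeger--Sogge--Stein unweighted $L^{p}$ bound at an order $m_{2}\in\big(m,(\varrho-n)\big|\tfrac{1}{p}-\tfrac{1}{2}\big|\big)$. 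The only minor differences are that you make explicit the $A_{p}$ power-closure sweep producing the full range $\varepsilon\in[0,\alpha]$ (left implicit in the paper), and you should dispose of the case $m\leq-\tfrac{n+1}{2}\varrho+n(\varrho-1)$ directly by Theorem \ref{endpointweightedboundthm}(a), as the paper does, since there $m$ does not lie in $(m_{1},m_{2})$ and Lemma \ref{interpolgen} is not applicable.
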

\begin{proof}
  By the equivalence of phase function theorem, which for $\frac{1}{2}< \varrho \leq 1$ is due to H\"ormander \cite{H3} and for $\varrho=\frac{1}{2}$ is due to Greenleaf-Uhlmann \cite{GU}, we reduce the study of operator $T$ to that of a finite linear combination of operators which in appropriate local coordinate systems have the form
  \begin{equation}
   T_{a}u(x)=(2\pi)^{-n} \iint e^{i\varphi(x,\xi)-i\langle y, \xi\rangle}\, a(x,\xi) \,u (y) dy\, d\xi,
  \end{equation}
  where $a\in S^{m}_{\varrho, 1-\varrho}$ with compact support in $x$ variable, and $\varphi$ homogeneous of degree 1 in $\xi,$ with $\det\partial^2_{x\xi}\varphi(x,\xi) \neq 0$ and $\mathrm{rank}\,\partial^2_{\xi\xi}\varphi(x,\xi) =n-1 .$ If $m\leq -\frac{n+1}{2}\varrho + n(\varrho -1),$ then Theorem \ref{endpointweightedboundthm} case $(a)$ yields the result, so we assume that $m> -\frac{n+1}{2}\varrho + n(\varrho -1).$ Also by assumption of the theorem we can find a $m_2 ,$ which we shall fix from now on, such that $m< m_2 <  (\varrho-n)|\frac{1}{p}-\frac{1}{2}|$ and $m_1 <-\frac{n+1}{2}\varrho + n(\varrho -1).$ Now a result of Seeger-Sogge-Stein, namely Theorem 5.1 in \cite {SSS} yields that operators $T_a$ with amplitudes compactly supported in the $x$ variable in the class $S^{m_2}_{\varrho, 1-\varrho},$ and phase functions $\varphi(x,\xi)$ satisfying $\mathrm{rank}\,\partial^2_{\xi\xi}\varphi(x,\xi) =n-1$ are bounded on $L^p.$  Furthermore by Theorem \ref{endpointweightedboundthm} case $(a),$ The operators $T_a$ with $a\in S^{m_1}_{\varrho, \delta}$ are bounded on $L^{p}_{w},$ $p\in (1,\infty).$ Therefore, Lemma \ref{interpolgen} yields the desired result.
\end{proof}
A similar result also holds for operators with amplitudes in $S^{m}_{\varrho,\delta}$ with without any rank condition on the phase function.
\begin{thm}\label{weighted boundedness with power weights}
Let $a(x,\xi)\in S^{m}_{\varrho,\delta},$  $\varphi(x,\xi)$ be a strongly non-degenerate phase function with $\varphi(x,\xi)-\langle x,\xi\rangle \in \Phi^1,$ and $\lambda:=\min(0,n(\varrho-\delta)),$  with either of the following ranges of parameters:
                    \begin{enumerate}
\item [$(1)$]  $0\leq \varrho \leq 1$, $0\leq \delta\leq 1, $ $1 \leq p \leq 2$ and
                      $$ m<n(\varrho -1)\bigg (\frac{2}{p}-1\bigg)+\big(n-1\big)\bigg(\frac{1}{2}-\frac{1}{p}\bigg)+ \lambda\bigg(1-\frac{1}{p}\bigg); $$ or
\item [$(2)$]  $0\leq \varrho \leq 1$, $0\leq \delta\leq 1, $ $2 \leq p \leq \infty$ and
                      $$ m<n(\varrho -1)\bigg (\frac{1}{2}-\frac{1}{p}\bigg)+ (n-1)\bigg(\frac{1}{p}-\frac{1}{2}\bigg) +\frac{\lambda}{p}.$$
                      \end{enumerate}
Then for all $w \in A_p $ there exists $\alpha\in (0,1)$ depending on $m$, $\varrho$, $\delta$, $p$ and $[w]_{A_p}$ such that,
for all $\varepsilon \in[0,\alpha] ,$ the Fourier integral operator $T_{a, \varphi}$ is bounded on $L^{p}_{w^\varepsilon}$ where $1<p<\infty.$
\end{thm}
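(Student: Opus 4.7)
The plan is to follow the same two-tier interpolation strategy as in the proof of Theorem \ref{weighted boundedness for true amplitudes with power weights}, substituting Proposition \ref{weighted boundedness without rank} for Theorem \ref{endpointweightedboundthm} at the weighted endpoint, since the present hypotheses impose no rank condition on $\partial^2_{\xi\xi}\varphi$. The assumption $\varphi-\langle x,\xi\rangle\in\Phi^1$ places $\varphi$ itself in $\Phi^2$ (because $\langle x,\xi\rangle$ has vanishing derivatives of order $\ge 2$ and $\Phi^1\subset\Phi^2$), and in particular $\varphi-\langle x,\xi\rangle\in L^\infty\Phi^1$, so both Theorem \ref{main L^p thm for smooth Fourier integral operators} and Proposition \ref{weighted boundedness without rank} will be at my disposal.

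First I would fix $m_2$ strictly between $m$ and the critical threshold in the corresponding inequality of (1) or (2), so that by Theorem \ref{main L^p thm for smooth Fourier integral operators} the operator $T_{a,\varphi}$ with amplitude in $S^{m_2}_{\varrho,\delta}$ is bounded on the unweighted $L^p$. Next I would establish the weighted endpoint at some fixed $m_1\le-n$: for every $v\in A_p$, $T_{a,\varphi}$ with $a\in S^{m_1}_{\varrho,\delta}$ is bounded on $L^p_v$. Although Proposition \ref{weighted boundedness without rank} is stated for $\varrho=1$, its proof proceeds by showing that the pseudo-pseudodifferential symbol $\sigma(x,\xi):=a(x,\xi)e^{i(\varphi-\langle x,\xi\rangle)}$ lies in $L^\infty S^{-n}_0$, and this conclusion holds for any $a\in L^\infty S^{-n}_\varrho$ with $\varrho\in[0,1]$, because the exponential factor has all $\xi$-derivatives bounded and therefore cannot improve the symbol class of $\sigma$ beyond $L^\infty S^{-n}_0$. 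Since $S^{m_1}_{\varrho,\delta}\subset L^\infty S^{m_1}_\varrho\subset L^\infty S^{-n}_\varrho$, the argument of Proposition \ref{weighted boundedness without rank} will apply verbatim and deliver the required weighted bound.

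Given $w\in A_p$, Theorem \ref{open} supplies $\varepsilon_0>0$ such that $v:=w^{1+\varepsilon_0}\in A_p$. Applying Lemma \ref{interpolgen} with the $L^p_v$ bound at $m_1$ and the unweighted $L^p$ bound at $m_2$ then yields boundedness on $L^p_{v^\nu}=L^p_{w^{(1+\varepsilon_0)\nu}}$ at the order $m$, where $\nu=(m_2-m)/(m_2-m_1)\in(0,1)$. By choosing $m_2$ close enough to $m$ from above, I can arrange $\alpha:=(1+\varepsilon_0)\nu$ to lie in $(0,1)$. A concluding Riesz--Thorin/Stein--Weiss interpolation between this $L^p_{w^\alpha}$ bound and the unweighted $L^p$ bound at $m$ itself (provided directly by Theorem \ref{main L^p thm for smooth Fourier integral operators}) then gives boundedness on $L^p_{w^\varepsilon}$ for every $\varepsilon\in[0,\alpha]$, as claimed.

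The hardest step to verify carefully is the extension of Proposition \ref{weighted boundedness without rank} to $\varrho<1$ just described; this amounts to checking that the Littlewood--Paley together with Hausdorff--Young argument there relies only on $L^\infty$-type control of $\sigma$ and not on any $\varrho$-gain in $\xi$-derivatives (which is anyway destroyed after multiplying by $e^{i(\varphi-\langle x,\xi\rangle)}$). Beyond this, the strategy runs entirely in parallel to that of Theorem \ref{weighted boundedness for true amplitudes with power weights}.
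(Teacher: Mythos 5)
Your argument is correct and follows essentially the same route as the paper: interpolating, via Lemma \ref{interpolgen}, between the weighted bound at an order $m_1\leq -n$ supplied by Proposition \ref{weighted boundedness without rank} and the unweighted $L^p$ bound at an order $m_2$ below the critical threshold supplied by Theorem \ref{main L^p thm for smooth Fourier integral operators}. Your explicit check that Proposition \ref{weighted boundedness without rank} applies to $a\in S^{m_1}_{\varrho,\delta}$ with $\varrho<1$ (because its proof only needs $\sigma=a\,e^{i(\varphi-\langle x,\xi\rangle)}\in L^{\infty}S^{-n}_{0}$ away from $\xi=0$) is a point the paper uses implicitly, while your extra steps (replacing $w$ by $w^{1+\varepsilon_0}$ and the final Stein--Weiss interpolation to reach every $\varepsilon\in[0,\alpha]$) are harmless but not needed, since Lemma \ref{interpolgen} already yields $\alpha=\nu\in(0,1)$ and the intermediate exponents follow by applying the same conclusion to the $A_p$ weight $w^{\varepsilon/\alpha}$.
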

\begin{proof}
  The proof is similar to that of Theorem \ref{weighted boundedness for true amplitudes with power weights} and we only consider the proof in case (1), since the other case is similar. We observe that since $\Phi^1\subset \Phi^2$ and $\langle x, \xi \rangle \in \Phi^2,$ the assumption $\varphi (x,\xi)-\langle x,\xi\rangle \in \Phi^1,$ implies that $\varphi(x,\xi) \in \Phi^2.$ To proceed with the proof we can assume that $m> -n$ because otherwise by Proposition \ref{weighted boundedness without rank} there is nothing to prove. The assumption of the theorem, enables us to find $m_2$ such that
   $$ m< m_2 <n(\varrho -1)\bigg (\frac{2}{p}-1\bigg)+\big(n-1\big)\bigg(\frac{1}{2}-\frac{1}{p}\bigg)+ \lambda\bigg(1-\frac{1}{p}\bigg) $$
and $m_1 <-n.$ Now Theorem \ref{main L^p thm for smooth Fourier integral operators} yields that operators $T_a$ with amplitudes in the class $S^{m_2}_{\varrho, \delta},$ and strongly non-degenerate phase functions $\varphi(x,\xi) \in \Phi^2$ are bounded on $L^p.$  Furthermore Proposition \ref{weighted boundedness without rank} yields that operators $T_a$ with $b\in S^{m_1}_{\varrho, \delta}$ are bounded on $L^{p}_{w}.$ Therefore, Lemma \ref{interpolgen} yields once again the desired result for the range $1<p\leq 2.$
\end{proof}

Finally, for operators with non-smooth amplitudes we can prove the following:
\begin{thm}\label{weighted boundedness with power weights nonsmooth symbols}
Let $a(x,\xi)\in L^{\infty}S^{m}_{\varrho},$ $0\leq \varrho\leq 1,$ and let $\varphi(x,\xi)-\langle x,\xi\rangle \in \Phi^1,$ with a strongly non-degenerate $\varphi$ and either of the following ranges of parameters:
                    \begin{enumerate}
\item[$(a)$]  $1 \leq p \leq 2$ and
                      $$ m<\frac{n}{p}(\varrho -1)+\big(n-1\big)\bigg(\frac{1}{2}-\frac{1}{p}\bigg); $$ or
\item[$(b)$]  $2 \leq p \leq \infty$ and
                      $$ m<\frac{n}{2}(\varrho -1)+ (n-1)\bigg(\frac{1}{p}-\frac{1}{2}\bigg).$$
                      \end{enumerate}
Then for all $w \in A_p $ there exists $\alpha\in (0,1)$ depending on $m$, $\varrho$, $p$ and $[w]_{A_p}$ such that,
for all $\varepsilon \in[0,\alpha] ,$ the Fourier integral operator $T_{a, \varphi}$ is bounded on $L^{p}_{w^\varepsilon}.$
\end{thm}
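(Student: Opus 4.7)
The plan is to follow exactly the interpolation scheme used in the proofs of Theorem~\ref{weighted boundedness for true amplitudes with power weights} and Theorem~\ref{weighted boundedness with power weights}, built around Lemma~\ref{interpolgen}. First I would dispense with the low-frequency part: since $\varphi(x,\xi)-\langle x,\xi\rangle\in \Phi^1\subset L^{\infty}\Phi^1$, Proposition~\ref{weighted low frequency estimates}(b) already yields $L^p_w$-boundedness of the low-frequency piece of $T_{a,\varphi}$ for every $w\in A_p$ and every $p\in(1,\infty)$, independently of the value of $m$. Hence it suffices to treat the high-frequency component.

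For the high-frequency component, the strategy is to interpolate between a ``heavy-loss'' weighted endpoint and a ``light-loss'' unweighted endpoint. Fix $w\in A_p$. By Theorem~\ref{open}, choose $\varepsilon_0>0$ so that $w^{1+\varepsilon_0}\in A_p$. Now pick $m_2\in(m,m_{\mathrm{crit}})$, where $m_{\mathrm{crit}}$ is the sharp unweighted threshold in case (a) or (b) of the statement, so that Theorem~\ref{main L^p thm for Fourier integral operators with smooth phase and rough amplitudes} applies (note $\Phi^1\subset\Phi^2$, so $\varphi\in\Phi^2$ and the strong non-degeneracy assumption is exactly the one required there), giving unweighted $L^p$-boundedness at order $m_2$. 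Next pick $m_1<-n$ and, arguing exactly as in the proof of Proposition~\ref{weighted boundedness without rank} but applied to the symbol $\sigma(x,\xi)=a(x,\xi)e^{i(\varphi(x,\xi)-\langle x,\xi\rangle)}\in L^{\infty}S^{m_1}_{\varrho}$ (well-defined because $\varphi-\langle x,\xi\rangle\in\Phi^1$), obtain $L^p_{w^{1+\varepsilon_0}}$-boundedness of $T_{a,\varphi}$ at order $m_1$ for the full $A_p$ class.

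At this stage Lemma~\ref{interpolgen}, applied with the fixed weight $v=w^{1+\varepsilon_0}$ and the two endpoint orders $m_1$ and $m_2$, yields $L^p_{v^{\nu}}$-boundedness of $T_{a,\varphi}$ at order $m$, where
\[
\nu=\frac{m_2-m}{m_2-m_1}\in(0,1).
\]
Setting $\alpha:=(1+\varepsilon_0)\nu$, this gives boundedness on $L^p_{w^{\alpha}}$, and since $m_1$ can be pushed further to $-\infty$ while $m_2$ stays fixed, $\alpha$ can be taken to be any sufficiently small positive number depending on $m$, $\varrho$, $p$ and $[w]_{A_p}$. Combined with the already available unweighted $L^p$-bound at the same order $m$ (again from Theorem~\ref{main L^p thm for Fourier integral operators with smooth phase and rough amplitudes}), complex interpolation between $L^p=L^p_{w^0}$ and $L^p_{w^{\alpha}}$ then delivers the full range $\varepsilon\in[0,\alpha]$ claimed in the theorem.

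The only step requiring some care is the weighted endpoint at $m_1$: Proposition~\ref{weighted boundedness without rank} is literally stated for $L^{\infty}S^{m}_{1}$, whereas here the amplitude lies in $L^{\infty}S^{m_1}_{\varrho}$ with general $\varrho\in[0,1]$. However, for $m_1<-n$ one already has $|\xi|^{-n}$ decay from the zeroth-order symbol estimate alone, and the Littlewood-Paley argument of that proposition only uses integration of $|K_k(x,y)|^{p'}\omega(y)^{p'}$ against a power weight $\omega(y)=|y|^l$: the frequency-derivative estimates $|\partial^{\alpha}_{\xi}a|\lesssim\langle\xi\rangle^{m_1-\varrho|\alpha|}$ still produce convergent dyadic sums provided $l$ is chosen large (independent of $\varrho$, since $m_1+n<0$ absorbs any deficit in $\varrho$). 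I expect this verification, together with checking that the pseudo-pseudodifferential passage $T_{a,\varphi}=\sigma(x,D)$ really does preserve the relevant symbol class when $\varphi-\langle x,\xi\rangle\in\Phi^1$, to be the main (essentially routine) technical obstacle; everything else is a verbatim transcription of the interpolation argument in Theorem~\ref{weighted boundedness with power weights}.
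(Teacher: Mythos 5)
Your proposal is correct and takes essentially the same route as the paper: interpolation via Lemma \ref{interpolgen} between the heavy-loss weighted endpoint furnished by Proposition \ref{weighted boundedness without rank} (whose Littlewood--Paley proof, as you note, only uses the bound $\langle\xi\rangle^{-n}$ on all $\xi$-derivatives of $a(x,\xi)e^{i(\varphi(x,\xi)-\langle x,\xi\rangle)}$, so type $\varrho$ causes no trouble) and the unweighted $L^p$ bound of Theorem \ref{main L^p thm for Fourier integral operators with smooth phase and rough amplitudes}, exactly mirroring the proof of Theorem \ref{weighted boundedness with power weights}. The extra devices you add (splitting off the low-frequency piece, interpolating against $w^{1+\varepsilon_0}$, and Stein--Weiss interpolation to cover the whole range $\varepsilon\in[0,\alpha]$) are harmless refinements of the same argument.
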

\begin{proof}
The proof is a modification of that of Theorem \ref{weighted boundedness with power weights}, where one also uses Theorem \ref{main L^p thm for Fourier integral operators with smooth phase and rough amplitudes}. The straightforward modifications are left to the interested reader.
\end{proof}
Here we remark that if in the proofs of Theorems \ref{weighted boundedness with power weights} and \ref{weighted boundedness with power weights nonsmooth symbols} we would have used Theorem \ref{endpointweightedboundthm} case (b) instead of Proposition \ref{weighted boundedness without rank} in the proof above, then we would obtain a similar result, under the condition $|\mathrm{det}_{n-1} \partial^2_{\xi\xi}\varphi(x,\xi)|\geq c>0$ on the phase, but with an improved $\alpha$ as compared to those in the statements of Theorems \ref{weighted boundedness with power weights} and \ref{weighted boundedness with power weights nonsmooth symbols}.

\section{Applications in harmonic analysis and partial differential equations}
In this chapter, we use our weighted estimates proved in the previous chapter to show the boundedness of constant coefficient Fourier integral operators in weighted Triebel-Lizorkin spaces. This is done using vector-valued inequalities for the aforementioned operators. We proceed by establishing weighted and unweighted $L^p$ boundedness of commutators of a wide class of Fourier integral operators with functions of bounded mean oscillation (BMO), where in some cases we also show the weighted boundedness of iterated commutators. The boundedness of commutators are proven using the weighted estimates of the previous chapter and a rather abstract complex analytic method. Finally in the last section, we prove global unweighted and local weighted estimates for the solutions of the Cauchy problem for $m$-th and second order hyperbolic partial differential equations on $\mathbf{R}^n .$
\subsection{Estimates in weighted Triebel-Lizorkin spaces}
In this section, we investigate the problem of the boundedness of certain classes on Fourier integral operators in weighted Triebel-Lizorkin spaces. The result obtained here can be viewed as an example of the application of weighted norm inequalities for FIO's. The main reference for this section is \cite{GR} and we will refer the reader to that monograph for the proofs of the statements concerning vector-valued inequalities.
\begin{defn}
An operator $T$ defined in $L^{p}(\mu)$ $($ this denotes $L^{p}$ spaces with measure $d\mu$$)$ is called {\it{linearizable}} if there exits a linear operator $U$ defined on $L^{p}(\mu)$ whose values are Banach space-valued functions such that
\begin{equation}
  \vert Tu(x)\vert =\Vert Uu(x)\Vert_{B},\,\,\,\, u\in L^{p}(\mu)
\end{equation}
\end{defn}
We shall use the following theorem due to Garcia-Cuerva and Rubio de Francia, whose proof can be found in \cite{GR}.
\begin{thm}\label{vectorvalued thm}
  Let $T_{j}$ be a sequence of linearizable operators and suppose that for some fixed $r>1$ and all weights $w\in A_r$
  \begin{equation}\label{vectorvalued estim}
    \int \vert T_{j} u(x) \vert ^{r} w(x) \, \dd x \leq C_{r}(w) \int \vert u(x)\vert ^r w(x) \, \dd x,
  \end{equation}
 with $C_{r}(w)$ depending on the weight $w$.
  Then for $1<p,\, q<\infty$ and $w\in A_{p}$ one has the following weighted vector-valued inequality
   \begin{equation}\label{weightedvecvaluedineq}
    \Big\Vert \Big\{\sum_{j} \vert T_{j} u_{j} \vert ^{q}\Big\}^{\frac{1}{q}}\Big\Vert_{L^{p}_w}
    \leq C_{p,q}(w)  \Big\Vert\Big \{\sum_{j} \vert u_{j} \vert ^{q}\Big\}^{\frac{1}{q}}\Big\Vert_{L^{p}_w}.
  \end{equation}
\end{thm}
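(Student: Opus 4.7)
The starting point of the plan is to invoke Rubio de Francia's extrapolation theorem (Theorem \ref{extrapolation}) in order to upgrade the scalar weighted hypothesis \eqref{vectorvalued estim} from the single exponent $r$ to every exponent: for each $1<s<\infty$ and each $v\in A_s$ one obtains $\Vert T_j u\Vert_{L^s_v}\leq C(s,[v]_{A_s})\Vert u\Vert_{L^s_v}$, with a constant depending only on $s$ and $[v]_{A_s}$, and therefore uniform in $j$. This removes the single-exponent restriction built into the hypothesis and is the only place where linearizability is not yet used.

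The diagonal case $p=q$ follows at once by summing the scalar estimates:
\begin{equation*}
\Big\Vert\Big(\sum_j |T_j u_j|^p\Big)^{1/p}\Big\Vert_{L^p_w}^p = \sum_j \int |T_j u_j|^p w \, \dd x \leq C^p \sum_j \int |u_j|^p w \, \dd x,
\end{equation*}
where $C=C(p,[w]_{A_p})$.

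For the off-diagonal cases $p\neq q$ the plan is to combine duality, linearizability, and the Rubio de Francia iteration algorithm. When $q<p$, I would write $\Vert (T_j u_j)\Vert_{L^p_w(\ell^q)}^q=\Vert\sum_j |T_j u_j|^q\Vert_{L^{p/q}_w}$ and dualize in $L^{p/q}_w$ against a nonnegative $h$ with $\Vert h\Vert_{L^{(p/q)'}_w}\leq 1$. Then I would construct an $A_1$ majorant $H\geq h$ through an iteration of the form $H=\sum_{k\geq 0}(2\Vert M\Vert)^{-k}M^k h$ based on a suitable maximal operator $M$, chosen so that $Hw$ lies in $A_q$ with constant bounded in terms of $[w]_{A_p}$ only; applying the extrapolated scalar weighted estimate at exponent $q$ with weight $Hw$, and using H\"older to pass from $H$ back to $h$, closes the argument. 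When $q>p$ I would use the linearization $|T_j u_j(x)|=\Vert U_j u_j(x)\Vert_B$ to express the $L^p_w(\ell^q)$ norm through pairing with sequences of $B^*$-valued test functions in $L^{p'}_{w^{1-p'}}(\ell^{q'})$. Since $w\in A_p$ if and only if $w^{1-p'}\in A_{p'}$, this swaps the roles of the exponents with their conjugates and reduces to the previous subcase.

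The main obstacle will be the weight bookkeeping inside the Rubio de Francia iteration: one must select the underlying maximal operator so that it is bounded on the precise weighted space in which $h$ lives (so the geometric series defining $H$ converges), and one must verify that the $A_1$ constant of $Hw$ depends only on $p$, $q$, and $[w]_{A_p}$. The linearizability hypothesis is indispensable in the subcase $q>p$, because only for linear $U_j$ can the dual pairing be manipulated through adjoints to recover a scalar weighted expression amenable to the extrapolated estimate of the first paragraph.
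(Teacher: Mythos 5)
The paper never proves this statement: it is quoted verbatim from Garc\'ia-Cuerva and Rubio de Francia, with the proof delegated to \cite{GR}. So the only meaningful comparison is with the classical argument there, which your plan resembles in outline: extrapolate the scalar hypothesis \eqref{vectorvalued estim} to all exponents, sum to get the diagonal case $p=q$, then pass to general $(p,q)$. Your first two steps are fine, modulo the standard caveat that Theorem \ref{extrapolation} requires the constant $C_r(w)$ to depend on $w$ only through $[w]_{A_r}$ (this is how the hypothesis must be read for the theorem to be true at all).

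The genuine gap is in the off-diagonal step $q<p$, which is where all the content lies. After dualizing $\Vert\sum_j|T_ju_j|^q\Vert_{L^{p/q}_w}$ against $h$, you propose a single Rubio de Francia iteration producing an $A_1$ majorant $H\geq h$ with $\Vert H\Vert_{L^{(p/q)'}_w}\lesssim 1$ \emph{and} $Hw\in A_q$ with constant controlled by $[w]_{A_p}$. These two requirements cannot be met by one iteration in general: the iteration built on the Hardy--Littlewood maximal operator converges in $L^{(p/q)'}_w$ only if $M$ is bounded there, i.e.\ only if $w\in A_{(p/q)'}$, while the iteration built on the adapted operator $h\mapsto M(hw)/w$ (the one that does force the product with $w$ into $A_1\subset A_q$) is bounded on $L^{(p/q)'}_w$ only if $w\in A_{p/q}$ --- neither condition follows from $w\in A_p$, and multiplying a fixed $A_p$ weight by an $A_1$ factor does not by itself improve its Muckenhoupt class. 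The known proofs resolve exactly this tension with a composite weight built from \emph{two} iteration algorithms (one acting on the dual function, one adapted to $w$ via $M(\cdot\,w)/w$) together with a H\"older splitting of exponents; the bookkeeping you defer is the heart of the proof, not a routine verification. The cleanest repair, and essentially what \cite{GR} does, avoids the case analysis entirely: once you have the summed estimate $\Vert(\sum_j|T_ju_j|^q)^{1/q}\Vert_{L^q_v}\leq C([v]_{A_q})\Vert(\sum_j|u_j|^q)^{1/q}\Vert_{L^q_v}$ for all $v\in A_q$, observe that $(u_j)\mapsto(\sum_j|T_ju_j|^q)^{1/q}$ is again a linearizable operator (this, rather than any adjoint manipulation, is where linearizability is used), so a second application of Theorem \ref{extrapolation} to it yields \eqref{weightedvecvaluedineq} for every $1<p<\infty$ and $w\in A_p$ at once, making both your $q<p$ construction and the $q>p$ duality reduction unnecessary.
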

Next we recall the definition of the weighted Triebel-Lizorkin spaces, see e.g. \cite{T}
\begin{defn}
 Start with a partition of unity $\sum_{j=0}^{\infty} \psi_{j}(\xi)=1$, where $\psi_{0}(\xi)$ is supported in $\vert \xi\vert\leq 2$, $\psi_{j}(\xi)$ for $j\geq 1$ is supported in $2^{j-1} \leq \vert \xi\vert \leq 2^{j+1}$ and $ |\partial^{\alpha} \psi_{j}(\xi)| \leq C_{\alpha} 2^{-j|\alpha|}$, for $j\geq 1$. Given $s\in \mathbb{R},$ $1\leq p\leq\infty,$ $1\leq q\leq \infty ,$ and $w\in A_p ,$ a tempered distribution $u$ belongs to the weighted {\it{Triebel-Lizorkin}} space $F^{s,p}_{q,\,w}$ if
\begin{equation} \label{Tribliz definition}
  \Vert u\Vert_{F^{s,p}_{q,\,w}}:= \bigg\Vert \bigg\{\sum_{j=0}^{\infty}\vert 2^{js}\psi_{j}(D) u\vert^q\bigg\}^{\frac{1}{q}}\bigg\Vert_{L^p_{w}}<\infty,
\end{equation}
\end{defn}
From this it follows that for a linear operator $T$ the estimate
\begin{equation}
  \Vert Tu\Vert_{F^{s',p}_{q,\,w}}\lesssim \Vert u\Vert_{F^{s,p}_{q,\,w}},
\end{equation}
is implied by
\begin{equation}\label{trieblizorestim}
 \bigg\Vert \bigg\{\sum_{j=0}^{\infty}\vert 2^{js'}\psi_{j}(D) Tu\vert^q\bigg\}^{\frac{1}{q}}\bigg\Vert_{L^p_{w}} \lesssim
 \bigg\Vert \bigg\{\sum_{j=0}^{\infty}\vert 2^{js}\psi_{j}(D) u\vert^q\bigg\}^{\frac{1}{q}}\bigg\Vert_{L^p_{w}}.
\end{equation}
Now if one is in the situation where $[T,\psi_j]=0,$ then \eqref{trieblizorestim} is equivalent to
\begin{equation}\label{trieblizorestimequiv}
\bigg\Vert \bigg\{\sum_{j=0}^{\infty}\vert 2^{j(s'-s)}T (2^{js}\psi_{j}(D)u)\vert^q\bigg\}^{\frac{1}{q}}\bigg\Vert_{L^p_{w}}
\lesssim \bigg\Vert \bigg\{\sum_{j=0}^{\infty}\vert 2^{js}\psi_{j}(D) u\vert^q\bigg\}^{\frac{1}{q}}\bigg\Vert_{L^p_{w}}.
\end{equation}
Therefore, setting $T_j:=2^{j(s'-s)}T$ and $u_j := 2^{js}\psi_j u$ and assuming that $s\geq s'$, \eqref{trieblizorestimequiv} has the same form as the the vector-valued inequality \eqref{weightedvecvaluedineq} and follows from \eqref{vectorvalued estim}.
Using these facts yields the following result,
\begin{thm}
Let $a(\xi)\in S_{1,0}^{-\frac{n+1}{2}}$ and $\varphi\in \Phi^{1}$ with $|\mathrm{det}_{n-1} \partial^2_{\xi\xi}\varphi(\xi)|\geq c>0$. Then for $s\geq s',$ $1<p<\infty ,$ $1<q<\infty ,$ and $w\in A_p, $ the Fourier integral operator $$Tu(x) = \frac{1}{(2\pi)^{n}}\int_{\mathbb{R}^{n}}
e^{i \varphi(\xi)+i \langle x, \xi\rangle} a(\xi)\hat{u}(\xi) \, d\xi$$ satisfies the estimate
\begin{equation}
  \Vert Tu\Vert_{F^{s',p}_{q,\,w}}\lesssim \Vert u\Vert_{F^{s,p}_{q,\,w}}
\end{equation}
\end{thm}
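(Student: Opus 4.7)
The plan is to reduce the Triebel--Lizorkin estimate to a weighted vector-valued inequality of the type provided by Theorem \ref{vectorvalued thm}, using the discussion that immediately precedes the statement. The key observation is that $T$ is a Fourier multiplier (the amplitude and phase depend only on $\xi$), so it commutes with every Littlewood--Paley projector $\psi_j(D)$. Consequently, writing $T_j:=2^{j(s'-s)}T$ and $u_j:=2^{js}\psi_j(D)u$, the desired inequality $\Vert Tu\Vert_{F^{s',p}_{q,w}}\lesssim \Vert u\Vert_{F^{s,p}_{q,w}}$ is exactly \eqref{trieblizorestimequiv}, i.e.\
\begin{equation*}
     \Big\Vert \Big\{\sum_{j=0}^{\infty} |T_{j}u_{j}|^{q}\Big\}^{\frac{1}{q}}\Big\Vert_{L^p_{w}}
     \lesssim \Big\Vert \Big\{\sum_{j=0}^{\infty} |u_{j}|^{q}\Big\}^{\frac{1}{q}}\Big\Vert_{L^p_{w}}.
\end{equation*}

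To apply Theorem \ref{vectorvalued thm} it suffices to produce one fixed $r>1$ for which the family $\{T_j\}_{j\geq 0}$ is uniformly bounded on $L^r_w$ for all $w\in A_r$. Since $s\geq s'$, the scalar multipliers $2^{j(s'-s)}$ lie in $[0,1]$ for all $j\geq 0$, so uniform boundedness of $\{T_j\}$ reduces to the boundedness of the single operator $T$ on $L^r_w$. To establish the latter I will invoke Theorem \ref{endpointweightedboundthm}(b) with $\varrho=1$ applied to the phase $\Phi(x,\xi):=\langle x,\xi\rangle+\varphi(\xi)$ and amplitude $a(\xi)$. The three hypotheses are readily checked: one has $\Phi(x,\xi)-\langle x,\xi\rangle=\varphi(\xi)\in\Phi^1\subset L^{\infty}\Phi^1$; the gradient relation $\nabla_\xi\Phi(x,\xi)-\nabla_\xi\Phi(y,\xi)=x-y$ gives rough non-degeneracy with constant $1$; and $\partial^2_{\xi\xi}\Phi=\partial^2_{\xi\xi}\varphi$, so the lower bound $|\mathrm{det}_{n-1}\partial^2_{\xi\xi}\Phi|\geq c$ is inherited from the assumption on $\varphi$. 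Moreover $a(\xi)\in S^{-(n+1)/2}_{1,0}\subset L^{\infty}S^{-(n+1)/2}_{1}$ which is exactly the endpoint order $-\tfrac{n+1}{2}\varrho+n(\varrho-1)$ at $\varrho=1$. Theorem \ref{endpointweightedboundthm}(b) then yields $\Vert Tu\Vert_{L^r_w}\leq C(r,[w]_{A_r})\Vert u\Vert_{L^r_w}$ for every $r\in(1,\infty)$ and $w\in A_r$.

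With this at hand, I pick any convenient $r\in(1,\infty)$ (for concreteness $r=p$) and note that $\Vert T_ju\Vert_{L^r_w}\leq 2^{j(s'-s)}\Vert Tu\Vert_{L^r_w}\leq C(r,[w]_{A_r})\Vert u\Vert_{L^r_w}$ uniformly in $j$. Each $T_j$ is linear, hence linearizable, so Theorem \ref{vectorvalued thm} applies and delivers the weighted vector-valued inequality displayed above. Translating back to Triebel--Lizorkin language via the definition \eqref{Tribliz definition} of $\Vert\cdot\Vert_{F^{s,p}_{q,w}}$ and the commutation $T\psi_j(D)=\psi_j(D)T$ completes the proof. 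No genuine obstacle arises here since the earlier chapters of the paper have already done the analytic work; the only subtlety is simply to verify that the constant-coefficient framework of the present theorem does fall into case (b) of Theorem \ref{endpointweightedboundthm}, which is immediate because the $x$-dependence of the full phase is linear.
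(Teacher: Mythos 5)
Your proof is correct and follows essentially the same route as the paper: the reduction to the weighted vector-valued inequality of Theorem \ref{vectorvalued thm} via the commutation $[T,\psi_j(D)]=0$ and the choice $T_j=2^{j(s'-s)}T$, $u_j=2^{js}\psi_j(D)u$, combined with the global weighted bound of Theorem \ref{endpointweightedboundthm}(b) applied to the phase $\langle x,\xi\rangle+\varphi(\xi)$, is exactly the paper's argument. Your explicit verification of the hypotheses of Theorem \ref{endpointweightedboundthm}(b) (membership of $\varphi$ in $L^{\infty}\Phi^{1}$, rough non-degeneracy with constant $1$, and the $\mathrm{det}_{n-1}$ bound) is a correct filling-in of details the paper leaves implicit.
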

\begin{proof}
  We only need to check that $T_{j}= 2^{j(s'-s)}T$ satisfies \eqref{vectorvalued estim}. But this follows from the assumption $s\geq s'$ and Theorem \ref{endpointweightedboundthm} part (b) concerning the global weighted boundedness of Fourier integral operators.
\end{proof}

\subsection{Commutators with BMO functions}
In this section we show how our weighted norm inequalities can be used to derive the $L^p$ boundedness of commutators of
functions of bounded mean oscillation with a wide range of pseudodifferential operators. We start with the precise definition of a function of bounded mean oscillation.
\begin{defn} \label{BMO}
A locally integrable function $b$ is of bounded mean oscillation if
\begin{equation}
\Vert b\Vert_{\mathrm{BMO}} := \sup_{B}\q \int_{B} \vert b(x)-b_B\vert \, \dd x<\infty,
\end{equation}
where the supremum is taken over all balls in $\R^n$. We denote the set of such functions by $\mathrm{BMO}$.
\end{defn}
For $b\in \mathrm{BMO}$ it is well-known that for any $\gamma<\frac{1}{2^ n e}$, there exits a constant $C_{n,\gamma}$ so that for $u\in\text{BMO}$ and all balls $B$,
\begin{equation}\label{bmoestimate}
  \frac{1}{\vert B\vert}\int_{B} e^{\gamma\vert b(x)-b_B\vert/\Vert u\Vert_{\mathrm{BMO}}}\, \dd x \leq C_{n,\gamma}.
\end{equation}
For this see \cite[p.~528]{G}.
The following abstract lemma will enable us to prove the $L^p$ boundedness of the BMO commutators of Fourier integral operators.
\begin{lem}\label{commutatorlem}
For $1< p <\infty$, let $T$ be a linear operator which is bounded on $L^{p}_{w^\varepsilon}$ for all $w\in A_{p}$ for some fixed $\varepsilon\in (0,1]$.
Then given a function $b\in\mathrm{BMO}$, if $\Psi(z):= \int e^{zb(x)}T(e^{-z b(x)} u)(x) v(x)\, \dd x$ is holomorphic in a disc $\vert z\vert <\lambda$,
then the commutator $[b,T]$ is bounded on $L^{p}$.
\end{lem}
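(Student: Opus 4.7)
The plan is to realize $[b,T]$ as a derivative of $\Psi$ at $z=0$ and control it by Cauchy's integral formula. An explicit expansion gives $\Psi(0)=\langle Tu,v\rangle$ and
\begin{equation*}
\Psi'(0)=\int\bigl(b(x)\,Tu(x)-T(bu)(x)\bigr)v(x)\,\dd x=\langle [b,T]u,v\rangle,
\end{equation*}
so by a density and duality argument with $u,v\in\mathscr{S}(\mathbf{R}^n)$ it suffices to produce an estimate $|\Psi'(0)|\leq C\|u\|_{L^p}\|v\|_{L^{p'}}$. Since $\Psi$ is holomorphic on $|z|<\lambda$, Cauchy's estimate yields $|\Psi'(0)|\leq r^{-1}\sup_{|z|=r}|\Psi(z)|$ for any $0<r<\lambda$, and the whole matter reduces to a uniform bound for $|\Psi(z)|$ on a suitably small circle about the origin.

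The decisive ingredient is the classical consequence of the John--Nirenberg estimate \eqref{bmoestimate}: for every $b\in\mathrm{BMO}$ there exists $\eta_{0}>0$ depending only on $n$, $p$ and $\|b\|_{\mathrm{BMO}}$ such that for $|\eta|\leq\eta_{0}$ the weight $e^{\eta b}$ lies in $A_{p}$ with $[e^{\eta b}]_{A_{p}}$ bounded by a constant of the same nature. I would accordingly pick $r\in(0,\lambda)$ so small that $pr/\varepsilon<\eta_{0}$ and, for $z$ on the circle $|z|=r$, set $w(z):=e^{p\,\mathrm{Re}(z)\,b/\varepsilon}$. This is an $A_{p}$ weight with an $A_{p}$-constant bounded uniformly in $z$, and since the hypothesis provides the operator norm of $T$ on $L^{p}_{w(z)^{\varepsilon}}$ as a function of $[w(z)]_{A_{p}}$ alone, the boundedness of $T$ is uniform along the circle $|z|=r$.

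With this weight at hand, an application of H\"older's inequality and of the assumed weighted bound for $T$ yields
\begin{equation*}
|\Psi(z)|\leq\Bigl(\int|T(e^{-zb}u)|^{p}\,w(z)^{\varepsilon}\,\dd x\Bigr)^{\!1/p}\Bigl(\int|v|^{p'}\,e^{p'\mathrm{Re}(z)\,b}\,w(z)^{-\varepsilon p'/p}\,\dd x\Bigr)^{\!1/p'}\lesssim\|u\|_{L^{p}}\|v\|_{L^{p'}},
\end{equation*}
the special choice of $w(z)$ being tailored precisely so that the weight $e^{-p\mathrm{Re}(z)\,b}\,w(z)^{\varepsilon}$ arising from $|e^{-zb}u|^{p}$ in the first factor, and the weight $e^{p'\mathrm{Re}(z)\,b}\,w(z)^{-\varepsilon p'/p}$ in the second factor, both collapse identically to one. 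The main obstacle is verifying the uniform control of $[e^{\eta b}]_{A_{p}}$ for $|\eta|\leq\eta_{0}$, which requires applying the John--Nirenberg bound \eqref{bmoestimate} simultaneously to $e^{\eta b}$ and to $e^{-\eta b/(p-1)}$; once this is granted, the above uniform estimate together with Cauchy's differentiation formula gives $|\Psi'(0)|\lesssim r^{-1}\|u\|_{L^{p}}\|v\|_{L^{p'}}$, and duality finishes the proof of the $L^{p}$ boundedness of $[b,T]$.
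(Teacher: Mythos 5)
Your proposal is correct and follows essentially the same route as the paper's proof: bound $\Psi$ uniformly on a small circle by writing $e^{p\,\mathrm{Re}(z)b}=w(z)^{\varepsilon}$ with $w(z)=e^{p\,\mathrm{Re}(z)b/\varepsilon}\in A_p$ (the paper quotes Chanillo's lemma where you invoke John--Nirenberg directly), apply H\"older and the weighted hypothesis so the exponential weights cancel against $\|u\|_{L^p}$ and $\|v\|_{L^{p'}}$, and recover $\langle[b,T]u,v\rangle=\Psi'(0)$ via Cauchy's formula and duality. The only differences are cosmetic (explicit norms instead of the paper's normalizations, and a slightly different placement of the weight in the H\"older splitting).
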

\begin{proof}
Without loss of generality we can assume that $\Vert b\Vert_{\mathrm{BMO}}=1$. We take $u$ and $v$ in $C^{\infty}_{0}$ with $\Vert u\Vert_{L^p} \leq1$ and $\Vert v\Vert_{L^{p'}} \leq 1$, and an application of H\"older's inequality to the holomorphic function $\Psi(z)$ together with the assumption on $v$ yield
\begin{equation*}\label{Fp}
| \Psi(z)|^p \leq \int e^{p\,\text{Re}\,z b(x)} | T(e^{-\,z\, b(x)}u) |^{p}\, \dd x .
\end{equation*}
Our first goal is to show that the function $\Psi(z)$ defined above is bounded on a disc with centre at the origin and sufficiently small radius. At this point we recall a lemma due to Chanillo \cite{Chan} which states that if $\Vert b\Vert_{\mathrm{BMO}}=1$, then for $2<s<\infty$, there is an $r_0$ depending on $s$ such that for all $r\in [-r_0 , r_0]$, $e^{rb(x)}\in A_{\frac{s}{2}}.$\\
Taking $s=2p$ in Chanillo's lemma, we see that there is some $r_1$ depending on $p$ such that for $| r| <r_1$, $e^{rb(x)}\in A_{p}$.
Then we claim that if $R:=\text{min}\, (\lambda, \frac{\varepsilon r_{1}}{p})$ and $|z|< R$ then $|\Psi(z)| \lesssim 1$. Indeed since $R< \frac{\varepsilon r_{1}}{p}$ we have $|\text{Re}\, z| < \frac{\varepsilon r_{1}}{p}$ and therefore $| \frac{p\,\text{Re}\,z}{\varepsilon}| <r_1$. Therefore Chanillo's lemma yields that for $|z|< R$, $w:=e^{\frac{p\,\text{Re}\,z}{\varepsilon}b(x)}\in A_{p}$ and since $e^{p\,\text{Re}\,z b(x)}=w^{\varepsilon}$, the assumption of weighted boundedness of $T$ and the $L^{p}$ bound on $u$, imply that for $|z|<R$
\begin{equation*}
\begin{split}
| \Psi(z)|^{p} & \leq \int e^{p\,\text{Re}\,z b(x)} | T(e^{-\,z\, b(x)}u) |^{p}\, \dd x \\
& = \int w^{\varepsilon} | T(e^{-\,z\, b(x)}u) |^{p}\, \dd x \\
& \lesssim \int w^{\varepsilon} | e^{-\,z\, b(x)}u |^{p}\, \dd x=\int w^{\varepsilon}  w^{-\varepsilon}| u |^{p}\, \dd x\lesssim 1,
\end{split}
\end{equation*}
and therefore $|\Psi(z)|\lesssim 1$ for $|z| <R$.
Finally, using the holomorphicity of $\Psi(z)$ in the disc $|z|<R$, Cauchy's integral formula applied to the circle $|z|=R'<R$, and the estimate $| \Psi(z)|\lesssim 1$, we conclude that
\begin{equation*}
| \Psi'(0)| \leq \frac{1}{2\pi} \int_{| z| =R'} \frac{| \Psi(\zeta)|}{| \zeta^2|} \,|\dd\zeta| \lesssim 1.
\end{equation*}
By construction of $\Psi(z)$, we actually have that $\Psi'(0)=\int v(x) [b,T]u(x) \, \dd x$ and the definition of the $L^p$ norm of the operator $[b, T]$ together with the assumptions on $u$ and $v$ yield at once that $[f, T]$
is a bounded operator from $L^p$ to itself for $p$.
\end{proof}
The following lemma guarantees the holomorphicity of
    $$ \Psi(z):= \int e^{zb(x)} T_{a, \varphi}(e^{-z b(x)} u)(x) v(x)\, \dd x, $$
when $T_{a,\varphi}$ is a $L^2$ bounded Fourier integral operator.
\begin{lem}\label{holomorflem}
Assume that $\varphi$ is a strongly non-degenerate phase function in the class $\Phi^2$ and suppose that either:
\begin{enumerate}
\item [$(a)$] $T_{a, \varphi}$ is a Fourier integral operator with $a\in S^{m}_{\varrho, \delta}$, $0\leq \varrho\leq 1,$ $0\leq \delta<1, $ $m=\min(0, \frac{n}{2}(\varrho-\delta))$ or
\item [$(b)$] $T_{a, \varphi}$ is a Fourier integral operator with $a\in L^{\infty} S^{m}_{\varrho},$  $0\leq \varrho \leq 1,$ $m<\frac{n}{2}(\varrho-1).$
\end{enumerate}
Then given $b\in\mathrm{BMO}$ with $\Vert b\Vert_{\mathrm{BMO}}=1$ and $u$ and $v$ in $C^{\infty}_{0}$, there exists $\lambda>0$ such that the function $\Psi(z):= \int e^{zb(x)} T_{a, \varphi} (e^{-z b(x)} u)(x) v(x)\, \dd x$ is holomorphic in the disc $\vert z\vert <\lambda$.
\end{lem}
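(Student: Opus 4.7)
The plan is to exhibit $\Psi$ as a power series convergent on a small disc about $z=0$, from which holomorphicity follows immediately. The two ingredients are the global $L^2$ boundedness of $T_{a,\varphi}$---supplied by Theorem \ref{Calderon-Vaillancourt for FIOs} under assumption (a) and by Theorem \ref{global L2 boundedness smooth phase rough amplitude} under assumption (b)---together with the John--Nirenberg type exponential integrability \eqref{bmoestimate} for BMO functions. Since $u,v\in C^{\infty}_{0}$, their supports lie in a common ball $B$, and \eqref{bmoestimate}, valid because $\|b\|_{\mathrm{BMO}}=1$, yields for any fixed $\gamma<(2^n e)^{-1}$
\begin{equation*}
\|e^{\gamma|b|}u\|_{L^2}^2=e^{2\gamma|b_B|}\int_{B}|u(x)|^2 e^{2\gamma|b(x)-b_B|}\,\dd x<\infty,
\end{equation*}
and analogously for $v$. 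Combined with the elementary bound $|b(x)|^k\leq (k!/\gamma^k)e^{\gamma|b(x)|}$ this gives
\begin{equation*}
\|b^k u\|_{L^2}\leq \frac{k!}{\gamma^k}\,C_u,\qquad \|b^k v\|_{L^2}\leq \frac{k!}{\gamma^k}\,C_v,
\end{equation*}
with $C_u,C_v$ independent of $k$; in particular the partial sums of the Taylor series of $e^{\pm zb}u$ and of $e^{\pm zb}v$ converge in $L^2$ for $|z|<\gamma$.

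I would then formally expand both exponentials inside $\Psi(z)$ and collect powers of $z$ to obtain
\begin{equation*}
\Psi(z)=\sum_{n=0}^{\infty}z^{n}c_{n},\qquad c_{n}:=\sum_{j+k=n}\frac{(-1)^{j}}{j!\,k!}\int T_{a,\varphi}(b^{j}u)(x)\,b^{k}(x)\,v(x)\,\dd x.
\end{equation*}
By Cauchy--Schwarz and the $L^2$ boundedness of $T_{a,\varphi}$,
\begin{equation*}
|c_{n}|\leq \|T_{a,\varphi}\|_{L^2\to L^2}\sum_{j+k=n}\frac{\|b^{j}u\|_{L^2}\|b^{k}v\|_{L^2}}{j!\,k!}\leq (n+1)\gamma^{-n}\|T_{a,\varphi}\|_{L^2\to L^2}\,C_{u}C_{v},
\end{equation*}
so the series converges absolutely on every compact subset of $|z|<\gamma$. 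This shows $\Psi$ is holomorphic on that disc, and we take $\lambda:=\gamma$.

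The main subtlety lies in justifying the rearrangement leading to the stated power-series expression for $\Psi(z)$, i.e.\ the interchange of the double summation with the $T_{a,\varphi}$ and the ambient integration. This is handled precisely by combining the $L^2$-convergence of the truncated exponential series established in the first step with the continuity of the bilinear form $(f,g)\mapsto \int T_{a,\varphi}(f)(x)\,g(x)\,\dd x$ on $L^2\times L^2$: substituting the partial sums of $e^{-zb}u$ and $e^{zb}v$ into this bilinear form, passing to the limit, and then reordering the resulting absolutely convergent double series in $(j,k)$ to group terms of like power $z^{n}$ produces exactly the expression for $c_{n}$ above. Once this interchange is in place, the uniform bound on $|c_{n}|$ yields holomorphicity on $|z|<\lambda$ with no further work.
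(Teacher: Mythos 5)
Your proposal is correct, but it proceeds by a genuinely different route than the paper. The paper does not expand the exponentials: in case (a) it truncates the amplitude in frequency, $a_{\varepsilon}(x,\xi)=a(x,\xi)\chi(\varepsilon\xi)$, shows each $\Psi_{\varepsilon}(z)$ is holomorphic because the truncated triple integral converges absolutely (using \eqref{bmoestimate} to get local integrability of $e^{\pm p\,\mathrm{Re}\,z\,b}$), and then passes to the limit $\varepsilon\to 0$ uniformly on a small disc via the $\varepsilon$-uniform $L^{2}$ bounds of Theorem \ref{Calderon-Vaillancourt for FIOs}; in case (b) it uses the semiclassical/dyadic decomposition $T=T_{0}+\sum_{j}T_{2^{-j}}$, checks each piece gives a holomorphic $\Psi_{h}(z)$, and sums using the estimate \eqref{semiclassical L2 pieces} together with $m<\tfrac{n}{2}(\varrho-1)$; in both cases holomorphicity of $\Psi$ comes from a locally uniform limit of holomorphic functions. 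You instead obtain $\Psi$ directly as a power series: John--Nirenberg controls $\Vert b^{k}u\Vert_{L^{2}}$ and $\Vert b^{k}v\Vert_{L^{2}}$, the global $L^{2}$ boundedness of $T_{a,\varphi}$ (Theorems \ref{Calderon-Vaillancourt for FIOs} and \ref{global L2 boundedness smooth phase rough amplitude}, both available under the stated hypotheses) bounds the coefficients, and the continuity of the bilinear pairing on $L^{2}\times L^{2}$ justifies the rearrangement. This is arguably cleaner and treats (a) and (b) uniformly, since the only analytic input is ``$T_{a,\varphi}$ is bounded on $L^{2}$''; the paper's decompositions keep closer contact with the oscillatory-integral representation of $\Psi$, which is why no interpretation issue arises there, whereas in your argument one should say explicitly that $T_{a,\varphi}(e^{-zb}u)$ is understood via the $L^{2}$ extension of the operator (this is consistent with how $\Psi$ is used in Lemma \ref{commutatorlem}). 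Two cosmetic corrections: the displayed identity for $\Vert e^{\gamma|b|}u\Vert_{L^{2}}^{2}$ should be an inequality, $e^{2\gamma|b|}\leq e^{2\gamma|b_{B}|}e^{2\gamma|b-b_{B}|}$, and since the exponent that appears is $2\gamma$ you must choose $\gamma$ so that $2\gamma<\tfrac{1}{2^{n}e}$ before invoking \eqref{bmoestimate}; this only shrinks $\lambda$ and does not affect the conclusion. Also avoid reusing $n$ (the dimension) as your summation index.
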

\begin{proof}
\setenumerate[0]{leftmargin=0pt,itemindent=20pt}
\begin{enumerate}
 \item [(a)] From the explicit representation of $\Psi(z)$
 \begin{equation}\label{psi(z) representation}
\Psi(z)= \iiint a(x,\xi)\, e^{i\varphi(x,\xi)-i\langle y, \xi\rangle}\,e^{zb(x)-z b(y)}\,v(x)\,u(y) \,\dd y\, \dd\xi\, \dd x
 \end{equation}
 we can without loss of generality assume that $a(x,\xi)$ has compact $x-$support. For $f\in \mathscr{S}$ and $\varepsilon\in(0,1)$ we take $\chi(\xi)\in C^{\infty}_{0}(\mathbf{R}^{n})$ such that $\chi(0)=1$ and set
 \begin{equation}
   T_{a_{\varepsilon}, \varphi} f(x)=\int  a(x,\xi)\,\chi(\varepsilon \xi)\, e^{i\varphi(x,\xi)}\,\hat{f}(\xi)\, d\xi.
 \end{equation}
Using this and the assumption of the compact $x-$support of the amplitude, one can see that for $f\in \mathscr{S}$, $\lim_{\varepsilon \to 0}T_{a_{\varepsilon}, \varphi} f=  T_{a, \varphi}f$ in the Schwartz class $\mathscr{S}$ and also $\lim_{\varepsilon \to 0}\Vert  T_{a_{\varepsilon}, \varphi} f-  T_{a, \varphi}f\Vert _{L^2}=0.$ Since $ a(x,\xi)\,\chi(\varepsilon \xi)\in S^{m}_{\varrho, \delta}$ with seminorms that are independent of $\varepsilon,$ it follows from our assumptions on the amplitude and the phase and Theorem \ref{Calderon-Vaillancourt for FIOs} that $\Vert  T_{a_{\varepsilon}, \varphi} f\Vert_{L^2} \lesssim \Vert f\Vert_{L^2}$ with a $L^2$ bound that is independent of $\varepsilon.$ Therefore, the density of $\mathscr{S}$ in $L^2$ yields
\begin{equation}\label{L2 Convergence of T-epsilon to T}
\lim_{\varepsilon \to 0}\Vert  T_{a_{\varepsilon}, \varphi} f-  T_{a, \varphi}f\Vert _{L^2}=0,
\end{equation}
for all $f\in L^2.$ Now if we define
\begin{align}\label{defn psi epsilon}
\Psi_{\varepsilon} (z)&:=\int e^{zb(x)} T_{a_{\varepsilon}, \varphi} (e^{-z b(x)} u)(x) v(x)\, \dd x\\ \nonumber
&=\iiint a(x,\xi)\,\chi(\varepsilon \xi)\, e^{i\varphi(x,\xi)-i\langle y, \xi\rangle}\,e^{zb(x)-z b(y)}\,v(x)\,u(y) \, \dd y\, \dd\xi\, \dd x,
\end{align}
then the integrand in the last expression is a holomorphic function of $z$. Furthermore, from \eqref{bmoestimate} and the assumption $\Vert b\Vert_{\textrm{BMO}}=1$ one can deduce that for all $p\in [1, \infty)$ and $|z|<\frac{\gamma}{p}$, and all compact sets $K$ one has
\begin{equation}\label{local integrability of exponentials}
\int_{K} e^{\pm p\,\mathrm{Re}\, z\, b(x)} \, \dd x\leq C_\gamma (K).
\end{equation}
This fact shows that $u e^{-z\, b}$ and $v e^{z\, b}$ both belong to $L^{p}$ for all $p\in [1,\infty)$ provided $|z|<\frac{\gamma}{p}$. These together with the compact support in $\xi$ of the integrand defining $\Psi_{\varepsilon} (z)$ and uniform bounds on the amplitude in $x$, yield the absolute convergence of the integral in \eqref{defn psi epsilon} and therefore $\Psi_{\varepsilon}(z)$ is a holomorphic function in $|z|<1.$
Now we claim that for $\gamma$ as in \eqref{bmoestimate},
\begin{equation*}
\lim_{\varepsilon\to 0} \sup_{\vert z\vert< \frac{\gamma}{2}} | \Psi_{\varepsilon}(z)-\Psi(z)|=0.
\end{equation*}
Indeed, since $\frac{\gamma}{2}<\frac{1}{2}$, one has for $\vert z\vert <\frac{\gamma}{2}$
\begin{align*}
| \Psi_{\varepsilon}(z)-\Psi(z)|&=\left|\int  v(x) e^{z b(x)}\,\big[T_{a_{\varepsilon}, \varphi}- T_{a, \varphi}\big](e^{-z b} u)(x)\, \dd x\right|\\
&\leq \|v\, e^{zb}\|_{L^2} \big\Vert [T_{a_{\varepsilon}, \varphi}-T_{a, \varphi}]( u\, e^{-z b})\big\Vert_{L^2}\\
&\leq \|v\|_{L^{\infty}}\left \{\int_{\mathrm{supp}\,v} e^{2\mathrm{Re}\, z b(x)}\, \dd x\right \}^{\frac{1}{2}} \Vert [T_{a_{\varepsilon}, \varphi}-T_{a, \varphi}]( u\, e^{-z b})\Vert_{L^2}.
\end{align*}
Therefore, using \eqref{local integrability of exponentials} with $p=2$ and \eqref{L2 Convergence of T-epsilon to T} yield that $$\varlimsup_{\varepsilon\to 0} \sup_{\vert z\vert< \frac{\gamma}{2}}|\Psi_{\varepsilon}(z)-\Psi(z)|=0$$ and hence $\Psi(z)$ is a holomorphic function in $\vert z\vert <\lambda$ with $\lambda \in (0, \frac{\gamma}{2}).$
\item [(b)] Using the semiclassical reduction in the proof of Theorem \ref{global L2 boundedness smooth phase rough amplitude}, we decompose the operator $T_{a ,\varphi}$ into low and high frequency parts, $T_0$ and $T_h$. From this it follows that $\Psi_{0}(z):= \int e^{zb(x)} T_{0} (e^{-z b(x)} u)(x) v(x)\, \dd x$ can be written as
\begin{multline}\label{Psi0}
\Psi_{0}(z)= \\
\int \bigg\{\int\int e^{i\varphi(x,\xi) -i\langle y, \xi \rangle}\, \chi_{0}(\xi)\, a(x,\xi)\, u(y)\, e^{ -zb(y)} \, \dd y\, \dd \xi\bigg\} v(x)\,e^{zb(x)}\, \dd x
\end{multline}
and $\Psi_{h}(z):= \int e^{zb(x)} T_{h} (e^{-z b(x)} u)(x) v(x)\, \dd x$ is given by
\begin{multline}\label{Psih}
\Psi_{h}(z)= \\ h^{-n}\int \bigg\{\iint e^{\frac{i}{h}\varphi(x,\xi)-\frac{i}{h}\langle y, \xi \rangle}\, \chi(\xi)\, a(x,\xi/h)\, u(y) \,e^{ -zb(y)}\, \dd y\, \dd \xi\bigg\} \, v(x)\,e^{zb(x)} \, \dd x,
\end{multline}
Now we claim that for $\Psi_{0}(z)$ and $\Psi_h(z)$ are holomorphic in $|z|<1$. To see this, we reason in a way similar to the proof of part (a). Namely, using the compact support in $\xi$ of the integrands of \eqref{Psi0} and \eqref{Psih} and uniform bounds on the amplitude in $x$, yield the absolute convergence of the integrals in \eqref{Psi0} and \eqref{Psih} and therefore $\Psi_{0}(z)$ and $\Psi_{h}(z)$ are holomorphic functions in $|z|<1.$
Next we proceed with a uniform estimate (in $z$) for $\Psi_{h}(z)$. For this we use once again that $u e^{-z\, b}$ and $v e^{z\, b}$ both belong to $L^{2}$ provided $|z|<\frac{\gamma}{2}.$ Therefore the Cauchy-Schwartz inequality and \eqref{semiclassical L2 pieces} yield
\begin{align}\label{main Psi_h z estim}
| \Psi_{h}(z)|&=\left|\int  v(x) e^{zb(x)}T_{h}(e^{-z b} u)(x)\, \dd x\right| \\ \nonumber
&\leq \Vert  u\, e^{-z b} \Vert_{L^2} \Vert T^{\ast}_{h}(v\, e^{zb})\Vert_{L^2} \leq \Vert u\, e^{-z b} \Vert_{L^2} \Vert T_{h}T^{\ast}_{h}(v\, e^{zb})\Vert^{1/2}_{L^2}\Vert v\, e^{zb}\Vert^{1/2}_{L^2} \\ \nonumber
&\leq h^{-m-(1-\varrho)M/2} \Vert u\, e^{-zb}\Vert_{L^2} \Vert v\, e^{zb}\Vert_{L^2} \lesssim h^{-m-(1-\varrho)M/2}
\end{align}
Hence, $\vert \Psi_{h}(z)\vert \lesssim h^{-m-(1-\varrho)M/2}$ and setting $h=2^{-j}$, using $m<\frac{n}{2}(\varrho-1)$ and summing in $j$ we would have a uniformly convergent series of holomorphic functions which  therefore converges to a holomorphic function and by taking a $\lambda$ in the interval $(0,\frac{\gamma}{2})$ we conclude the holomorphicity of $\Psi(z)$ in $|z|<\lambda.$
\end{enumerate}
\end{proof}
Lemmas \ref{commutatorlem} and \ref{holomorflem} yield our main result concerning commutators with BMO functions, namely
\begin{thm}\label{Commutator estimates for FIO}
Suppose either
\begin{enumerate}
  \item [$(a)$] $T\in I^{m}_{\varrho, \mathrm{comp}}(\mathbf{R}^{n}\times \mathbf{R}^{n}; \mathcal{C})$ with $\frac{1}{2} \leq \varrho\leq 1$ and $m<(\varrho-n)|\frac{1}{p}-\frac{1}{2}|,$ satisfies all the conditions of Theorem $\ref{weighted boundedness for true amplitudes with power weights}$ or;
  \item [$(b)$] $T_{a,\varphi}$ with $a\in S^{m}_{\varrho, \delta},$ $0\leq \varrho \leq 1$, $0\leq \delta\leq 1,$ $\lambda= \min(0, n(\varrho-\delta))$ and $\varphi(x,\xi)$ is a strongly non-degenerate phase function with $\varphi(x,\xi)-\langle x,\xi\rangle \in \Phi^1,$ where in the range $1<p\leq 2,$
$$ m<n(\varrho -1)\bigg (\frac{2}{p}-1\bigg)+\big(n-1\big)\bigg(\frac{1}{2}-\frac{1}{p}\bigg)+ \lambda\bigg(1-\frac{1}{p}\bigg);$$ and in the range $2 \leq p <\infty$
                      $$ m<n(\varrho -1)\bigg (\frac{1}{2}-\frac{1}{p}\bigg)+ (n-1)\bigg(\frac{1}{p}-\frac{1}{2}\bigg) +\frac{\lambda}{p};$$ or
\item [$(c)$] $T_{a,\varphi}$ with $a\in L^{\infty}S^{m}_{\varrho},$ $0\leq \varrho \leq 1$ and $\varphi$ is a strongly non-degenerate phase function with $\varphi(x,\xi) -\langle x, \xi\rangle \in \Phi^1,$ where in the range $1<p\leq 2,$
$$ m<\frac{n}{p}(\varrho -1)+\big(n-1\big)\bigg(\frac{1}{2}-\frac{1}{p}\bigg),$$
and for the range $2 \leq p <\infty$
                      $$ m<\frac{n}{2}(\varrho -1)+ (n-1)\bigg(\frac{1}{p}-\frac{1}{2}\bigg).$$
\end{enumerate}
Then for $b\in \mathrm{BMO}$, the commutators $[b, T]$ and $[b, T_{a,\varphi}]$ are bounded on $L^p$ with $1<p<\infty.$
\end{thm}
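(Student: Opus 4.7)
The plan is to deduce the statement in each of the three cases from the abstract commutator principle of Lemma \ref{commutatorlem}. That lemma requires two inputs: first, the boundedness of $T$ on $L^p_{w^\varepsilon}$ for some fixed $\varepsilon\in(0,1]$ and all $w\in A_p$; second, the holomorphy on a disc $|z|<\lambda$ of the complex family
\begin{equation*}
\Psi(z)=\int e^{zb(x)}\,T\bigl(e^{-zb(\cdot)}u\bigr)(x)\,v(x)\,\dd x.
\end{equation*}
Once both inputs are in hand, a one-line application of Cauchy's formula on a circle $|z|=R'<\lambda$ yields $\Psi'(0)=\int[b,T]u\cdot v\,\dd x$ with the required $L^p$ bound, as carried out in the proof of Lemma \ref{commutatorlem}.

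For the weighted input, I would simply match the three cases of this theorem with the three weighted boundedness results of Chapter~3. In case~(a) the geometric rank condition on $\mathcal{C}$ and the order condition $m<(\varrho-n)|1/p-1/2|$ are exactly the hypotheses of Theorem \ref{weighted boundedness for true amplitudes with power weights}, which supplies $\alpha\in(0,1)$ such that $T$ is bounded on $L^p_{w^\varepsilon}$ for every $\varepsilon\in[0,\alpha]$ and every $w\in A_p$. Cases~(b) and~(c) are handled identically by Theorem \ref{weighted boundedness with power weights} and Theorem \ref{weighted boundedness with power weights nonsmooth symbols} respectively; the bounds on $m$ in the two $p$-ranges match word for word. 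In each case I would then fix any $\varepsilon\in(0,\alpha]$ and feed this into Lemma \ref{commutatorlem}.

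For the holomorphy input I would invoke Lemma \ref{holomorflem}. Case~(c) of the present theorem feeds directly into part~(b) of that lemma, and case~(b) into part~(a), provided one verifies that the $L^p$-level bounds on $m$ forced by the hypotheses actually imply the $L^2$-level bound ($m<\tfrac{n}{2}(\varrho-1)$ in the rough case, respectively $m<\lambda/2=\min(0,\tfrac{n}{2}(\varrho-\delta))$ in the smooth case) required to run the $TT^*$ argument inside Lemma \ref{holomorflem}. This reduces to an elementary monotonicity check: writing the right-hand sides of the two displayed inequalities as affine functions of $1/p$, one sees that, under the constraints $0\le\varrho,\delta\le1$, both attain their maximum over $(1,\infty)$ at $p=2$, and that maximum is precisely $\lambda/2$ (resp.\ $\tfrac{n}{2}(\varrho-1)$). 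For case~(a) there is the additional preliminary step of invoking the equivalence-of-phase-function theorem (H\"ormander for $\tfrac{1}{2}<\varrho\le1$, Greenleaf--Uhlmann for $\varrho=\tfrac12$), exactly as in the proof of Theorem \ref{weighted boundedness for true amplitudes with power weights}, to write $T$ as a finite sum of model operators with amplitude in $S^m_{\varrho,1-\varrho}$ and phase in $\Phi^2$ that is strongly non-degenerate; since $\varrho\ge 1-\varrho$ and $m<0$, the condition $m<\min(0,n(\varrho-\delta)/2)$ of Lemma \ref{holomorflem}~(a) is satisfied, and the holomorphy follows.

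The main technical obstacle is precisely this last monotonicity verification in case~(b), together with the reduction in case~(a); everything else is bookkeeping. No further genuine analysis is needed beyond what is already contained in Chapters~2 and~3: the proof is structurally a two-line deduction from Lemmas \ref{commutatorlem} and \ref{holomorflem} once the parameters are checked.
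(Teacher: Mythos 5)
Your proposal is correct and follows essentially the same route as the paper: the cases (a)--(c) are matched with Theorems \ref{weighted boundedness for true amplitudes with power weights}, \ref{weighted boundedness with power weights} and \ref{weighted boundedness with power weights nonsmooth symbols} for the weighted input, the $L^2$-level bound $m<\min(0,\tfrac{n}{2}(\varrho-\delta))$ (resp.\ $m<\tfrac{n}{2}(\varrho-1)$) is checked so that Lemma \ref{holomorflem} applies, and Lemma \ref{commutatorlem} then gives the commutator bound, with the same reduction via equivalence of phase functions in case (a). The monotonicity-in-$1/p$ verification you spell out is exactly the check implicit in the paper's assertion that the hypotheses force the $L^2$-level order condition.
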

\begin{proof}
\setenumerate[0]{leftmargin=0pt,itemindent=20pt}
\begin{enumerate}
\item [(a)] One reduces $T$ to a finite sum of operators of the form $T_a$ as in the proof of Theorem \ref{weighted boundedness for true amplitudes with power weights}. That theorem also yields the existence of an $\varepsilon \in (0,1)$ such that $T_a$ with $a\in S^{m}_{\varrho, 1-\varrho}$ and $m<(\varrho-n)|\frac{1}{p}-\frac{1}{2}|$ is $L^{p}_{w^\varepsilon}-$bounded. Moreover, since $m<(\varrho-n)|\frac{1}{p}-\frac{1}{2}|\leq 0,$ and $1-\varrho\leq \varrho,$ Theorem \ref{Calderon-Vaillancourt for FIOs} yields that $T_a$ is $L^2$ bounded. Hence, if $$\Psi(z)=\int e^{z b(x)} T_{a} (e^{-z b(x)} u)(x) v(x) \, \dd x$$ with $u$ and $v$ in $C^{\infty}_{0},$ then Lemma \ref{holomorflem} yields that $\Psi(z)$ is holomorphic in a neighbourhood of the origin. Therefore, Lemma \ref{commutatorlem} implies that the commutator $[b, T_a ]$ is bounded on $L^p$ and the linearity of the commutator in $T$ allows us to conclude the same result for a finite linear combinations of operators of the same type as $T_a$. This ends the proof of part (a).
\item [(b)] The proof of this part is similar to that of part (a). Here we observe that for any ranges of $p$ in the statement of the theorem, the order of the amplitude $m<\min(0,\frac{n}{2}(\varrho-\delta))$ and so $T_{a, \varphi}$ is $L^2$ bounded. Now, application of \ref{weighted boundedness with power weights}, Theorem \ref{Calderon-Vaillancourt for FIOs} and Lemma \ref{holomorflem} part (a), concludes the proof.
\item[(c)] The proof of this part is similar to that of part (b). For any ranges of $p,$ the order of the amplitude $m<\frac{n}{2}(\varrho-1)$ and so $T_{a, \varphi}$ is $L^2$ bounded. Therefore, Theorem \ref{weighted boundedness with power weights nonsmooth symbols}, Theorem \ref{global L2 boundedness smooth phase rough amplitude} and Lemma \ref{holomorflem} part (b), yield the desired result.
\end{enumerate}
\end{proof}
Finally, the weighted norm inequalities with weights in all $A_p$ classes have the advantage of implying weighted boundedness of repeated commutators. Namely, one has
\begin{thm}\label{k-th commutator estimates}
Let $a(x,\xi)\in L^{\infty}S^{-\frac{n+1}{2}\varrho+n(\varrho -1)}_{\varrho}$ and $\varrho \in[0,1].$ Suppose that either
\begin{enumerate}
\item[$(a)$] $a(x,\xi)$ is compactly supported in the $x$ variable and the phase function $\varphi(x,\xi)\in C^{\infty} (\mathbf{R}^ n \times \mathbf{R}^{n}\setminus 0)$, is positively homogeneous of degree $1$ in $\xi$ and satisfies, $\det\partial^2_{x\xi}\varphi(x,\xi) \neq 0$ as well as $\mathrm{rank}\,\partial^2_{\xi\xi}\varphi(x,\xi) =n-1$; or
\item[$(b)$] $\varphi (x,\xi)-\langle x,\xi\rangle \in L^{\infty}\Phi^1,$ $\varphi $ satisfies either the strong or the rough non-degeneracy condition $($depending on whether the phase is spatially smooth or not$)$, as well as $|\mathrm{det}_{n-1} \partial^2_{\xi\xi}\varphi(x,\xi)|\geq c>0$.
\end{enumerate}
Then, for $b \in \mathrm{BMO}$ and $k$ a positive integer, the $k$-th commutator defined by
\begin{equation*}
T_{a, b,k} u(x):= T_{a}\big((b(x)-b(\cdot))^{k}u\big)(x)
\end{equation*}
is bounded on $L^{p}_w$ for each $w \in A_p$ and $p\in(1, \infty)$.
\end{thm}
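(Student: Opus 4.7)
The plan is to deduce Theorem \ref{k-th commutator estimates} from Theorem \textbf{D} (Theorem \ref{endpointweightedboundthm}) by exploiting the availability of the weighted bound for \emph{every} $w \in A_p$ and every $p \in (1,\infty)$. The mechanism is the classical conjugation-and-Cauchy-formula trick of Coifman--Rochberg--Weiss, adapted to iterated commutators in the spirit of \'Alvarez--Bagby--Kurtz--P\'erez: the estimate for the $k$-th commutator is read off the $k$-th Taylor coefficient at zero of the conjugate $e^{zb}\, T_a\, e^{-zb}$, once the latter is shown to be a uniformly bounded, holomorphic family of operators on $L^p_w$ for $z$ in a disc around the origin.

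First I would introduce the analytic family
$$\Psi_z u(x) := e^{z b(x)}\, T_a\!\left(e^{-z b(\cdot)} u\right)\!(x),$$
which, by the pointwise Taylor expansion $e^{z(b(x)-b(y))} = \sum_{k\geq 0} \tfrac{z^k}{k!}(b(x)-b(y))^k$ inside the Schwartz kernel of $T_a$, satisfies
$$\Psi_z u(x) = \sum_{k=0}^{\infty} \frac{z^k}{k!}\, T_{a,b,k} u(x).$$
If one establishes that $z\mapsto \Psi_z u$ is a holomorphic $L^p_w$-valued function on some disc $|z|<\lambda$ and is uniformly bounded there by $C\|u\|_{L^p_w}$, then Cauchy's integral formula on any circle $|z|=r<\lambda$ gives
$$T_{a,b,k} u(x) = \frac{k!}{2\pi i} \oint_{|z|=r} \frac{\Psi_z u(x)}{z^{k+1}}\, dz,$$
whence Minkowski's integral inequality yields $\|T_{a,b,k} u\|_{L^p_w} \leq \frac{k!}{r^k}\sup_{|z|=r}\|\Psi_z u\|_{L^p_w}$, which is the desired bound.

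For the uniform bound I would rewrite
$$\|\Psi_z u\|_{L^p_w}^p = \int \big|T_a(e^{-zb}u)\big|^p\, e^{p\,\mathrm{Re}(z)\, b}\, w\, \dd x,$$
so the matter reduces to showing that the weight $W_z := e^{p\,\mathrm{Re}(z)\, b}\, w$ lies in $A_p$ with $[W_z]_{A_p}$ controlled uniformly for $|z|\leq \lambda$. The idea is to combine the openness of $A_p$ (Theorem \ref{open}), which produces $\eta>0$ with $w^{1+\eta}\in A_p$, with H\"older's inequality and the Chanillo consequence of John--Nirenberg recalled in the proof of Lemma \ref{commutatorlem}, namely that $e^{rb}\in A_q$ for each $q>1$ as soon as $|r|$ is sufficiently small in terms of $\|b\|_{\mathrm{BMO}}$. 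A direct factorization of the $A_p$ quotient $W_{z,B}\,(W_z^{-1/(p-1)})_B^{p-1}$ through the pair of conjugate Lebesgue exponents $1+\eta$ and $(1+\eta)/\eta$ then produces $[W_z]_{A_p}\leq C([w]_{A_p},p,\|b\|_{\mathrm{BMO}})$ uniformly in a small disc. With this in hand, Theorem \ref{endpointweightedboundthm} applied with the weight $W_z$ delivers $\|\Psi_z u\|_{L^p_w}\leq C\|u\|_{L^p_w}$ throughout $|z|\leq \lambda$.

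The genuine obstacle is to verify the $L^p_w$-valued holomorphy of $z\mapsto \Psi_z u$, since at this endpoint regularity one cannot simply differentiate under the integral sign. I would follow the truncation scheme of Lemma \ref{holomorflem}: insert smooth frequency cutoffs $\chi(\varepsilon\xi)$ in the oscillatory integral defining $T_a$. For fixed $\varepsilon>0$ the truncated operators $\Psi^{\varepsilon}_z u$ are manifestly holomorphic $L^p_w$-valued functions of $z$, because the compactness of the $\xi$-support of the amplitude together with the local integrability of $e^{\pm p\,\mathrm{Re}(z)\, b}$ supplied by \eqref{bmoestimate} turns them into absolutely convergent integrals of integrands that are entire in $z$. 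Finally I would pass to the limit $\varepsilon\to 0$ in the $L^p_w$-operator topology, using the uniform bound established above together with a dominated-convergence argument in $L^p_w$ for the difference $\Psi_z - \Psi^{\varepsilon}_z$ applied to a dense class of test functions. Since a locally uniform limit of Banach-space-valued holomorphic functions is holomorphic, the holomorphy of $\Psi_z$ follows and the Cauchy estimate delivers $\|T_{a,b,k}u\|_{L^p_w}\lesssim \|u\|_{L^p_w}$ for every $w\in A_p$, every $p\in(1,\infty)$, and every positive integer $k$.
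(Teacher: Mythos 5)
Your argument is sound, but it takes a different (and much longer) route than the paper: the paper's proof of this theorem is a one-line citation, feeding the weighted bound of Theorem \ref{endpointweightedboundthm} into the general commutator theorem of \'Alvarez--Bagby--Kurtz--P\'erez (Theorem 2.13 of \cite{ABKP}), which says exactly that an operator bounded on $L^p_w$ for all $w\in A_p$, with constants depending on $w$ only through $[w]_{A_p}$, has $L^p_w$-bounded iterated BMO commutators. What you have written is essentially a self-contained reconstruction of that cited black box: conjugation $e^{zb}T_a e^{-zb}$, stability of $A_p$ under multiplication by $e^{sb}$ for small $|s|$ (openness of $A_p$ plus H\"older and John--Nirenberg), uniform application of Theorem \ref{endpointweightedboundthm} to the perturbed weights $e^{p\,\mathrm{Re}(z)b}w$, and extraction of the $k$-th commutator from the $k$-th Taylor coefficient via Cauchy's formula. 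The inputs are identical; your version buys transparency and makes explicit the point that is tacit in the citation, namely that the constant in Theorem \ref{endpointweightedboundthm} depends on the weight only through $[w]_{A_p}$ (this is precisely what allows the weights $e^{p\,\mathrm{Re}(z)b}w$ to be handled uniformly in $z$). The price is that you must genuinely justify the $L^p_w$-valued holomorphy of $z\mapsto e^{zb}T_a(e^{-zb}u)$ and the identification of its Taylor coefficients with $T_{a,b,k}u/k!$; for the rough amplitudes and phases considered here this is not automatic, and your frequency-truncation and limiting scheme (a weighted analogue of Lemma \ref{holomorflem}, using that $a(x,\xi)\chi(\varepsilon\xi)$ has symbol seminorms uniform in $\varepsilon$, uniform weighted bounds for the truncated operators, and strong convergence on a dense class, with locally uniform limits of vector-valued holomorphic functions being holomorphic) is the right way to fill it --- alternatively one can reduce to scalar holomorphy by pairing against $v\in L^{p'}_{w^{1-p'}}$ and argue as in Lemma \ref{commutatorlem}. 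So: correct, same underlying mechanism as the literature the paper invokes, just unpacked rather than cited.
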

\begin{proof}
The claims in (a) and (b) are direct consequences of Theorem \ref{endpointweightedboundthm} and Theorem 2.13 in \cite{ABKP}.
\end{proof}
\subsection{Applications to hyperbolic partial differential equations}
It is wellknown, see e.g. \cite{D}, that the Cauchy problem for a strictly hyperbolic partial differential equation
\begin{equation}\label{hyp Cauchy prob}
  \begin{cases}
    D^{m}_{t}+ \sum_{j=1}^{m} P_{j}(x,t,D_x ) D_{t}^{m-j}, \,\,\, t\neq 0\\
    \partial_{t}^{j} u|_{t=0}=f_ j (x),\,\,\, 0\leq j\leq m-1
  \end{cases}
\end{equation}
can be solved locally in time and modulo smoothing operators by
\begin{equation}\label{FIO representation of the solution}
  u(x,t)= \sum_{j=0}^{m-1}\sum_{k=1}^{m} \int_{\mathbf{R}^n} e^{i\varphi_{k}(x,\xi,t)} a_{jk}(x,\xi,t) \, \widehat{f_{j}}(\xi)\, \dd\xi
\end{equation}
where $a_{jk}(x,\xi,t)$ are suitably chosen amplitudes depending smoothly on $t$ and belonging to $S^{-j}_{1, 0}$, and the phases $\varphi_{k}(x,\xi,t)$ also depend smoothly on $t,$ are strongly non-degenerate and belong to the class $\Phi^2.$ This yields the following:
\begin{thm}
  Let $u(x,t)$ be the solution of the hyperbolic Cauchy problem \eqref{hyp Cauchy prob} with initial data $f_j$. Let $m_{p}= (n-1)|\frac{1}{p}-\frac{1}{2}|$, for a given $p\in [1, \infty]$. If $f_j \in H^{s+m_{p}-j, p}(\mathbf{R}^n)$ and $T\in (0, \infty)$ is fixed, then for any $\varepsilon >0$ the solution $u(\cdot, t) \in H^{s-\varepsilon,p}(\mathbf{R}^n),$ satisfies the global estimate
  \begin{equation}
    \Vert u(\cdot, t)\Vert_{H^{s-\varepsilon,p}}\leq C_{T} \sum_{j=0}^{m-1}\Vert f_{j}\Vert_{H^{s+m_p-j,p}}, \,\,\, t\in [-T,T],\,\, p\in [1, \infty]
  \end{equation}
\end{thm}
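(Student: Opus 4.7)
The starting point is the Fourier integral representation \eqref{FIO representation of the solution}, which expresses $u(\cdot,t)$ as a finite sum (modulo a smoothing remainder) of Fourier integral operators $T_{jk}$ with amplitudes $a_{jk}(\cdot,\cdot,t)\in S^{-j}_{1,0}$ and strongly non-degenerate phases $\varphi_k(\cdot,\cdot,t)\in\Phi^2$, whose relevant seminorms are controlled uniformly for $t\in[-T,T]$ by smoothness in the time variable. The plan is to reduce the desired Sobolev bound to the global $L^p$ estimate of Theorem \ref{main L^p thm for smooth Fourier integral operators}.

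Since $\Vert u(\cdot,t)\Vert_{H^{s-\varepsilon,p}}=\Vert(I-\Delta)^{(s-\varepsilon)/2}u(\cdot,t)\Vert_{L^p}$, I would write each initial datum as $f_j=(I-\Delta)^{-(s+m_p-j)/2}g_j$ with $g_j\in L^p$ of norm $\Vert f_j\Vert_{H^{s+m_p-j,p}}$, and then analyze the composition
\[
S_{jk}:=(I-\Delta)^{(s-\varepsilon)/2}\circ T_{jk}\circ (I-\Delta)^{-(s+m_p-j)/2}.
\]
The right-hand factor only multiplies the amplitude of $T_{jk}$ by $\langle\xi\rangle^{-(s+m_p-j)}$, yielding an amplitude in $S^{-s-m_p}_{1,0}$; the left-hand factor is to be absorbed by the composition calculus between a classical Fourier multiplier and an FIO with homogeneous $\Phi^2$ phase. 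The outcome is that $S_{jk}$ is itself an FIO with phase $\varphi_k$ and amplitude in $S^{-\varepsilon-m_p}_{1,0}$, modulo a smoothing term that is trivially bounded on every $L^p$.

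With $\varrho=1$, $\delta=0$, and $\lambda=0$, Theorem \ref{main L^p thm for smooth Fourier integral operators} provides $L^p$ boundedness of such FIOs as soon as the amplitude order is strictly below $-m_p=-(n-1)|\tfrac{1}{p}-\tfrac{1}{2}|$. Since the order of $S_{jk}$ is precisely $-\varepsilon-m_p<-m_p$, this yields $\Vert S_{jk}g_j\Vert_{L^p}\leq C_T\Vert g_j\Vert_{L^p}$ uniformly in $t\in[-T,T]$. Inserting this estimate and summing over the finitely many indices $j,k$ delivers the asserted global inequality.

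The only nontrivial point, and the step I expect to be the main obstacle, is the justification that left-multiplication by $(I-\Delta)^{(s-\varepsilon)/2}$ preserves the FIO class with the expected reduction in order. For smooth phases in $\Phi^2$ this falls within H\"ormander's FIO symbolic calculus, but to keep the argument self-contained one can split the operator into low- and high-frequency pieces: the low-frequency portion is absorbed using the kernel estimates of Subsection \ref{small frequency Lp boundedness}, while on the high-frequency piece a single step of stationary phase (in the spirit of the semiclassical reduction of Subsection \ref{Semiclasical reduction subsec}) produces an amplitude of the desired order in $S^{-\varepsilon-m_p}_{1,0}$ plus a smoothing remainder that is harmless at the level of any Sobolev estimate.
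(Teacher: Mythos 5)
Your argument is essentially the paper's own proof: the paper simply combines the representation \eqref{FIO representation of the solution} with Corollary \ref{LinftySm1 cor} (equivalently Theorem \ref{main L^p thm for smooth Fourier integral operators} with $\varrho=1$, $\delta=0$), i.e.\ exactly your reduction, via conjugation with Bessel potentials, to global $L^p$ boundedness of Fourier integral operators of order $-\varepsilon-m_p<-(n-1)\vert\frac{1}{p}-\frac{1}{2}\vert$. Your extra discussion of the left composition with $(I-\Delta)^{(s-\varepsilon)/2}$ is a detail the paper leaves implicit, and your treatment of it is sound.
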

\begin{proof}
The result follows at once from the Fourier integral operator representation \eqref{FIO representation of the solution} and Corollary \ref{LinftySm1 cor}.
\end{proof}
The representation formula \eqref{FIO representation of the solution} also yields the following local weighted estimate for the solution of the Cauchy problem for the second order hyperbolic equation above and in particular for variable coefficient wave equation. In this connection we recall that $H_{w}^{s}:=\{ u\in \mathscr{S}';\, (1-\Delta)^{\frac{s}{2}}u \in L_{w}^{p}, \, w\in A_{p}\}. $
\begin{thm}
  Let $u(x,t)$ be the solution of the hyperbolic Cauchy problem \eqref{hyp Cauchy prob} with $m=2$ and initial data $f_j .$ For $p\in (1,\infty),$ if $f_j \in H_{w}^{s+\frac{n+1}{2}-j, p}(\mathbf{R}^n)$ with $w\in A_p$, and if $T\in (0,\infty)$ is small enough, then the solution $u(\cdot, t)$ is in $H_{w}^{s,p}(\mathbf{R}^n)$ and satisfies the weighted estimate
  \begin{equation}
    \Vert \chi u(\cdot, t)\Vert_{H_{w}^{s,p}}\leq C_{T} \sum_{j=0}^{1}\Vert f_{j}\Vert_{H_{w}^{s+\frac{n+1}{2}-j,p}}, \,\,\, t\in [-T,T]\setminus \{0\},\,\forall w\in A_{p},
  \end{equation}
  and all $\chi \in C^{\infty}_{0}(\mathbf{R}^n).$
\end{thm}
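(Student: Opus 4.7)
The plan is to apply Theorem \ref{endpointweightedboundthm} case (a) to each building block of the Fourier integral operator representation \eqref{FIO representation of the solution} of the solution. For $m=2$, this representation reads, modulo a smoothing remainder,
\begin{equation*}
    u(\cdot,t) = \sum_{j=0}^{1}\sum_{k=1}^{2} T_{a_{jk}(\cdot,\cdot,t),\,\varphi_{k}(\cdot,\cdot,t)}\, f_j,
\end{equation*}
with $a_{jk}(\cdot,\cdot,t)\in S^{-j}_{1,0}$ depending smoothly on $t$ and $\varphi_{k}(\cdot,\cdot,t)\in\Phi^{2}$ strongly non-degenerate. I would reduce the desired bound to a weighted $L^{p}$ estimate for suitable compositions of each $T_{a_{jk},\varphi_{k}}$ with fractional powers of $I-\Delta$ and a spatial cutoff.

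Concretely, since $\|\chi u(\cdot,t)\|_{H^{s,p}_{w}}=\|(I-\Delta)^{s/2}\chi u(\cdot,t)\|_{L^{p}_{w}}$, it suffices to bound each operator
\begin{equation*}
    (I-\Delta)^{s/2}\,\chi\,T_{a_{jk},\varphi_{k}}\,(I-\Delta)^{-(s+(n+1)/2-j)/2}
\end{equation*}
on $L^{p}_{w}$, with $w\in A_{p}$. By the composition rule between a pseudodifferential operator and a Fourier integral operator with phase in $\Phi^{2}$ (see \cite{H3}), this composite is, up to a smoothing operator, a Fourier integral operator with the same phase $\varphi_{k}$, with amplitude compactly supported in $x$ (thanks to $\chi$) and of order
\begin{equation*}
    s+(-j)-\bigl(s+\tfrac{n+1}{2}-j\bigr)=-\tfrac{n+1}{2}.
\end{equation*}
Hence the new amplitude lies in $S^{-\frac{n+1}{2}}_{1,0}\subset L^{\infty}S^{-\frac{n+1}{2}}_{1}$ with compact $x$-support, which is precisely the endpoint class $L^{\infty}S^{-\frac{n+1}{2}\varrho+n(\varrho-1)}_{\varrho}$ at $\varrho=1$ required by Theorem \ref{endpointweightedboundthm}.

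It remains to verify that $\varphi_{k}(\cdot,\cdot,t)$ satisfies the geometric hypotheses of Theorem \ref{endpointweightedboundthm} case (a): positive homogeneity of degree $1$ in $\xi$, strong non-degeneracy $\det\partial^{2}_{x\xi}\varphi_{k}\neq 0$, and the rank condition $\mathrm{rank}\,\partial^{2}_{\xi\xi}\varphi_{k}=n-1$. The phases are obtained by solving the eikonal equation $\partial_{t}\varphi_{k}+\lambda_{k}(x,t,\nabla_{x}\varphi_{k})=0$ with initial datum $\varphi_{k}(x,\xi,0)=\langle x,\xi\rangle$, where the $\lambda_{k}$ are the real distinct characteristic roots of the principal symbol, positively homogeneous of degree $1$ in $\xi$. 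At $t=0$ the non-degeneracy is trivial and the corank of $\partial^{2}_{\xi\xi}\varphi_{k}$ equals $n$; but for $t\neq 0$ small the cofactor identity applied to the homogeneous function $\lambda_{k}(x,0,\xi)$, together with strict hyperbolicity (which forces the characteristic cones $\{\tau=\pm\lambda_{k}(x,0,\xi)\}$ to have everywhere non-vanishing Gaussian curvature off the axis), yields $\mathrm{rank}\,\partial^{2}_{\xi\xi}\varphi_{k}=n-1$. By continuity in $t$ both the rank condition and the strong non-degeneracy persist for $|t|\leq T$ provided $T$ is sufficiently small.

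With these ingredients, Theorem \ref{endpointweightedboundthm} case (a) applies to each composite operator and gives $L^{p}_{w}$ boundedness for every $w\in A_{p}$ and $p\in(1,\infty)$, uniformly in $t\in[-T,T]\setminus\{0\}$. Summing in $j$ and $k$, and undoing the $(I-\Delta)^{\pm}$ reduction, yields the claimed weighted estimate, with the smoothing remainder absorbed trivially. The main obstacle in the argument is the verification of the corank-$1$ condition for the eikonal phase and its stability under small perturbation in $t$; this is classical for second-order strictly hyperbolic operators (the characteristic variety is a smooth, non-flat hypersurface away from $\xi=0$), but is the step where the restriction $T$ small and the structural hypothesis $m=2$ are genuinely used, since for higher-order strictly hyperbolic systems the corresponding curvature condition has to be imposed separately.
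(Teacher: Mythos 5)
Your proposal is correct and follows essentially the same route as the paper: localize with $\chi$ to get compact $x$-support, reduce via $(I-\Delta)^{\pm}$ to an operator with amplitude of order $-\frac{n+1}{2}$, check that for $m=2$ the eikonal phases satisfy $\mathrm{rank}\,\partial^2_{\xi\xi}\varphi_k=n-1$ for small $t\neq 0$, and apply Theorem \ref{endpointweightedboundthm} case (a) with $\varrho=1$. In fact you supply more detail than the paper, which simply asserts the rank property; your eikonal/Lorentz-cone justification of it (and of why $m=2$ is needed) is a correct elaboration of the same argument.
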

\begin{proof}
In the case when $m=2$ then one has the important property that $$\mathrm{rank}\, \partial^{2}_{\xi \xi} \varphi(x, \xi, t)=n-1,$$ for $t \in [-T,T]\setminus \{0\}$ and small $T.$ This fact and the localization of the solution $u(x,t)$ is the spatial variable $x,$ enable us to use Theorem \ref{endpointweightedboundthm} in the case $\varrho=1,$ from which the theorem follows.
\end{proof}

\end{document}